\numberwithin{equation}{section}
\newtheorem*{thmA}{\bf Theorem A}
\newtheorem*{corB}{\bf Corollary B}
\newtheorem*{thmC}{\bf Theorem C}
\newtheorem*{thmD}{\bf Theorem D}
\newtheorem{thm}{Theorem}[section]
\newtheorem{lem}[thm]{Lemma}
\newtheorem{prop}[thm]{Proposition}
\theoremstyle{definition}
\newtheorem{defn}[thm]{Definition}
\theoremstyle{remark}
\newtheorem{rem}[thm]{Remark}
\theoremstyle{example}
\newtheorem{example}{definition}
\newtheorem{eg}[example]{Example}
\newcommand{\set}[1]{\left\{#1\right\}}
\newcommand{\ip}[1]{\langle #1 \rangle}
\newcommand{\Zeit}{\mathbb Z}
\newcommand{\Real}{\mathbb R}
\newcommand{\Cpx}{\mathbb{C}}
\newcommand{\Qua}{\mathbb{H}}
\newcommand{\Cay}{\mathbb O}
\newcommand{\Rp}{\mathbb{R}\mathrm{P}}
\newcommand{\Cp}{\mathbb{C}\mathrm{P}}
\newcommand{\Hp}{\mathbb{H}\mathrm{P}}
\newcommand{\Cayp}{\mathbb{O}\mathrm{P}}
\newcommand{\half}{\frac{1}{2}}
\newcommand{\sph}{\mathbb{S}}
\newcommand{\Disk}{\mathbb{D}}
\newcommand{\To}{\longrightarrow}
\newcommand{\qi}{\imath}
\newcommand{\qj}{\jmath}
\newcommand{\qk}{\kappa}
\newcommand{\imbed}{\hookrightarrow}
\newcommand{\gS}{\textsf{S}}
\newcommand{\T}{\textsf{T}}
\newcommand{\SO}{\textsf{SO}}
\newcommand{\gO}{\textsf{O}}
\newcommand{\Spin}{\textsf{Spin}}
\newcommand{\SU}{\textsf{SU}}
\newcommand{\Sp}{\textsf{Sp}}
\newcommand{\G}{\textsf{G}}
\newcommand{\Gt}{\mathfrak{G}_2}
\newcommand{\K}{\textsf{K}}
\newcommand{\U}{\textsf{U}}
\newcommand{\gH}{\textsf{H}}
\newcommand{\gL}{\textsf{L}}
\newcommand{\fw}{\pi}
\newcommand{\E}{\textsf{E}}
\newcommand{\F}{\textsf{F}}
\newcommand{\liesu}{\mathfrak{su}}
\newcommand{\lieu}{\mathfrak{u}}
\newcommand{\lieso}{\mathfrak{so}}
\newcommand{\liesp}{\mathfrak{sp}}
\newcommand{\liespin}{\mathfrak{spin}}
\newcommand{\liee}{\mathfrak{e}}
\newcommand{\lief}{\mathfrak{f}}
\newcommand{\lieg}{\mathfrak{g}}
\newcommand{\liegt}{\mathscr{G}_2}
\newcommand{\lieh}{\mathfrak{h}}
\newcommand{\liep}{\mathfrak{p}}
\newcommand{\liem}{\mathfrak{m}}
\newcommand{\liek}{\mathfrak{k}}
\newcommand{\liel}{\mathfrak{l}}
\newcommand{\Lie}{\mathrm{Lie}}
\newcommand{\cohomd}{cohomogeneity one manifold }
\newcommand{\Kp}{\textsf{K}^{+}}
\newcommand{\Km}{\textsf{K}^{-}}
\newcommand{\Kpm}{\textsf{K}^{\pm}}
\newcommand{\liekm}{\mathfrak{k}^{-}}
\newcommand{\liekp}{\mathfrak{k}^{+}}
\newcommand{\liekpm}{\mathfrak{k}^{\pm}}
\newcommand{\Bm}{B_{-}}
\newcommand{\Bp}{B_{+}}
\newcommand{\Bpm}{B_{\pm}}
\newcommand{\lpm}{l_{\pm}}
\newcommand{\lp}{l_{+}}
\newcommand{\lm}{l_{-}}
\newcommand{\ptpm}{p_{\pm}}
\newcommand{\ptp}{p_{+}}
\newcommand{\ptm}{p_{-}}
\newcommand{\vp}{\varphi}
\newcommand{\cohom}{\mathrm{cohom}}
\newcommand{\Ad}{\mathrm{Ad}}
\newcommand{\ad}{\mathrm{ad}}
\newcommand{\df}{\mathrm{d}}
\newcommand{\Proj}{\mathrm{Proj}}
\newcommand{\diag}{\mathrm{diag}}
\newcommand{\Diag}{\Delta}
\newcommand{\id}{\mathrm{id}}
\newcommand{\Id}{\mathrm{Id}}
\newcommand{\Sym}{\mathrm{S}}
\newcommand{\Ann}{\mathrm{Ann}}
\newcommand{\Aut}{\mathrm{Aut}}
\newcommand{\I}{\mathrm{I}}
\newcommand{\II}{\mathrm{II}}
\newcommand{\III}{\mathrm{III}}
\newcommand{\V}{\mathrm{V}}
\newcommand{\wt}{\widetilde}
\newcommand{\Gr}{\mathrm{Gr}}
\newcommand{\ot}{\otimes}
\newcommand{\tU}{\widetilde{\textsf{U}}}
\newcommand{\tKm}{\widetilde{\K^-}}
\newcommand{\tKp}{\widetilde{\K^+}}
\newcommand{\Schb}{Schwachh\"{o}fer }
\begin{document}

\title[Cohomogeneity One Manifolds]{Cohomogeneity one manifolds with a small family of invariant metrics}%
\author{Chenxu He}%
\address{Department of Mathematics, University of Pennsylvania}%
\email{hech@math.upenn.edu}%

%\thanks{}%
%\subjclass{}%
%\keywords{}%

%\date{}%
%\dedicatory{}%
%\commby{}%
\maketitle

\tableofcontents

%\newpage
%
%\listoftables

\newpage

% -------------------------------------------------------------------------------------------------
\begin{abstract}
In this paper, we classify compact simply connected cohomogeneity one manifolds up to equivariant
diffeomorphism whose isotropy representation by the connected component of the principal isotropy
subgroup has three or less irreducible summands. The manifold is either a bundle over a homogeneous
space or an irreducible symmetric space. As a corollary such manifolds admit an invariant metric
with non-negative sectional curvature.
\end{abstract}

% -------------------------------------------------------------------------------------------------
% -------  main body ------------------------------------------------------------------------------
\section{Introduction}

Manifolds with positive or non-negative sectional curvature have been of interest since the
beginning of the global Riemannian geometry. Finding new examples of such manifolds is a particular
difficult problem. There are few examples of positive curvature, mainly quotients of compact Lie
groups. Recently two new examples were discovered using different methods. K. Grove, L. Verdiani
and W. Ziller succeeded in putting positively curved metrics on a so called \emph{cohomogeneity
one} manifold\cite{GVZpos}, i.e., the manifold admits an isometric group action and the orbit space
is one dimension. Their example belongs to a family of infinitely many cohomogeneity one manifolds
for which there is no known obstruction to positively curved metric, see \cite{GWZ}. O. Dearicott
also proposed another construction for the same manifold, see \cite{Derricott}. Another new example
with positive curvature was discovered by P. Petersen and F. Wilhelm\cite{PWexotic} on an exotic
$7$-sphere.

The class of non-negatively curved manifolds is much larger, although methods to construct them are
very limited. Well known examples are products and quotients of such manifolds, e.g., quotients of
compact Lie groups. In \cite{Cheeger} J. Cheeger used a gluing method to put non-negatively curved
metrics on the connected sum of any two compact rank one symmetric spaces. Cheeger's gluing method
was greatly generalized by K. Grove and W. Ziller in \cite{GZMilnor} to cohomogeneity one manifold,
where they proved that if the singular orbits are codimension two, the manifold admits an invariant
metric with non-negative curvature. However \emph{not} every cohomogeneity one manifold carries an
invariant metric with non-negative sectional curvature. The first examples were discovered by K.
Grove, L. Verdiani, B. Wilking and W. Ziller in \cite{GVWZ} which were generalized to a larger
class by the author, see \cite{He}. It is thus natural to ask how large the class of cohomogeneity
one manifolds with an invariant metric of non-negative curvature is, see \cite{Zsurvey1}.

One possible approach is to study cohomogeneity one manifolds with further geometric or topological
restrictions. C. Hoelscher classified simply-connected examples in dimension 7 or less in
\cite{HoelscherClass}. Most of them carry non-negatively curved metrics except for some examples in
dimension 7, for which the existence of such metrics is still open. L. \Schb and K. Tapp studied
cohomogeneity one manifolds which have a totally geodesic principal orbit. They classified such
manifolds under some further conditions, see \cite{SchTapp}. The examples in their classification
were already known to have non-negatively curved metrics.

Cohomogeneity one manifolds also arise as interesting examples in other areas. For instance, many
new examples of Einstein and Einstein-Sasaki metrics(see, for example, \cite{BoehmEinstein},
\cite{Conti} and \cite{GibbonsHartnollYasui}), and metrics with special holonomy(see
\cite{CleytonSwann}, \cite{CveticGibbonsLuPope} and \cite{ReidegeldG2}-\cite{ReidegeldAW}) were
found among cohomogeneity one manifolds. In the study of the Ricci flow, Ricci solitons play an
important role in order to understand the singularities of the flow. Many interesting examples of
Ricci solitons also admit cohomogeneity one actions, see \cite{DancerWang}.

As far as the Riemannian metrics are concerned, one may first consider examples which are
geometrically simple, i.e., the family of invariant metrics is small. Let $M$ be a cohomogeneity
one manifold, i.e., there exists a compact Lie group $\G$ acting on $M$ by isometries and the
cohomogeneity of the action, defined as $\cohom(M,\G) = \dim(M/\G)$, is equal to $1$. If the
manifold is compact and simply-connected, then the orbit space is a closed interval $I$. In this
case, there are precisely two singular orbits $\Bpm$ with isotropy subgroups $\Kpm$ corresponding
to the endpoints of $I$, and principal orbits corresponding to the interior points with isotropy
subgroup $\gH$. Let $\lpm$ denote the codimensions of $\Bpm$. The manifold can be written as a
union of two disk bundles as $M = \G \times_{\Km}\Disk^{\lm} \cup_{\G/\gH}\G
\times_{\Kp}\Disk^{\lp}$, and thus it can be identified with the \emph{group diagram} $\gH \subset
\set{\Kpm} \subset \G$. Suppose $c(t)$ is a minimal geodesic between the singular orbits and $t$ is
the arc length. Since the $\G$-action is transitive on orbits and the metric is left invariant on
them, we only have to consider the metric along $c(t)$ and thus $g= \df t^2 + g_t$, where $g_t$ is
the metric at $c(t)$. Let $M_t = \G.c(t)$ be a principal orbit. Since $\gH_c$, the identity
component of $\gH$, fixes $c(t)$, it induces the isotropy representation on the tangent space
$T_{c(t)}M_t$. In general this representation is not irreducible and let $s$ denote the number of
irreducible summands. From Schur's lemma, if the value of $s$ is big, the family of invariant
metrics is large.

In this paper we classify compact simply-connected cohomogeneity one manifolds with $s \leq 3$. We
say a $\G$-manifold $M$ is \emph{non-primitive} if there is a $\G$-equivariant map $M \To \G/\gL$
for some proper subgroup $\gL \subset \G$. Otherwise the $G$-action is called \emph{primitive}. Our
main result is
\begin{thmA}
If a compact simply-connected Riemannian manifold $M$ admits a primitive cohomogeneity one action
with $s \leq 3$, then it is equivariantly diffeomorphic to a symmetric space with an isometric
action.
\end{thmA}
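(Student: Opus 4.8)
The plan is to replace the geometric hypothesis by the algebra of the group diagram $\gH \subset \set{\Kpm} \subset \G$ and then carry out a finite classification. Two structural inputs drive everything. First, the slice theorem identifies the normal sphere to each singular orbit $\Bpm$ with a homogeneous sphere $\Kpm/\gH \cong \sph^{\lpm-1}$ on which $\Kpm$ acts transitively, so by the classification of transitive actions on spheres each pair $(\Kpm,\gH)$ is severely restricted. Second, writing $\lieg = \lieh \oplus \frakp$ for the reductive decomposition at a principal orbit and $\liekpm = \lieh \oplus \liem_{\pm}$ for the slice directions, one obtains $\frakp = \liem_{\pm} \oplus \mathfrak{n}_{\pm}$, where $\mathfrak{n}_{\pm} \cong T(\G/\Kpm)$ is tangent to the singular orbit and $\dim \liem_{\pm} = \lpm - 1$. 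Both summands are $\gH_c$-invariant, so the hypothesis $s \leq 3$ forces the $s$ irreducible pieces of $\frakp$ to be partitioned between the two blocks $\liem_{\pm} \oplus \mathfrak{n}_{\pm}$ in only finitely many patterns.

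First I would record the algebraic meaning of primitivity. A $\G$-equivariant map $M \To \G/\gL$ restricts on each orbit to a map of homogeneous spaces, so it exists for a proper $\gL$ exactly when some proper subgroup contains both $\Kp$ and $\Km$; hence the action is primitive if and only if $\Kp$ and $\Km$ generate $\G$. This is the constraint that, once the two slice pairs $(\Kpm,\gH)$ are pinned down, will determine the ambient group $\G$.

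Next I would split into cases by the value of $s$. For $s = 1$ the representation $\frakp$ is irreducible, so each $\liem_{\pm}$, being an invariant subspace of positive dimension $\lpm - 1 \geq 1$, must equal all of $\frakp$; thus both singular orbits are fixed points and $M$ is a sphere, the simplest rank one symmetric space. For $s = 2$ and $s = 3$ I would run through this list to enumerate the admissible $(\Kpm,\gH)$, keeping track of how many summands each slice $\liem_{\pm}$ consumes: an isotropy-irreducible sphere such as $\SO(k)/\SO(k-1)$ contributes a single summand, while a Hopf-type sphere such as $\U(k)/\U(k-1)$ contributes more and is correspondingly rarer. Matching the two ends along the common $\gH$ and imposing that $\Kp$ and $\Km$ generate $\G$ leaves a finite list of diagrams, each of which I would then identify with the group diagram of a symmetric space carrying its standard cohomogeneity one action.

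The hardest part will be the bookkeeping for $s = 3$. Beyond tracking how the three summands split between $\liem_{\pm}$ and $\mathfrak{n}_{\pm}$ at the two ends, one must record the real, complex, or quaternionic type of each summand and the action of the component group $\gH/\gH_c$, since these decide which summands may be isomorphic and hence which block patterns are realized by a genuine subgroup inclusion. A further delicate point is that primitivity is not automatic: one has to exclude, case by case, the existence of a proper $\gL$ containing both $\Kpm$, and then verify that each surviving diagram coincides, up to equivariant diffeomorphism, with one on the known list of symmetric spaces.
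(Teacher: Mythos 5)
Your skeleton --- group diagram, sphere slices $\Kpm/\gH$, partition of the $\leq 3$ summands of $\frakp$ between $\liem_{\pm}$ and $\mathfrak{n}_{\pm} \cong T(\G/\Kpm)$ --- is the same framework the paper uses, and your $s=1$ argument is the paper's. The genuine gap is the step you compress into ``matching the two ends along the common $\gH$ \dots leaves a finite list of diagrams.'' The slice data $\gH \subset \Km$ and $\gH \subset \Kp$ coming from the transitive-sphere classification does not determine the ambient group: the actual problem is to classify all compact $\G$, together with the embeddings $\Kpm \subset \G$, for which $\Ad_{\gH_c}$ on $\lieg/\lieh$ has at most three summands, and essentially all of the paper's work goes into this. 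The paper proceeds top-down rather than bottom-up: its pivotal Proposition \ref{proppairwisenonequ} shows, by a bracket-and-orthogonality computation with the bi-invariant metric, that if the three summands are pairwise inequivalent then either $[\liep_2,\liep_3]=\liep_1$, which forces both singular orbits $\G/\Kpm$ to be strongly isotropy irreducible, or else $\Km$ and $\Kp$ generate a proper normal subgroup and $M$ is a sphere with a sum action. Only this dichotomy lets the heavy classification inputs enter: Wolf's list of strongly isotropy irreducible spaces, the symmetric-pair classification, Onishchik's transitive-subgroup triples, and the Dickinson--Kerr classification of two-summand homogeneous spaces, which the paper must first correct (Appendix \ref{apptwosummands}, Theorem \ref{thmtwosummands}) before using it in Proposition \ref{propGK2summandsnonequiv}. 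These inputs, not the transitive-sphere list, are what make Table \ref{tabletripleHKG} finite; nothing in your outline plays this role, so your finiteness claim is unsupported.

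Two further points. Your primitivity criterion is stated for a single diagram, but non-primitivity only requires a proper connected $\gL \supset \Kpm$ in \emph{some} equivalent diagram, and by Lemma \ref{LemGroupEquivalent} equivalent diagrams differ by replacing $\Kp$ with $a\Kp a^{-1}$ for $a \in N(\gH)_c$; the correct test is therefore that $\Km$ and $a\Kp a^{-1}$ generate $\G$ for \emph{every} such $a$. This is not pedantry: the case-by-case analysis of such variations (Theorems \ref{thmrigiditydisconnected} and \ref{thmrigidityconnected}) is exactly where the paper shows, e.g., that the three $\Spin(9)$'s in $\F_4$ yield the single manifold $\sph^{25}$, while the triple $\SO(3)\subset\SO(4)\subset\Gt$ admits a genuine variation giving the Grassmannian $\SO(7)/(\SO(3)\times\SO(4))$. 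Finally, your outline never separates reducible from non-reducible actions, nor simple from non-simple $\G$; the paper needs Theorem \ref{thmclassificationreducible} to see that primitive reducible actions are sum actions on spheres, and the non-simple case (Theorem \ref{thms3Gnonsimple}) requires a separate analysis of factors of $\gH$ embedded diagonally across factors of $\G$, a configuration that your per-end bookkeeping of the pairs $(\Kpm,\gH)$ does not detect.
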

In fact only spheres, complex and quaternionic projective spaces, the Cayley plane, and
Grassmannians $\SO(m+n)/(\SO(m)\times \SO(n))(m,n\geq 2)$ appear in the classification. This easily
implies
\begin{corB}
Every compact simply-connected cohomogeneity one manifold with $s\leq 3$ admits an invariant metric
with non-negative sectional curvature.
\end{corB}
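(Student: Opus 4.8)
The plan is to exploit the structural dichotomy furnished by the classification: every such $M$ is either an irreducible symmetric space or a fiber bundle over a homogeneous space. In the symmetric-space case, Theorem A identifies $M$, up to equivariant diffeomorphism, with one of the spaces on the short list---a round sphere, $\Cp^n$, $\Hp^n$, the Cayley plane $\Cayp^2$, or a real Grassmannian $\SO(m+n)/(\SO(m)\times\SO(n))$---each of which is a compact symmetric space of compact type and therefore carries its canonical symmetric metric of non-negative sectional curvature. Since Theorem A presents the action as isometric for this metric, pulling the metric back along the equivariant diffeomorphism produces a $\G$-invariant non-negatively curved metric on $M$, which settles the primitive case.

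For the non-primitive manifolds I would use the associated-bundle reduction. A non-primitive action comes with a $\G$-equivariant fibration $M \To \G/\gL$, and the fiber $N$ over the base point is a cohomogeneity one $\gL$-manifold of strictly smaller dimension with $M \cong \G \times_{\gL} N$. The key geometric tool is that if $N$ admits an $\gL$-invariant metric of non-negative curvature, then, equipping $\G$ with a bi-invariant metric, the free isometric $\gL$-action $l\cdot(g,n) = (gl^{-1}, l n)$ on the product $\G \times N$ turns the projection $\G \times N \To \G \times_{\gL} N$ into a Riemannian submersion; by O'Neill's curvature formula the quotient metric is again non-negatively curved, and by construction it is $\G$-invariant. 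Thus the problem reduces to producing an invariant metric of non-negative curvature on the lower-dimensional core $N$.

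To finish I would run through the explicit list of non-primitive examples in the classification and verify that each core $N$ supports the required $\gL$-invariant metric. In every case $N$ should turn out to be either a rank-one symmetric space (carrying its symmetric metric) or a cohomogeneity one $\gL$-manifold whose two singular orbits both have codimension two, in which case the theorem of Grove and Ziller \cite{GZMilnor} supplies an $\gL$-invariant metric of non-negative curvature. The main obstacle is precisely this last bookkeeping step: one must confirm, family by family, that no non-primitive example has a core falling outside these two situations---equivalently, that stripping off the homogeneous factor $\G/\gL$ leaves a core whose singular orbits are codimension two (or a symmetric fiber). Granting this verification, combining the two cases completes the proof.
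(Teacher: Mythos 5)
Your proposal is correct and follows essentially the same route as the paper: Theorem A handles the primitive case via symmetric metrics, and for non-primitive actions the identification $M \cong \G\times_\gL N$ with a bi-invariant metric on $\G$ and O'Neill's formula is exactly the observation the paper makes in Section 2 ("$M$ has an invariant metric with non-negative sectional curvatures if $N$ admits such a metric"), with doubles absorbed as the special case $N = \sph^l$ with a linear $\K$-action. The "bookkeeping" you defer is precisely the content of Theorem C, which lists the possible fibers as spheres, three-dimensional lens spaces, projective spaces, and the Cayley plane, each of which carries an invariant non-negatively curved metric, confirming your dichotomy.
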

In contrast to the above corollary, the Kervaire spheres carry cohomogeneity one actions with $s=4$
and they do not admit any non-negatively curved invariant metrics, see \cite{GVWZ}.

If the cohomogeneity one manifold $M$ is non-primitive, then it has a group diagram $\gH \subset
\set{\Km, \Kp} \subset \G$ such that there is a proper subgroup $\gL \subset \G$ that contains both
$\Kpm$. It follows that the $\G$ action has no fixed points. Furthermore, $M$ is a fiber bundle
over the homogeneous space $\G/\gL$ with fiber $N$ and the $\gL$ action on $N$ is cohomogeneity one
with diagram $\gH \subset \set{\Km, \Kp} \subset \gL$. One particular class of non-primitive
manifolds is the so called \emph{double} for which $\Km = \Kp$. For a double we can let $\gL =
\Kpm$ and then $N$ is a sphere with a linear cohomogeneity one action that has two fixed points. In
general for a non-primitive action, we have
\begin{thmC}
If $M$ is a compact simply-connected cohomogeneity one manifold that admits a non-primitive action
by a compact Lie group $\G$ with $s\leq 3$, then
\begin{enumerate}
\item either $M$ is a double;
\item or $M$ is a fiber bundle over a strongly isotropy irreducible space $\G/\gL$ with fiber $N$.
\end{enumerate}
In case (2), if the action of $\gL$ on $N$ has no fixed points, then $N$ is a sphere or a three
dimensional lens space. Otherwise $N$ is a projective space or the Cayley plane.
\end{thmC}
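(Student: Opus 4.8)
The plan is to exploit the fibration coming from non-primitivity and to read off the whole dichotomy from the way the isotropy representation splits along it. Fix a proper subgroup $\gL$ with $\Kpm \subset \gL \subset \G$, provided by non-primitivity, so that $M$ is the total space of a bundle $N \To M \To \G/\gL$ whose fiber $N$ is a compact cohomogeneity one $\gL$-manifold with diagram $\gH \subset \set{\Km, \Kp} \subset \gL$. Choosing an $\Ad(\gL)$-invariant inner product on $\lieg$, the isotropy representation of the principal orbit splits $\gH_{c}$-equivariantly as
\[
\lieg/\lieh \;\cong\; \lieg/\liel \,\oplus\, \liel/\lieh,
\]
the first summand being the restriction of the isotropy representation of the base $\G/\gL$ and the second the isotropy representation of the principal orbit $\gL/\gH$ of the fiber. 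Write $a$ and $b$ for the number of $\gH_{c}$-irreducible summands of $\liel/\lieh$ and $\lieg/\liel$ respectively. Since $\gH \subsetneq \Kpm \subseteq \gL$ (the codimensions $\lpm$ are at least $2$) and $\gL$ is proper, both summands are nonzero, so $a, b \geq 1$ and $a + b = s \leq 3$.

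The dichotomy will be governed entirely by $a$. First I would treat $a = 1$. Using $\liel/\lieh = \liel/\liekpm \oplus \liekpm/\lieh$ for each sign, where $\liekpm/\lieh$ is the nonzero tangent space of the slice sphere $\Kpm/\gH = \sph^{\lpm - 1}$, irreducibility of $\liel/\lieh$ forces $\liel/\liekpm = 0$, i.e. $\Km^{0} = \Kp^{0} = \gL$; thus both singular orbits of $N$ collapse to fixed points and $M$ is a double (case (1)). Consequently, if $M$ is \emph{not} a double then every witnessing $\gL$ has $a \geq 2$, hence $a = 2$ and $b = 1$: in that case $\lieg/\liel$ is already $\gH_{c}$-irreducible, so the larger group $\gL_{0} \supseteq \gH_{c}$ acts irreducibly on it and $\G/\gL$ is strongly isotropy irreducible, giving case (2).

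In case (2) the fiber $N$ is a cohomogeneity one $\gL$-manifold whose principal orbit isotropy representation $\liel/\lieh$ has exactly two summands, and I would classify it by splitting on the presence of an $\gL$-fixed point. If some $\Kpm^{0} = \gL$, say at the positive end, then $N$ has an isolated fixed point and is the cone-like total space $\gL \times_{\Km}\Disk^{\lm}$; analyzing the two slice representations subject to $a = 2$ pins $N$ down to a projective space $\Cp^{n}$, $\Hp^{n}$, or the Cayley plane $\Cayp^{2}$ (spheres do not occur here, since they fall under $a = 1$). If instead both $\Kpm^{0} \subsetneq \gL$, then both singular orbits are positive dimensional; running through the admissible pairs of slice representations (transitive actions on spheres) compatible with two summands, together with the requirement that $M$ be simply connected, forces $N$ to be a sphere $\sph^{n}$ or, in the degenerate low-dimensional situation where $\gL$ carries a central circle, a three dimensional lens space.

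The main obstacle is the fiber classification in case (2): the reductions above are short, but identifying $N$ exactly requires matching the two slice representations against the list of groups acting transitively on spheres, checking which pairs are compatible with a single principal isotropy $\gH$ yielding $a = 2$, and using the simple-connectivity of $M$ to discard the remaining candidates. I also expect to invoke the classification of strongly isotropy irreducible homogeneous spaces to confirm that each surviving diagram $(\G, \gL, \gH, \Kpm)$ actually occurs, and to keep the fixed-point case (projective spaces and the Cayley plane) cleanly separated from the fixed-point-free case (spheres and lens spaces).
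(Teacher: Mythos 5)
Your structural skeleton is correct, and it is in fact the ``direct'' route that the paper mentions in the introduction but deliberately does not write out (the paper instead first classifies all non-primitive diagrams, Theorems \ref{thmclasss2nonprimitive}, \ref{thmclassificationreducible} and \ref{thms3nonprimitive}, and reads Theorem C off the resulting tables): writing $s=a+b$ with $a$ the number of $\gH_c$-summands of $\liel/\lieh$ and $b$ that of $\lieg/\liel$, your argument that $a=1$ forces $\liekpm=\liel$, hence $\Km=\Kp=\gL$ and a double, and that otherwise $a=2$, $b=1$, so that $\lieg/\liel$ is $\gH_c$-irreducible and a fortiori irreducible under $\gL_c$, i.e.\ $\G/\gL$ is strongly isotropy irreducible, is sound.

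The genuine gap is in your identification of the fiber $N$ in case (2). Every classification you intend to quote there --- Proposition \ref{propfixedpointaction} in the fixed-point case and, in effect, Theorems \ref{thmclasss2primitive} and \ref{thmclasss2nonprimitive} in the fixed-point-free case --- is a statement about \emph{simply connected} manifolds, whereas the fiber of $N\To M\To \G/\gL$ need not be simply connected: the homotopy exact sequence only makes $\pi_1(N)$ a quotient of $\pi_2(\G/\gL)$, and this group is nontrivial precisely in the relevant situations (e.g.\ $\pi_2=\Zeit$ for $\SO(n+2)/(\SO(2)\times\SO(n))$, the base of the paper's lens-space examples \textbf{R}.23 and \textbf{N}.10). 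This is not a peripheral degeneracy to be waved at: the three dimensional lens spaces in the statement arise exactly from non-simply-connected fibers, and only for \emph{disconnected} principal isotropy $\gH$, when both slice spheres $\Kpm/\gH$ are circles whose identity components sit with different slopes inside a two-torus; the effective fiber diagram is $\set{1}\subset\set{\gS^1_{1,m_-},\gS^1_{1,m_+}}\subset \T^2$, one gets $\pi_1(N)=\Zeit_{|m_+-m_-|}$ by van Kampen, and simple connectivity of $M$ (not of $N$) is enforced through the component conditions of Lemma \ref{lemsimplyconnected} and Lemmas 1.13 and 4.3 of \cite{HoelscherClass}. Your plan of matching slice representations against transitive sphere actions tacitly treats $N$ as simply connected, so it can neither produce these lens-space fibers nor rule out other non-simply-connected ones; and even in the fixed-point case one must first check that a fixed point forces $\pi_1(N)=0$ (true, via the disk--disk-bundle decomposition and $\lpm\geq2$) before Proposition \ref{propfixedpointaction} may legitimately be applied to $N$. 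Until this disconnected-$\gH$, non-simply-connected-fiber analysis is supplied, the final dichotomy of the theorem (sphere or lens space versus projective space or Cayley plane) is asserted rather than proved.
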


In case (2), the space $\G/\gL$ is \emph{strongly isotropy irreducible} which means that the
isotropy representation by $\gL_c$ is irreducible. Cohomogeneity one manifolds with a fixed point
were classified in \cite{HoelscherClass}(see also in \cite{GroveSearleJDG}), where he showed that
the manifolds are equivariantly diffeomorphic to projective spaces and the Cayley plane. In Theorem
\ref{thmclasss2primitive} and \ref{thmclasss2nonprimitive} we classify cohomogeneity one manifolds
such that the action has no fixed point and $s=2$. One can prove Theorem C directly by applying
these two classifications. In this paper, we take a different approach since we also want the
classification of such cohomogeneity one diagrams, i.e., a classification up to equivariant
diffeomorphism. We first classify all non-primitive diagrams and then Theorem C easily follows.

Notice that in Theorem A and C, we do not assume that $\gH$ is connected. Notice also that $s$ is
the number of irreducible summands of the isotropy action by $\gH_c$. It is possible that there are
other cohomogeneity one manifolds where the whole group $\gH$ acts with only $3$ irreducible
summands.

A manifold $M$ may have different cohomogeneity one actions, for example, there are many
cohomogeneity one actions on spheres. One special case is that a proper normal subgroup $\G_1
\subset \G$ acts on the manifold with the same orbits. In this case, we call the $\G$-action
\emph{reducible}. Otherwise the action is called \emph{non-reducible}. Let $\gH$ be the principal
isotropy subgroup of the $\G$-action. Note that the $\G$ action being reducible is equivalent to
the fact that $\gH$ projects onto a simple factor of $\G$. Let $\gH_1 = \gH \cap \G_1$ and then it
is the principal isotropy subgroup of the $\G_1$-action. Since $\gH_1$ is a subgroup of $\gH$, the
isotropy action of $\gH_1$ on the tangent space of the principal orbit may have more irreducible
summands, i.e., a bigger value of $s$. For this reason, we consider the classification of reducible
actions as well.

\begin{thmD}
If a compact simply-connected manifold $M$ admits a cohomogeneity one action by $\G$ with $s\leq 3$
and the action is reducible, then one of the following holds:
\begin{enumerate}
\item a normal subgroup of $\G$ acts on $M$ non-reducibly and has the same value of $s$;
\item the action is non-primitive;
\item the action is primitive and it is a linear action on a sphere.
\end{enumerate}
\end{thmD}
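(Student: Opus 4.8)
The plan is to reduce a reducible action to the normal subgroup that already realizes the orbits, and to track how the isotropy representation splits under this reduction. Since the action is reducible, $\gH$ projects onto a simple factor $\G_2$ of $\G$; let $\G_1 \subset \G$ be the normal subgroup whose Lie algebra is the sum of the remaining simple ideals and the center, so that $\G = \G_1\G_2$ is an almost direct product and $\G_1$ acts on $M$ with the same orbits. Then $\G_1$ is again cohomogeneity one, with group diagram $\gH_1 \subset \set{\Km\cap\G_1,\ \Kp\cap\G_1} \subset \G_1$, where $\gH_1 = \gH\cap\G_1$ is its principal isotropy subgroup. Let $V = T_{c(t)}M_t$ be the tangent space of a principal orbit: it carries the isotropy action of $\gH_c$ with $s$ irreducible summands and, since $\gH_{1,c}\subset\gH_c$, the isotropy action of $\gH_{1,c}$ with $s_1 \geq s$ summands. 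The first step is to make this comparison precise. Because $\G_2$ commutes with $\G_1$ and preserves every $\G_1$-orbit, it acts on $V$ commuting with $\gH_{1,c}$; hence each $\gH_c$-irreducible summand $W_j \subset V$ is a union of some number $m_j$ of $\gH_{1,c}$-summands, and $s_1 = \sum_{j=1}^{s} m_j$.

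The proof then splits according to whether this reduction increases $s$. If $s_1 = s$, every $m_j = 1$, so $\G_1$ records exactly the same isotropy decomposition. I would then iterate: if $\G_1$ acts non-reducibly we are in case (1); otherwise $\gH_1$ again projects onto a simple factor of $\G_1$ and we repeat, each step strictly dropping $\dim\G$ while leaving $s$ unchanged. Since reducibility requires a \emph{simple} factor onto which the isotropy surjects, the process terminates at a product of simple factors of $\G$ together with a central subgroup — hence a normal subgroup of $\G$ — which acts non-reducibly with the same value of $s$. This is precisely case (1).

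The substantive case is $s_1 > s$, where some $m_j \geq 2$, i.e. the redundant factor $\G_2$ genuinely mixes $\gH_{1,c}$-summands inside a single $\gH_c$-summand. Here I would use that $\G_2$ acts on the orbit $\G_1/\gH_1$ by $\G_1$-equivariant diffeomorphisms, hence through a nontrivial homomorphism $\G_2 \To N_{\G_1}(\gH_{1,c})/\gH_{1,c}$ given by right translation; the mixing therefore forces the normalizer $N_{\G_1}(\gH_{1,c})$ to be large, containing a subgroup covering $\G_2$. I would then argue that such a large normalizer produces an intermediate group: either there is a proper $\gL$ with $\Km,\Kp \subset \gL \subsetneq \G$, so that $M \To \G/\gL$ is $\G$-equivariant and the action is non-primitive, giving case (2); or no such proper $\gL$ exists, the action is primitive, and the combination of a big normalizer with at most three combined isotropy summands forces the principal orbit and $M$ to exhibit the isotropy pattern of a round sphere, giving the linear sphere of case (3).

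The main obstacle is exactly this last dichotomy when $s_1 > s$: one must show that a redundant simple factor acting with $s\leq 3$ can mix summands only in the two ways above. This amounts to a representation-theoretic classification of the possible pairs $\gH_{1,c}\subset\gH_c$ with simple quotient acting on $V$ with at most three $\gH_c$-summands, together with the identification of the resulting diagrams. I expect the bound $s\leq 3$ to make the list of possibilities finite, and to dispose of the base cases using the no-fixed-point classifications with $s=2$ in Theorems \ref{thmclasss2primitive} and \ref{thmclasss2nonprimitive}; when the reduced action instead acquires a fixed point, Hoelscher's classification of cohomogeneity one manifolds with a fixed point supplies only projective spaces and the Cayley plane, which feed back into cases (2) and (3).
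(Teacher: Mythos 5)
Your setup is sound as far as it goes, and it runs parallel to the paper's proof of Theorem \ref{thmclassificationreducible} (which is where Theorem D is actually proved): the paper also writes $\lieg=\lieg_1\oplus\lieg_2$ with $\mathrm{Proj}_2(\lieh)=\lieg_2$, and its first case, $\lieh=\lieh_1\oplus\lieg_2$, is your branch $s_1=s$ leading to case (1). Your finite-kernel homomorphism $\G_2\To N_{\G_1}(\gH_{1,c})/\gH_{1,c}$ is also correct, and it is precisely the paper's key structural fact in the remaining case: effectiveness forces $\lieh\cap\lieg_2=0$, so $\lieg_2$ is a diagonally embedded factor $\lieh_0$ of $\lieh$, i.e.\ $\lieh=\lieh_1\oplus\Diag\lieh_0$, $\lieg=\lieg_1\oplus\lieh_0$, with isotropy representation $\ad_{\lieg_1/(\lieh_1\oplus\lieh_0)}\oplus\ad_{\lieh_0}$. (Your dichotomy $s_1=s$ versus $s_1>s$ is not identical to the paper's dichotomy of whether $\lieh$ splits --- some diagonal configurations have $s_1=s$ and land in your case (1), which the paper only recognizes inside its case analysis --- but both splits are legitimate.) The genuine gap is everything after this point. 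Your ``either a proper $\gL$ exists (case (2)) or none exists and $M$ must exhibit the isotropy pattern of a round sphere (case (3))'' is a restatement of the trichotomy to be proved, not a derivation: nothing in the proposal shows why primitivity together with $s\leq 3$ forces a linear sphere action. That implication is the actual content of the theorem, and in the paper it is the long case analysis of Theorem \ref{thmclassificationreducible} (Cases I, II.A, II.B.1--II.B.6): the bound $s\leq3$ limits $\lieh_0$ to at most two primitive factors, one enumerates the possible intermediate subalgebras $\liekpm$, and each configuration is eliminated or identified using Wolf's classification of strongly isotropy irreducible spaces, Onishchik's transitive triples (Table \ref{tableOnishchik}), spherical pairs, and the two-summand classification of Appendix \ref{apptwosummands}. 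Your concluding remark that the needed list is ``expected to be finite'' concedes exactly the missing proof; a big-normalizer heuristic cannot decide the matter, since large normalizers occur equally in the non-primitive outcomes (doubles, the lens space bundles \textbf{R}.22--\textbf{R}.23) and in the sphere outcome (sum actions).

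Three further defects compound this. First, the tools you cite are mismatched: Theorems \ref{thmclasss2primitive} and \ref{thmclasss2nonprimitive} classify cohomogeneity one \emph{manifolds} with $s=2$, whereas what the diagonal branch produces is a \emph{homogeneous pair} $(\lieg_1,\lieh_1\oplus\lieh_0)$ with at most two isotropy summands; the relevant inputs are the homogeneous classifications (Wolf, Dickinson--Kerr via Theorem \ref{thmtwosummands}), and there is no auxiliary cohomogeneity one action with smaller $s$ in the data to which your cited theorems could be applied. Second, your treatment of fixed points is incoherent: the reduced action has the same orbits, so it cannot ``acquire'' a fixed point, and a fixed-point action has some $\Kpm=\G$, hence is primitive by definition, so Hoelscher's projective spaces cannot ``feed back into cases (2) and (3)''. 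This is not pedantry: the reducible action of $\Sp(n)\times\Sp(1)$ on $\Hp^n$ in Table \ref{Tablefixedpointaction} has $s=2$, a fixed point, is primitive, and is not a sphere, while its reduction to $\Sp(n)$ has $s=4$ --- which is exactly why the paper's proof assumes the action is fixed-point free, an assumption any correct argument must isolate rather than gesture at. Third, your iteration in the branch $s_1=s$ tacitly assumes either that $s$ never jumps at a later stage or that the conclusions for a reduced action lift back to $\G$; the latter needs the normal-extension lemma (a normal extension of a non-primitive diagram is non-primitive, and primitivity descends to a normal subgroup acting with the same orbits), which you never invoke. In sum, the proposal is a plausible outline of where a proof should go, but the classification that constitutes the theorem is absent.
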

In Theorem \ref{thmclassificationreducible}, we obtained the classification of cohomogeneity one
manifolds with $s=3$ in case (2) of Theorem D by using the classification results in Theorem A and
C with $s=1, 2$. The actions on spheres in case (3) of Theorem D are the so called \emph{sum
actions}, see definition in Section 2.
\smallskip

One can also use our classification to study some cohomogeneity one manifolds with $s = 4$ or
higher. For example, the Kervaire sphere which does not admit an invariant metric with non-negative
curvature has the group diagram
\begin{equation*}
\Zeit_2 \times \SO(n-2) \subset \set{\SO(2) \times \SO(n-2), \gO(n-1)} \subset \SO(2) \times
\SO(n),
\end{equation*}
where $n \equiv 1 \mod 4$. The diagram can be constructed from the following one which has $s=3$
\begin{equation*}
\Zeit_2 \times \SO(n-2) \subset \set{\SO(2) \times \SO(n-2), \gO(n-1)} \subset \SO(n)
\end{equation*}
by adding an $\SO(2)$ factor to $\SO(n)$ and embedding the $\SO(2)$ factor in $\SO(2)\times
\SO(n-2)$ diagonally into $\SO(2) \times \SO(n)$. Similar constructions can be applied to certain
examples in our classification and they give an interesting class of cohomogeneity one diagrams
with $s=4$. They will be discussed in a forthcoming paper.

\smallskip

The paper is organized as follows. In Section 2, we recall some basic facts about cohomogeneity one
manifolds which will be used throughout the paper. In Section 3 we consider the classification when
$s=1,2$, see Theorem \ref{thmclasss2primitive} and \ref{thmclasss2nonprimitive}. In Section 4 we
consider the classification when $s=3$. The classification for reducible actions is stated in
Theorem \ref{thmclassificationreducible} and for the non-primitive ones in Theorem
\ref{thms3nonprimitive}. The classification of primitive cohomogeneity one manifolds with $\G$
simple and $s=3$ is carried in Section 5, see Theorem \ref{thms3primitiveGsimple}, and when $\G$ is
non-simple in Section 6, see Theorem \ref{thms3Gnonsimple}. In Appendix \ref{apptwosummands}, we
correct the classification of compact simply-connected homogeneous spaces $\G/\gH$ such that the
isotropy action has two summands and $\G$ is a simple Lie group. This will be used in Section 5. In
Appendix \ref{appTables}, we collect the tables which contain our classification results. We will
see that a cohomogeneity manifold with $s\leq 3$ either has a fixed point, or is a double, a
sphere, or is contained in Tables \ref{Tableprojectivespaces}, \ref{Tables3reducibledouble},
\ref{Tables3reduciblenondouble} and \ref{Tables3nonprimitive}.

\medskip

\emph{Acknowledgements:} Part of the paper is in the author's Ph.D. thesis at University of
Pennsylvania. The author wants to thank his advisor, Professor Wolfgang Ziller, for his constant
supports and great patience, and to Professor Lorenz \Schb and Professor Megan Kerr for valuable
discussions.

% =================================================================================================
\section{Preliminaries}

In this section, we recall some basic and well-known facts about cohomogeneity one manifolds. For
more detail, we refer to, for example, \cite{Alekseevsy} and \cite{GWZ}.

As mentioned already, there are precisely two non-principal orbits $\Bpm$ in a simply connected
cohomogeneity one manifold. Suppose $M$ is endowed with an invariant metric $g$ and the distance
between the two non-principal orbits is $L$. The two ending points of the minimal geodesic $c(t)$
are specified as $c(0) = \ptm \in \Bm$ and $c(L) = \ptp \in \Bp$. Thus the isotropy subgroups at
$\ptpm$ are $\Kpm$ and the principal isotropy subgroup at any point $c(t)$, $t\in (0,L)$, is $\gH$.
We then draw the following group diagram for the manifold $M$:
\begin{equation*}
\xymatrix{
                & \G             \\
 \Km \ar@{-}[ur]^{}& &  \Kp \ar@{-}[ul]                 \\
                & \gH \ar@{-}[ul]^{\sph^{\lm -1} = \Km/\gH} \ar@{-}[ur]_{\sph^{\lp-1} = \Kp/\gH  }}
\end{equation*}
The group diagram $\gH \subset \set{\Km, \Kp}\subset \G$ is not uniquely determined by the manifold
since one can switch $\Km$ with $\Kp$, change $g$ to another invariant metric and choose another
minimal geodesic $c(t)$.

We consider the family of invariant metrics on $M$. Since the union of principal orbits are open
and dense in $M$, we only have to consider the metric restricted on it. For every $t \in (0,L)$,
the principal orbit $M_t$ is diffeomorphic to the homogeneous space $\G/\gH$. Let $\lieg$ and
$\lieh$ are the Lie algebras of $\G$ and $\gH$ respectively. Fix a bi-invariant inner product $Q$
on $\lieg$ and let $\liep$ be the orthogonal compliment of $\lieh \subset \lieg$. The space $\liep$
is identified with the tangent space $T_{c(t)}M_t$ via Killing vector fields. Since $\gH$ fixed the
point $c(t)$, it induces the so called isotropy representation on $\liep$:
\begin{equation*}
\Ad(h) : X \To h.X.h^{-1}, \mbox{ for any } h \in \gH \mbox{ and } X \in \liep.
\end{equation*}
We denote its differential by $\ad$ and it defines the isotropy representation of the Lie algebra
$\lieh$:
\begin{equation*}
\ad_{Z} : X \To [Z,X], \mbox{ for any } Z \in \lieh \mbox{ and } X \in \liep.
\end{equation*}
For a cohomogeneity one manifold $M$, $\gH$ may not be connected even if we assume that $\G$ is
connected and $M$ is simply-connected. Let $\Ad_{\gH_c}$ be the restriction the isotropy
representation to the identity component $\gH_c$. In most cases, it is reducible and the number of
the irreducible summands is denoted by $s$. It is equivalent to say that the isotropy
representation $\ad_{\lieh}$ has $s$ summands. The simplest case is $s=1$, i.e., $\Ad_{\gH_c}$ is
irreducible. From Schur's lemma $g_t$ is a scalar multiplication of the identity map on
$T_{c(t)}M_t$ and thus the metric $g$ depends on one function. In the case where $s=3$ the metric
depends on three functions if the three summands are non-equivalent.

For a cohomogeneity one manifold, a convenient way to describe the manifold and the action is to
use the group diagram. However, a cohomogeneity one manifold may have different diagrams.

\begin{defn} Two group diagrams are called \emph{equivalent} if they
determine the same cohomogeneity one manifold up to equivariant diffeomorphism.
\end{defn}

The following lemma characterizes which two group diagrams are equivalent, see \cite{GWZ}.
\begin{lem}\label{LemGroupEquivalent}
Two group diagrams $\gH \subset \set{\Km, \Kp} \subset \G $ and $\widetilde{\gH} \subset
\set{\widetilde{\K}^-, \widetilde{\K}^+} \subset \G$ are equivalent if and only if after possibly
switching the roles of $\Km$ and $\Kp$, the following holds: There exist elements $b \in \G$ and $a
\in N(\gH)_c$, where $N(\gH)_c$ is the identity component of the normalizer of $\gH$, with
$\widetilde{\K}^- = b \Km b^{-1}$, $\widetilde{\gH} = b\gH b^{-1}$, and $\widetilde{\K}^+ = ab \Kp
b^{-1} a^{-1}$.
\end{lem}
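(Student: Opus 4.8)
The plan is to prove both directions by viewing the manifold as reconstructed from its diagram, $M = \G\times_{\Km}\Disk^{\lm}\cup_{\G/\gH}\G\times_{\Kp}\Disk^{\lp}$, where the two disk bundles are glued along the central principal orbit $\G/\gH$, and an equivalence of diagrams is by definition a $\G$-equivariant diffeomorphism of the resulting manifolds. The two algebraic moves in the statement, conjugating the whole diagram by $b\in\G$ and modifying only $\Kp$ by $a\in N(\gH)_c$, should correspond to the two sources of non-uniqueness of the diagram noted after the group-diagram picture: choosing a different base point and minimal geodesic, and reparametrizing or regluing at the second singular orbit.

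For sufficiency I would exhibit the two moves explicitly. Conjugation by $b$: the automorphism $g\mapsto bgb^{-1}$ carries $\gH\subset\{\Km,\Kp\}\subset\G$ to $b\gH b^{-1}\subset\{b\Km b^{-1}, b\Kp b^{-1}\}\subset\G$, and since it is inner it is realized on the manifold by right translation $g\gH\mapsto gb^{-1}(b\gH b^{-1})$, a $\G$-equivariant diffeomorphism of $\G/\gH$ that extends over both disk bundles; this gives the stated relations with $a=e$. For the factor $a\in N(\gH)_c$: right translation $R_a$ descends to a $\G$-equivariant self-diffeomorphism of $\G/\gH$ precisely because $a$ normalizes $\gH$, and regluing the two half-bundles by $R_a$ instead of the identity produces an equivariant diffeomorphism onto the manifold with diagram $\gH\subset\{\Km, a\Kp a^{-1}\}\subset\G$, in which only the positive group is conjugated. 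Crucially, because $a$ lies in the identity component $N(\gH)_c$, the twist $R_a$ is equivariantly isotopic to the identity, so the regluing does not change the diffeomorphism type; this is where connectedness of the normalizer is used. Composing the two moves realizes the stated relations.

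For necessity, let $F\colon M\to\widetilde{M}$ be a $\G$-equivariant diffeomorphism. Orbit type and orbit dimension are invariants, so $F$ carries the union of singular orbits onto that of $\widetilde{M}$ and principal orbits to principal orbits; after possibly interchanging the labels (the allowed switch of $\Km$ and $\Kp$) I may assume $F(\Bpm)=\widetilde{B}^{\pm}$. By the slice theorem $F$ restricts to an equivariant diffeomorphism of the tube around $\Bm$ onto the tube around $\widetilde{B}^-$, that is $\G\times_{\Km}\Disk^{\lm}\to\G\times_{\widetilde{\K}^-}\Disk^{\lm}$; comparing isotropy groups along the zero section and the boundary sphere bundle yields a single element $b\in\G$ with $\widetilde{\K}^- = b\Km b^{-1}$ and $\widetilde{\gH} = b\gH b^{-1}$. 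It remains to read off the positive group: $F$ restricted to the central orbit is a $\G$-equivariant diffeomorphism $\G/\gH\to\G/\widetilde{\gH}$, hence after correcting by $b$ it is right translation by some element of $N(\gH)$; this residual element, which also governs how $F$ matches the two gluings, is the factor $a$, giving $\widetilde{\K}^+ = ab\Kp b^{-1}a^{-1}$. Finally, because this residual self-diffeomorphism of $\G/\gH$ arises as the boundary value of $F$ on a disk bundle over an interval, it lies in the identity component of the equivariant diffeomorphism group, forcing $a\in N(\gH)_c$.

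The main obstacle is precisely this last reconciliation in the necessity direction: showing that a single element $b$ can be chosen to conjugate both $\gH$ and $\Km$ simultaneously, that the remaining discrepancy at the positive end is one element of $N(\gH)$ rather than a more complicated correction, and that connectedness forces that element into the identity component $N(\gH)_c$. Carrying this out rigorously requires careful tracking of the fiber identifications in the two slice representations, namely the $\Km$- and $\Kp$-actions on the normal disks, and of how $F$ intertwines the two gluing maps; it is here that the asymmetry between the two singular groups in the statement originates.
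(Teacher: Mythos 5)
First, a point of comparison: the paper never proves this lemma at all --- it is quoted from \cite{GWZ} --- so there is no internal argument to measure you against, and your proposal must stand on its own. Its sufficiency half does: conjugation by $b$ is realized by the equivariant map $g\gH \mapsto gb^{-1}(b\gH b^{-1})$ extended over both disk bundles, and replacing $\Kp$ by $a\Kp a^{-1}$ amounts to regluing $\G\times_{\Km}\Disk^{\lm}$ and $\G\times_{\Kp}\Disk^{\lp}$ along $\G/\gH$ by the twist $R_a$, which is equivariantly isotopic to the identity precisely because $a$ can be joined to $e$ inside $N(\gH)_c$. This is the standard argument.

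The necessity half, however, has a genuine gap at exactly the step you yourself flag as the main obstacle, and the claim you make there is false as stated. Your $b$ is pinned down by $\widetilde{\K}^- = b\Km b^{-1}$ and $\widetilde{\gH} = b\gH b^{-1}$ only up to right multiplication by elements of $N(\gH)\cap N(\Km)$, and for a perfectly legitimate choice of $b$ the residual translation read off on a principal orbit need \emph{not} lie in $N(\gH)_c$. Concretely, take the double $\Zeit_n \subset \set{\U(1),\U(1)} \subset \SU(2)$ with $n\geq 3$ (these are the manifolds $\sph^2\times\sph^2$ and $\Cp^2\sharp\overline{\Cp^2}$ of Table \ref{Tablemanifoldlowdim}), let $\widetilde{M}=M$ and $F=\id$, but compute the tilde diagram along the translated geodesic $j\cdot c$, where $j=\left(\begin{smallmatrix}0&1\\-1&0\end{smallmatrix}\right)$ normalizes both $\U(1)$ and $\Zeit_n$. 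The two diagrams coincide, so $b=e$ is a valid output of your minus-end comparison; yet the residual element on every principal orbit is $j^{-1}$, which lies in $N(\Zeit_n)\setminus N(\Zeit_n)_c$ because $N(\Zeit_n)_c=\U(1)$. So nothing about being ``the boundary value of $F$ on a disk bundle over an interval'' forces $a$ into the identity component; the lemma survives in this example only because one may re-choose $b=j$. Producing \emph{some} choice of $b$, within its $N(\gH)\cap N(\Km)$-ambiguity, for which the residual lands in $N(\gH)_c$ is the entire content of the lemma, and your proof offers no mechanism for it. (The same defect infects the prior step: the value of $F$ on one central orbit does not by itself determine $\widetilde{\K}^+$, since isotropy groups jump at the singular orbits.)

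The missing mechanism is a two-ended continuity argument. After conjugating the tilde diagram by $b^{-1}$ so that $\widetilde{\gH}=\gH$ and $\widetilde{\K}^-=\Km$, identify both principal parts with $\G/\gH\times(0,1)$ via the two normal geodesics; since $F$ carries orbits to orbits, $F(g\gH,s)=(g\,n(s)\gH,\sigma(s))$, where $s\mapsto n(s)\gH$ is a continuous path in $(\G/\gH)^{\gH}=N(\gH)/\gH$ and hence stays in a single connected component. Letting $s\to 0$ and $s\to 1$ (isotropy only jumps up in the limit) produces $\bar{n}_-\in N(\gH)\cap N(\Km)$ and $\bar{n}_+$ with $\Kp=\bar{n}_+\widetilde{\K}^+\bar{n}_+^{-1}$; because $\bar{n}_\pm$ lie in the same component of $N(\gH)/\gH$, one can write $\bar{n}_+=\bar{n}_-\,a\,h$ with $a\in N(\gH)_c$ and $h\in\gH$, and absorbing $\bar{n}_-$ into the element $b$ (it normalizes both $\gH$ and $\Km$) and discarding $h$ (conjugation by $h\in\gH$ fixes any group containing $\gH$) gives exactly the assertion. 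Without this, or an equivalent analysis of the components of the fixed point set $(\G/\Kp)^{\gH}$ under $N(\gH)_c$, the necessity direction is not proved.
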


\begin{rem}\label{remequivalentdiag}
If $c(t)$ is the minimal normal geodesic between the two singular orbits, then $b. c(t)$ is another
minimal geodesic between $\Bpm$ and the associated group diagram is obtained by conjugating all
isotropy groups by the element $b$. We can assume that $b \in N(\gH) \cap N(\Km)$ in order to fix
$\gH$ and $\Km$. Conjugation by an element $a$ as in the above lemma usually corresponds to
changing the invariant metric on the manifold.
\end{rem}

Let us describe a method using automorphisms of $\G$ to obtain new group diagrams from a given one
$M: \gH\subset \set{\Km, \Kp}\subset \G$. Take two automorphisms $\tau_\pm$ of $\G$ and apply them
to the triples $\gH \subset \Kpm \subset \G$. If $\tau_{-}(\gH)$ and $\tau_{+}(\gH)$ are equal,
then we have the diagram $\tau_{-}(\gH) \subset \set{\tau_{-}(\Km), \tau_{+}(\Kp)} \subset \G$ and
the manifold is $\G$ equivariantly diffeomorphic to $M_{\tau}$ defined by the diagram $\gH \subset
\set{\Km, \tau(\Kp)} \subset \G$ where $\tau = \tau_{-}^{-1}\cdot \tau_{+}$ that leaves $\gH$
invariant. If $\Km$ or $\Kp$ is also invariant by $\tau$, then the manifolds $M$ and $M_{\tau}$ are
equivariantly diffeomorphic. So we assume that only $\gH$ is invariant by the automorphism $\tau$,
i.e., $\tau$ is in the double coset space $\Aut(\G, \Km)\backslash \Aut(\G,\gH)/\Aut(\G,\Kp)$ where
$\Aut(\G,\gL)$ is the group of automorphisms of $\G$ leaving the subgroup $\gL$ invariant.
Furthermore, if two automorphisms $\tau_1$ and $\tau_2$ can be connected by a continuous path in
$\Aut(\G, \gH)$, then $M_{\tau_1}$ and $M_{\tau_2}$ are $\G$ equivariantly diffeomorphic.
\begin{defn}
A group diagrams $\Gamma_1: \gH \subset \set{\tKm, \tKp}\subset \G$ is called \emph{a variation} of
the diagram $\Gamma_2: \gH\subset \set{\Km, \Kp}\subset \G$ if $\tKm =\tau(\Km)$, $\tKp = \Kp$ for
some $\tau \in \Aut(\G,\gH)$ after possible switching of $\Km$ and $\Kp$.
\end{defn}

Next we consider several special classes of cohomogeneity one actions which have been mentioned in
the Introduction and we recall basic properties of them.

\begin{defn}
A cohomogeneity one manifold $(M,\G)$ is called \emph{a double} if it admits a group diagram such
that $\Km = \Kp$.
\end{defn}
One can put a non-negatively curved invariant metric on the disk bundle $\G \times_\K \Disk^l$
making the boundary totally geodesic. If $M$ is a double, then we can glue the two identical disk
bundles along the totally geodesic boundary so that $M$ has an invariant metric with non-negative
sectional curvatures.

\begin{defn}
A cohomogeneity one manifold $(M, \G)$ is \emph{non-primitive} if it admit a group diagram $\gH
\subset \set{\Kpm} \subset \G$ and there is a proper connected subgroup $\textsf{L}\subset \G$ such
that $\Kpm \subset \textsf{L}$. A cohomogeneity one manifold is called \emph{primitive} if it is
not non-primitive.
\end{defn}

If $M$ has a non-primitive group diagram $\gH \subset \set{\Kpm} \subset \G$ and $\Kpm \subset \gL
\subset \G$, then we have the following fibration:
\begin{equation*}
N \To M \To \G/\gL,
\end{equation*}
and the fiber $N$ carries a cohomogeneity one action by $\gL$ with the diagram $\gH \subset
\set{\Kpm} \subset \gL$. Thus $M$ is $\G$ equivariantly diffeomorphic to $\G\times_\gL N$. So $M$
has an invariant metric with non-negative sectional curvatures if $N$ admits such a metric.

\begin{defn}
A cohomogeneity one action of $\G$ on $M$ is \emph{reducible} if there is a proper normal subgroup
of $\G$ that still acts by cohomogeneity one with the same orbit. Otherwise the action is called
\emph{nonreducible}.
\end{defn}

In terms of group diagram, we have the following characterization of reducible actions, see
\cite{HoelscherClass}, Section 1.3.

\begin{prop}\label{propreduciblediagram}
Let $M$ be the \cohomd given by the group diagram $\gH \subset \set{\Km, \Kp} \subset \G$ and
suppose $\G= \G_1 \times \G_2$ with $\Proj_2(\gH)=\G_2$. Then the sub-action of $\G_1 \times
\set{1}$ on $M$ is also by cohomogeneity one, with the same orbits, and with isotropy groups
$\Kpm_1 = \Kpm \cap (\G_1 \times \set{1})$ and $\gH_1 = \gH\cap (\G_1\times \set{1})$.
\end{prop}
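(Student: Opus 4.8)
The plan is to show directly that the subgroup $\G_1 \times \set{1}$ acts transitively on each $\G$-orbit; since the $\G$-orbits already foliate $M$ with a one-dimensional quotient, this immediately yields that the sub-action is by cohomogeneity one with the same orbits, and reading off the stabilizers gives the claimed isotropy groups. The single ingredient is the hypothesis $\Proj_2(\gH)=\G_2$, which is exactly what makes the first factor act transitively on each orbit.

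First I would verify transitivity on a principal orbit $\G/\gH$. It suffices to prove the group-theoretic identity $(\G_1 \times \set{1}) \cdot \gH = \G$. Given $(g_1, g_2) \in \G_1 \times \G_2$, the hypothesis $\Proj_2(\gH) = \G_2$ supplies some $h = (h_1, g_2) \in \gH$ whose second component is $g_2$; then $(g_1, g_2) = (g_1 h_1^{-1}, 1)\, h$ exhibits $(g_1, g_2)$ as a member of $(\G_1 \times \set{1}) \cdot \gH$. Hence $\G_1 \times \set{1}$ is transitive on the coset space $\G/\gH$, and the stabilizer of the identity coset within $\G_1 \times \set{1}$ is exactly $(\G_1 \times \set{1}) \cap \gH = \gH_1$.

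The same argument applies verbatim to the singular orbits. Because $\gH \subset \Kpm$, we have $\Proj_2(\Kpm) \supseteq \Proj_2(\gH) = \G_2$, so $\Proj_2(\Kpm) = \G_2$; the same computation then shows $(\G_1 \times \set{1}) \cdot \Kpm = \G$, so $\G_1 \times \set{1}$ acts transitively on $\Bpm = \G/\Kpm$ with stabilizer $\Kpm \cap (\G_1 \times \set{1}) = \Kpm_1$. Transporting these transitivity statements to the normal geodesic $c(t)$: the orbit through each interior point $c(t)$, $t \in (0,L)$, and the orbits through $\ptm = c(0)$ and $\ptp = c(L)$ are each a single $(\G_1 \times \set{1})$-orbit. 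Consequently every $\G$-orbit in $M$ is a $(\G_1 \times \set{1})$-orbit, the two orbit spaces coincide, and $\dim\big(M/(\G_1 \times \set{1})\big) = \dim(M/\G) = 1$; thus the sub-action is by cohomogeneity one with the same orbits and with the stated isotropy groups $\gH_1$ and $\Kpm_1$.

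Since the whole argument reduces to the elementary coset identity above, I do not expect a serious obstacle. The only point worth checking is that nothing degenerates when passing to the sub-action: one must confirm that $c(t)$ remains a minimal normal geodesic for $\G_1 \times \set{1}$, which holds because the orbits — and hence their tangent and normal spaces — are unchanged, so $c(t)$ still meets each orbit orthogonally and the $\G_1 \times \set{1}$ diagram is genuinely the group diagram $\gH_1 \subset \set{\Km_1, \Kp_1} \subset \G_1$.
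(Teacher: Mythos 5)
Your proof is correct, and it is essentially the standard argument: the paper itself does not prove this proposition but cites Hoelscher (Section 1.3 of \cite{HoelscherClass}), where the proof is exactly this coset identity $(\G_1 \times \set{1}) \cdot \gH = \G$ derived from $\Proj_2(\gH) = \G_2$, giving transitivity of $\G_1 \times \set{1}$ on every $\G$-orbit and the stated isotropy groups by intersection. Nothing further is needed.
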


On the other hand, suppose $\G_1$ acts on $M$ via cohomogeneity one with the diagram $\gH_1 \subset
\set{\Kpm_1} \subset \G_1$, then one can extend this action to a possibly larger group by the so
called \emph{normal extension}, see \cite{HoelscherClass}. Let $\gL$ be a compact connected
subgroup of $N(\gH_1) \cap N(\Km_1) \cap N(\Kp_1)$ and define $\G_2 = \gL/(\gL \cap \gH_1)$. Then
one can define an isometric action of $\G_1 \times \G_2$ on $M$ orbitwise by $(\hat{g}_1, [l]).
g_1(\G_1)_x = \hat{g}_1 g_1 l^{-1}(\G_1)_x$ for $(\G_1)_x = \gH_1$ or $\Kpm_1$. This action is also
cohomogeneity one and has the following diagram
\begin{equation*}
(\gH_1 \times 1)\cdot \Delta \gL \subset \set{(\Kpm_1 \times 1)\cdot \Delta \gL} \subset \G_1
\times \G_2,
\end{equation*}
where $\Delta \gL = \set{(l,[l])| l \in \gL}.$ If the diagram is non-primitive, then its normal
extension is also non-primitive. Note that the reducible action by $\G_1 \times \G_2$ in
Proposition \ref{propreduciblediagram} occurs as a normal extension of the reduced one by $\G_1$,
see Proposition 1.18 in \cite{HoelscherClass}.

\begin{rem}
Suppose the diagram $\gH \subset \set{\Kpm}\subset \G_1 \times \G_2$ is reducible and the isotropy
representation of $\gH_c$ has $3$ irreducible summands. If we consider the non-reducible action by
$\G_1$, then in general the isotropy representation by the connected component of $\gH_1$ may have
more irreducible summands since $\gH_1$ is a subgroup of $\gH$.
\end{rem}

There is a particular class of cohomogeneity one actions on spheres, called \emph{sum action},
which can be builded from transitive actions on spheres with lower dimensions, see
\cite{HoelscherClass} and \cite{GWZ}. Let $\G_i$ act transitively, linearly and isometrically on
the sphere $\sph^{l_i}$ with isotropy subgroup $\gH_i$, $i=1,2$. Then the action of $\G_1 \times
\G_2$ on $\sph^{l_1 + l_2 + 1} \subset \Real^{l_1+l_2 +2}$ is cohomogeneity one and the isotropy
subgroups are $\G_1 \times \gH_2$, $\gH_1 \times \G_2$ and $\gH_1 \times \gH_2$, i.e., the
cohomogeneity diagram is
\begin{equation}
\gH_1 \times \gH_2 \subset \set{\G_1 \times \gH_2, \, \gH_1 \times \G_2} \subset \G_1 \times \G_2.
\label{diagramsumaction}
\end{equation}
Suppose the isotropy representation of $\G_i/\gH_i$ has $s_i$ irreducible summands($i=1,2$), then
the sum action of the diagram (\ref{diagramsumaction}) has $s= s_1 + s_2$.

\smallskip

Not every group diagram defines a simply-connected cohomogeneity one manifold. The necessary
conditions are given in \cite{GWZ}.
\begin{lem}\label{lemsimplyconnected}
Suppose a connected Lie group $\G$ acts on a simply-connected manifold $M$ by cohomogeneity one and
the diagram is $\gH \subset \set{\Kpm} \subset \G$. Then we have
\begin{enumerate}
\item There are no exceptional orbits, i.e., $\lpm \geq 2$.
\item If both $\lpm \geq 3$, then $\Kpm$ and $\gH$ are all connected.
\item If one of $\lpm$, say $\lm = 2$, and $\lp \geq 3$, then $\Km = \gH\cdot \gS^1 = \gH_c \cdot
\gS^1$, $\gH = \gH_c \cdot \Zeit_k$ and $\Kp = \Kp_c \cdot \Zeit_k$.
\end{enumerate}
\end{lem}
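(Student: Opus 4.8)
The plan is to extract everything from the handle decomposition $M=\G\times_{\Km}\Disk^{\lm}\cup_{\G/\gH}\G\times_{\Kp}\Disk^{\lp}$ together with the normal-sphere identifications $\Kpm/\gH=\sph^{\lpm-1}$ already recorded in the group diagram (the slice representation of $\Kpm$ at $\ptpm$ is an orthogonal action, transitive on the normal sphere of $\Bpm$, with stabilizer $\gH$). Each disk bundle $\G\times_{\Kpm}\Disk^{\lpm}$ deformation retracts onto the singular orbit $\G/\Kpm$, and the gluing locus retracts onto a principal orbit $\G/\gH$, which is connected since $\G$ is. The Seifert--van Kampen theorem then gives
\[
\pi_1(M)\;=\;\pi_1(\G/\Km)\ast_{\pi_1(\G/\gH)}\pi_1(\G/\Kp),
\]
where the amalgamating maps $i_\pm\colon\pi_1(\G/\gH)\to\pi_1(\G/\Kpm)$ are induced by the orbit inclusions. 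The second and complementary input is the homotopy long exact sequence of the bundle $\sph^{\lpm-1}=\Kpm/\gH\to\G/\gH\to\G/\Kpm$, which controls $i_\pm$ purely in terms of $\lpm$: when $\lpm\ge 2$ the sphere is connected, so $i_\pm$ is surjective; when $\lpm\ge 3$ it is in addition simply connected, so $i_\pm$ is an isomorphism. Everything below is read off from these two facts plus $\pi_1(M)=0$.

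For (1) I would argue by contradiction. If $\lm=1$ then $\Km/\gH=\sph^0$, so $\G/\gH\to\G/\Km$ is a connected double cover and the image of $i_-$ is an index-two, hence normal, subgroup of $\pi_1(\G/\Km)$. Collapsing $\pi_1(\G/\Kp)$ and the image of $\pi_1(\G/\gH)$ yields a surjection $\pi_1(M)\twoheadrightarrow\pi_1(\G/\Km)/\langle\!\langle\,\mathrm{im}\,i_-\rangle\!\rangle=\Zeit_2$, contradicting simple connectivity. The same reasoning with the roles of $\Km$ and $\Kp$ interchanged gives $\lp\ge 2$, so there are no exceptional orbits.

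For (2), when both $\lpm\ge 3$ both $i_\pm$ are isomorphisms, the amalgamated product collapses to $\pi_1(M)\cong\pi_1(\G/\gH)\cong\pi_1(\G/\Kpm)$, and simple connectivity forces all three to vanish. Feeding $\pi_1(\G/\gH)=0$ and $\pi_1(\G/\Kpm)=0$ into the tails $\pi_1(\G/\gH)\to\pi_0(\gH)\to\pi_0(\G)$ and $\pi_1(\G/\Kpm)\to\pi_0(\Kpm)\to\pi_0(\G)$, with $\pi_0(\G)=0$, gives $\pi_0(\gH)=\pi_0(\Kpm)=0$, i.e.\ all groups are connected. For (3), with $\lp\ge 3$ the map $i_+$ is an isomorphism while with $\lm=2$ the map $i_-$ is merely surjective; absorbing the $+$ factor through $i_+$ shows $\pi_1(M)\cong\pi_1(\G/\Km)$, whence $\pi_1(\G/\Km)=0$ and, as in (2), $\Km$ is connected. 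The structural claims then follow by analysing the transitive $\Km$-action on $\Km/\gH=\sph^1=\gS^1$: since $\gS^1=\SO(2)$ acts freely, the ineffective kernel is exactly $\gH$, so $\gH$ is normal in $\Km$ with $\Km/\gH\cong\SO(2)$; a circle subgroup $\gS^1\subset\Km$ projecting onto the quotient gives $\Km=\gH\cdot\gS^1=\gH_c\cdot\gS^1$ by a dimension count inside the connected group $\Km$, and $\gH\cap\gS^1$ is a finite cyclic $\Zeit_k$, so $\gH=\gH_c\cdot\Zeit_k$. Finally $\Kp/\gH=\sph^{\lp-1}$ is connected, hence $\Kp=\Kp_c\cdot\gH=\Kp_c\cdot\Zeit_k$, and the $\pi_0$-portion of the sequence for $\gH\to\Kp\to\sph^{\lp-1}$ gives $\pi_0(\gH)\cong\pi_0(\Kp)$, confirming the same $\Zeit_k$ on both sides.

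The Seifert--van Kampen step and the homotopy-sequence bookkeeping are routine, and I expect the genuine work to sit in part (3): converting the fundamental-group data into the stated normal form, namely verifying that $\gH$ is normal in $\Km$ with circle quotient, that $\Km$ is generated by $\gH_c$ and a single circle rather than only by $\gH$ and a circle, and that the cyclic factor $\Zeit_k=\gH\cap\gS^1$ is literally the subgroup detected by $\Kp/\Kp_c$. A secondary point needing care is the elementary group-theoretic fact powering (1), that an amalgamated free product surjects onto $\pi_1(\G/\Km)/\langle\!\langle\,\mathrm{im}\,i_-\rangle\!\rangle$; this is immediate from the universal property but must be stated cleanly so that the index-two computation actually produces a nontrivial quotient.
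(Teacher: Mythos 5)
Your proof is correct and takes the standard route: the paper states this lemma without proof, citing \cite{GWZ}, and the argument there (see also Proposition 1.8 in \cite{HoelscherClass}) is exactly your combination of van Kampen applied to the two-disk-bundle decomposition with the homotopy exact sequences of the bundles $\Kpm/\gH \to \G/\gH \to \G/\Kpm$ and $\gH \to \G \to \G/\gH$. The delicate points --- normality of the index-two image in (1), identifying the effective transitive group on $\sph^1$ with $\SO(2)$ in (3), and verifying that the same cyclic group $\Zeit_k = \gH \cap \gS^1$ appears in both $\gH = \gH_c\cdot\Zeit_k$ and $\Kp = \Kp_c\cdot\Zeit_k$ --- are all handled correctly.
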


In the last case where $\lm=2$, we have that $\Km$ is connected, $\Zeit_k \subset
N_\G(\Kp_c)/\Kp_c$ and $\gS^1$ normalizes $\gH_c$. It follows that if $\Km$ is contained in
$\Kp_c$, then one cannot add connected components to the isotropy groups. Note that the diagram of
the connected groups $\gH_c \subset \set{\Km, \Kp_c} \subset \G$ in this case also defines a
simply-connected cohomogeneity one manifold.

\smallskip

We use representation theory of compact Lie groups to study the isotropy representation. In the
following we introduce the notations that we will use through the paper.

The exceptional Lie groups are denoted by $\E_6$, $\E_7$, $\E_8$, $\F_4$, $\Gt$ and their Lie
algebras by $\liee_6$, $\liee_7$, $\liee_8$, $\lief_4$ and $\liegt$. The complex irreducible
representations of simple Lie algebras are highest weight representations, so we can identify the
representation with its highest weight. Each highest weight is the linear combination of the so
called fundamental weights with non-negative integer coefficients. If the Lie algebra $\lieg$ has
rank $n$, then there are $n$ fundamental weights $\fw_1, \cdots, \fw_n$. We also use the notions
$\varrho_n$, $\mu_n$ and $\nu_n$ for the standard representations of $\SO(n)$, $\SU(n)$(or $\U(n)$)
and $\Sp(n)$ on $\Real^n$, $\Cpx^n$ and $\Qua^n$ respectively. $\Delta_n$ stands for the unique
spin representation of $\SO(n)$ if $n$ is odd, and $\Delta^{\pm}_n$ stands for the two spin
representations of $\SO(n)$ if $n$ is even. The fundamental weights of exceptional Lie algebras are
specified as follows:

\smallskip

\setlength{\unitlength}{0.1cm}

\begin{picture}(2,12)
\put(5,4){$\liee_6:$} \put(12,0){\put(1,1){\circle{2}} \put(0,-4){1} \put(2,1){\line(1,0){3}}
\put(6,1){\circle{2}} \put(5,-4){3} \put(7,1){\line(1,0){3}} \put(11,1){\circle{2}} \put(10,-4){4}
\put(11,2){\line(0,1){3}} \put(11,6){\circle{2}} \put(10,8){2} \put(12,1){\line(1,0){3}}
\put(16,1){\circle{2}} \put(15,-4){5} \put(17,1){\line(1,0){3}} \put(21,1){\circle{2}}
\put(20,-4){6}}

\put(50,4){$\liee_7:$} \put(57,0){\put(1,1){\circle{2}} \put(0,-4){1} \put(2,1){\line(1,0){3}}
\put(6,1){\circle{2}} \put(5,-4){3} \put(7,1){\line(1,0){3}} \put(11,1){\circle{2}} \put(10,-4){4}
\put(11,2){\line(0,1){3}} \put(11,6){\circle{2}} \put(10,8){2} \put(12,1){\line(1,0){3}}
\put(16,1){\circle{2}} \put(15,-4){5} \put(17,1){\line(1,0){3}} \put(21,1){\circle{2}}
\put(20,-4){6} \put(22,1){\line(1,0){3}} \put(26,1){\circle{2}} \put(25,-4){7}}

\put(105,4){$\liee_8:$} \put(112,0){\put(1,1){\circle{2}} \put(0,-4){1} \put(2,1){\line(1,0){3}}
\put(6,1){\circle{2}} \put(5,-4){3} \put(7,1){\line(1,0){3}} \put(11,1){\circle{2}} \put(10,-4){4}
\put(11,2){\line(0,1){3}} \put(11,6){\circle{2}} \put(10,8){2} \put(12,1){\line(1,0){3}}
\put(16,1){\circle{2}} \put(15,-4){5} \put(17,1){\line(1,0){3}} \put(21,1){\circle{2}}
\put(20,-4){6} \put(22,1){\line(1,0){3}} \put(26,1){\circle{2}} \put(25,-4){7}
\put(27,1){\line(1,0){3}} \put(31,1){\circle{2}} \put(30,-4){8}}

\put(30,-15){\put(1,1){$\lief_4:$} \put(10,1){\circle*{2}} \put(9,-4){1} \put(11,1){\line(1,0){3}}
\put(15,1){\circle*{2}} \put(14,-4){2} \put(16,1.5){\line(1,0){3}} \put(16,0.5){\line(1,0){3}}
\put(20,1){\circle{2}} \put(19,-4){3} \put(21,1){\line(1,0){3}} \put(25,1){\circle{2}}
\put(24,-4){4}}

\put(85,-15){\put(3,0){$\liegt:$} \put(15,1){\circle*{2}} \put(14,-4){1}
\put(16,1.1){\line(1,0){3}} \put(16,1.6){\line(1,0){3}} \put(16,0.6){\line(1,0){3}}
\put(20,1){\circle{2}} \put(19,-4){2}}

\end{picture}

\vspace{2.5 cm}

We denote the standard representation of $\U(1)$ on $\Cpx$ by $\phi$. For the Lie algebras
$\lieso(4)$ and $\lieso(6)$, we specify some representations and their highest weights. For
$\lieso(4)$, the representations with highest weights $\fw_1 = \half(e_1 - e_2)$ and $\fw_2 =
\half(e_1 + e_2)$ are the two spin representations. The standard representation $\varrho_4$ of
$\SO(4)$ on $\Cpx^4$ has the highest weight $\fw_1 + \fw_2 = e_1$. For $\lieso(6)$, the
representation with highest weight $\fw_1 = e_1$ is the standard representation $\varrho_6$ of
$\SO(6)$ on $\Cpx^6$. However the representation of $\liesu(4)$ with the highest weight $\fw_1$ is
the standard representation $\mu_4$ of $\SU(4)$ on $\Cpx^4$ though $\lieso(6)$ is isomorphic to
$\liesu(4)$.

\smallskip

Some homogeneous spaces with special geometrical properties are used in the classification.

\begin{defn}
A pair of Lie group $(\K, \gH)$ is called \emph{a spherical pair} if $\K/\gH$ is a sphere.
\end{defn}
In this case, we also call the Lie algebras $(\lieg, \lieh)$ a spherical pair.

The transitive actions on spheres were classified, see, for example, \cite{BesseEinstein}. The
results are used frequently later on and we list them in Table \ref{Tabletransitiveactionsphere}.
Here $s$ is the number of the irreducible summands in the isotropy representation.

\begin{table}[!h]
\begin{center}
\begin{tabular}{|c|c|c|c|c|}
\hline
$n$ & $\K$ & $\gH$ & Isotropy representation & $s$  \\
\hline \hline
$n$ & $\SO(n+1)$ & $\SO(n)$ & $\varrho_n$ & $1$ \\
\hline
$2n+1$ & $\SU(n+1)$ & $\SU(n)$ & $[\mu_n]_\Real\oplus \Id$ & $2$ \\
\hline
$2n+1$ & $\U(n+1)$ & $\U(n)$ & $[\mu_n]_\Real \oplus \Id$ & $2$ \\
\hline
$4n+3$ & $\Sp(n+1)$ & $\Sp(n)$ & $[\nu_n]_\Real \oplus \Id \oplus \Id \oplus \Id$ & $4$ \\
\hline
$4n+3$ & $\Sp(n+1) \Sp(1)$ & $\Sp(n) \Delta \Sp(1)$ & $\nu_n\otimes \nu_1 \oplus \Id \otimes \varrho_3$ & $2$ \\
\hline
$4n+3$ & $\Sp(n+1) \U(1)$ & $\Sp(n) \Delta \U(1)$ & $[\nu_n\otimes \phi]_\Real \oplus [\Id \otimes \phi]_\Real \oplus \Id$ & $3$ \\
\hline
$15$ & $\Spin(9)$ & $\Spin(7)$ & $\varrho_7\oplus \Delta_7$ & $2$ \\
\hline
$7$ & $\Spin(7)$ & $\Gt$ & $\fw_1$ & $1$ \\
\hline
$6$ & $\Gt$ & $\SU(3)$ & $[\mu_3]_\Real$ & $1$ \\
\hline
\end{tabular}
\end{center}
\smallskip
\caption{Transitive actions on $\sph^n$.} \label{Tabletransitiveactionsphere}
\end{table}

\begin{defn}
A homogeneous space $\G/\gH$ is called \emph{isotropy irreducible} if the isotropy representation
of $\gH$ is irreducible. If the isotropy representation by the identity component $\gH_c$ of $\gH$
is also irreducible, then it is called \emph{strongly isotropy irreducible}.
\end{defn}

Every irreducible symmetric space is strongly isotropy irreducible. J. Wolf classified compact
strongly isotropy irreducible spaces which are not symmetric spaces, see \cite{WolfIrr}. Compact
homogeneous spaces which are isotropy irreducible but not strongly isotropy irreducible were
classified by M. Wang and W. Ziller, see \cite{WangZillerIsotropy}. If the space $\G/\gH$ is
strongly isotropy irreducible, then the isotropy representation $\ad_{\lieh}$ is also irreducible.
In this case, we call the pair of Lie algebras $(\lieg, \lieh)$ strongly isotropy irreducible.

\medskip

% =================================================================================================

\section{Classification of cohomogeneity one manifolds with $s=1,2$}

In this section, we classify simply-connected cohomogeneity one manifolds with $s=1, 2$. The
results are also used in the classification of the case where $s=3$.

\subsection{Fixed point actions}

If $s=1$, then the cohomogeneity one action has two fixed points, i.e., $\G = \Km = \Kp$ and the
manifold is a sphere. By the classification of transitive actions of the sphere, the cohomogeneity
one manifolds are listed in Table \ref{tablesequal1}
\begin{table}[!h]
\begin{center}
\begin{tabular}{|c||c|c|c|}
\hline
$M$ & $\G = \Kpm$ & $\gH$ &  \\
\hline \hline
$\sph^{n+1}$ & $\SO(n+1)$ & $\SO(n)$ & $n\geq 1$ \\
\hline
$\sph^7$ & $\Gt$ & $\SU(3)$ & \\
\hline
$\sph^8$ & $\Spin(7)$ & $\Gt$ & \\
\hline
\end{tabular}
\end{center}
\smallskip
\caption{Cohomogeneity one manifolds with $s=1$.}\label{tablesequal1}
\end{table}

Next we quote the result when there is only one fixed point, say $\Km = \G$ and $\Kp \subset \G$ is
a proper subgroup, see \cite{HoelscherClass} and \cite{GroveSearleJDG}.

\begin{prop}\label{propfixedpointaction}
If a simply-connected manifold $M$ admits a cohomogeneity one action with exactly one fixed points,
then $M$ is a (complex or quaternion) projective space or the Cayley plane with an isometric
action. They are classified in Table \ref{Tablefixedpointaction}.
\end{prop}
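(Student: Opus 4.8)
The plan is to exploit the local linear structure at the fixed point to reduce the classification to an enumeration of intermediate spherical subgroups. Since the action has exactly one fixed point, one of the two singular orbits degenerates to that point; after relabeling we may assume $\Km = \G$, so that $\Bm = \G/\G$ is a single point of codimension $\lm = \dim M$. The slice theorem identifies a $\G$-invariant neighborhood of $\ptm$ with the linear isotropy representation of $\G$ on $T_{\ptm}M \cong \Real^{\lm}$. A linear orthogonal representation is of cohomogeneity one precisely when the induced action on the unit sphere is transitive (the orbit space of $\Real^{\lm}$ is the cone over that of $\sph^{\lm-1}$), so $\G$ acts transitively on $\sph^{\lm-1} = \G/\gH$. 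Therefore $(\G, \gH)$ must be one of the transitive sphere actions listed in Table \ref{Tabletransitiveactionsphere}; in particular there are only finitely many families to consider.

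First I would record the structure of the second singular orbit. The normal sphere to $\Bp = \G/\Kp$ is $\sph^{\lp - 1} = \Kp/\gH$, so $(\Kp, \gH)$ is itself a spherical pair and $\gH \subsetneq \Kp \subsetneq \G$. The core of the proof is then, for each pair $(\G, \gH)$ from Table \ref{Tabletransitiveactionsphere}, to enumerate all connected intermediate subgroups $\Kp$ with $\gH \subset \Kp$ and $\Kp/\gH$ a sphere. This is a purely Lie-theoretic computation: one lists the connected subgroups of $\G$ properly containing $\gH$ and keeps only those for which the quotient is again a sphere in the sense of Table \ref{Tabletransitiveactionsphere}. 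For instance, $(\G,\gH)=(\SO(n+1),\SO(n))$ admits no proper intermediate $\Kp$ (the real case collapses to the two-fixed-point sphere), whereas $(\U(n),\U(n-1))$ yields $\Kp = \U(1)\times\U(n-1)$ with $\Bp = \Cp^{n-1}$, $(\Sp(n+1),\Sp(n))$ yields $\Sp(n)\U(1)$ and $\Sp(n)\Sp(1)$ with $\Bp = \Cp^{2n+1}$ and $\Hp^{n}$, and $(\Spin(9),\Spin(7))$ yields $\Kp = \Spin(8)$ with $\Bp = \sph^8 = \Cayp^1$.

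Next I would identify the resulting manifold for each admissible diagram. Because $\G \times_{\G}\Disk^{\lm}$ is just the disk $\Disk^{\lm}$, the manifold decomposes as $M = \Disk^{\lm}\cup_{\G/\gH}\bigl(\G\times_{\Kp}\Disk^{\lp}\bigr)$, i.e.\ as the normal disk bundle of $\Bp = \G/\Kp$ with a single disk glued along its boundary sphere bundle. Recognizing this decomposition as the standard one for $\Cp^n$, $\Hp^n$, or $\Cayp^2$ (affine chart union hyperplane at infinity, together with the quaternionic and Cayley analogues) gives the equivariant diffeomorphism type and fills in Table \ref{Tablefixedpointaction}. Throughout, I would invoke Lemma \ref{lemsimplyconnected} to pin down the admissible component groups of $\gH$ and $\Kp$ ensuring that $M$ is simply connected, and Lemma \ref{LemGroupEquivalent} to discard diagrams that coincide after conjugation or after switching the two ends.

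The main obstacle I anticipate is the identification step rather than the enumeration: matching each disk-bundle-plus-disk decomposition with a genuine projective space or the Cayley plane, and verifying that no additional manifold slips through, requires care with the gluing data and with the equivalences of Lemma \ref{LemGroupEquivalent}. Since the statement is quoted from \cite{HoelscherClass} and \cite{GroveSearleJDG}, I would finally cross-check the finite list produced by the enumeration against their classification to confirm completeness of Table \ref{Tablefixedpointaction}.
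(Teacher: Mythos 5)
Your proposal is correct in outline, but note that the paper itself offers no proof of this proposition: it is explicitly \emph{quoted} from \cite{HoelscherClass} and \cite{GroveSearleJDG}, so there is no internal argument to compare against. What you have written is in effect a reconstruction of the standard argument used in those references, and it is the right one: the slice theorem at the fixed point $\ptm$ identifies a neighborhood with the isotropy representation on $T_{\ptm}M$, cohomogeneity one of a linear action forces transitivity on the unit sphere, so $(\G,\gH)$ is one of the pairs in Table \ref{Tabletransitiveactionsphere} with $\gH$ exactly the sphere isotropy group (this also settles connectedness of $\gH$ for free, since $\G/\gH$ must be a genuine sphere, not a quotient of one); then one enumerates intermediate subgroups $\Kp$ with $\Kp/\gH$ a sphere. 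Your spot checks are consistent with the enumeration: irreducibility of the isotropy representations kills all intermediates for $\SO(n+1)/\SO(n)$, $\Spin(7)/\Gt$ and $\Gt/\SU(3)$, the absence of a trivial summand in $\varrho_7\oplus\Delta_7$ leaves only $\Kp=\Spin(8)$ for $\Spin(9)/\Spin(7)$, and the unitary and symplectic pairs produce exactly the $\Cp$ and $\Hp$ diagrams of Table \ref{Tablefixedpointaction}. The one step where your write-up is thinner than it should be is the identification of $M$: ``recognizing'' the decomposition $M=\Disk^{\lm}\cup_{\G/\gH}\bigl(\G\times_{\Kp}\Disk^{\lp}\bigr)$ as a projective space is not by itself a proof. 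The clean way to close it is the standard rigidity fact that a cohomogeneity one manifold is determined up to equivariant diffeomorphism by its group diagram: exhibit the model linear action on $\Cp^{n}$, $\Hp^{n}$ or $\Cayp^2$ (affine chart plus the orbit at infinity), check that its diagram coincides with the one produced by the enumeration (using Lemma \ref{LemGroupEquivalent} to normalize), and conclude. With that substitution, and your final cross-check against the cited classifications, the argument is complete and agrees with the source the paper relies on.
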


\begin{table}[h!]
\begin{center}
\begin{tabular}{|c|l|c|}
\hline
$M$ & $\quad \quad \quad \quad \quad \quad \quad \quad \quad $ Group diagram & $s$ \\
\hline \hline
$\Cp^n$ & $\SU(n) \supset \set{\SU(n), \, S(\U(n-1)\U(1))} \supset \SU(n-1)$ & $2$ \\
\hline
$\Cp^n$ & $\U(n) \supset \set{\U(n), \, \U(n-1)\U(1)} \supset \U(n-1)$ & $2$ \\
\hline
$\Cayp^2$ &$\Spin(9) \supset \set{\Spin(9), \, \Spin(8)} \supset \Spin(7)$ & $2$ \\
\hline \hline
$\Hp^n$ & $\Sp(n)\times \Sp(1) \supset \set{\Sp(n)\times \Sp(1), \, \Sp(n-1)\Sp(1)\times\Sp(1)} \supset \Sp(n-1)\Diag\Sp(1)$ & $2$ \\
\hline
$\Hp^n$ & $\Sp(n)\times\U(1) \supset \set{\Sp(n)\times \U(1), \, \Sp(n-1)\Sp(1)\times \U(1)} \supset \Sp(n-1)\Diag\U(1)$ & $3$ \\
\hline
$\Cp^{2n+1}$ & $\Sp(n)\times\U(1) \supset \set{\Sp(n)\times\U(1), \, \Sp(n-1)\U(1)\times\U(1)} \supset \Sp(n-1)\Diag\U(1)$ & $3$ \\
\hline \hline
$\Hp^n$ & $\Sp(n) \supset \set{\Sp(n), \, \Sp(n-1)\Sp(1)} \supset \Sp(n-1)$ & $4$ \\
\hline
$\Cp^{2n+1}$ & $\Sp(n) \supset \set{\Sp(n), \, \Sp(n-1)\U(1)} \supset \Sp(n-1)$ & $4$ \\
\hline
\end{tabular}
\smallskip
\end{center}\caption{The cohomogeneity one action with one fixed point}\label{Tablefixedpointaction}
\end{table}

\subsection{Classification with $s=2$}
We assume that the action has no fixed points. If the action is primitive, then the manifold is a
sphere, see Theorem \ref{thmclasss2primitive}. If the action is non-primitive, then the manifold is
a double, i.e., $\Kpm = \K$, and we classified the triples $\gH \subset \K \subset \G$, see Theorem
\ref{thmclasss2nonprimitive}.

\begin{thm}\label{thmclasss2primitive}
Suppose $M$ is a compact simply-connected manifold that admits a primitive cohomogeneity one action
with $s=2$ and no fixed points. Then one of the followings holds.
\begin{enumerate}
\item The manifold is $\sph^{15}$ with the diagram $\Gt \subset \set{\Spin^+(7), \Spin^-(7)} \subset
\Spin(8)$ and the embedding $\Spin(8) \subset \SO(16)$ is given by the representation
$\Delta_8^+\oplus \Delta_8^-$.
\item The manifold is a sphere with a sum action.
\end{enumerate}
\end{thm}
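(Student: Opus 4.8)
The plan is to exploit that the normal spheres $\Kpm/\gH = \sph^{\lpm-1}$ realize $(\liekpm,\lieh)$ as spherical pairs, and to combine this with the hypotheses $s=2$ and primitivity to pin down the isotropy submodules. Fix the reductive decomposition $\lieg = \lieh\oplus\liep$ and set $\liep_{\pm} = \liekpm\ominus\lieh$, which are $\Ad(\gH_c)$-invariant since the $\liekpm$ are subalgebras. The absence of fixed points gives $\liekpm\neq\lieg$, hence $\liep_{\pm}\subsetneq\liep$, while Lemma \ref{lemsimplyconnected}(1) gives $\dim\liep_{\pm}=\lpm-1\geq 1$. As $\liep$ has only $s=2$ irreducible $\gH_c$-summands, every nonzero proper invariant subspace is irreducible, so each $\liep_{\pm}$ is a single irreducible summand. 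First I would observe that $\liep_-\neq\liep_+$: otherwise $\liekm=\liekp$ and the subalgebra generated by $\liekm,\liekp$ is the proper $\liekm\subsetneq\lieg$, contradicting primitivity. Two distinct irreducible submodules of a module with two irreducible summands span it, so $\liep=\liep_-\oplus\liep_+$ and $\lieg=\liekm+\liekp$.

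The second step is to read off the candidate pairs. Since $\liep_{\pm}$ is irreducible, each $(\liekpm,\lieh)$ is a spherical pair whose isotropy representation is irreducible, i.e. one of the $s=1$ rows of Table \ref{Tabletransitiveactionsphere}: $(\lieso(n+1),\lieso(n))$, $(\liegt,\liesu(3))$, or $(\liespin(7),\liegt)$. Because $\gH$ is common to both sides, $\gH_c$ must be simultaneously the small group on each side, so $\gH_c\in\set{\SO(n),\SU(3),\Gt}$ with the corresponding $\liekpm$. I would then split the argument according to whether the two summands $\liep_-,\liep_+$ are equivalent as $\gH_c$-modules.

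If $\liep_-\not\cong\liep_+$, I expect $\lieg$ to split as a product. Using that the $\liekpm$ are subalgebras one has $[\liep_-,\liep_-]\subseteq\lieh$ and $[\liep_+,\liep_+]\subseteq\lieh$, and Schur's lemma applied to the inequivalent irreducibles $\liep_\pm$, together with $Q([\liep_-,\liep_+],\lieh)=-Q(\liep_+,[\liep_-,\lieh])=0$, forces $[\liep_-,\liep_+]=0$ and a splitting $\lieg=\lieg_1\oplus\lieg_2$ into ideals carrying the two spherical pairs. This produces exactly the sum-action diagram \eqref{diagramsumaction} built from two $s=1$ transitive sphere actions, so $M$ is a sphere with a sum action, giving case (2).

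The remaining, and main, case is $\liep_-\cong\liep_+=:V$, where $\liep_-$ and $\liep_+$ are two distinct irreducible ``diagonal'' copies inside the isotypic component $V\oplus V$, and $\lieg=\lieh\oplus V\oplus V$ is a simple compact Lie algebra. The heart of the argument is to show that the only possibility is $\lieh=\liegt$, $\liekpm=\liespin(7)$ and $\lieg=\liespin(8)$. The cases $\gH_c=\SO(n)$ and $\gH_c=\SU(3)$ must be excluded: for $\gH_c=\SO(n)$ the candidate $\lieso(n)\oplus\Real^n\oplus\Real^n$ would have to sit as a codimension-one subspace of $\lieso(n+2)$ and is not closed under the bracket $[\liep_-,\liep_+]$, since the pairing $\Lambda^2\Real^n\to\lieso(n)$ together with the missing $\lieso(2)$ rotating the two copies obstructs closure, and a similar dimension-and-closure count rules out $\gH_c=\SU(3)$. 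Only the triality coincidence $\liespin(8)=\liegt\oplus\Real^7\oplus\Real^7$, in which the two $7$-dimensional $\Gt$-modules are equivalent and $\dim\liespin(8)=14+7+7$ leaves no leftover direction, is self-consistent; the two summands correspond to the two subgroups $\Spin^{+}(7)$ and $\Spin^{-}(7)$ meeting in $\Gt$. I expect this closure and dimension bookkeeping to be the main obstacle. Finally I would identify the resulting manifold: since $\Bpm=\Spin(8)/\Spin^{\pm}(7)=\sph^7$ via the half-spin representations, the diagram $\Gt\subset\set{\Spin^{+}(7),\Spin^{-}(7)}\subset\Spin(8)$ is the linear cohomogeneity one action of $\Spin(8)\subset\SO(16)$ on $\sph^{15}$ through $\Delta_8^+\oplus\Delta_8^-$, which is case (1).
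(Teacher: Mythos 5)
Your handling of the non-isotypic case $\liep_-\not\cong\liep_+$ is essentially the paper's own proof: the same $Q$-invariance computation yields $[\liep_-,\liep_+]=0$, and the decomposition of $\lieg$ into ideals exhibits a sum action. Two slips there are repairable: $[\liep_\pm,\liep_\pm]$ lies in $\liekpm$, not in $\lieh$ (spherical pairs such as $(\liegt,\liesu(3))$ are not symmetric), and Schur's lemma alone does not kill the $\liep_\mp$-components of $[\liep_-,\liep_+]$ (for instance when one summand is a trivial one-dimensional module); both components vanish by the same invariance trick, $\ip{[\liep_-,\liep_+],\liep_-}=-\ip{\liep_+,[\liep_-,\liep_-]}\subseteq\ip{\liep_+,\liekm}=0$, which is exactly what the paper writes out.

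The genuine gap is in the isotypic case $\liep_-\cong\liep_+$. First, your assertion that $\lieg=\lieh\oplus V\oplus V$ is a \emph{simple} compact Lie algebra is never justified and is false: the torus diagram $\set{1}\subset\set{\gS^1_{p,q},\gS^1_{r,s}}\subset\T^2$ has two equivalent (trivial) summands, is primitive for suitable slopes, and produces the sum actions on $\sph^3$; the paper retains exactly this case (its triple $\set{1}\subset\U(1)\subset\U(1)\times\U(1)$), whereas your dichotomy ``isotypic $\Rightarrow$ case (1)'' would erase it, and it already falsifies your exclusion of $\gH_c=\SO(n)$ at $n=1$. Second, that exclusion is not an argument for any $n$ --- nothing forces the putative $\lieg$ to embed in $\lieso(n+2)$ --- and it fails concretely at $n=3$: take $\lieg=\liesu(2)^3$, $\lieh=\Diag\liesu(2)$, $\liekm=\set{(X,Y,Y)}$, $\liekp=\set{(X,X,Y)}$. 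Then $\liekpm\cong\lieso(4)$, $\liekm\cap\liekp=\lieh\cong\lieso(3)$, and both complements are copies of the adjoint representation, so this is a compact Lie algebra precisely of the shape $\lieso(3)\oplus\Real^3\oplus\Real^3$ whose closure you claim is obstructed (here $\Lambda^2\Real^3\cong\Real^3$ supplies the $\liep_\pm$-component of $[\liep_-,\liep_+]$). The resulting diagram is primitive with $s=2$ and is realized by the linear action of $\Sp(1)^3$ on $\sph^7\subset\Qua^2$ given by $(q_1,q_2,q_3)\cdot(x,y)=(q_1x\bar{q}_2,\,q_2y\bar{q}_3)$, which is not a sum action in the paper's sense since the principal isotropy group is the full diagonal. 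What your proposal is missing is the paper's actual mechanism: because $\liep_+\cong T_{\ptm}(\G/\Km)$ is already $\gH_c$-irreducible, it is an irreducible representation of the \emph{whole} group $\Km$ whose restriction to $\gH$ must equal the isotropy representation of the sphere $\Km/\gH$; the exclusions then reduce to representation theory ($\SO(n+1)$ has no $n$-dimensional irreducible representation for $n\geq 2$, $n\neq 3$; $\Gt$ has no $6$-dimensional one; likewise for $\Sp(1)$, $\Sp(2)$, $\SU(4)$), carried out over all presentations in Table \ref{Tabletransitiveactionsphere}, including covers and ineffective kernels $\Kpm=\K'\cdot\gL$, which your list of three pairs omits. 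Note that the $\SO(4)/\SO(3)$ presentation, equivalently $\SU(2)\times\SU(2)/\Diag\SU(2)$, is exactly the borderline case where this dimension count excludes nothing (the paper's own exclusion list is silent on it), so no ``dimension-and-closure bookkeeping'' can be waved through at that point; without these ingredients your case analysis does not close.
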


\begin{proof}
From the assumption $s=2$, the space $\liep$ of the representation $\Ad_{\gH_c}$ splits into two
subspaces which are denoted by $\liep_1$ and $\liep_2$, and the representation of $\Ad_{\gH_c}$ on
each of them is irreducible.

First we assume that the two summands $\liep_1$ and $\liep_2$ are equivalent representations of
$\gH_c$. Let $\K = \Km$ and we consider the group triple $\gH \subset \K \subset \G$. Since the
sphere $\K/\gH$ is isotropy irreducible, its effective version is one of the pairs
$\SO(n+1)/\SO(n)$($n\geq 2$), $\Spin(7)/\Gt$, $\U(1)$, $\Gt/\SU(3)$, $\Sp(1)/\U(1)$,
$\Sp(2)/(\Sp(1)\times \Sp(1))$, $\SU(2)\times \SU(2)/\Diag \SU(2)$ and $\SU(4)/\Sp(2)$. Suppose
$\K/\gH = (\SO(n+1)\cdot \gL)/(\SO(n)\cdot \gL)$ for some $\gL$, then the two representations are
$\varrho_n \ot\Id$. In particular they have dimension $n$. However $\SO(n+1)$ has no irreducible
representation with dimension $n$ if $n \geq 2$. Similarly, $\K/\gH$ cannot be $\Gt/\SU(3)$,
$\Sp(1)/\U(1)$, $\Sp(2)/(\Sp(1)\times \Sp(1))$ and $\SU(4)/\Sp(2)$. The possible triples are
\begin{equation*}
\Gt \subset \Spin(7) \subset \SO(8), \quad \set{1} \subset \U(1) \subset \U(1)\times \U(1).
\end{equation*}
There are two different $\Spin(7)$, denoted by $\Spin^+(7)$ and $\Spin^-(7)$, in $\SO(8)$ that
contains $\Gt$ and they differ by an automorphism of $\SO(8)$. So there is one primitive diagram
from this triple:
\begin{equation*}
\Gt \subset \set{\Spin^+(7), \Spin^-(7)} \subset \Spin(8)
\end{equation*}
and the manifold is $\sph^{15}$. For the second triple $\set{1} \subset \U(1) \subset \U(1)\times
\U(1)$, one may choose different embedding of $\U(1) \subset \U(1)\times \U(1)$ for $\Kpm$ such
that the diagram is primitive. However all are sum actions on $\sph^3$.

Next we assume that $\liep_1$ and $\liep_2$ are non-equivalent representations. From the assumption
the action $\ad_{\lieh}$ on the spaces $\liep_1$ and $\liep_2$ are irreducible, W.L.O.G., we may
assume that $\liekm= \lieh\oplus \liep_1$ and $\liekp = \lieh\oplus \liep_2$. We denote $Q(X,Y)$ by
$\ip{X, Y}$ for $X, Y \in \lieg$. Then for any $X_1, Y_1 \in \liep_1$, $X_2, Y_2 \in \liep_2$ and
$Y_0 \in \lieh$, since $[X_2, Y_0] \in \liekp = \lieh \oplus \liep_2$, $[Y_1, X_1] \in \liekm =
\lieh\oplus \liep_1$ and $[X_2, Y_2] \in \liekp = \lieh \oplus \liep_2$, we have
\begin{eqnarray*}
\ip{[X_1, X_2], Y_0} & = & \ip{X_1, [X_2, Y_0]} = 0, \\
\ip{[X_1, X_2], Y_1} & = & \ip{Y_1, [X_1, X_2]} = \ip{[Y_1, X_1], X_2} = 0, \\
\ip{[X_1, X_2], Y_2} & = & \ip{X_1, [X_2, Y_2]} = 0.
\end{eqnarray*}
Therefore $[X_1, X_2]$ is orthogonal to any vector in $\lieg$, i.e., $[\liep_1, \liep_2] = 0$. We
define the following subspaces of $\lieg$. Let
\begin{equation*}
\lieh_0 = \Ann(\liep_1\oplus \liep_2)\cap\lieh = \set{X \in \lieh | [X,Y] = 0 \mbox{ for any }Y \in
\liep_1\oplus \liep_2},
\end{equation*}
and
\begin{equation*}
\lieh_i = \Ann(\liep_i)\cap \lieh_0^\perp\cap \lieh, \quad (i =1,2), \quad \lieh_3= (\lieh_0\oplus
\lieh_1\oplus\lieh_2)^\perp\cap \lieh,
\end{equation*}
where $\perp$ is the orthogonal complement with respect to the inner product $Q$.

Since $[\liep_1 , \liep_1] \subset \liekm = \lieh\oplus \liep_1$, $[\liep_1, \lieh] \subset
\liep_1$ and
\begin{equation*}
[[\liep_1, \liep_1],\liep_2] = - [[\liep_1, \liep_2],\liep_1] - [[\liep_2, \liep_1], \liep_1] = 0,
\end{equation*}
we have $[\liep_1, \liep_1] \subset \liep_1 \oplus \lieh_0 \oplus \lieh_2$. Moreover since
\begin{equation*}
\ip{[\liep_1, \liep_1], \lieh_0} = \ip{\liep_1, [\liep_1, \lieh_0]} = \ip{\liep_1, 0} = 0,
\end{equation*}
we have $[\liep_1, \liep_1] \subset \liep_1\oplus \lieh_2$. Denote the Lie algebra generated by
$\liep_1$ by $\Lie(\liep_1)$, then $\Lie(\liep_1) \subset \liep_1 \oplus \lieh_2$. If there is a
vector $X \in \liep_1 \oplus \lieh_2$ such that $X \perp \Lie(\liep_1)$, then $X \in \lieh_2$ and
thus $[X, \liep_1] \subset \liep_1$. For any vector $Y \in \liep_1$, we have $\ip{[X,\liep_1], Y} =
\ip{X,[\liep_1, Y]} = 0$ which implies that $X \in \Ann(\liep_1)\cap \lieh_2 = 0$. Hence we have
$\Lie(\liep_1) = \liep_1\oplus \lieh_2$. Similarly we also have $\Lie(\liep_2) = \liep_2 \oplus
\lieh_1$.

We claim that $\lieh_3 =0$. In fact, first we have $\ip{[\lieh_3,\liep_1], \liep_1} =
\ip{\lieh_3,[\liep_1, \liep_1]} = 0$ since $[\liep_1, \liep_1] \subset \liep_1\oplus \lieh_2$.
Since $\lieh_3 \subset \lieh$, $[\lieh_3, \liep_1] \subset \liep_1$ and thus $[\lieh_3, \liep_1] =
0$ which implies that $\lieh_3 \subset \Ann(\liep_1) = \lieh_0\oplus \lieh_1$. So we have $\lieh_3
= 0$ by its definition.

By the Jacobi identity, we have
\begin{eqnarray*}
0 & = & [[\lieh_0, \lieh], \liep_1\oplus \liep_2] + [[\lieh, \liep_1\oplus \liep_2], \lieh_0] +
[[\liep_1\oplus\liep_2, \lieh_0], \lieh] \\
& = & [[\lieh_0, \lieh], \liep_1\oplus \liep_2] + [[\lieh, \liep_1\oplus \liep_2], \lieh_0].
\end{eqnarray*}
Since $[\lieh, \liep_1\oplus \liep_2]\subset \liep_1 \oplus \liep_2$ and then $[[\lieh,
\liep_1\oplus \liep_2], \lieh_0] = 0$, we have $[[\lieh_0, \lieh],\liep_1\oplus\liep_2]= 0$ which
implies that $[\lieh_0, \lieh] \subset \lieh_0$, i.e., $\lieh_0$ is an ideal of $\lieh$. Similarly,
we have that $[\lieh_1, \lieh]\subset \Ann(\liep_1)\cap \lieh$. Furthermore $\langle
[\lieh_1,\lieh],\lieh_0\rangle = \langle \lieh_1,[\lieh,\lieh_0]\rangle = 0$ since $\lieh_0$ is an
ideal of $\lieh$. It follows that $[\lieh_1, \lieh] \subset \lieh_1$, i.e., $\lieh_1$ is also an
ideal of $\lieh$. Similarly $\lieh_2$ is an ideal of $\lieh$. Since $\lieg = \lieh \oplus \liep_1
\oplus \liep_2$ and $\lieh_0$ annihilates $\liep_1\oplus \liep_2$, $\lieh_0$ is an ideal of
$\lieg$. By the assumption that the $\G$ action is almost effective, we have $\lieh_0 = 0$. So we
have
\begin{equation*}
\lieg=\lieh_1\oplus \lieh_2 \oplus \liep_1 \oplus \liep_2, \quad \mbox{ and } \quad \liekm=\lieh_1
\oplus \lieh_2 \oplus \liep_1, \quad \liekp=\lieh_1\oplus \lieh_2 \oplus \liep_2.
\end{equation*}
We claim that $\Lie(\liep_1)=\lieh_2\oplus\liep_1$ is an ideal in $\lieg$. In fact, $[\lieh_2
\oplus \liep_1, \lieh_1] = [\lieh_2, \lieh_1] = 0$ and $[\lieh_2 \oplus \liep_1, \liep_2] = 0$
imply that $[\Lie(\liep_1),\lieg] \subset \Lie(\liep_1)$. Similarly $\Lie(\liep_2)$ is also an
ideal in $\lieg$. Therefore
\begin{equation*}
\G = \gL_2\times \gL_1, \quad \Km=\gH_1\times \gL_1, \quad \Kp = \gL_2\times \gH_2, \quad \mbox{
and }\quad \gH=\gH_1 \times \gH_2,
\end{equation*}
where $\lieh_i$ is the Lie algebra of $\gH_i$, $\Lie(\liep_i)$ is the Lie algebra of $\gL_i$ for
$i=1,2$ and $\gL_1/\gH_2$, $\gL_2/\gH_1$ are spheres. Hence the $\G$-action is a sum action and the
manifold $M$ is $\G$-equivariant to a sphere.
\end{proof}

\begin{thm}\label{thmclasss2nonprimitive}
Suppose $M$ is a compact simply-connected manifold that admits a cohomogeneity one action with
$s=2$ and no fixed points. If the action is non-primitive, then the manifold is a double and the
triples $\gH \subset \K \subset \G$ with $\gH$ connected are classified in Table
\ref{TableHKG2summandsKHsphere}.
\end{thm}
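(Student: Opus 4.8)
The plan is to reduce the classification to a purely algebraic problem about triples of Lie algebras $\lieh \subset \liek \subset \lieg$ and then sort through the possibilities using representation theory. First I would establish that a non-primitive action with $s=2$ and no fixed points must be a double. Since the action is non-primitive, there is a proper connected subgroup $\gL$ with $\Kpm \subset \gL$, and $M$ fibers as $N \To M \To \G/\gL$ where $N$ carries a cohomogeneity one $\gL$-action with diagram $\gH \subset \set{\Kpm} \subset \gL$. The fiber $N$ itself has at most two isotropy summands. If $N$ were primitive, Theorem \ref{thmclasss2primitive} would force it to be a sphere, but a careful analysis of the sum-action and $\sph^{15}$ cases (combined with the no-fixed-point hypothesis on $M$) should show these do not produce genuinely non-primitive $M$; so the essential case is $\Km = \Kp = \K$, i.e. a double.

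Once we know $M$ is a double, the problem becomes classifying triples $\gH \subset \K \subset \G$ with $\gH$ connected such that the isotropy representation $\Ad_{\gH_c}$ on $\liep = \lieh^\perp$ in $\lieg$ has exactly two irreducible summands. The natural decomposition is $\liep = \liep_{\K} \oplus \liep_{\G}$, where $\liep_{\K}$ is the tangent space to the sphere $\K/\gH$ and $\liep_{\G}$ is the tangent space to $\G/\K$ at the base. Since $s=2$, each of these two pieces must be $\Ad_{\gH_c}$-irreducible. The condition that $\K/\gH$ is a sphere means $(\K,\gH)$ is one of the spherical pairs, and by Table \ref{Tabletransitiveactionsphere} the irreducibility of $\liep_\K$ restricts $(\K,\gH)$ to those spherical pairs whose own isotropy representation is irreducible, i.e. $\SO(n+1)/\SO(n)$, $\Gt/\SU(3)$, $\Spin(7)/\Gt$, and a few low-dimensional coincidences. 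The remaining condition is that $\liep_\G$, the isotropy representation of $\G/\K$ restricted to $\gH_c$, is also irreducible.

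The main work — and the expected main obstacle — is the second irreducibility condition. For each admissible spherical pair $(\K,\gH)$ I would run through the candidate ambient groups $\G \supset \K$ and decide when the $\gH_c$-representation on $\liep_\G$ stays irreducible. This is where I expect the bulk of the casework: one must understand how the isotropy representation of $\G/\K$ branches when restricted from $\K$ down to $\gH$, and ensure it does not split further. Here I would lean on the classification of strongly isotropy irreducible spaces (Wolf \cite{WolfIrr}) and isotropy irreducible spaces (Wang--Ziller \cite{WangZillerIsotropy}) to limit the pairs $(\G,\K)$, together with explicit branching rules for the relevant classical and exceptional groups. Particular care is needed for the equivalence-of-summands phenomenon: if $\liep_\K$ and $\liep_\G$ happen to be equivalent $\gH_c$-representations the metric family is larger, so one must track equivalences as well as irreducibility. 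I would also use Lemma \ref{LemGroupEquivalent} to remove redundant triples related by outer automorphisms of $\G$ fixing $\gH$, and Lemma \ref{lemsimplyconnected} to impose the codimension constraints $\lpm \geq 2$ and the connectedness of $\gH$. Assembling the survivors yields the entries of Table \ref{TableHKG2summandsKHsphere}.
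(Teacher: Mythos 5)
Your second step --- the classification of the triples $\gH \subset \K \subset \G$ --- follows the paper's own strategy: split $\liep$ into the tangent space of the sphere $\K/\gH$ and that of the singular orbit $\G/\K$, force each piece to be $\Ad_{\gH_c}$-irreducible, use Table \ref{Tabletransitiveactionsphere} to restrict $(\K,\gH)$ to $\SO(n+1)/\SO(n)$, $\Gt/\SU(3)$, $\Spin(7)/\Gt$, and use the symmetric space and Wolf classifications plus branching to pin down $(\G,\K)$. That part is sound, with one caveat: Wolf's list presumes the action on $\G/\K$ is effective, whereas for non-simple $\G$ the action of $\G$ on the singular orbit generically has a kernel; it is exactly this situation that produces the last entries of Table \ref{TableHKG2summandsKHsphere} (the product triples $\gH_1\times\gH_2 \subset \gH_1\times\G_2 \subset \G_1\times\G_2$ and the diagonal $\U(1)$, $\SU(2)$ families), so the non-simple case needs its own treatment, as in the paper.

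The genuine gap is in your first step, the claim that non-primitivity forces $\Km=\Kp$. You propose to apply Theorem \ref{thmclasss2primitive} to the fiber $N$, but that theorem requires the $\gL$-action on $N$ to have $s=2$ \emph{and} no fixed points, and neither hypothesis is available. In particular, the no-fixed-point assumption on $M$ concerns the $\G$-action and does \emph{not} pass to the fiber: when $M$ is a double, $\gL=\K$ acts on the fiber sphere with two fixed points, so the very case you want to isolate is one where your cited theorem is inapplicable. The missing idea is a summand count. Write $\lieg\ominus\lieh = (\liel\ominus\lieh)\oplus(\lieg\ominus\liel)$; both pieces are nonzero $\Ad_{\gH_c}$-invariant subspaces (the second because $\gL$ is a proper connected subgroup), so $s=2$ forces each to be irreducible, i.e.\ the fiber action has exactly one isotropy summand. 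The tangent spaces of the spheres $\Kpm/\gH$ are then nonzero invariant subspaces of the irreducible module $\liel\ominus\lieh$, hence equal to it, so $\liekpm=\liel$ and therefore $\Kpm=\gL$ (since $\Kpm\subset\gL$ and $\gL$ is connected): $M$ is a double. This count also shows that the scenarios you planned to rule out by ``careful analysis'' (fiber a sum action or the $\sph^{15}$ example, both of which have two summands) are vacuous, since they would force $s\geq 3$ for the $\G$-action on $M$.
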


\begin{proof}
The manifold $M$ has the diagram as $\gH \subset \set{\Kpm = \K} \subset \G$ where $\K$ is a proper
subgroup of $\G$ and $\K/\gH$ is a sphere.

First we classify the triples $\G \supset \K \supset \gH$ such that $\G$ is simple, $\gH$ is
connected, $\K/\gH$ is a sphere and the isotropy representation $\Ad_{\gH}$ of $\G/\gH$ has two
irreducible summands. It follows that $(\G, \K)$ and $(\K, \gH)$ are strongly isotropy irreducible
and the isotropy representation of $\G/\K$ remains irreducible when restricted to $\gH$. From the
classification of transitive actions on spheres, the effective version of $\K/\gH$ is one of
$\Spin(7)/\Gt$, $\Gt/\SU(3)$ and $\SO(n+1)/\SO(n)$ with $n \geq 1$. Using the classification of
compact irreducible symmetric spaces and J. Wolf's classification, $\G \supset \K \supset \gH$ is
one in the first part of Table \ref{TableHKG2summandsKHsphere}. The last column contains further
conditions. If a homogeneous space appears in this column, it means that the space is strongly
isotropy irreducible, for example, ``$\G_1/\gH_1, \G_2/\gH_2 = \sph^k$'' means that both spaces are
strongly isotropy irreducible and the second one is also a sphere.

\begin{table}[!h]
\begin{tabular}{|l|l|l|c|}
\hline $\quad \quad \G$ & $\quad \quad \quad \K$ & $\quad \quad \quad \gH$ & \\
\hline \hline

$\SO(8)$ & $ \Spin(7)$ & $\Gt$ & \\
\hline
$\Spin(9)$ & $\Spin(8)$ & $\Spin(7)$ & \\
\hline
$\Spin(9)$ & $\Spin(7) \cdot \SO(2)$ & $\Gt\times \SO(2)$ & \\
\hline
$\SO(2n)$ & $\U(n)$ & $\SU(n)$ & $n\geq 2$, $n\ne 4$ \\
\hline
$\SU(5)$ & $\U(4)$ & $\U(1)\cdot \Sp(2)$ & \\
\hline
$\SU(p+q)$ & $S(\U(p)\times \U(q))$ & $\SU(p)\times \SU(q)$ & $p, q\geq 1, pq\geq 2$ \\
\hline
$\Sp(16)$ & $\Spin(12) $ & $\Spin(11)$ & \\
\hline
$\Sp(n)$ & $\U(n)$ & $\SU(n)$ & $n \geq 3$ \\
\hline
$\Sp(n+1)$ & $\Sp(n) \times \Sp(1)$ & $\Sp(n) \times \U(1)$ & $n \geq 1$ \\
\hline
$\E_6$ & $\Spin(10) \times \SO(2)$ & $\Spin(10)$ & \\
\hline
$\E_6$ & $\SU(6) \times \SU(2)$ & $\SU(6) \times \U(1)$ & \\
\hline
$\E_7$ & $\E_6 \times \SO(2)$ & $\E_6$ & \\
\hline
$\E_7$ & $\Spin(12) \times \Sp(1)$ & $\Spin(12)\times \U(1)$ & \\
\hline
$\E_7$ & $\Spin(12) \times \Sp(1)$ & $\Spin(11) \times \Sp(1)$ & \\
\hline
$\E_8$ & $\Spin(16)$ & $\Spin(15)$ & \\
\hline
$\E_8$ & $\Sp(1) \times \E_7$ & $\U(1) \times \E_7$ & \\
\hline
$\F_4$ & $\Sp(3) \times \Sp(1)$ & $\Sp(3)\times \U(1)$ & \\
\hline \hline {$\G_1 \times \G_2$} & {$\gH_1 \times \G_2$} & {$\gH_1 \times \gH_2$} & $\G_1/\gH_1$, $\G_2/\gH_2 = \sph^{k}$ \\
\hline
$\U(1)\times \G_1$ & $\U(1) \times \U(1) \times \gH_1$ & $\Diag \U(1) \times \gH_1$ & $\G_1/(\U(1) \gH_1)$ \\
\hline
$\SU(2) \times \G_1$ & $\SU(2) \times \SU(2) \times \gH_1$ & $\Diag \SU(2) \times \gH_1$ & $\G_1/(\SU(2) \gH_1)$ \\
\hline
\end{tabular}
\smallskip
\caption{Group triple $\G \supset \K \supset \gH$ such that $\gH$ is connected, $\Ad_{\G/\gH}$ has
$2$ irreducible summands and $\K/\gH$ is a sphere.} \label{TableHKG2summandsKHsphere}
\end{table}

The cohomogeneity one manifold defined by the diagram $\gH \subset \set{\Kpm = \K} \subset \G$ is a
sphere bundle over the homogeneous spaces $\G/\K$ which is an irreducible symmetric space except
for $\Sp(16)/\Spin(12)$. If $\K/\gH$ is a circle, then one can add components to $\gH$. Then for
each positive integer $n$, we have the cohomogeneity one diagram
\begin{equation*}
\gH \cdot \Zeit_n \subset \set{\Kpm = \K} \subset \G.
\end{equation*}

If $\G$ is not simple, then there are two constructions of such triples of connected groups $\gH
\subset \K \subset \G$, see the last two examples in Table \ref{TableHKG2summandsKHsphere}. In the
first case, $\G = \G_1 \times \G_2$, $\K = \gH_1\times \G_2$ and $\gH = \gH_1 \times \gH_2$ where
$\G_1/\gH_1$ is a strongly isotropy irreducible space and $\G_2/\gH_2$ is one of $\Spin(7)/\Gt$,
$\Gt/\SU(3)$ and $\SO(n+1)/\SO(n)$($n \geq 1$). The cohomogeneity one manifold is the product of a
sphere with the homogeneous space $\G_1/\gH_1$. In the second and last cases, $\G = \gH_0 \times
\G_1$, $\K = \gH_0 \times \gH_0 \times \gH_1$ and $\gH = \Diag \gH_0 \times \gH_1$ where $\gH_0$ is
either $\U(1)$ or $\SU(2)$ and $\G_1/\gH_0\times \gH_1$ is a strongly isotropy irreducible space.
The diagram is reducible and its nonreducible version is $\gH_1 \subset \set{\gH_0 \times \gH_1,
\gH_0 \times \gH_1} \subset \G_1$. If $\gH_0$ is $\U(1)$, then one can add components to $\gH$.
Then for each positive integer $n$, we have the diagram
\begin{equation*}
(\Zeit_n \cdot \Delta \U(1)) \times \gH_1 \subset \set{\Kpm = \U(1) \times \U(1)\times \gH_1}
\subset \U(1) \times \G_1.
\end{equation*}
Its non-reducible version is $\Zeit_n \cdot\gH_1 \subset \set{\Kpm = \U(1)\times \gH_1} \subset
\G_1$. The manifolds defined by these diagrams are sphere($\sph^2$ or $\sph^4$) bundles over
$\G_1/(\gH_0\times \gH_1)$.
\end{proof}

\begin{rem}\label{rems2lowdim}
There are two examples in low dimensions. One example has the diagram
\begin{equation*}
\Zeit_n\cdot \SU(2) \subset \set{\Kpm = \U(2)} \subset \SU(3)
\end{equation*}
and the manifold is 6 dimension, see example $N^6_F$ in \cite{HoelscherClass}. The other one has
the diagram
\begin{equation*}
\Sp(1)\U(1) \subset \set{\Kpm = \Sp(1)\Sp(1)} \subset \Sp(2)
\end{equation*}
and the manifold is 7 dimension, see example $N^7_I$ in \cite{HoelscherClass}.
\end{rem}

\medskip

% =================================================================================================

\section{Special types of cohomogeneity one actions with $s=3$}

From this section on we consider the classification when $s=3$. In this section, we look at some
special types of cohomogeneity one actions, i.e., reducible and non-primitive actions. The action
of $\G$ is assumed to be effective or almost effective, i.e., the ineffective kernel is finite.

% Reducible actions and Non-primitive actions with s=3 ============================================

% +++++++++++++++++++++++++++++++++++++++++++++++++++++++++++++++++++++++++++++++++++++++++++++++++
\subsection{Reducible actions}

The main result is

\begin{thm}\label{thmclassificationreducible}
If a simply-connected cohomogeneity one manifold $M$ admits a reducible action without fixed points
and $s= 3$, then one of the followings holds:
\begin{enumerate}
\item it admits a non-reducible action with $s=3$;
\item the action is primitive and is a sum action on a sphere;
\item it is a double, i.e., $\Km = \Kp$;
\item the action is non-primitive with different $\Kpm$ and the manifold is a sphere, $\Cp^2$, $\Hp^n$ or a three dimensional lens space bundle over a
homogeneous space.
\end{enumerate}
The cohomogeneity one manifolds in case (3) and (4) are classified in Table
\ref{Tables3reducibledouble} and \ref{Tables3reduciblenondouble}.
\end{thm}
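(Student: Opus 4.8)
The plan is to reduce the reducible case to the known classifications of Theorem A and C (with smaller $s$) and to the $s=1,2$ results of Theorems \ref{thmclasss2primitive} and \ref{thmclasss2nonprimitive}. By Proposition \ref{propreduciblediagram} I may write $\G = \G_1 \times \G_2$ with $\Proj_2(\gH) = \G_2$, so that the normal subgroup $\G_1 \times \set{1}$ acts by cohomogeneity one with the same orbits and with diagram $\gH_1 \subset \set{\Km_1, \Kp_1} \subset \G_1$, where $\gH_1 = \gH \cap (\G_1 \times \set{1})$ and $\Kpm_1 = \Kpm \cap (\G_1 \times \set{1})$. The original action is the normal extension of this $\G_1$-action, so the whole manifold $M$ is determined by the reduced diagram. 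The key point I want to exploit is that since $\gH_1$ is a subgroup of $\gH$, the isotropy representation of $(\gH_1)_c$ on $\liep$ can only refine the one by $\gH_c$; writing $s_1$ for the number of irreducible summands of the $\G_1$-action, I always have $s_1 \geq s = 3$ unless summands merge under the larger group.

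\textbf{Organizing by the value of $s_1$.} First I would check whether the reduced $\G_1$-action is again reducible. If it is non-reducible and still has $s_1 = 3$, then case (1) holds directly. The substantive work is therefore to analyze what happens when the reduction strictly lowers $s$, i.e.\ when $s_1 < 3$, so that $s_1 \in \set{1,2}$. If $s_1 = 1$ the $\G_1$-action on a principal orbit is isotropy irreducible, so by the $s=1$ analysis (Table \ref{tablesequal1}) the reduced manifold is a sphere with a fixed-point action; carrying out the normal extension by $\G_2$ should force the full action to be either a sum action on a sphere (case (2)) or to collapse the two singular orbits into one type, giving a double (case (3)). If $s_1 = 2$, I would invoke Theorems \ref{thmclasss2primitive} and \ref{thmclasss2nonprimitive}: the reduced diagram is either a primitive sphere (a sum action, again feeding into case (2)) or a non-primitive double from Table \ref{TableHKG2summandsKHsphere}. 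In the latter situation I then run the normal extension explicitly and track whether $\Km$ and $\Kp$ remain equal (case (3), a double) or become genuinely distinct (case (4)).

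\textbf{Identifying the case-(4) manifolds.} The heart of the classification is case (4): a non-primitive action with $\Km \neq \Kp$ arising by normal extension. Here I would use the non-primitive structure $\Kpm \subset \gL \subset \G$ together with the fibration $N \to M \to \G/\gL$, so that $N$ carries a cohomogeneity one action by $\gL$ with the reduced diagram. Since $s_1 \leq 2$, the fiber $N$ is constrained to be low-dimensional; I expect $N$ to be one of a sphere $\sph^2$ or $\sph^4$, a three-dimensional lens space, or a low rank projective space. Matching the fiber type against the possible extensions by the circle or $\SU(2)$ factors $\G_2$ (exactly the two extension mechanisms appearing in the last two rows of Table \ref{TableHKG2summandsKHsphere}) should yield the assertion that the manifold is a sphere, $\Cp^2$, $\Hp^n$, or a three-dimensional lens space bundle over a homogeneous space, and produce the entries of Tables \ref{Tables3reducibledouble} and \ref{Tables3reduciblenondouble}.

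\textbf{The main obstacle.} The hard part will be the bookkeeping in the normal extension when $s_1 = 2$ and the reduced diagram is a circle-bundle type (the $\U(1)$ and $\SU(2)$ rows of Table \ref{TableHKG2summandsKHsphere}), because there I must decide precisely when adding the $\G_2$ factor keeps $\Km = \Kp$ versus when a diagonal embedding of a torus factor splits them apart, and simultaneously verify that the total number of isotropy summands does not exceed three. Controlling this requires analyzing how $\G_2 = \gL/(\gL \cap \gH_1)$ acts on each summand $\liep_i$ and checking, via Schur's lemma on equivalent versus inequivalent summands, that no spurious fourth summand appears. Once these extension computations are pinned down, assembling the four cases and filling in the tables is routine.
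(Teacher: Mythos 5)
There is a genuine gap, and it is fatal to the structure of your argument. You correctly observe at the outset that since $\gH_1 = \gH \cap (\G_1 \times \set{1})$ is a subgroup of $\gH$ and both groups act on the tangent space of the same principal orbit, the isotropy representation of $(\gH_1)_c$ is the restriction of the one by $\gH_c$, hence $s_1 \geq s = 3$. But restriction to a subgroup can only refine a decomposition into irreducibles --- summands can never ``merge'' when passing to the smaller group --- so this inequality is unconditional. Your proof then pivots to analyzing ``what happens when the reduction strictly lowers $s$, i.e.\ when $s_1 < 3$,'' and devotes essentially all of its substance (the appeals to Table \ref{tablesequal1}, to Theorems \ref{thmclasss2primitive} and \ref{thmclasss2nonprimitive}, and the fiber analysis premised on $s_1 \leq 2$) to the cases $s_1 = 1,2$. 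These cases are empty. Conversely, the case that carries all the content of Theorem \ref{thmclassificationreducible} --- a reducible action with $s=3$ whose reduced, non-reducible action has $s_1 \geq 4$ --- is never addressed in your proposal; the captions of Tables \ref{Tables3reducibledouble} and \ref{Tables3reduciblenondouble} state explicitly that the reduced action has $s \geq 4$. Since the paper only classifies cohomogeneity one manifolds with $s \leq 3$, no earlier theorem can be quoted for these reduced diagrams, which is precisely why the proof cannot be run the way you propose.

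What the paper does instead is a direct Lie-algebra analysis. Writing $\lieg = \lieg_1 \oplus \lieg_2$ with $\Proj_2(\lieh) = \lieg_2$, either $\lieh$ splits as $\lieh_1 \oplus \lieh_2$ (which yields your case (1)), or $\lieh = \lieh_1 \oplus \Diag\lieh_0$ with $\lieg_2 = \lieh_0$ after effectiveness kills $\lieh_2$; then $\lieg = \lieg_1 \oplus \lieh_0$ and the principal isotropy representation splits as $\ad_{\lieg_1/(\lieh_1 \oplus \lieh_0)} \oplus (\Id \ot \ad_{\lieh_0})$. The hypothesis $s=3$ then bounds the number of primitive factors of $\lieh_0$ by two, and the classification proceeds by enumerating the possible intermediate subalgebras $\liekpm$ between $\lieh$ and $\lieg$, using the classifications of strongly isotropy irreducible pairs, spherical pairs, and Onishchik's transitive triples --- not the $s \leq 2$ classification theorems. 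Your sketch for case (4) (the fibration $N \to M \to \G/\gL$ and lens space fibers) does echo how the paper's Step II identifies the bundles arising from disconnected isotropy groups, but it cannot be reached from your case division. To repair the proof you would have to replace the vacuous $s_1 \in \set{1,2}$ analysis with a classification of reduced diagrams having $s_1 \geq 4$ whose normal extensions have $s=3$, which is in effect what the paper's Case I/Case II analysis accomplishes.
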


\begin{proof}
We prove the theorem in two steps. In step I, we classify cohomogeneity one diagrams with connected
isotropy subgroups. In step II, we consider the possible ways to get new diagrams from those
obtained in Step I, i.e., variations by automorphisms and adding connected components to isotropy
groups. Here we only consider the inner automorphism, i.e., conjugation by a group element.

\textsc{Step I.} Suppose $\lieg = \lieg_1 \oplus \lieg_2$ and we may assume that $\Proj_2(\lieh) =
\lieg_2$ and that the projection from $\lieh$ to any primitive factor of $\lieg_1$ is not
surjective.

Let $\lieh_i = \lieg_i\cap\lieh$ for $i= 1, 2$. If $\lieh=\lieh_1 \oplus \lieh_2$, then from the
reducibility assumption, we have that $\lieh_2 = \lieg_2$ and $\lieh_1$ is a proper subalgebra of
$\lieg_1$. In this case, the non-reducible action by $\G_1$ also has $s=3$.

Next we assume that there exists a nonzero subspace $\lieh_0 \subset \lieh$ such that the images
under the two projections $\Proj_i$ are nonzero, i.e., $\lieh= \lieh_1 \oplus \Diag \lieh_0\oplus
\lieh_2$ and $\Proj_i(\lieh) = \lieh_i \oplus \lieh_0$ for $i=1, 2$. From the reducibility
assumption, we have $\lieg_2 = \lieh_0 \oplus \lieh_2$. Since the action of $\G$ is effective, we
have $\lieh_2 = 0$. There is an intermediate subalgebra $\lieh_1\oplus\lieh_0\oplus \lieh_0$
between $\lieh = \lieh_1 \oplus \Diag\lieh_0$ and $\lieg = \lieg_1 \oplus \lieh_0$ and the
principal isotropy representation is
\begin{equation*}
\chi = (\ad_{\lieg_1/(\lieh_1\oplus\lieh_0)})\oplus (\Id_{\lieh_1}\ot \ad_{\lieh_0}).
\end{equation*}
It follows that $\lieh_0$ has at most $2$ primitive factors.

\textsc{Case I.} If $\lieh_0$ has $2$ primitive factors as $\lieh_0 = \lieh_0' \oplus \lieh_0''$,
then the pair $(\lieg_1, \lieh_1\oplus \lieh_0)$ is strongly isotropy irreducible and thus
$\lieg_1$ is a simple Lie algebra or $\lieso(4)$. If $\lieg_1 = \lieso(4)$, then
$\lieh_1\oplus\lieh_0 = \lieso(2) \oplus \lieso(2)$ and the dimension of the manifold is smaller or
equal to $7$. So we assume that $\lieg_1$ is simple. The three irreducible summands are
\begin{equation*}
\chi_1 = \ad_{\lieg_1/(\lieh_1\oplus \lieh_0)}, \quad \chi_2 = \Id_{\lieh_1} \ot \ad_{\lieh_0'} \ot
\Id_{\lieh_0''} , \quad \chi_3 = \Id_{\lieh_1}\ot \Id_{\lieh_0'} \ot \ad_{\lieh_0''},
\end{equation*}
and their corresponding representation spaces are denoted by $\liep_1$, $\liep_2$ and $\liep_3$.

We claim that any two of the three representations are non-equivalent. In fact, if $\chi_2 =
\chi_3$, then $\lieh_0' = \lieh_0'' = \lieu(1)$. From the classification of strongly isotropy
irreducible spaces, such pair $(\lieg_1, \lieh_1\oplus\lieh_0)$ with $\lieg_1$ simple does not
exist. If $\chi_1$ is equivalent to one of $\chi_2$ or $\chi_3$, say $\chi_2$, then the isotropy
representation of the pair $(\lieg_1, \lieh_1\oplus \lieh_0)$ is given by $\zeta =
\Id_{\lieh_1}\ot\ad_{\lieh_0'}\ot\Id_{\lieh_0''}$. It is clear that $\lieh_0' \ne \lieu(1)$, and
furthermore there is no strongly isotropy irreducible pair with isotropy representation as $\zeta$.

We consider the intermediate subalgebra $\liek$. If it contains the subspace $\liep_1$, then it is
one of
\begin{enumerate}
\item $\lieh\oplus \liep_1$, and then $[\liep_1, \liep_1] \subset \lieh_1\oplus \liep_1$, i.e., $\liel = \lieh_1\oplus\liep_1$ is a Lie algebra;
\item $\lieh_1 \oplus \lieh_0'\oplus \lieh_0' \oplus \Diag\lieh_0'' \oplus \liep_1$, and
$[\liep_1, \liep_1] \subset \lieh_1\oplus \lieh_0' \oplus \liep_1$;
\item $\lieh_1 \oplus \Diag \lieh_0'\oplus \lieh_0'' \oplus \lieh_0'' \oplus \liep_1$, and
$[\liep_1, \liep_1] \subset \lieh_1\oplus \lieh_0'' \oplus \liep_1$.
\end{enumerate}
In all above cases, $(\lieg_1, \lieh_1\oplus\lieh_0)$ is not a symmetric pair.

In Case $(1)$, let $\G_1$, $\gH'$ and $\gL$ be the corresponding Lie groups of $\lieg_1$,
$\lieh_1\oplus \lieh_0$ and $\liel$, and then the subgroup $\gL$ acts transitively on the
homogeneous space $\G_1/\gH'$. A. L. Onishchik classified all triples $(\gL_1, \gL_2, \gL_3)$ such
that $\gL_1$ is simple, and $\gL_2$ is a subgroup of $\gL_1$ and acts transitively on the
homogeneous space $\gL_1/\gL_3$, see \cite{GorOnishchik}, p. 143 Theorem 4.5, or \S 2 in
\cite{DAtriZiller}. This classification is also used in \cite{KerinShankar} and the following Table
\ref{tableOnishchik} is part of Table 3 in the appendix of their paper where $\gL_1/\gL_3$ is not a
symmetric space.

\begin{table}[h!]
\begin{center}
\begin{tabular}{|c|c|c|c|}
\hline
$\gL_1$ & $\gL_2$ & $\gL_3$ & $\gL_2\cap\gL_3$  \\
\hline \hline
$\SO(4n)$ & $\SO(4n-1)$ & $\Sp(n)$ & $\Sp(n-1)$ \\
\hline
$\SO(4n)$ & $\SO(4n-1)$ & $\Sp(n)\U(1)$ & $\Sp(n-1)\U(1)$ \\
\hline
$\SO(4n)$ & $\SO(4n-1)$ & $\Sp(n)\Sp(1)$ & $\Sp(n-1)\Sp(1)$ \\
\hline
$\SO(2n)$ & $\SO(2n-1)$ & $\SU(n)$ & $\SU(n-1)$ \\
\hline
$\SO(16)$ & $\SO(15)$ & $\Spin(9)$ & $\Spin(7)$ \\
\hline
$\SO(8)$ & $\Spin(7)$ & $\SO(6)$ & $\SU(3)$ \\
\hline
$\SO(8)$ & $\Spin(7)$ & $\SO(5)$ & $\SU(2)$ \\
\hline
$\SO(8)$ & $\Spin(7)$ & $\SO(2)\SO(5)$ & $\SO(2)\SU(2)$\\
\hline
$\SO(7)$ & $\Gt$ & $\SO(5)$ & $\SU(2)$ \\
\hline
\end{tabular}
\end{center}\caption{Onishchik's triples $(\gL_1,\gL_2,\gL_3)$ with $\gL_1$ simple and $\gL_1/\gL_3$ non-symmetric}\label{tableOnishchik}
\end{table}
From the classification, $(\SO(4n), \SO(4n-1), \Sp(1)\Sp(n))(n\geq 2)$ is the only triple such that
$\gL_1/\gL_3$ is strongly isotropy irreducible and $\gL_3$ has at least two primitive factors. If
$\lieg_1 = \lieso(4n)$ and $\lieh_0 = \liesp(1)\oplus \liesp(n)$, then $\lieh_1 = 0$ and $\liel =
\liep_1$ would be  $\lieso(4n-1)$ which would imply that $4n-1 = 3 + \dim\liesp(n)$ and it gives us
a contradiction.

In Case $(2)$, $\lieh_1 \oplus \lieh_0' \oplus \liep_1$ is a subalgebra of $\lieg_1$. From a
similar argument as in the previous case, we have that $\lieg_1 = \lieso(4n)(n\geq 2)$, $\lieh_0 =
\liesp(1)\oplus \liesp(n)$, $\lieh_1 = 0$ and $\dim \liep_1 = \dim \lieso(4n) - \dim \liesp(1) -
\dim \liesp(n) = 6n^2 - 3n - 3$. However it is not equal to either $\dim \lieso(4n-1) -
\dim\liesp(1) = 8n^2 - 6n -2$ or $\dim \lieso(4n-1) - \dim \liesp(n) = 6n^2 - 7n + 1$ for $n\geq
2$. So $\liek$ is not in this case. A similar argument also show that $\liek$ is not in Case $(3)$.

Now we assume that $\liek$ does not contain the subspace $\liep_1$, then it is one of
\begin{enumerate}
\item $\lieh_1\oplus \lieh_0\oplus \lieh_0$;
\item $\lieh_1\oplus \lieh_0'\oplus \lieh_0'\oplus \Diag\lieh_0''$;
\item $\lieh_1\oplus \Diag\lieh_0'\oplus \lieh_0''\oplus \lieh_0''$.
\end{enumerate}
In Case $(1)$, $(\liek,\lieh)$ is not a spherical pair. If both $\liekpm$ are in Case $(2)$, then
$\lieh_0'$ is either $\lieu(1)$ or $\liesu(2)$ and the diagram is not primitive. This gives us
example \textbf{R}.1($m=1$) and \textbf{R}.2.

If both $\liekpm$ are in Case $(3)$, then we have a similar result. If $\liekm$ is in Case $(2)$
and $\liekp$ is in Case $(3)$, then $\lieh_0'$ and $\lieh_0''$ are $\lieu(1)$ or $\liesu(2)$. From
the classification of strongly isotropy irreducible spaces, both $\lieh_0'$ and $\lieh_0''$ cannot
be $\lieu(1)$. If both $\lieh_0'$ and $\lieh_0''$ are $\liesu(2)$, though the embeddings of
$\liekpm \subset \lieg$ are different, the manifold is equivariant diffeomorphic to the one in the
previous example. If $\lieh_0 = \lieu(1)\oplus \liesu(2)$, then we have example
\textbf{R}.14($m=1$).

\smallskip

\textsc{Case II.} If $\lieh_0$ is primitive, then the isotropy representation of the pair
$(\lieg_1, \lieh_1\oplus \lieh_0)$ has $2$ irreducible summands $\chi_1$, $\chi_2$ and their
representation spaces are denoted by $\liep_1$ and $\liep_2$. The representation space of $\chi_3 =
\Id_{\lieh_1}\ot \ad_{\lieh_0}$ is denoted by $\liep_3$. From the assumption that the projection
from $\lieh$ to any primitive factor of $\lieg_1$ is not surjective, $\lieg_1$ has at most two
primitive factors.

\textsc{Case II.A.} We first consider the case when $\lieg_1$ has two factors $\lieg_1'$ and
$\lieg_1''$, and then we may assume that $\lieg_1' = \lieh_1'\oplus \liep_1$ and $\lieg_1'' =
\lieh_1''\oplus\lieh_0\oplus \liep_2$ where $\lieh_1 =\lieh_1'\oplus \lieh_1''$. The only possible
pair of equivalent summands are $\chi_1$ and $\chi_3$. If we are in this case, then $\lieh_1= 0$,
$\lieg_1' = \lieu(1)$, $\lieh_0 = \lieu(1)$ and $\lieg_1'' = \liesu(2)$. Thus the manifold is $5$
dimensional. Now we assume that $\chi_i$'s are pairwisely non-equivalent. $\lieh\oplus \liep_2$ and
$\lieh\oplus \liep_1\oplus \liep_2$ are not subalgebras of $\lieg$ otherwise $\liep_2$ would be a
subalgebra of $\lieg_1''$. Furthermore the intermediate subalgebra cannot be $\lieh\oplus
\liep_3\oplus \liep_1 = \lieg_1'\oplus\lieh_1''\oplus \lieh_0\oplus\lieh_0$ since
$(\lieg_1'\oplus\lieh_1''\oplus \lieh_0\oplus\lieh_0, \lieh_1'\oplus \lieh_1''\oplus \Delta
\lieh_0)$ is not a spherical pair. So the intermediate subalgebra $\liek$ is one of the followings:
\begin{enumerate}
\item $\lieh\oplus \liep_3\oplus \liep_2 = \lieh_1'\oplus\lieg_1''\oplus\lieh_0$, and then $(\lieh_1'\oplus \lieg_1''\oplus \lieh_0, \lieh_1 \oplus \Diag
\lieh_0)$ is a spherical pair;
\item $\lieh\oplus \liep_3 = \lieh_1 \oplus\lieh_0\oplus \lieh_0$, and then $\lieh_0$ is either
$\lieu(1)$ or $\liesu(2)$;
\item $\lieh\oplus \liep_1 = \lieg_1'\oplus \lieh_1''\oplus \Diag\lieh_0$, and then $(\lieg_1',\lieh_1')$ is a strongly isotropy irreducible spherical pair.
\end{enumerate}
In Case $(1)$ the spherical is $(\lieu(n+1), \lieu(n))$ with $\lieh_0 = \lieu(1)$ or
$(\liesp(n+1)\oplus \liesp(1), \liesp(n)\oplus \Delta\liesp(1))$ with $\lieh_0 = \liesp(1)$. For
the first pair, since the sub-action by $\G_1 \times \SU(n+1) \subset \G_1 \times \U(n+1)$ also has
$s=3$ if $n\geq 2$, we only consider the pair $(\U(2), \U(1))$. If both $\liekpm$ are in Case
$(2)$, then we have example \textbf{R}.3 and \textbf{R}.4.

In Case $(2)$, if $\lieh_0 = \lieu(1)$, then $\G_1''/\gH_1''\U(1)$ is strongly isotropy
irreducible. If $\lieh_0 = \liesu(2)$, then $\G_1''/\gH_1''\SU(2)$ is strongly isotropy
irreducible. If both $\liekpm$ are in this case, then we have example \textbf{R}.5($m=1$) and
\textbf{R}.6.

If both $\liekpm$ are in Case $(3)$, then we have example \textbf{R}.7 and \textbf{R}.8. The
special case where $(\G_1,\gH_1) = (\U(1), \set{1})$ and $\gH_0 = \U(1)$ will be discussed in Step
II and it gives us example \textbf{R}.11, \textbf{R}.12, \textbf{R}.22 and \textbf{R}.23.

If $\liekpm$ are in Case (1) and (3), then the diagram is the sum action on a sphere. For other
cases, we have example \textbf{R}.15($m=1$), \textbf{R}.16, \textbf{R}.17($m=1$) and \textbf{R}.18.
\smallskip

\textsc{Case II.B.} Next we consider the case when $\lieg_1$ is primitive. We claim that any two of
$\chi_i$'s are not equivalent. If not, then we have two different cases. First if $\chi_1$ is
equivalent to $\chi_2$, then $(\lieg_1, \lieh_1\oplus\lieh_0)$ is either $(\lieso(8), \liegt)$ or
$(\lieso(7), \lieu(3))$. In the first case, $\lieg = \lieso(8)\oplus \liegt$ and $\lieh =
\Diag\liegt$. However there is no intermediate subalgebra $\liek$ such that $(\liek, \lieh)$ is a
spherical pair. In the second case, $\lieg=\lieso(7)\oplus \liesu(3)$, $\lieh = \lieu(1)\oplus
\Diag\liesu(3)$ and there is no intermediate Lie algebra $\liek$ such that $(\liek,\lieh)$ is a
spherical pair. Secondly if $\chi_3$ is equivalent to one of $\chi_1$ and $\chi_2$, say $\chi_2$,
then $\liel = \lieh_1\oplus\lieh_0\oplus\lieh_0$ is an intermediate algebra between $\lieg_1$ and
$\lieh_1\oplus \lieh_0$. The Lie algebra $\lieh_0$ embeds diagonally into $\liel$, $(\lieg_1,
\liel)$ is a strongly isotropy irreducible pair and the isotropy representation
$\ad_{\lieg_1/\liel}$ remains irreducible when restricted to $\lieh_1\oplus \lieh_0$. However there
is no such pair $(\lieg_1, \liel)$ that satisfies these properties.

Now we know that the $\liep_i$'s are pairwisely non-equivalent. There are $6$ different
possibilities for the intermediate subalgebra $\liek$:
\begin{center}
\begin{tabular}{lll}
(II.B.1) $\lieh_1\oplus \Diag \lieh_0 \oplus \liep_1\oplus \liep_2$; & (II.B.2) $\lieh_1\oplus
\lieh_0 \oplus
\liep_1 \oplus \lieh_0$; & (II.B.3) $\lieh_1\oplus \lieh_0 \oplus \liep_2 \oplus \lieh_0$; \\
(II.B.4) $\lieh_1 \oplus \Diag \lieh_0 \oplus \liep_1$; & (II.B.5) $\lieh_1 \oplus \Diag \lieh_0
\oplus \liep_2$; & (II.B.6) $\lieh_1 \oplus \lieh_0 \oplus \lieh_0$.
\end{tabular}
\end{center}

If $\liek$ is in Case (II.B.1), then $\liel = \lieh_1 \oplus \liep_1\oplus \liep_2$ is a subalgebra
of $\lieg_1$ and then $\lieg_1 = \liel \oplus \lieh_0$ is not primitive.

If $\liek$ is in Case (II.B.2), then let $\liel = \lieh_1\oplus \lieh_0\oplus \liep_1$ which is a
Lie subalgebra of $\lieg_1$ and $(\lieg_1, \liel)$ is a strongly isotropy irreducible pair. If
$\liel$ is primitive, then it is either $\liesu(n+1)$ or $\liesp(n+1)$($n\geq 1$) since
$(\liek=\liel\oplus \lieh_0, \lieh=\lieh_1\oplus\Diag\lieh_0)$ is a spherical pair for which
$\ad_{\liek/\lieh}$ has two irreducible summands. If $\liel = \liesu(n+1)$, then $\lieh_1 =
\liesu(n)$ and $\lieh_0 = \lieu(1)$. Moreover $(\lieg_1, \liesu(n+1))$ is strongly isotropy
irreducible and its isotropy representation remains irreducible when restricted to $\lieu(n)$. It
follows that it is one of
\begin{equation*}
\lieu(3)\subset \lieso(6) \subset \lieso(7), \quad \lieu(7) \subset \liesu(8) \subset \liee_7.
\end{equation*}
However for each triple above, $\ad_{\lieg_1/\lieh_1}$ also has $3$ irreducible summands, i.e., the
non-reducible action by $\G_1$ also has $s=3$. If $\liel = \liesp(n+1)$, then $\lieh_1 =
\liesp(n)$, $\lieh_0 = \liesp(1)$ and the isotropy representation of $(\lieg_1, \liesp(n+1))$
remains irreducible when restricted to $\liesp(n)\oplus\liesp(1)$. However such $\lieg_1$ does not
exist.

If $\liel$ is not primitive, then the effective version of the spherical pair $(\liel \oplus
\lieh_0, \lieh_1\oplus\Diag \lieh_0)$ is either $(\liesp(n+1), \liesp(n)\oplus \Diag \liesp(1))$
with $\lieh_0 = \liesp(1)$ or $(\lieu(n+1), \lieu(n))$ with $\lieh_0 = \lieu(1)$. In the first
case, we have that $\liel = \liesp(n+1)\oplus\liel_0$ for some nonzero Lie algebra $\liel_0$ and
the isotropy representation of $(\lieg_1, \liesp(n+1)\oplus \liel_0)$ remains irreducible when
restricted to $\liesp(n)\oplus\liesp(1)\oplus \liel_0$. However such $\lieg_1$ does not exist.

In the second case, we have that $\liel = \liesu(n+1)\oplus\liel_0$ for some nonzero Lie algebra
$\liel_0$ and the isotropy representation of $(\lieg_1, \liesu(n+1)\oplus \liel_0)$ remains
irreducible when restricted to $\lieu(n)\oplus \liel_0$. It follows that $\lieu(n)\oplus \liel_0
\subset \liesu(n+1)\oplus \liel_0 \subset \lieg_1$ is one of the following triples:
\begin{center}
\begin{tabular}{ll}
$\lieu(1)\oplus \liesp(n) \subset \liesp(1)\oplus \liesp(n) \subset \liesp(n+1)$, &
$\lieu(2)\subset \lieso(4) \subset \liegt$, \\
$\lieu(1)\oplus\liesp(3) \subset \liesp(1)\oplus\liesp(3) \subset \lief_4$, & $\lieu(1)\oplus
\liesu(6) \subset \liesu(2)\oplus \liesu(6) \subset \liee_6$, \\
$\lieu(5)\oplus \liesu(2) \subset \liesu(6) \oplus \liesu(2) \subset \liee_6$, & $\lieu(1)\oplus
\lieso(12) \subset \liesp(1) \oplus \lieso(12) \subset \liee_7$, \\
$\lieu(1)\oplus \liee_7 \subset \liesu(2)\oplus \liee_7 \subset \liee_8$, &
\end{tabular}
\end{center}
and the corresponding triples $\lieh_1\oplus\Diag \lieh_0 \subset \liek \subset
\lieg_1\oplus\lieh_0$ are
\begin{enumerate}
\item $\liesp(n)\oplus \Diag \lieu(1) \subset \liesp(n)\oplus \liesp(1)\oplus\lieu(1) \subset
\liesp(n+1)\oplus \lieu(1)$,
\item $\lieu(2)\subset \liesu(2)\oplus\liesu(2)\oplus\lieu(1) \subset \liegt\oplus\lieu(1)$,
\item $\liesp(3)\oplus \Diag \lieu(1) \subset \liesp(3)\oplus\liesp(1)\oplus\lieu(1) \subset \lief_4\oplus \lieu(1)$,
\item $\liesu(6)\oplus \Diag\lieu(1) \subset \liesu(6)\oplus \liesu(2)\oplus \lieu(1) \subset \liee_6\oplus \lieu(1)$,
\item $\liesu(2)\oplus \lieu(5) \subset \liesu(2) \oplus \liesu(6)\oplus\lieu(1) \subset
\liee_6\oplus\lieu(1)$,
\item $\lieso(12)\oplus \Diag \lieu(1) \subset \lieso(12) \oplus \liesp(1)\oplus\lieu(1) \subset \liee_7\oplus\lieu(1)$,
\item $\liee_7\oplus \Diag\lieu(1) \subset \liee_7\oplus \liesu(2)\oplus\lieu(1) \subset
\liee_8\oplus\lieu(1)$.
\end{enumerate}
In Case $(2)$ above, there is no corresponding group triple. The non-reducible version of the
triple in Case $(5)$ is $\liesu(2) \oplus\liesu(5) \subset \liesu(2)\oplus\liesu(6) \subset
\liee_6$ and its isotropy representation also has $3$ irreducible summands. The group triples $\gH
\subset \K \subset \G$ of the remaining cases are
\begin{enumerate}
\item $\Sp(n)\Diag \U(1) \subset \Sp(n)\times \Sp(1)\times \U(1) \subset \Sp(n+1)\times \U(1)$ with $n\geq
1$;
\item $\Sp(3)\Diag \U(1) \subset \Sp(3)\times \Sp(1)\times \U(1) \subset \F_4\times \U(1)$;
\item $\SU(6)\Diag \U(1) \subset \SU(6)\times \SU(2)\times \U(1) \subset \E_6 \times \U(1)$;
\item $\Spin(12)\Diag \U(1) \subset \Spin(12)\times \SU(2)\times \U(1) \subset \E_7 \times \U(1)$;
\item $\E_7 \Diag \U(1) \subset \E_7 \times \SU(2) \times \U(1) \subset \E_8\times \U(1)$.
\end{enumerate}

The discussion in Case (II.B.3) is similar to Case (II.B.2).

If $\liek$ is in Case (II.B.4), then $\lieh_1\oplus \liep_1$ is a Lie algebra and $(\lieh_1\oplus
\liep_1, \lieh_1)$ is a spherical pair with irreducible isotropy representation. Furthermore, the
pair $(\lieg_1, \lieh_1\oplus \liep_1\oplus\lieh_0)$ is strongly isotropy irreducible and its
isotropy representation remains irreducible when restricted to $\lieh_1\oplus \lieh_0$. First we
have the following possibilities of $(\lieg_1, \lieh_1\oplus \liep_1 \oplus\lieh_0)$ for which the
pair is strongly isotropy irreducible and the subalgebra is not primitive:
\begin{center}
\begin{tabular}{rlrl}
(1) &  $(\liesu(p+q), S(\lieu(p)\oplus \lieu(q)))$ with $p, q\geq 1$, & (2) & $(\lieso(p+q),
\lieso(p)\oplus \lieso(q))$ with $p, q\geq 1$, \\
(3) & $(\liesp(p+q), \liesp(p)\oplus \liesp(q))$ with $p, q\geq 1$, & (4) & $(\liesp(n), \lieu(n))$
with $n \geq 1$ \\
(5) & $(\lieso(2n), \lieu(n))$ with $n \geq 3$, & (6) & $(\liegt, \lieso(4))$, \\
(7) & $(\lief_4, \liesp(3)\oplus \liesp(1))$, & (8) & $(\liee_6, \lieso(10)\oplus \lieso(2))$, \\
(9) & $(\liee_6, \liesu(6)\oplus \liesu(2))$, & (10) & $(\liee_7, \liee_6\oplus \lieso(2))$, \\
(11) & $(\liee_7, \lieso(12)\oplus \liesu(2))$, & (12) & $(\liee_8, \liee_7\oplus \liesu(2))$, \\
(13) & $(\liesu(4), \lieso(4))$, & (14) & $(\liesu(pq), \liesu(p)\oplus \liesu(q))$ with $p,q\geq
2, pq \geq 5$, \\
(15) & $(\lief_4, \liegt\oplus \lieso(3))$, & (16) & $(\lief_4, \liesu(3)\oplus \liesu(3))$, \\
(17) & $(\liee_6, \liesu(3) \oplus \liegt)$, & (18) & $(\liee_6, \liesu(3)\oplus \liesu(3)\oplus
\liesu(3))$, \\
(19) & $(\liee_7, \liesp(3) \oplus \liegt)$, & (20) & $(\liee_7, \lieso(3)\oplus \lief_4)$, \\
(21) & $(\liee_7, \liesu(3)\oplus \liesu(6))$, & (22) & $(\liee_8, \liesu(3)\oplus \liee_6)$, \\
(23) & $(\liee_8, \liegt\oplus \lief_4)$, & (24) & $(\lieso(4n),\liesp(1)\oplus \liesp(n))$ with $n
\geq 2$, \\
(25) & $(\liesp(n), \liesp(1)\oplus\lieso(n))$ with $n \geq 3$.
\end{tabular}
\end{center}
The first $13$ cases are from the symmetric spaces and the rest are from Wolf's list. Next we
consider the triples $\lieh_1 \oplus\lieh_0 \subset \lieh_1\oplus\liep_1\oplus \lieh_0 \subset
\lieg_1$ and they are the followings:
\begin{enumerate}
\item $\liesu(p)\oplus \liesu(q) \subset S(\lieu(p)\oplus\lieu(q)) \subset \liesu(p+q)$ with $p, q \geq
1, pq \geq 2$,
\item $\lieso(n)\oplus \liegt \subset \lieso(n)\oplus \lieso(7) \subset \lieso(n+7)$ with $n\geq
2$,
\item $\liesp(n)\oplus \lieu(1) \subset \liesp(n)\oplus \liesp(1) \subset \liesp(n+1)$ with $n \geq
1$,
\item $\liesp(n)\oplus \lieso(4) \subset \liesp(n)\oplus\liesp(2) \subset \liesp(n+2)$ with $n \geq
1$,
\item $\liesu(n) \subset \lieu(n) \subset \liesp(n)$ with $n \geq 1$,
\item $\lieu(1)\oplus \lieso(5) \subset \lieu(4) \subset \liesp(4)$,
\item $\liesu(n) \subset \lieu(n) \subset \lieso(2n)$ with $n \geq 3$,
\item $\lieu(1)\oplus \liesp(3) \subset \liesp(1)\oplus \liesp(3) \subset \lief_4$,
\item $\lieso(10) \subset \lieso(10)\oplus \lieso(2) \subset \liee_6$,
\item $\lieso(9)\oplus \lieso(2) \subset \lieso(10) \oplus \lieso(2) \subset \liee_6$,
\item $\lieu(1) \oplus \liesu(6) \subset \liesu(2) \oplus \liesu(6) \subset \liee_6$,
\item $\liee_6 \subset \liee_6\oplus \lieso(2) \subset \liee_7$,
\item $\lieu(1)\oplus \lieso(12) \subset \liesu(2)\oplus \lieso(12) \subset \liee_7$,
\item $\lieso(11)\oplus \liesu(2) \subset \lieso(12) \oplus \liesu(2) \subset \liee_7$,
\item $\lieu(1)\oplus \liee_7 \subset \liesu(2)\oplus \liee_7 \subset \liee_8$,
\item $\lieso(5) \oplus\lieu(n) \subset S(\lieu(4)\oplus \lieu(n)) \subset \liesu(n+4)$ with $n \geq
1$,
\item $\lieu(2) \subset \lieso(4) \subset \liegt$.
\end{enumerate}
There is no corresponding group triples for the last two cases $(16)$ and $(17)$. In Case $(2)$ the
group triple is
\begin{equation*}
\gH = \Spin(n) \times \Gt \subset \K = \wt{\SO(n)\times \SO(7)} \subset \Spin(n+7).
\end{equation*}
If $n\geq 3$, then $\K$ is not simply-connected and its two-fold cover is $\Spin(n) \times
\Spin(7)$. It follows that $\K/\gH$ is the real projective space $\Rp^7$. For other cases, we list
the group triples $\gH \subset \K \subset \G$ below:
\begin{enumerate}
\item $\SU(p)\times \Diag\SU(q) \subset \U(1)\cdot\SU(p)\times  \Diag\SU(q) \subset \SU(p+q)\times \SU(q)$,
\item $\Gt \times \Diag \SO(2) \subset \Spin(7)\times\Diag \SO(2) \subset \Spin(9) \times \SO(2)$,
\item $\U(1)\times \Diag \Sp(n) \subset \Sp(1)\times \Diag\Sp(n) \subset \Sp(n+1)\times \Sp(n)$,
\item $\Sp(1)\times \Sp(1) \times \Diag \Sp(n) \subset \Sp(2) \times \Diag \Sp(n) \subset \Sp(n+2) \times
\Sp(n)$,
\item $\Diag \SU(n) \subset \U(1) \times \Diag \SU(n) \subset \Sp(n) \times \SU(n)$,
\item $\Sp(2) \times \Diag \U(1) \subset \SU(4) \times \Diag \U(1) \subset \Sp(4) \times \U(1)$,
\item $\Diag \SU(n) \subset \U(1) \times \Diag \SU(n) \subset \SO(2n)\times \SU(n)$,
\item $\U(1)\times \Diag \Sp(3) \subset \Sp(1)\times \Diag \Sp(3) \subset \F_4\times \Sp(3)$,
\item $\Diag \Spin(10) \subset \SO(2) \times \Diag \Spin(10) \subset \E_6 \times \Spin(10)$,
\item $\Spin(9) \times \Diag \SO(2) \subset \Spin(10) \times \Diag \SO(2) \subset \E_6\times
\SO(2)$,
\item $\U(1)\times \Diag\SU(6) \subset \SU(2)\times \Diag \SU(6) \subset \E_6\times \SU(6)$,
\item $\Diag \E_6 \subset \SO(2) \times \Diag \E_6 \subset \E_7\times \E_6$,
\item $\U(1) \times \Diag \Spin(12) \subset \SU(2) \times \Diag \Spin(12) \subset \E_7 \times
\Spin(12)$,
\item $\Spin(11) \times \Diag \SU(2) \subset \Spin(12) \times \Diag \SU(2) \subset \E_7\times
\SU(2)$,
\item $\U(1) \times \Diag \E_7 \subset \SU(2) \times \Diag \E_7 \subset \E_8\times \E_7$.
\end{enumerate}

The discussion in Case (II.B.5) is similar to Case (II.B.4).

If $\liek$ is in Case (II.B.6), then $\lieh_0$ is either $\lieu(1)$ or $\liesu(2)$ and the isotropy
representation of the pair $(\lieg_1, \lieh_1\oplus \lieh_0)$ has two irreducible summands. There
are many examples in this case.

We summarize the group triples in Case (II.B.2) and (II.B.4) in Table \ref{tabletriplereducible}.
\begin{table}[!h]
\begin{center}
\begin{tabular}{|l|l|l|l|}
\hline $\quad \quad \quad \quad \quad \G$ & $\quad \quad \quad \quad \K$ & $\quad \quad \quad \gH$ &  \\
\hline \hline
$\Sp(n+1) \times \U(1)$ & $\Sp(n) \times \Sp(1)\times \U(1)$ & $\Sp(n)\Diag \U(1)$ & $n\geq 1$ \\
\hline
$\F_4 \times \U(1)$ & $\Sp(3)\times \Sp(1)\times \U(1)$ & $\Sp(3) \Diag \U(1)$ & \\
\hline
$\E_6 \times \U(1)$ & $\SU(6) \times \SU(2) \times \U(1)$ & $\SU(6) \Diag \U(1)$ &  \\
\hline
$\E_7 \times \U(1)$ & $\Spin(12)\times \SU(2)\times \U(1)$ & $\Spin(12) \Diag \U(1)$ & \\
\hline
$\E_8\times \U(1)$ & $\E_7\times \SU(2)\times \U(1)$ & $\E_7\Diag \U(1)$ & \\
\hline \hline
$\Spin(9) \times \U(1)$ & $\Spin(7) \times \Diag \U(1)$ & $\Gt\Diag \U(1)$ & \\
\hline
$\Sp(4)\times \U(1)$ & $\SU(4)\times \Diag \U(1)$ & $\Sp(2) \Diag \U(1)$ & \\
\hline
$\E_6\times \U(1)$ & $\Spin(10) \times \Diag \U(1)$ & $\Spin(9) \Diag \U(1)$ & \\
\hline $\SU(p+q)\times \SU(q)$ & $\U(1)\cdot\SU(p) \times \Diag \SU(q)$ & $
\SU(p)\times \Diag\SU(q)$ & $p\geq 1, q\geq 2$ \\
\hline
$\Sp(n+1)\times \Sp(n)$ & $\Sp(1) \times \Diag \Sp(n)$ & $\U(1) \Diag \Sp(n)$ & $n\geq 1$ \\
\hline
$\Sp(n+2)\times \Sp(n)$ & $\Sp(2)\times \Diag \Sp(n)$ & $\Sp(1) \times \Sp(1)\Diag \Sp(n)$ & $n\geq 1$ \\
\hline
$\Sp(n) \times \SU(n)$ & $\U(1) \times \Diag \SU(n)$ & $\Diag \SU(n)$ & $n\geq 2$ \\
\hline
$\E_7\times \SU(2)$ & $\Spin(12)\times \Diag \SU(2)$ & $\Spin(11)\Diag \SU(2)$ & \\
\hline
$\SO(2n)\times \SU(n)$ & $\U(1)\times \Diag \SU(n)$ & $\Diag \SU(n)$ & $n\geq 3$ \\
\hline
$\F_4\times \Sp(3)$ & $\Sp(1) \times \Diag \Sp(3)$ & $\U(1)\Diag \Sp(3)$ & \\
\hline
$\E_6 \times \Spin(10)$ & $\U(1) \times \Diag \Spin(10)$ & $\Diag \Spin(10)$ & \\
\hline
$\E_6\times \SU(6)$ & $\SU(2)\times \Diag \SU(6)$ & $\U(1) \Diag \SU(6)$ & \\
\hline
$\E_7 \times \E_6$ & $\U(1) \times \Diag \E_6$ & $\Diag \E_6$ & \\
\hline
$\E_7\times \Spin(12)$ & $\SU(2) \times \Diag \Spin(12)$ & $\U(1) \Diag \Spin(12)$ & \\
\hline
$\E_8\times \E_7$ & $\SU(2) \times \Diag \E_7$ & $\U(1) \Diag \E_7$ & \\
\hline
\end{tabular}
\smallskip
\caption{Group triple $\G \supset \K \supset \gH$ in Case II.B.2 and
II.B.4}\label{tabletriplereducible}
\end{center}
\end{table}

We consider the construction of the cohomogeneity one diagram.

If none of the triples $\gH \subset \Kpm \subset \G$ is in Case (II.B.6), then from the fact that
the three summands are pairwisely non-equivalent, each of them should be in Table
\ref{tabletriplereducible}. It is easy to see that in Table \ref{tabletriplereducible}, for any
given pair $(\G, \gH)$, there is only one intermediate subgroup $\K$. This gives us example
\textbf{R}.13($m=1$).

If $\gH \subset \Kp \subset \G$ is in Case (II.B.6), but $\gH \subset \Km \subset \G$ is not in
Case (II.B.6), then we have example \textbf{R}.19, \textbf{R}.20($m=1$) and \textbf{R}.21.

If both $\Kpm$ are in Case (II.B.6), then we have example \textbf{R}.9($m=1$) and \textbf{R}.10.

\smallskip

\textsc{Step II.} First note that other than example \textbf{R}.11, \textbf{R}.12, \textbf{R}.22
and \textbf{R}.23, the three summands of the isotropy representation $\Ad_{\gH_c}$ of the examples
in Table \ref{Tables3reducibledouble} and \ref{Tables3reduciblenondouble} are non-equivalent. From
Step I, these four examples have $(\G, \gH)=(\U(1) \times \G_2\times \U(1), \gH_2\Delta \U(1))$
where $\U(1)$ is diagonally embedded into the last two factors of $\G$, and $\G_2/(\gH_2\times
\U(1))$ is strongly isotropy irreducible. The simplest case is when $\G_2 = \SU(2)$ and
$\gH=\Delta\U(1)$. The manifold is 5 dimensional and the isotropy representation is $[\phi]_\Real
\oplus \id \oplus \id$.

It is easy to see that the action is not a product(see Section 1.5.1 in \cite{HoelscherClass})
since the manifold is simply-connected. In dimension $5$, if the action is not a product or a sum
action, or fixed points free, then the non-reducible diagrams have $\G=\SU(2)\times \U(1)$ and
$\gH_c = \set{1}$. Such diagrams were classified in \cite{HoelscherClass} and most of them have
disconnected $\gH$. If a $\U(1) \subset \SU(2)$ normalizes $\gH$ and $\Kpm$, then one can extend
the diagram $\gH \subset \set{\Kpm} \subset \U(1)\times \SU(2)$ to a reducible one with $\G = \U(1)
\times \SU(2) \times \U(1)$ and $\gH_c = \Delta \U(1)$ via normal extension. In example $N^5$,
$Q^5_A$ and $Q^5_C$ in \cite{HoelscherClass}, one can take $\gL = \U(1) = \set{e^{\qi \theta}}$ to
extend the action to $\U(1) \times \SU(2)\times \U(1)$. Note that example $Q^5_C$ is a primitive
action and it is a sum action on $\sph^5$. However such extension does not exist for the example
$P^5$ and $Q^5_B$.

Next we consider other examples, i.e. $\gH_2$ is not a trivial group. The non-reducible diagrams
have $\G = \U(1) \times \G_2$ and $\gH = \gH_2$. If the isotropy representation of $\G_2/\gH_2$ has
two irreducible summands, then the non-reducible diagram by $\U(1) \times \G_2$ also has $s=3$. So
we only consider the pairs $(\G_2, \gH_2)$ where the isotropy representation $\G_2/\gH_2$ has more
than two irreducible summands, and they are given by $(\SO(n+2),\SO(n))(n\geq 2)$ and their finite
covers. The connected components of $\Kpm$ are given by $\U(1) \cdot \gH_2$ and the embeddings of
the $\U(1)$ factor into $\U(1)\times \G_2$ are different. The proper subgroup $\gL = \U(1) \times
\SO(2) \times \SO(n) \subset \G$ contains both $\Kpm_c$. We may assume that $\G_2$ is
simply-connected by lifting the action to its universal covering if necessary. The diagram in this
case was discussed in \cite{HoelscherClass}, see Lemma 4.3. There are two different classes of
cohomogeneity one diagrams. In one class, example \textbf{R}.12, the diagrams are doubles with
disconnected $\gH$ and the manifolds are $\sph^2$ bundles over the Stiefel manifold
$\SO(n+2)/\SO(n)$. In another class, example \textbf{R}.23, the diagrams are non-primitive and the
manifolds are bundles over $\SO(n+2)/(\SO(2)\times \SO(n))$ with fiber a three dimensional lens
space. In both classes, since $\U(1) \times \SO(2)$ normalizes $\Kpm$ and $\gH$, one can extend the
actions by $\U(1)\times \SO(n+2)$ to reducible actions by $\U(1) \times \SO(n+2) \times \U(1)$ such
that the principal isotropy representation has three irreducible summands.

We illustrate the construction in this case by the pair $(\G_2, \gH_2) = (\SO(5), \SO(3))$.

Suppose $\set{\beta(\theta) = e^{2\pi \imath \theta}| 0 \leq \theta \leq 1 }$ is the circle group
$\SO(2) \subset \SO(2) \times \SO(3) \subset \SO(5)$ and $\set{\alpha(t) = e^{2\pi \imath t} | 0
\leq t \leq 1}$ is the $\SO(2)$ factor in $\G = \SO(2) \times \SO(5)$. Let
\begin{equation*}
\Kpm_c = \left\{ \left( \alpha(n_{\pm} \theta),
\begin{pmatrix}
\beta(m_{\pm}\theta) & \\
& A
\end{pmatrix} \right)
|\, 0 \leq \theta \leq 1, A \in \SO(3) \right\} \subset \SO(2) \times \SO(2) \times \SO(3)
\end{equation*}
be the identity components of $\Kpm$ where $m_\pm$ and $n_\pm$ are integers. To obtain a diagram
which defines a double, let $n_\pm = 1$ and $m_\pm = m$, and for any integer $k$ let $\Zeit_k
\subset \set{(\alpha(\theta),\beta(m \theta))}$ be a cyclic group. Then a double has the diagram
$\gH = \Zeit_k \cdot \SO(3) \subset \set{\Kpm = \Kpm_c} \subset \SO(2) \times \SO(5)$.

We consider the diagram in the second class which is not a double. To simplify the discussion, we
assume that $n_\pm = 1$ and then $m_+ \ne m_-$. Suppose $\Zeit_{m_{\pm}}$ is the cyclic group
generated by $(\alpha(1/m_\pm), I_5)$ and $\Zeit_m$ by $(\alpha(1/m),I_5)$ where $m$ is the least
common multiple of $m_\pm$. Let $\gH_\pm = \Zeit_{m_\pm} \cdot \SO(3)$ and
\begin{equation*}
\gH = \gH_+ \cdot \gH_- = \Zeit_m \cdot \SO(3), \quad \Kpm = \Kpm_c \cdot \gH = \Kpm_c \cdot
\Zeit_{m/m_\pm},
\end{equation*}
then the diagram $\gH \subset \set{\Kpm} \subset \G = \SO(2) \times \SO(5)$ defines a
simply-connected cohomogeneity one manifold $M$. Since both $\Kpm$ is contained in $\gL = \SO(2)
\times \SO(2) \times \SO(3)$, $M$ is a fiber bundle over the space $\G/\gL = \SO(5)/(\SO(2)\times
\SO(3))$ and the $\gL$ action on the fiber is also cohomogeneity one. The action is not effective
and the effective one has the diagram
\begin{equation*}
\set{1} \subset \set{\gS^1_{1,m_-}, \gS^1_{1,m_+}} \subset \T^2,
\end{equation*}
where $\gS^1_{p,q}$ is embedded in $\T^2$ as $(e^{2\pi \imath p \theta}, e^{2\pi \imath q
\theta})(0 \leq \theta \leq 1)$. Using the van Kampen Theorem, the fibre is a lens space with
fundamental group $\pi_1 = \Zeit_{|m_+ - m_-|}$, see \cite{Neumann} and Proposition 1.8 in
\cite{HoelscherClass}.

In the following let $\gH \subset \set{\Kpm} \subset \G$ be a diagram in Table
\ref{Tables3reducibledouble} and \ref{Tables3reduciblenondouble}, and we assume that the three
summands are non-equivalent. It follows that one cannot obtain a new diagram by conjugating the
original one by an element in $N_\G(\gH)$. If one singular orbit is codimension $2$, say $\Km /\gH
= \sph^1$, then one may add connected components to isotropy subgroups. In the case where the
diagram is a double, i.e., $\Km = \Kp$, then we have the diagram $\gH \cdot \Zeit_m \subset
\set{\Kpm} \subset \G$ for every $m \in \Zeit$. Note that in example \textbf{R}.8, if one add
connected components, then the action is not effective and its effective version is the original
one with connected $\gH$. If the diagram $\Gamma$ is not a double, then it is one of the example
\textbf{R}.14, \textbf{R}.15, \textbf{R}.17, \textbf{R}.19 and \textbf{R}.20. Except for example
\textbf{R}.15 and \textbf{R}.19, there exists a proper subgroup $\gL$ contains $\Kpm$ and the
diagram $\gH \subset \set{\Kpm} \subset \gL$ is a sum action on a sphere. For each $m \in \Zeit$ we
have the diagram $\gH \cdot \Zeit_m \subset \set{\Km, \Kp \cdot \Zeit_m} \subset \G$ and $\Kp\cdot
\Zeit_m$ is contained in $\gL$. In example \textbf{R}.15 and \textbf{R}.19, $\Kp$ contains $\Km$
which implies that one can not add components to the isotropy subgroups. This finishes the proof of
Theorem \ref{thmclassificationreducible}.
\end{proof}

% +++++++++++++++++++++++++++++++++++++++++++++++++++++++++++++++++++++++++++++++++++++++++++++++++
\subsection{Non-primitive actions}

We assume that the diagram is non-reducible and the main result is

\begin{thm}\label{thms3nonprimitive}
Suppose a compact simply-connected manifold $M$ admits a cohomogeneity one action with diagram $\gH
\subset \set{\Kpm} \subset \G$ and $s=3$. If the action is non-primitive and non-reducible, then
\begin{enumerate}
\item either the manifold is a double, i.e., $\Km = \Kp$,
\item or it is equivariantly diffeomorphic to one of the examples in Table \ref{Tables3nonprimitive}.
\end{enumerate}
\end{thm}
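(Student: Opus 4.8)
The plan is to use the non-primitivity to cut the problem down to the already-established classifications for $s=1,2$ in Theorems \ref{thmclasss2primitive}, \ref{thmclasss2nonprimitive} and Proposition \ref{propfixedpointaction}. By hypothesis there is a proper connected subgroup $\gL$ with $\Kpm \subset \gL \subset \G$, giving the fibration $N \To M \To \G/\gL$ with $M = \G\times_\gL N$, where the fiber $N$ carries a cohomogeneity one $\gL$-action of diagram $\gH \subset \set{\Kpm} \subset \gL$. Correspondingly the principal isotropy representation splits $\gH_c$-invariantly as $\liep = \liep_f \oplus \liep_b$, where $\liep_f$ is the orthogonal complement of $\lieh$ in $\liel$ (the isotropy representation of the fiber action) and $\liep_b$ is the orthogonal complement of $\liel$ in $\lieg$ (the isotropy representation of $\G/\gL$ restricted to $\gH_c$). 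Writing $s_N$ and $s_b$ for the numbers of irreducible $\gH_c$-summands in $\liep_f$ and $\liep_b$, the key observation is the additivity $s = s_N + s_b$. Since $\gL$ is proper we have $\liep_b\neq 0$, so $s_b\geq 1$; since $\gH\subsetneq\Km\subseteq\gL$ we have $s_N\geq 1$. Hence $(s_N,s_b)\in\{(1,2),(2,1)\}$.

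If $(s_N,s_b)=(1,2)$, the fiber action has $s=1$, so by Table \ref{tablesequal1} it is a linear action on a sphere with two fixed points, forcing $\Km=\Kp=\gL$; then $M$ is a double and we are in alternative (1). The remaining case is $(s_N,s_b)=(2,1)$. Here $s_b=1$ means $\gH_c$ acts irreducibly on $\liep_b$, which a fortiori makes $\gL_c$ act irreducibly on it, so $\G/\gL$ is strongly isotropy irreducible; the candidate pairs $(\lieg,\liel)$ are therefore the irreducible symmetric pairs together with Wolf's list. Independently the fiber $N$ has an $s=2$ cohomogeneity one $\gL$-action, so the triple $\lieh\subset\liekpm\subset\liel$ is governed by the $s=2$ trichotomy: if it is non-primitive it is a double (Theorem \ref{thmclasss2nonprimitive}), forcing $\Km=\Kp$ and hence $M$ a double as in (1); if it has a fixed point the fiber is a complex or quaternionic projective space or the Cayley plane (Proposition \ref{propfixedpointaction}); and otherwise it is a sphere carrying a sum action or the exceptional $\sph^{15}$ (Theorem \ref{thmclasss2primitive}).

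Following the structure of the proof of Theorem \ref{thmclassificationreducible}, I would carry this out in two steps. In Step I, working at the Lie algebra level with all isotropy groups connected, I would match each strongly isotropy irreducible pair $(\lieg,\liel)$ against the admissible $s=2$ fiber triples $\lieh\subset\liekpm\subset\liel$, subject to the crucial extra constraint that $\liep_b$ remain irreducible after restriction from $\liel$ to the smaller algebra $\lieh$. This condition is considerably stronger than strong isotropy irreducibility of $\G/\gL$ and is what prunes Wolf's list down to the finitely many surviving triples recorded in Table \ref{Tables3nonprimitive}; the non-reducibility hypothesis is used here to discard the triples in which $\gH$ projects onto a simple factor of $\G$, since those belong to Theorem \ref{thmclassificationreducible} instead. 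In Step II, I would reconstruct the global diagrams from the Lie algebra data: adding $\Zeit_k$ components to $\gH$ and $\Kpm$ when a singular orbit has codimension two (as permitted by Lemma \ref{lemsimplyconnected}), taking variations by automorphisms via Lemma \ref{LemGroupEquivalent}, and passing to the effective version of the $\gL$-action on $N$ — the step at which a non-effective sum action on $\sph^3$ becomes the three dimensional lens space fiber of alternative (2). Throughout, Lemma \ref{lemsimplyconnected} guarantees that the assembled diagrams define simply-connected manifolds.

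I expect the main obstacle to be the Step I enumeration: for each entry of Wolf's list and each irreducible symmetric pair one must decide whether some $s=2$ cohomogeneity one fiber can be fitted inside $\liel$ so that the base summand $\liep_b$ stays irreducible under $\gH_c$, and this irreducibility test has to be performed weight-by-weight against the explicit isotropy representations. Organizing these checks so that no admissible triple is missed and no redundant (equivalent) diagram is double-counted, while simultaneously separating off the doubles and the reducible diagrams, is the delicate bookkeeping on which the argument turns.
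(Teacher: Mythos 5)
Your proposal follows essentially the same route as the paper's proof: the paper likewise exploits the fibration $N \To M \To \G/\gL$, observes that the summand count forces the fiber diagram $\gH \subset \set{\Kpm} \subset \gL$ to have $s\leq 2$ while $\G/\gL$ is strongly isotropy irreducible with base summand irreducible even under $\gH_c$, and then enumerates against Proposition \ref{propfixedpointaction}, Theorems \ref{thmclasss2primitive}--\ref{thmclasss2nonprimitive} and Table \ref{TableHKG2summandsKHsphere}, deferring disconnected groups, variations, and the lens-space fibers to a second step. The only difference is organizational: where you run the $(s_N,s_b)$ dichotomy followed by the $s=2$ trichotomy, the paper takes $\gL$ minimal among subgroups containing both $\Kpm$ (so the fiber diagram is automatically a double, has a fixed point, or is primitive) and splits accordingly into its Cases A and B.
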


\begin{proof}
\textsc{Step I:} We assume that $\gH$ is connected. Let $\gH \subset \set{\Kpm} \subset \G$ be a
non-primitive diagram and $\gL$ be the minimal subgroup of $\G$ which contains both $\Kpm$. Let
$D_1$ denote the diagram $\gH \subset \set{\Kpm} \subset \gL$. If $\Kpm$ are the same, then $\gL=
\Kpm$ and the manifold is a double. In the following, we assume that at least one of $\Kpm$ is
proper in $\gL$.

\textsc{Case A.} We consider the case when $\gL$ is one of $\Kpm$, say $\gL = \Kp$ and $\Km
\subsetneq \gL$. So the diagram $D_1$ has only one fixed point and $s=2$ and $\G/\gL$ is strongly
isotropy irreducible. Suppose $\gL = \gL_1\times \gL_2$ and $\gL_1$ is the non-effective kernel of
the action given by $D_1$. Then the effective version of $D_1$ is $\gH_1 \subset \set{\K_1, \gL_2}
\subset \gL_2$. From the classification of the actions with one fixed point in Proposition
\ref{propfixedpointaction}, $\gL_1 \times \gH_1 \subset \gL_1 \times \gL_2 \subset \G$ is one of
the following triples:
\begin{enumerate}
\item $\gL_1 \times \SU(n) \subset \gL_1\times \SU(n+1) \subset \G$,
\item $\gL_1 \times \U(n) \subset \gL_1 \times \U(n+1) \subset \G$,
\item $\gL_1 \times \Spin(7) \subset \gL_1 \times \Spin(9) \subset \G$,
\item $\gL_1 \times \Sp(n) \Delta \Sp(1) \subset \gL_1 \times \Sp(n+1) \times \Sp(1) \subset \G$.
\end{enumerate}
Since the diagram of $M$ is non-reducible, any primitive factor in $\gL_1$ is not in the
non-effective kernel of the homogeneous space $\G/\gL$.

In the first case, the homogeneous space $\G/\gL$ is effective and then the isotropy representation
of $\G/(\gL_1\times \gH_1)$ has $3$ summands. Combining the classification in Table
\ref{tabletripleHKG} for the triple $\G \supset \gL_1\times \gL_2 \supset \gL_1 \times \gH_1$, we
have the following two possibilities:
\begin{enumerate}
\item $\SU(2) \times \SU(5) \subset \SU(2) \times \SU(6) \subset \E_6$,
\item $\SO(n) \times \SU(3) \subset \SO(n) \times \SU(4) \subset \Spin(6+n)$ with $n\geq 1$,
\end{enumerate}
Thus we have example \textbf{N}.1 and \textbf{N}.2. The manifolds are $\Cp^6$ bundle over $\E_6
/(\SU(6) \cdot \SU(2))$ and $\Cp^4$ bundle over $\SO(n+6)/(\SO(6)\times \SO(n))$ respectively.

In the second case, some primitive factor of $\gL_2$ but not the whole $\gL_2$ is the non-effective
kernel of the homogeneous space $\G/(\gL_1 \times \gL_2)$. Then we have example \textbf{N}.3 and
\textbf{N}.4 and the manifolds are $\Hp^{n+1}$ bundle over $\G_1/(\gL_1 \times \Sp(1))$ and
$\Cp^{n+1}$ bundle over $\G_1/(\gL_1\times \U(1))$ respectively.

In the last case, $\gL_2$ is the non-effective kernel of the homogeneous space $\G/(\gL_1\times
\gL_2)$. Then we have example \textbf{N}.5 and the manifold is the product of $\G_1/\gL_1$ with the
one defined by the diagram $D_1$.

\smallskip

\textsc{Case B.} Now we assume that both $\Kpm$ are proper subgroups in $\gL$ and then $\G/\gL$ is
isotropy irreducible. The diagram $D_1$ is primitive and has $s=2$. There are three difference
cases for the effective version of this diagram classified in Theorem \ref{thmclasss2primitive}.

\textsc{Case B.1.} Suppose the diagram $D_1$ is given by $\Gt\times \gL_1 \subset \set{\Spin^-(7)
\times \gL_1, \Spin^+(7)\times \gL_1 } \subset \Spin(8) \times \gL_1$. If $\G$ is simple, then $\G
\supset \Spin(8)\times \gL_1 \supset \Spin^-(7)\times \gL_1$ is in Table
\ref{TableHKG2summandsKHsphere} and thus $\G = \Spin(9)$ and $\gL_1 = \set{1}$. However the
isotropy representation of the homogeneous space $\Spin(9)/\Spin(8)$ splits when restricted to
$\Gt$. If $\G$ is not a simple Lie group, then we have example \textbf{N}.6 and the manifold is the
product $\sph^{15}\times \G_1/\gL_1$.

\textsc{Case B.2.} Suppose the diagram $D_1$ is given by $\set{1}\times \gL_1 \subset
\set{\U(1)_1\cdot \gL_1, \U(1)_2 \cdot \gL_1} \subset \U(1)\times \U(1)\times \gL_1$. If the
non-effective kernel of the homogeneous space $\G/\gL$ is $\U(1)\times \U(1)$, then we have a
special case of example \textbf{N}.7 for which $\gL_1 = \gL_2 = \U(1)$.

The other possibility is that $\G = \G_1 \times\U(1)$ such that $\G_1$ is simple, $\G_1/(\U(1)\cdot
\gL_1)$ is strongly isotropy irreducible and its isotropy representation remains irreducible when
restricted to $\gL_1$. It follows that the pair $(\G_1, \U(1)\cdot\gL_1)$ appears as $(\G, \K)$ in
Table \ref{TableHKG2summandsKHsphere}. Thus we have example \textbf{N}.8 with $(\gL_1, \gH_1) =
(\U(1), \set{1})$ and \textbf{N}.10 with connected $\gH'$.

\textsc{Case B.3.} Suppose the diagram $D_1$ is $\gH_1\times \gH_2 \times \gL_0 \subset \set{\gL_1
\times \gH_2\times \gL_0, \gH_1\times \gL_2\times \gL_0} \subset \gL_1\times \gL_2\times \gL_0$ and
$\gL_i/\gH_i$($i=1,2$) is a sphere with irreducible isotropy representation. If some primitive
factor $\gL'$ of $\gL$ diagonally embeds into $\G$, then $\G = \gL' \times \gL$. Note $\gL'$ is not
a factor of $\gL_0$ otherwise the diagram is reducible. Since the isotropy representation of
$\G/\gL$ remains irreducible when restricted to $\gH$, it follows that one of $\gL_1$ and $\gL_2$,
say $\gL_2 = \SO(2)$, and then $\gL' = \gL_2$. However the manifold is a sphere bundle over
$\sph^1$ which is not simply-connected. So we assume that there is no diagonally embedded factors
in $\G$. Since the diagram is non-reducible, any primitive factor of $\gL_0$ is not contained in
the non-effective kernel of the strongly isotropy irreducible space $\G /\gL$, i.e., the kernel is
a subgroup of $\gL_1\times \gL_2$. If $\gL_1\times \gL_2$ is the kernel, then we have example
\textbf{N}.7 and the manifold is a product of a sphere and the homogeneous space $\G_1/\gL_1$.

Next we assume that the non-effective kernel $\gL'$ is a proper normal subgroup of $\gL_1\times
\gL_2$. Both $\gL_1$ and $\gL_2$ cannot be $\SU(2)\times \SU(2)$(or $\SO(4)$) otherwise the diagram
of $\G$ action is reducible. Hence one of $\gL_1$ and $\gL_2$, say $\gL_1$, is primitive and then
$\gL' = \gL_1$. It follows that $\G = \gL_1 \times \G_2$ and the pair $(\G_2, \gL_2\times \gL_0)$
appears as $(\G,\K)$ in Table \ref{TableHKG2summandsKHsphere}. Conversely, for every triple $\G_2
\supset \K_2 \supset \gH_2$ in Table \ref{TableHKG2summandsKHsphere} and for any isotropy
irreducible spherical pair $(\gL_1, \gH_1)$ with $\gL_1$ primitive, we have the following diagram
\begin{equation*}
\gH_1 \times \gH_2 \subset \set{\gL_1 \times \gH_2, \gH_1 \times \K_2} \subset \gL_1 \times \G_2,
\end{equation*}
and it gives us example \textbf{N}.8 and \textbf{N}.9($m=1$).

\smallskip

\textsc{Step II: } From Step I, if a variation of a double has different singular isotropy groups,
then the new diagram must be in Table \ref{Tables3nonprimitive}. Let $D_c : \gH' \subset \set{\Kpm
= \K'} \subset \G$ be a double with $\K'/\gH' = \sph^1$. Suppose $D: \gH \subset \set{\Kpm} \subset
\G$ is a diagram with disconnected subgroups from the double $D_c$, i.e., $\Kpm_c = \K'$ and $\gH_c
= \gH'$. Let $\gH_\pm = \gH \cap \Kpm_c = \gH\cap\K'$ and then Lemma 1.13 in \cite{HoelscherClass}
tells us that $\gH$ is generated by $\gH_{\pm}$ if the manifold defined by $D$ is simply connected.
In particular it implies that $\gH \subset \K'$ since $\gH_{-} = \gH_{+}$. Since $\Kpm/\gH =
\sph^1$, we have that $\Kpm = \K'$ and the manifold is an $\sph^2$ bundle over $\G/\K'$.

For most diagrams in Table \ref{Tables3nonprimitive}, the three irreducible summands of the
isotropy representation $\Ad_{\gH_c}$ are non-equivalent. It follows that their variations by
conjugating group elements are equivalent to the original ones. In the following four cases, the
isotropy representation $\Ad_{\gH_c}$ contains two equivalent irreducible summands:
\begin{eqnarray}
& & \Gt \times \gL_1 \subset \set{\Spin^-(7)\times \gL_1, \Spin^+(7)\times \gL_1} \subset
\Spin(8)\times \G_1 \label{eqndiagramnonp2} \\
& & \SU(3) \subset \set{\U(1)\cdot \SU(3), \SU(4)} \subset \Spin(7)  \label{eqndiagramnonp1} \\
& & \gH_1 \times \Gt \subset \set{\gL_1 \times \Gt, \gH_1 \times \Spin(7)} \subset \gL_1 \times
\SO(8) \label{eqndiagramnonp3}
\end{eqnarray}
\begin{eqnarray}
& & \gH_1 \subset \set{\U(1) \cdot \gH_1, \K_1} \subset \U(1)\times \G_1. \label{eqndiagramnonp4}
\end{eqnarray}
They are example \textbf{N}.2 with $n=1$, example \textbf{N}.6 where $\G_1/\gL_1$ is strongly
isotropy irreducible, example \textbf{N}.8 for the triple $\SO(8) \supset \Spin(7) \supset \Gt$,
and example \textbf{N}.10 where the triple $\G_1 \supset \K_1 \supset \gH_1$ is in Table
\ref{TableHKG2summandsKHsphere} such that $\K_1/\gH_1 = \sph^1$.

The different diagram from a variation of the first one (\ref{eqndiagramnonp2}) is a double. For
the second diagram (\ref{eqndiagramnonp1}), all automorphisms of $\Spin(7)$ are inner ones, i.e.,
conjugation by group elements. Since for every $g \in N(\SU(3))$, $g.(\U(1)\cdot \SU(3)).g^{-1} =
\U(1)\cdot \SU(3)$, a variation does not give us a new diagram. Since the $\U(1)$ factor in $\Km$
is contained in $\Kp = \SU(4)$, one cannot add connected component to $\gH$. A similar argument
shows that any variation of the third one (\ref{eqndiagramnonp3}) does not give us a new diagram
either. If the pair $(\gL_1, \gH_1)$ is $(\U(1), \set{1})$, then one can add connected components
to $\gH = \Gt$ and $\Km=\Spin(7)$ to obtain a diagram with disconnected principal isotropy
subgroup. However the action of $\G = \U(1) \times \SO(8)$ is not effective and the diagram of the
effective action is the same as the one (\ref{eqndiagramnonp3}). The discussion of the diagram
(\ref{eqndiagramnonp4}) is similar to the example \textbf{R}.23 in the proof of Theorem
\ref{thmclassificationreducible}. Since the two singular orbits are codimension two, one can add
components to the three isotropy subgroups. The proper subgroup $\gL = \U(1) \times \K_1$ contains
both $\Kpm_c$. We can apply Lemma 4.3 in \cite{HoelscherClass} to obtain all diagrams whose
connected groups are in (\ref{eqndiagramnonp4}). It follows that the manifolds are lens space
bundles over $\G_1/\K_1$. The one with the lowest dimension is given by the triple $(\G_1, \K_1,
\gH_1) = (\SO(3) \times \U(1), \SO(2)\times \U(1), \SO(2))$ and the manifold is the product
$\sph^3\times \sph^2$. If $(\G_1, \K_1, \gH_1)$ is the triple $(\SU(3), \U(2), \SU(2))$, then the
manifold is example $N^7_H$ in \cite{HoelscherClass}.

Finally we consider the diagrams by adding connected components to isotropy groups and we assume
that the three summands are non-equivalent. If the diagram is a double and $\Kpm/\gH = \sph^1$,
then for each nonzero integer $m$, we have the diagram $\Zeit_m \cdot \gH \subset \set{\Kpm}
\subset \G$. In example \textbf{N}.7 and \textbf{N}.8, if one add connected components to isotropy
subgroups, then the action by $\G$ is not effective. If the diagram is in example \textbf{N}.1,
\textbf{N}.2, \textbf{N}.4 and \textbf{N}.5, then the diagram $\gH \subset \set{\Kpm} \subset \Kp$
has a fixed point. So one cannot add connected components to the isotropy groups.
\end{proof}

\medskip

% =================================================================================================

\section{Primitive and non-reducible actions with $s=3$ and $\G$ simple}

First we state the main result in this section.
\begin{thm}\label{thms3primitiveGsimple}
Suppose $M$ is a compact simply-connected manifold with a cohomogeneity one action by a simple Lie
group $\G$. If the action is primitive and has $s=3$, then $M$ is equivariantly diffeomorphic to a
sphere, a complex projective space or the Grassmannian $\SO(m+n)/(\SO(m)\times \SO(n))(m,n \geq 2)$
with an isometric action, see Table \ref{Tablespherenotsum} and \ref{Tableprojectivespaces}.
\end{thm}

We saw in Theorem \ref{thmclassificationreducible} that if the action is reducible and primitive,
then the manifold is a sphere with a linear action. So in the following we assume that the action
is non-reducible. We prove the theorem in several steps:

\textsc{Step 1:} If the three summands of the isotropy representation are non-equivalent, then the
manifold is a sphere with a sum action or the two singular orbits $\G/\Kpm$ are strongly isotropy
irreducible, see Proposition \ref{proppairwisenonequ}.

\textsc{Step 2:} We assume that all isotropy subgroups are connected. If one of $\G/\Kpm$, say
$\G/\Km$ is not isotropy irreducible, i.e., the isotropy representation of $\G/\Km$ has two
summands, then the group triple $\G \supset \Km \supset \gH$ must be $\SO(7) \supset \U(3) \subset
\SU(3)$, see Proposition \ref{propGK2summandsnonequiv}.

\textsc{Step 3:} We classify triples of connected groups $\G \supset \K \supset \gH$ such that
$\G/\K$ is isotropy irreducible, $\K/\gH$ is a sphere and the isotropy representation of $\G/\gH$
has three summands, see Proposition \ref{proptripleHKG} and Table \ref{tabletripleHKG}. It follows
that if $M$ is not a sphere, then both triples $\G \supset \Kpm \supset \gH$ are in Table
\ref{tabletripleHKG} or $\SO(7) \supset \U(3) \supset \SU(3)$.

\textsc{Step 4:} For every quadruple $(\G, \K_1, \K_2, \gH)$ from the previous step, we consider
all possible cohomogeneity one group diagrams from it and then identify the manifolds. Theorem
\ref{thmrigiditydisconnected} is the classification for disconnected $\gH$ and Theorem
\ref{thmrigidityconnected} is for connected $\gH$.

\subsection{Three summands are pairwisely non-equivalent.}
We will show that the two singular orbits $\Bpm = \G/\Kpm$ are strongly isotropy irreducible
homogeneous spaces unless the \cohomd $M$ is equivariantly diffeomorphic to a sphere. In the
following, the three irreducible summands of $\Ad_{\gH_c}$ are denoted by $\liep_1$, $\liep_2$ and
$\liep_3$, i.e., $\liep = \liep_1 \oplus \liep_2 \oplus \liep_3$.

\begin{prop}\label{proppairwisenonequ}
If $\liep_1$, $\liep_2$ and $\liep_3$ are pairwisely non-equivalent as the $\Ad_{\gH_c}$
representations, then
\begin{enumerate}
\item either $\G/\Kpm$ are strongly isotropy,
\item or $M$ is a sphere with a sum action.
\end{enumerate}
\end{prop}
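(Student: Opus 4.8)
The plan is to exploit pairwise non-equivalence to rigidify the two singular isotropy algebras. Write $\liekpm = \lieh \oplus \liem_{\pm}$, where $\liem_{\pm} = \liekpm \cap \liep$ is the $\Ad_{\gH_c}$-invariant tangent space of the normal sphere $\Kpm/\gH$. Since $\liep_1,\liep_2,\liep_3$ are pairwise non-equivalent irreducible $\Ad_{\gH_c}$-modules, Schur's lemma forces every $\Ad_{\gH_c}$-invariant subspace of $\liep$ to be the sum of a subcollection of the $\liep_i$; in particular each $\liem_{\pm}$ is such a sum. As $\lpm \geq 2$ there are no exceptional orbits, so each $\liem_{\pm}$ contains at least one summand, and I may assume there are no fixed points (otherwise $M$ is covered by Proposition \ref{propfixedpointaction}), so neither $\liem_{\pm}$ is all of $\liep$. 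A primitive action is not a double, hence $\liem_- \neq \liem_+$. Counting the number of summands in each, this leaves exactly three configurations up to interchanging the two ends and relabelling: $(|\liem_-|,|\liem_+|) = (2,2)$, $(1,1)$, or $(1,2)$.

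I would settle $(2,2)$ and $(1,1)$ as instances of conclusion (1). In the $(2,2)$ case two distinct $2$-element subsets of a $3$-element set meet in exactly one summand, so after relabelling $\liem_- = \liep_1 \oplus \liep_2$ and $\liem_+ = \liep_1 \oplus \liep_3$; then $T(\G/\Km) = \liep_3$ and $T(\G/\Kp) = \liep_2$ are single $\gH_c$-irreducible, hence $\Kpm_c$-irreducible, summands, so both orbits are strongly isotropy irreducible. In the $(1,1)$ case, say $\liem_- = \liep_1$ and $\liem_+ = \liep_2$, the same inner-product identities used in the proof of Theorem \ref{thmclasss2primitive} (using that $\liekm,\liekp$ are subalgebras) give $[\liep_1,\liep_2] \perp \lieh \oplus \liep_1 \oplus \liep_2$, so $[\liep_1,\liep_2] \subseteq \liep_3$. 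Here primitivity is the crucial input: were $[\liep_1,\liep_2]=0$, then $\lieh \oplus \liep_1 \oplus \liep_2$ would be a proper subalgebra containing both $\liekpm$, contradicting primitivity. Hence $[\liep_1,\liep_2] \neq 0$; being $\Ad_{\gH_c}$-invariant and contained in the irreducible module $\liep_3$, it equals $\liep_3$. Since $\liep_3 = [\liep_1,\liep_2] \not\subseteq \liep_2$, the group $\Km_c$ does not preserve $\liep_2$, and as $\liep_2$ and $\liep_3$ are orthogonal complements in $\liep_2\oplus\liep_3$ — and by non-equivalence the only candidate proper invariant subspaces — it preserves neither. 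Thus $\G/\Km$ is strongly isotropy irreducible, and symmetrically so is $\G/\Kp$.

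The case $(1,2)$ produces conclusion (2) and is the heart of the argument. With $\liem_- = \liep_1$ and $\liem_+ = \liep_2 \oplus \liep_3$ one has the clean splitting $\lieg = \lieh \oplus \liem_- \oplus \liem_+$ with $\liekm = \lieh \oplus \liem_-$ and $\liekp = \lieh \oplus \liem_+$ both subalgebras — exactly the structure treated in the non-equivalent part of Theorem \ref{thmclasss2primitive}, but now with the pair $(\liem_-,\liem_+)$ in place of two irreducible summands. The three inner-product computations there show $[\liem_-,\liem_+] \perp \lieg$, whence $[\liem_-,\liem_+]=0$. I would then reproduce the annihilator-and-ideal argument of that proof — defining $\lieh_0,\lieh_1,\lieh_2$ relative to $\liem_{\pm}$, killing the leftover pieces by almost-effectiveness, and recognizing $\Lie(\liem_-)$ and $\Lie(\liem_+)$ as ideals — to obtain a splitting $\G = \G_1 \times \G_2$ in which the diagram is the sum action \eqref{diagramsumaction}; thus $M$ is equivariantly a sphere. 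This also explains why conclusion (2) is compatible only with $\G$ non-simple, which is why only conclusion (1) survives in the $\G$-simple setting of this section.

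The step I expect to be the main obstacle is verifying that the decomposition argument of Theorem \ref{thmclasss2primitive} survives when $\liem_+ = \liep_2 \oplus \liep_3$ is reducible rather than irreducible. The resolution is that, once $[\liem_-,\liem_+]=0$ is in hand, that argument never uses irreducibility of either block: it needs only the orthogonal splitting $\liep = \liem_- \oplus \liem_+$, the fact that $\liekpm$ are subalgebras, and almost-effectiveness of the $\G$-action. Hence it applies verbatim, and the spheres $\Kpm/\gH$ supply the two sphere factors of the sum action.
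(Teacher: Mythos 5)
Your proposal is correct and takes essentially the same route as the paper: its three "types" (neither, both, or exactly one of $\Kpm/\gH$ having irreducible isotropy representation) are precisely your configurations $(2,2)$, $(1,1)$ and $(1,2)$, with the same key steps in each — the trivial observation in the $(2,2)$ case, the bracket computation $[\liep_i,\liep_j]\subset\liep_k$ combined with primitivity in the $(1,1)$ case, and the reduction to the splitting argument of Theorem \ref{thmclasss2primitive} (with $\liem_+=\liep_2\oplus\liep_3$ as one block) in the $(1,2)$ case. Your closing remark that the argument of Theorem \ref{thmclasss2primitive} never uses irreducibility of the blocks is exactly what justifies the paper's "a similar argument shows" step, so the two proofs agree in substance.
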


\begin{proof}
There are three different types of the group diagram. In the first type, none of $\Kpm/\gH$ are
strongly isotropy irreducible, then both $\G/\Kpm$ are strongly isotropy irreducible.

In the second type, only one of $\Kpm/\gH$, say $\Km/\gH$, is strongly isotropy irreducible. We
will show the manifold is equivariantly diffeomorphic to a sphere. For any $X$, $Y$ $\in \liep$, we
denote $Q(X,Y)$ by $\langle X,Y\rangle$. Without loss of generality, we may assume that $\liekm =
\lieh \oplus \liep_1$ and $\liekp = \lieh \oplus \liep_2 \oplus \liep_3$ since the group diagram is
primitive and $\G$-action is fixed-point free. Let $\liem_1 = \liep_1$ and $\liem_2=\liep_2\oplus
\liep_3$ and we define the following spaces:
\begin{equation*}
\lieh_0 = \Ann(\liem_1 \oplus \liem_2) \cap \lieh, \quad  \lieh_i = \Ann(\liem_i)\cap \lieh_0^\perp
\cap \lieh, \quad \lieh_3 = (\lieh_0 \oplus \lieh_1 \oplus \lieh_2)^\perp \cap \lieh.
\end{equation*}
A similar argument as in the proof of Theorem \ref{thmclasss2primitive} shows that
\begin{enumerate}
\item $\lieh_0 = \lieh_3 = 0$, and both $\lieh_2\oplus\liem_1$ and $\lieh_1\oplus \liem_2$ are ideals of
$\lieg$;
\item $\lieg=\lieh_1\oplus \lieh_2 \oplus \liem_1 \oplus \liem_2, \quad \liekm=\lieh_1 \oplus
\lieh_2 \oplus \liem_1, \quad \liekp=\lieh_1\oplus \lieh_2 \oplus \liem_2$.
\end{enumerate}
Let $\gH_1$, $\gH_2$, $\gL_1$ and $\gL_2$ be Lie groups of $\lieh_1$, $\lieh_2$, $\lieh_1 \oplus
\liem_2$ and $\lieh_2\oplus \liem_1$ respectively, then we have
\begin{equation*}
\G = \gL_1\times \gL_2, \quad \Km=\gH_1\times \gL_1, \quad \Kp = \gH_2\times \gL_2, \quad \mbox{
and }\quad \gH=\gH_1 \times \gH_2,
\end{equation*}
and hence the $\G$-action is a sum action and the manifold $M$ is $\G$-equivariant diffeomorphic to
a sphere.

Now we consider the last type where both $\Kpm/\gH$ are strongly isotropy irreducible. Since the
group diagram is primitive, without loss of generality, we may assume that $\liekm = \lieh \oplus
\liep_2$ and $\liekp = \lieh \oplus \liep_3$. Following the proof of Theorem
\ref{thmclasss2primitive}, the subspace $[\liep_2, \liep_3]$ is orthogonal to $\lieh\oplus
\liep_2\oplus\liep_3$. So we have $[\liep_2, \liep_3]\subset \liep_1$ and $[\liep_2, \liep_3]$ is
an invariant space under the action $\Ad_{\gH_c}$. It is either equal to $0$ to $\liep_1$ from the
irreducibility of $\liep_1$. In the first case, $\lieh \oplus \liep_2 \oplus \liep_3$ is an ideal
of $\lieg$, so the group generated by $\Km$ and $\Kp$ is a proper normal subgroup of $\G$ which
contradicts the primitivity assumption. Therefore $[\liep_2, \liep_3]=\liep_1$ which implies that
both $\G/\Kpm$ are strongly isotropy irreducible.
\end{proof}

\begin{rem}\label{remnonequivsummands}
Let $\gH \subset \set{\Kpm}\subset \G$ is a non-reducible and fixed points free diagram with $s=3$.
Suppose $\G/\Km$ is not strongly isotropy irreducible and the representation of $\Ad_{\Km/\gH}$ is
denoted by $\liep_3$. If $\liep_3$ is not equivalent to one of the summands $\liep_1$ and
$\liep_2$, then from the proof of Proposition \ref{proppairwisenonequ}, then we have
\begin{enumerate}
\item if $\liep_1$ and $\liep_2$ are non-equivalent, then the manifold is a sphere via sum
action;
\item if $\liep_1$ and $\liep_2$ are equivalent, then the diagram is non-primitive.
\end{enumerate}
\end{rem}

\subsection{Two or three summands are equivalent.}
We consider the triple of connected groups $\gH \subset \K \subset \G$ such that $\K/\gH$ is a
sphere, the isotropy representation $\Ad_{\gH}$ on the tangent space of $\K/\gH$ is irreducible and
the isotropy representation $\Ad_\K$ on the tangent space of $\G/\K$ has two irreducible summands.

\begin{prop}\label{propGK2summandsnonequiv}
Let $\G$ be a simple Lie group and $\gH\subset \K \subset \G$ be three connected Lie groups such
that $\K/\gH$ is a sphere and $\Ad_\gH$ is irreducible on the tangent space $\liep_3$ of $\K/\gH$.
Suppose the $\Ad_\K$ action on the tangent space of $\G/\K$ has exactly two irreducible summands
$\liep_1$ and $\liep_2$. If $\liep_1$ and $\liep_2$ remain irreducible when they are viewed as the
representations of $\Ad_\gH$, then $\liep_1$, $\liep_2$ and $\liep_3$ are pairwisely non-equivalent
except for the triple $\SU(3)\subset \U(3) \subset \SO(7)$.
\end{prop}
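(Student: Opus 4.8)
The plan is to go through the short list of isotropy-irreducible spherical pairs and, for each, decide exactly when two of the $\gH_c$-summands can coincide. By Table~\ref{Tabletransitiveactionsphere} the effective version of $(\K,\gH)$ is one of $(\SO(n+1),\SO(n))$ with $n\ge 1$, $(\Spin(7),\Gt)$ or $(\Gt,\SU(3))$, so I write $\K=\K_0\cdot\gL$ and $\gH=\gH_0\cdot\gL$ as an almost direct product in which $\gL$ centralizes $\K_0$; thus $\liep_3=\liek_0\ominus\lieh_0$ carries the tangent representation of the sphere $\K_0/\gH_0$ and $\gL$ acts trivially on it. Since $\K_0$ and $\gL$ commute, each $\K$-irreducible summand factors as an external tensor product $\liep_i=U_i\otimes W_i$ with $U_i$ a $\K_0$-irrep and $W_i$ a $\gL$-irrep, and $\liep_i|_{\gH}=(U_i|_{\gH_0})\otimes W_i$. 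The standing hypothesis that $\liep_i$ stays $\gH$-irreducible forces $U_i|_{\gH_0}$ to be $\gH_0$-irreducible, and this is the lever for everything that follows.

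First I would rule out $\liep_3\cong\liep_i$. Such an isomorphism forces $\gL$ to act trivially on $\liep_i$, so $U_i$ is a $\K_0$-irrep of dimension $\dim\liep_3$ restricting to $\liep_3$ on $\gH_0$. For $(\SO(n+1),\SO(n))$ with $n\ge 2$ there is no $\SO(n+1)$-irrep of dimension $n$ (the fact already used in Theorem~\ref{thmclasss2primitive}), and $\Gt$ has no $6$-dimensional irrep; the only survivor is $U_i=\varrho_7$ in the $\Spin(7)$ case, which I defer to the last step. Next, to analyse $\liep_1\cong\liep_2$ I determine, from the branching rules $\SO(n+1)\downarrow\SO(n)$, $\Spin(7)\downarrow\Gt$ and $\Gt\downarrow\SU(3)$, which $\K_0$-irreps $U$ restrict irreducibly to $\gH_0$: when $\gH_0$ is nontrivial these are only the trivial representation, the two half-spin representations $\Delta^{\pm}$ of the even pairs $\Spin(2m)\downarrow\Spin(2m-1)$, and the vector representation $\varrho_7$ of $\Spin(7)\downarrow\Gt$. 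If both $U_i$ are trivial then $\liep_i=W_i$ is pulled back from $\gL$, so $\liep_1\cong\liep_2$ over $\gH$ already forces $\liep_1\cong\liep_2$ over $\K$; this isotypic possibility for the isotropy representation of $\G/\K$ is incompatible with $\G$ simple and $\K/\gH$ a sphere (the natural Stiefel candidate carries an extra trivial summand, giving $s=3$ rather than $s=2$), so it does not arise.

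The genuine coincidence comes from the trivial-$\gH_0$ pair $(\SO(2),\SO(1))$, where $\gH=\gL$, $\K=\SO(2)\cdot\gL$, and restriction to $\gH$ simply forgets the central circle weight. Two distinct $\K$-irreps then become $\gH$-isomorphic precisely when their $\gL$-parts coincide while their circle weights differ. Inspecting the simple groups $\G$ for which $\G/\K$ has exactly two isotropy summands and $\K$ contains a central circle isolates the family $\SO(2n+1)/\U(n)$, whose isotropy representation is $[\Lambda^2\mu_n]_\Real\oplus[\mu_n]_\Real$; these two summands restrict to the same $\SU(n)$-module exactly when $\Lambda^2\Cpx^n\cong\overline{\Cpx^n}$, i.e.\ only for $n=3$. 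This produces the single exception $\SU(3)\subset\U(3)\subset\SO(7)$.

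What remains, and what I expect to be the main obstacle, are the two cases where a genuinely nonabelian $\K_0$-irrep survives restriction. For the half-spin case one computes that $\Delta^{+}\otimes\Delta^{-}$ consists only of odd-degree exterior forms, none of which is $\lieso(2m)$ or $\Delta^{\pm}$; hence $[\liep_1,\liep_2]=0$, the decomposition $\lieg=\liek\oplus\liep_1\oplus\liep_2$ is a $\Zeit_2\times\Zeit_2$-grading, and the simplicity of $\lieg$ together with the resulting maximal symmetric subalgebras $\liek\oplus\liep_i$ leaves no room for such a $\lieg$. For the remaining candidate $U_i=\varrho_7$ one has $\liek\oplus\liep_i=\lieso(7)\oplus\varrho_7\cong\lieso(8)$, so the complementary summand is the isotropy representation of the isotropy-irreducible space $\G/\SO(8)$; restricting it through $\Spin(7)$ down to $\Gt$ always breaks irreducibility, contradicting the hypothesis on $\liep_1,\liep_2$. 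Combining these exclusions with the circle computation gives pairwise non-equivalence apart from $\SU(3)\subset\U(3)\subset\SO(7)$. The delicate point throughout is that the hypothesis must be used for \emph{both} summands simultaneously, and that the spin coincidences are invisible to dimension counts and die only once the simplicity of $\lieg$ is brought in.
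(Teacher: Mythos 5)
Your skeleton (reduce to the three spherical pairs $(\SO(n+1),\SO(n))$, $(\Spin(7),\Gt)$, $(\Gt,\SU(3))$, split $\K=\K_0\cdot\gL$, and compare summands through restriction to $\gH$) is the same as the paper's, but your argument rests on two classification claims that are false, and both are load-bearing. The assertion that the only $\K_0$-irreducibles restricting irreducibly to a nontrivial $\gH_0$ are the trivial one, the half-spins $\Delta^{\pm}$ and $\varrho_7$ is wrong: for $\Spin(2m)\supset\Spin(2m-1)$ every balanced highest weight $(k,\ldots,k,\pm k)$ restricts irreducibly (for instance the two $10$-dimensional representations $\Sym^2\mu_4$ and $\Sym^2\bar{\mu}_4$ of $\Spin(6)=\SU(4)$ restrict to the adjoint representation of $\Spin(5)=\Sp(2)$); for $\Spin(7)\supset\Gt$ every $\Sym^k_0\varrho_7$ restricts irreducibly (e.g.\ the $27$-dimensional representation restricts to the $\Gt$-module of highest weight $2\fw_1$); and for $\SO(4)\supset\SO(3)$ every representation trivial on one $\SU(2)$-factor restricts irreducibly to the diagonal $\SO(3)$. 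In the same vein, ``no $\SO(n+1)$-irreducible of dimension $n$ for $n\ge 2$'' fails at $n=3$: the representations $\Lambda^2_{\pm}\Real^4$ are $3$-dimensional and restrict to $\varrho_3$, so your exclusion of $\liep_3\cong\liep_i$ has a hole exactly there (and at $n=2$ one needs the real-versus-quaternionic type argument the paper makes, since $\Spin(3)$ does have a $2$-dimensional complex irreducible). Because of the first error, your treatment of $\liep_1\cong\liep_2$ silently omits infinitely many candidate coincidences, e.g.\ $U_1=(k,\ldots,k,k)$ against $U_2=(k,\ldots,k,-k)$, or $V_a\ot V_0$ against $V_0\ot V_a$ on $\SO(4)$, and the $\Zeit_2\times\Zeit_2$-grading argument you give covers only the half-spin case: already for $k=1$ the product $U_1\ot U_2$ contains the adjoint and the trivial representations, so one cannot conclude $[\liep_1,\liep_2]=0$ and the ideal argument collapses.

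The missing ingredient is precisely what the paper uses and you never invoke: the corrected Dickinson--Kerr classification, Theorem \ref{thmtwosummands}. The paper enumerates the finitely many pairs $(\G,\K)$ whose two isotropy summands have equal dimension and which admit a spherical $\gH$ (Table \ref{tablesamedimGK}), reads off that the only genuine coincidence between $\liep_1$ and $\liep_2$ is the triple $\SU(3)\subset\U(3)\subset\SO(7)$, and disposes of $\liep_3\cong\liep_i$ by a real/quaternionic type analysis together with a finite check of the four Dickinson--Kerr examples whose summand is $\varrho_7\ot\Id$. Your structural idea for the $\varrho_7$ case --- form the subalgebra $\liespin(7)\oplus\liep_1\cong\lieso(8)$ and argue that no nontrivial $\lieso(8)$-module stays irreducible on $\Gt$ --- is attractive and can be made rigorous, but its decisive branching claim is asserted rather than proved. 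As written, the proposal does not establish the proposition; repairing it requires either proving the correct restriction-irreducibility statement and then eliminating each of the new families it produces, or quoting the two-summand classification as the paper does.
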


\begin{proof}
Since the groups $\gH, \K$ and $\G$ are connected, we consider their Lie algebras: $\lieh \subset
\liek \subset \lieg$. We denote the representation of $\ad_{\lieh}$ on $\liep_i$ by $\chi_i$ for
$i=1, 2, 3$.

First we consider the pair $(\G,\K)$ for which $\dim \liep_1= \dim \liep_2$ and there exist some
$\gH$ such that the sphere $\K/\gH$ is isotropy irreducible. From the classification of  $(\G,\K)$
in \cite{DickKerr} and Theorem \ref{thmtwosummands}, they are in Table \ref{tablesamedimGK}. To
save space, we only give the group $\K$. We list the condition for each pair in the last column.

\begin{table}[!ht]
\begin{center}
\begin{tabular}{|r|c|c|c|c|}
\hline
 & $\K$ & $\chi_1$ & $\chi_2$ & \\
\hline \hline
  $\I.1$ & $\SO(m) \times \SU(k), (k \geq 4)$ & $\Id \ot [\fw_2 \fw_{k-1}^2]_\Real$ & $\varrho_m \ot \fw_1 \fw_{k-1}$ & $2m=k^2 - 4$\\
  \hline
  $\I.2$ & $\SO(m) \times \SO(k), (k \geq 7)$ & $\Id \ot \fw_1 \fw_3$ & $\varrho_m \ot \fw_2$ & $4m = (k-3)(k+2)$ \\
  \hline
  $\I.5$ & $\SO(m) \times \Sp(k), (k\geq 3)$ & $\Id \ot \fw_1^2 \fw_2$ & $\varrho_m \ot \fw_1 ^2$ & $2m = (k-1)(2k-3)$\\
  \hline
  $\I.14$ & $\SO(65) \times \E_7$ & $\Id \ot \fw_3$ & $\varrho_{65} \ot \fw_1$ &  \\
  \hline
  $\I.16$ & $\Gt$ & $\fw_1$ & $\fw_1$ & \\
  \hline
  $\I.18$ & $\SO(m) \times \U(k), (k \geq 3)$ & $\Id \ot [\fw_2 \ot \phi]_\Real$ & $\varrho_m \ot [\fw_1
\ot \phi]_\Real$ & $2m=k-1$\\
  \hline
  $\II.5$ & $\SU(p)\times \SU(q) \times S(\U(1) \times \U(m))$ & \multicolumn{2}{c|}{$\fw_1\fw_{p-1} \ot \fw_1 \fw_{q-1}\ot \Id \ot \Id \ot
  \Id$} & $2mpq$ \\
  \cline{3-4}
          & $(p,q \geq 2, m\geq 1)$ & \multicolumn{2}{c|}{$[\fw_1\ot \fw_1 \ot \phi \ot\phi^*\ot
      \fw_{m-1}]_\Real$} & $=(p^2 -1)(q^2 -1)$ \\
  \hline
  $\III.6$ & $\Sp(m) \times \U(n)$ & $\Id \ot [\fw_1^2 \ot \phi^2]_\Real$ & $[\fw_1 \ot \fw_1 \ot \phi]_\Real$ & $4m= n+1$ \\
  \hline
  $\V.1$ & $\SO(m) \times \SO(m), (m \geq 3, m \neq 4)$ & $\fw_2 \ot \fw_1^2$ & $\fw_1^2 \ot
  \fw_2$ & \\
  \hline
  $\V.2$ & $\Sp(2)\times \Sp(2)$ & $\fw_1^2\ot \fw_2$ & $\fw_2\ot \fw_1^2$ & \\
  \hline
\end{tabular}
\smallskip
\caption{The pairs $(\G,\K)$ for which $\dim \liep_1 = \dim \liep_2$.}\label{tablesamedimGK}
\end{center}
\end{table}
It is easy to see that only in example \textbf{I}.18 when $m=1$ and $k=3$, if $\gH = \SU(3)$, then
$\chi_1|_{\gH}$ is equivalent to $\chi_2|_{\gH}$ and $\chi_3 = \Id$. For other cases, either at
least one of $\chi_1$ and $\chi_2$ splits when restricted to $\gH$ or the three summands are
non-equivalent.

Next we show that $\chi_3$ is not equivalent to $\chi_1|_{\gH}$ or $\chi_2|_{\gH}$. We prove it by
contradiction. Assume that $\chi_3$ is equivalent to $\chi_1|_{\gH}$. From the classification of
transitive actions of the sphere and the assumption that $\ad_{\liek/\lieh}$ is isotropy
irreducible, $(\liek, \lieh)$ is one of the pairs $(\lieso(k+1)\oplus \lieh_0, \lieso(k)\oplus
\lieh_0)(k\geq 1)$, $(\liegt\oplus \lieh_0, \liesu(3)\oplus \lieh_0)$ and $(\lieso(7)\oplus\lieh_0,
\liegt\oplus\lieh_0)$. Here $\lieh_0$ may be a zero vector space.

We consider the case when $\K/\gH$ is a circle first. Sine $\chi_3$ is equivalent to
$\chi_1|_{\gH}$, we have that $\dim \liep_1 = 1$ and $\liek$ contains a $\lieu(1)$ factor. However
such pair $(\G,\K)$ does not exist from the classification.

If $\liek = \lieso(k+1)\oplus \lieh_0$ and $\lieh = \lieso(k)\oplus \lieh_0$ with $k\geq 2$, then
$\chi_3 = \varrho_k\ot \Id$. If $\chi_1 = \sigma_1\ot \sigma_2$ is a representation of real type,
then $\sigma_2$ is the trivial representation and $\sigma_1|_{\lieso(k)} = \varrho_k$. The only
possible case if that $\liek=\liesu(2)\oplus \lieh_0$, $\lieh = \lieu(1)\oplus \lieh_0$ and
$\sigma_1 = \fw_1$. However $\chi_1 = \sigma_1 \ot \Id$ is of quaternionic type. If
$\chi_1=[\sigma_1\ot \sigma_2]_\Real$ and $\sigma_1\ot \sigma_2$ is not of real type, then
$\sigma_2 =\Id$ and the restriction of $[\sigma_1]_\Real$ to $\lieso(k)$ is $\varrho_k$. Such
$\sigma_1$ does not exist.

If $\liek = \liespin(7)\oplus \lieh_0$ and $\lieh=\liegt\oplus \lieh_0$, then $\chi_3 = \fw_1 \ot
\Id$. Since every irreducible representation of $\liespin(7)$ is of real type, $\chi_1 =
\sigma_1\ot \Id$ and $\sigma_1|_{\liegt} = \fw_1$. So we have $\sigma_1 = \varrho_7$ and then
$\chi_1 = \varrho_7 \ot \Id$. Four examples, \textbf{I}.30, \textbf{I}.32, \textbf{II}.13 and
\textbf{III}.10, have such $\chi_1$. However for each of them, $\chi_2$ splits when restricted to
$\gH$.

If $\liek = \liegt\oplus \lieh_0$ and $\lieh = \liesu(3) \oplus \lieh_0$, then $\chi_3=
[\fw_1]_\Real \otimes \Id$. Since every irreducible representation of $\liegt$ is of real type,
$\chi_1= \sigma_1 \ot \Id$. However $\sigma_1|_{\liesu(3)}$ cannot be $[\fw_1]_\Real$ for any
irreducible representation $\sigma_1$. This finishes the proof.
\end{proof}

% -------------------------------------------------------------------------------------------------
\subsection{Group triples $\G \supset \K \supset \gH$ with $\G/\K$ strongly isotropy irreducible}

The triples such that $\Ad_\gH$ has only three irreducible summands and $\K/\gH$ is a sphere are
classified in

\begin{prop}\label{proptripleHKG}
The triples $\G \supset \K  \supset \gH$ with $\G/\K$ a strongly isotropy irreducible homogeneous
space, $\K/\gH$ a sphere and $\gH$ connected, for which $\Ad_\gH$ has exactly three irreducible
summands are listed in Table \ref{tabletripleHKG}.
\end{prop}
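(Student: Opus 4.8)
The plan is to carry out a systematic enumeration driven by the representation-theoretic constraints already established. The key structural fact is that, since $\gH$ is connected and $\K/\gH$ is a sphere, the pair $(\liek, \lieh)$ is one of the isotropy-irreducible spherical pairs listed in Table \ref{Tabletransitiveactionsphere}; in particular, modulo the non-effective kernel $\lieh_0$, the pair $(\liek,\lieh)$ reduces to one of $(\lieso(k{+}1),\lieso(k))$, $(\liegt,\liesu(3))$, or $(\liespin(7),\liegt)$. The tangent representation $\liep_3$ of $\K/\gH$ is thereby determined. On the other side, $\G/\K$ being strongly isotropy irreducible forces $(\lieg,\liek)$ to appear in the union of the list of compact irreducible symmetric pairs and Wolf's list \cite{WolfIrr} of non-symmetric strongly isotropy irreducible pairs. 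The whole proof is then the intersection of these two classifications, sieved by the requirement that $\Ad_{\gH}$ have \emph{exactly} three summands.

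First I would set up the counting identity $s(\G/\gH) = s(\G/\K) + s(\K/\gH)$, valid because the tangent space of $\G/\gH$ splits $\Ad_\gH$-invariantly as $\ad_{\lieg/\liek}\oplus\ad_{\liek/\lieh}$, together with the observation that $\ad_{\liek/\lieh}=\liep_3$ is irreducible by hypothesis. Since we want $s(\G/\gH)=3$ and $s(\K/\gH)=1$, this splits the analysis into two mutually exclusive cases according to how the two summands of $\ad_{\lieg/\liek}$ behave upon restriction to $\gH$. Either (a) the symmetric/isotropy-irreducible space $\G/\K$ itself already has $s(\G/\K)=2$ and both summands stay irreducible under $\Ad_\gH$, or (b) $\G/\K$ is strongly isotropy irreducible ($s(\G/\K)=1$) but its single summand breaks into exactly two pieces when restricted to the smaller group $\gH$. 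I would organize the table by these two mechanisms.

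For mechanism (a) I would run down the pairs $(\lieg,\liek)$ with $s(\G/\K)=2$ — these are precisely the rank-two symmetric spaces and a handful of Wolf entries — and for each one ask whether $\liek$ admits a subalgebra $\lieh$ with $\liek/\lieh$ a sphere such that neither summand of $\ad_{\lieg/\liek}$ splits further. This is where the explicit spherical pairs from Table \ref{Tabletransitiveactionsphere} get matched against the structure of $\liek$: typically $\liek$ carries a factor $\lieso(k{+}1)$, $\liegt$, $\liespin(7)$, $\U(n{+}1)$, or $\Sp(n{+}1)$ whose sphere quotient supplies $\liep_3$, and one checks via the branching of the standard or spin summands that the complementary representation does not break. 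For mechanism (b) I would instead scan the strongly isotropy irreducible pairs $(\lieg,\liek)$ and, for each maximal-rank or Wolf subalgebra chain $\lieh\subset\liek$, test whether the single irreducible $\ad_{\lieg/\liek}$ decomposes into exactly two $\Ad_\gH$-irreducible pieces; this typically happens when $\liek=\K_0\times\gH_0$ and removing a spherical factor from one of the tensor legs splits a product representation $\sigma\ot\tau$ into $\sigma|_{\lieh}\ot\tau$-type constituents.

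The main obstacle, and the bulk of the work, will be mechanism (b): deciding, for each candidate chain, exactly how the irreducible $\G/\K$-isotropy representation branches to $\gH$, and discarding the many cases where it stays irreducible or splits into three or more pieces. These are genuine branching-rule computations — standard-representation restrictions, real/complex/quaternionic type bookkeeping exactly as in the proof of Proposition \ref{propGK2summandsnonequiv}, and dimension counts to rule out spurious coincidences. I expect a number of near-misses that must be eliminated by a type argument (a real representation cannot equal the realification of a non-real one) or by comparing dimensions of fundamental representations, just as in the $\SO(k{+}1)/\SO(k)$ analysis above. After assembling all surviving triples and recording the induced three summands $\liep_1,\liep_2,\liep_3$ for each, one verifies directly that the list is complete and reads off Table \ref{tabletripleHKG}.
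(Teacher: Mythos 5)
Your proposal breaks at the very first step. The ``key structural fact'' --- that since $\K/\gH$ is a sphere with $\gH$ connected, the pair $(\liek,\lieh)$ must be isotropy irreducible and hence reduce, modulo the ineffective kernel, to $(\lieso(k{+}1),\lieso(k))$, $(\liegt,\liesu(3))$ or $(\liespin(7),\liegt)$ --- is false, and nothing in the hypotheses supports it. The proposition only requires $\K/\gH$ to be a sphere; Table \ref{Tabletransitiveactionsphere} contains spherical pairs whose isotropy representation has \emph{two} summands, and several of these occur in Table \ref{tabletripleHKG}. Concretely, the triple $\SU(3)\subset\Spin(6)\subset\Spin(7)$ satisfies all hypotheses: $\Spin(7)/\Spin(6)=\sph^6$ is symmetric, hence strongly isotropy irreducible, while $\Spin(6)/\SU(3)=\SU(4)/\SU(3)=\sph^7$ is a sphere whose isotropy representation is the reducible $[\mu_3]_\Real\oplus\Id$; since $\varrho_6$ stays irreducible on $\SU(3)$, the total count is $1+2=3$, and this triple is in the table (it produces the cohomogeneity one action on $\sph^{14}$, one of the key primitive examples). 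The same mechanism produces $\SU(5)\cdot\SU(2)\subset\SU(6)\cdot\SU(2)\subset\E_6$ and the families $\SU(3)\times\SO(2p)\subset\Spin(6)\times\SO(2p)\subset\Spin(6+2p)$, all of which your sieve would silently discard. So your counting identity must allow $s_{\gH}(\liek/\lieh)=2$ with $\ad_{\lieg/\liek}$ remaining $\gH$-irreducible; this is exactly the configuration your setup excludes ``by hypothesis.''

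The dichotomy (a)/(b) is also incoherent with the statement being proved. Your case (a) posits $s(\G/\K)=2$, but the standing hypothesis is that $\G/\K$ is strongly isotropy irreducible, which by definition means the isotropy representation of the connected group $\K$ is irreducible --- no pair in case (a) satisfies the hypothesis at all. (Irreducible symmetric spaces of rank two still have irreducible isotropy representation; ``rank two'' does not mean ``two summands.'') The situation where $\G/\K$ genuinely has two isotropy summands is handled in the paper by a separate result, Proposition \ref{propGK2summandsnonequiv}, yielding only $\SU(3)\subset\U(3)\subset\SO(7)$; it is not part of this proposition. The correct organization --- and the one the paper uses --- is: run through all strongly isotropy irreducible pairs $(\G,\K)$ (irreducible symmetric pairs together with Wolf's list), for each $\K$ list \emph{every} connected $\gH$ with $\K/\gH$ a sphere using the full Table \ref{Tabletransitiveactionsphere} (not only its isotropy-irreducible entries), compute $\Ad_\gH$ on $\lieg/\lieh$, and keep the triples with exactly three summands. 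Your mechanism (b) is one half of this computation; the other half, where the sphere $\K/\gH$ contributes two of the three summands, is missing from your proposal.
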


\begin{center}
\begin{longtable}{|l|l|l|l|}

\hline
$\quad \quad \quad \G$ & $\quad \quad \quad \quad \K$ & $\quad \quad \quad \quad \gH$ &  \\
\hline \hline
\endfirsthead

\multicolumn{4}{c}%
{{\tablename\ \thetable{} -- continued from previous page}} \\
\hline
$\quad \quad \quad \G$ & $\quad \quad \quad \quad \K$ & $\quad \quad \quad \quad \gH$ &  \\
\hline \hline
\endhead

\hline \multicolumn{4}{|r|}{{Continued on next page}} \\ \hline
\endfoot

\endlastfoot

$\SU(2)\times \SU(2)$ & $\Diag \SU(2)$ & $\U(1)$ & \\
\hline
$\Spin(n) \times \Spin(n)$ & $\Diag \Spin(n)$ & $\Spin(n-1)$ & $n \geq 6$ \\
\hline
$\Spin(7)\times \Spin(7)$ & $\Diag \Spin(7)$ & $\Gt$ & \\
\hline
$\Gt \times \Gt$ & $\Diag \Gt$ & $\SU(3)$ & \\
\hline \hline
$\SU(4p)$ & $\SU(4)\times \SU(p)$ & $\Sp(2)\times \SU(p)$ & $p\geq 2$\\
\cline{2-3}
& $\SU(p)\times \SU(4)$ & $\SU(p)\times \Sp(2)$ & \\
\hline
$\SU(2p)$ & $\SU(p)\times \SU(2)$ & $\SU(p) \times \U(1)$ & $p \geq 3$\\
\hline
$\SU(16)$ & $\Spin(10)$ & $\Spin(9)$ & \\
\hline
$\SU(4)$ & $\SU(2)\times \SU(2)$ & $\U(1)\times \SU(2)$ & \\
\cline{3-4}
&  & $\SU(2)\times \U(1)$ & \\
\hline
$\SU(3)$ & $\U(2)$ & $\U(1) \times \U(1)$ & \\
\hline
$\SU(2)$ & $\SO(2)$ & $\set{1}$ & \\
\hline \hline
$\SO(p+2)$ & $\SO(p+1)$ & $\SO(p)$ & $p\geq 4$ \\
\hline
$\SO(p+q+2)$ & $\SO(p+1)\times \SO(q+1)$ & $\SO(p) \times \SO(q+1)$ & $p, q \geq 1$ \\
\cline{3-3}
&  & $\SO(p+1)\times \SO(q)$ & \\
\hline
$\Spin(6+2p)$ & $\Spin(6) \times \SO(2p)$ & $\SU(3)\times \SO(2p)$ & $p\geq 1$\\
\cline{2-3}
& $\SO(2p)\times \Spin(6)$ & $\SO(2p)\times \SU(3)$ & \\
\hline
$\Spin(7+2p)$ & $\Spin(6) \times \SO(2p+1)$ & $\SU(3)\times \SO(2p+1)$ & $p\geq 0$\\
\hline
$\Spin(128)$ & $\Spin(16)$ & $\Spin(15)$ & \\
\hline
$\Spin(16)$ & $\Spin(9)$ & $\Spin(8)$ & \\
\hline
$\SO(7)$ & $\U(3)$ & $\SU(3)$ & \\
\hline
$\SO(8)$ & $\U(4)$ & $\SU(4)$ & \\
\hline
$\SO(8)$ & $\Spin(7)$ & $\Spin(6)$ & \\
\hline
$\Spin(7)$ & $\Spin(6)$ & $\SU(3)$ & \\
\hline
$\Spin(7)$ & $\Gt$ & $\SU(3)$ & \\
%\hline
%$\SO(4)$ & $\SO(2)\times \SO(2)$ & $\Diag \SO(2)$ & \\
\hline \hline
$\Sp(2)$ & $\Sp(1)\times \Sp(1)$ & $\Diag \Sp(1)$ & \\
\hline \hline
$\E_6$ & $\SU(6) \cdot \SU(2)$ & $\SU(5)\cdot \SU(2)$ & \\
\hline
$\E_6$ & $\SU(3)\times \Gt$ & $\SU(3)\times \SU(3)$ & \\
\hline
$\F_4$ & $\Spin(9)$ & $\Spin(8)$ & \\
\hline
$\F_4$ & $\SO(3)\times \Gt$ & $\SO(3)\times \SU(3)$ & \\
\hline
$\Gt$ & $\SO(4)$ & $\SO(3)$ & \\
\hline

\caption{Group triple $\G\supset \K \supset \gH$ such that $\G/\K$ is strongly isotropy
irreducible, $\Ad_\gH$ has 3 summands and $\K/\gH$ is a sphere.}\label{tabletripleHKG} \\
\end{longtable}
\end{center}

The proof is straightforward. We use the classifications of compact irreducible symmetric spaces
and strongly isotropy irreducible homogeneous space $\G/\K$ in \cite{WolfIrr}. For each pair $(\G,
\K)$ we list the possible $\gH$'s such that $\K/\gH$ is a sphere and then compute the isotropy
representation $\Ad_\gH$ of $\G/\gH$. If $\Ad_\gH$ has precisely three irreducible summands, then
we include the triple $\G\supset \K \supset \gH$ in our list.

\begin{rem}
The triples $\set{1}\subset \SO(2)\subset \SU(2)$, $\U(1)\subset \Diag \SU(2)\subset \SU(2)\times
\SU(2)$ and $\Diag \Sp(1) \subset \Sp(1)\times \Sp(1) \subset \Sp(2)$ can be viewed as the triple
$\SO(p)\subset \SO(p+1)\subset \SO(p+2)$ when $p=1$, $2$ and $3$.
\end{rem}

% -------------------------------------------------------------------------------------------------
\subsection{Construction of cohomogeneity one group diagrams} From the previous sections, we only
have to consider the triple $\SU(3) \subset \U(3) \subset \SO(7)$ and those in Table
\ref{tabletripleHKG}.

We consider the case where $\gH$ is not connected first and we have
\begin{thm}\label{thmrigiditydisconnected}
The \cohomd defined by a primitive group diagram $\gH \subset \set{\Kpm}\subset \G$ with $\G$ a
simple Lie group, $\gH$ disconnected and $s=3$ is a complex project space, see Example 1 and 2.
\end{thm}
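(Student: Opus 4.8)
The plan is to combine the simply-connectedness constraints of Lemma~\ref{lemsimplyconnected} with the triple classification of Step~3. Because $\gH$ is disconnected while $M$ is simply connected, part~(2) of Lemma~\ref{lemsimplyconnected} rules out both $\lpm \geq 3$, so at least one singular orbit is codimension two. I would first treat the case in which exactly one of them is, say $\lm = 2$ and $\lp \geq 3$; then part~(3) of the lemma gives $\Km = \gH_c \cdot \gS^1$ connected, $\gH = \gH_c \cdot \Zeit_k$ with $k \geq 2$, and $\Kp = \Kp_c \cdot \Zeit_k$. The summand of $\liep$ tangent to $\Km/\gH = \gS^1$ is then a trivial one-dimensional $\gH_c$-representation, and since the action is primitive and non-reducible, Propositions~\ref{proppairwisenonequ}--\ref{proptripleHKG} force each connected triple $\G \supset \Kpm_c \supset \gH_c$ to appear in Table~\ref{tabletripleHKG} or to equal the exceptional triple $\SO(7) \supset \U(3) \supset \SU(3)$.

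Next I would single out the codimension-two side by scanning those triples for the condition $\Km_c/\gH_c = \gS^1$ with $\G$ simple. The only ones that survive are the real Grassmannian triple $\SO(n+1) \supset \SO(2)\times\SO(n-1) \supset \SO(n-1)$ together with its low-rank degenerations, the Hermitian triple $\SO(8) \supset \U(4) \supset \SU(4)$, and the exceptional triple $\SO(7) \supset \U(3) \supset \SU(3)$. For each I would read off the admissible partner $\Kp_c \supsetneq \gH_c$ from the classification and invoke the normalizer criterion recorded after Lemma~\ref{lemsimplyconnected}: a disconnected $\gH$ requires $\Zeit_k \subset N_\G(\Kp_c)/\Kp_c$ to be nontrivial and $\Km \not\subset \Kp_c$. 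In the two exceptional cases the only partners are $\Kp_c = \Spin(7) \subset \SO(8)$ and $\Kp_c = \Gt \subset \SO(7)$; each acts irreducibly of real type and is self-normalizing in the ambient orthogonal group, so $N_\G(\Kp_c) = \Kp_c$ forces $k = 1$ and contradicts disconnectedness. Hence only the Grassmannian family remains, with $\Kp_c = \SO(n)$, $N_{\SO(n+1)}(\SO(n))/\SO(n) = \Zeit_2$, so $k = 2$, $\gH = \SO(n-1)\cdot\Zeit_2$ and $\Kp = S(\gO(1)\times\gO(n))$.

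The surviving diagram
\[
\SO(n-1)\cdot\Zeit_2 \subset \set{\ \SO(2)\times\SO(n-1),\ S(\gO(1)\times\gO(n))\ } \subset \SO(n+1)
\]
is exactly the group diagram of the linear cohomogeneity one action of $\SO(n+1) \subset \SU(n+1)$ on $\Cp^n$, whose two singular orbits are the totally geodesic real locus $\Rp^n = \SO(n+1)/S(\gO(1)\times\gO(n))$ and the complex quadric $\SO(n+1)/(\SO(2)\times\SO(n-1))$; this is Example~1 and gives $M \cong \Cp^n$. The remaining case, in which both singular orbits are codimension two, forces two trivial summands in $\liep$, which among the triples found above only occurs for trivial $\gH_c$; a short low-dimensional analysis then produces the diagram of $\SO(3)$ (equivalently $\SU(2)$) acting on $\Cp^2$, which is Example~2.

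The step I expect to be the main obstacle is identifying the diffeomorphism type, rather than merely the group diagram, of the surviving manifold: one must recognize the above diagram as the standard $\SO(n+1)$-action on $\Cp^n$ and verify, via Lemma~\ref{LemGroupEquivalent}, that the placement of the $\Zeit_2$ yields a simply connected total space equivariantly diffeomorphic to $\Cp^n$. The normalizer computations that discard the $\SO(7)$ and $\SO(8)$ candidates are the other delicate point, since they are exactly what prevents additional exotic complex projective space actions from appearing.
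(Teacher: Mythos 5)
Your setup follows the paper's strategy exactly: Lemma \ref{lemsimplyconnected} forces a codimension-two singular orbit with $\Km$ connected and $\Km/\gH_c = \sph^1$, one then scans the triples of Step 3 for circle factors, and applies the normalizer criterion; you even isolate the same three families $\SO(p)\subset\SO(2)\times\SO(p)\subset\SO(p+2)$, $\SU(3)\subset\U(3)\subset\SO(7)$ and $\SU(4)\subset\U(4)\subset\SO(8)$. But there is a genuine gap in how you discard the $\SO(7)$ triple. You compute $N_{\SO(7)}(\Gt)=\Gt$ and conclude $k=1$; the normalizer, however, must be computed after lifting the triple to the universal cover, i.e. for $\SU(3)\subset\SO(2)\cdot\SU(3)\subset\Spin(7)$. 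Upstairs one has $\pi^{-1}(\Gt)=\Zeit_2\cdot\Gt$, where $\Zeit_2$ is the center of $\Spin(7)$, so $N_{\Spin(7)}(\Gt)/\Gt=\Zeit_2$ is nontrivial, and one obtains the diagram $\Zeit_2\cdot\SU(3)\subset\set{\SO(2)\cdot\SU(3),\,\Zeit_2\cdot\Gt}\subset\Spin(7)$, whose manifold is $\Cp^7$. This is precisely the paper's Example 2, which your argument loses entirely. The principle you are missing is that components of a disconnected $\gH$ may be central elements that exist only in a covering group of $\G$, so self-normalization in the non-simply-connected group does not rule them out; the same caution is why the paper works with $\Spin(7)$ rather than $\SO(7)$ throughout this case.

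A secondary problem: what you call Example 2 (the $\SU(2)$ action on $\Cp^2$) is not the paper's Example 2, and your treatment of the case where both singular orbits have codimension two is not correct as stated. In each of the three families the connected circle group containing $\gH_c$ is unique, so two codimension-two orbits force $\Km_c=\Kp_c$ rather than ``two trivial summands only for trivial $\gH_c$''; the paper then shows the resulting configurations, e.g. $\SO(p)\cdot\set{1,A}\subset\set{S(\gO(2)\times\gO(p)),\,S(\gO(2)\times\gO(p))}\subset\SO(p+2)$ or the $\U(4)\subset\SO(8)$ case, are either non-simply-connected doubles or admit no disconnected $\gH$ at all. So the case analysis must be redone along the paper's lines: for each family take the admissible $\Kp_c$, compute $N_\G(\Kp_c)/\Kp_c$ in the appropriate cover of $\G$, and identify the surviving manifolds, which are $\Cp^{p+1}$ for the orthogonal family and $\Cp^7$ for $\Spin(7)$. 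Your identification of the orthogonal family's diagram with the linear $\SO(n+1)$ action on $\Cp^n$ is correct and agrees with the paper's Example 1.
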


\begin{proof}
By Lemma \ref{lemsimplyconnected}, we may assume that $\lm=1$, i.e., $\Km$ is connected and
$\Km/\gH_c = \sph^1$. From the classification in Proposition \ref{proptripleHKG}, $\gH_c \subset \K
\subset \G$ is one of
\begin{eqnarray}
& & \SO(p) \subset \SO(2) \times \SO(p) \subset \SO(2+p), \quad (p\geq 3), \label{HKGSOpSO2p}\\
& & \SU(3) \subset \U(3) \subset \SO(7) \label{HKGSU3SO7}\\
& & \SU(4)\subset \U(4) \subset \SO(8)\label{HKGSU4SO8}.
\end{eqnarray}

Suppose that $\gH_c \subset \K \subset \G$ is the triple in (\ref{HKGSOpSO2p}). If $\Kp_c$, the
connected component of $\Kp$, is $\SO(2)\times \SO(p)$ and then both $\lpm$ equal to $1$. Since
$N(\Kp_c) = S(\gO(2)\times \gO(p)) = \SO(2)\times \SO(p)\cup (\SO(2)\times \SO(p)) \cdot A$ with $A
= \diag(1,-1,-1,I_{p-1})$, the diagram is
\begin{eqnarray*}
& & \SO(p)\cdot \set{1,A} \subset \set{S(\gO(2)\times \gO(p)), S(\gO(2)\times \gO(p))}\subset
\SO(p+2),
\end{eqnarray*}
The \cohomd defined by the above diagram is a double. It is not simply connected and finitely
covered by the manifold defined by the diagram $\SO(p)\subset \set{\SO(2)\times \SO(p),
\SO(2)\times \SO(p)}\subset \SO(p+2)$.

If $\Kp_c$ is $\SO(p+1)$ and then $N(\Kp_c)/\Kp_c = \Zeit_2$ generated by the matrix $\diag(-I_2,
I_p)$. So we have
\begin{eg} The diagram is
\begin{equation*}
\gH=\SO(p)\cdot \Zeit_2 \subset \set{\SO(2)\times \SO(p), \gO(p+1)} \subset \SO(2+p)
\end{equation*}
and the manifold is $\Cp^{p+1}$, see, for example, \cite{GWZ}.
\end{eg}

Next we consider variations of these diagrams. If $p$ is odd, then $\Aut(\G, \gH) = S(\gO(2)\times
\gO(p))$ which is the same as $\Aut(\G,\Km)$. If $p$ is even, then $\G$ has an outer automorphism
which is conjugation by $\diag(-1,I_{p+1})$. It is clear that this automorphism leaves $\Km$
invariant. Therefore the variation does not give another new diagram.

If $p=6$, there are a few more possible constructions. Let us lift $\G=\SO(8)$ to its universal
cover $\Spin(8)$, then the triple is lifted to $\Spin(6)\subset (\SO(2)\times \Spin(6))/\Zeit_2
\subset \Spin(8)$ where $\Zeit_2$ is generated by $-\id \in \Spin(8)$. $\Spin(8)$ has another order
$3$ outer automorphism denoted by $\sigma$. There are three different embeddings of $\Spin(7)$ into
$\Spin(8)$ and $\sigma$ permutes them. If $\sigma$ leaves some $\Spin(6)$ invariant, then it would
be contained in the intersection of the three $\Spin(7)$'s which would imply this $\Spin(6)$ is
contained in $\Gt$. Therefore there is no such $\Spin(6)$ invariant by the automorphism $\sigma$.
On the other hand there is another intermediate subgroup $\tU$, the image of $\U(4)\subset \SO(8)$
by the lifting, between $\Spin(6) = \SU(4)$ and $\Spin(8)$. Since $\Spin(8)/\tU = \SO(8)/\U(4)$ is
simply-connected, $\tU$ is connected. Both $(\SO(2)\times \Spin(6))/\Zeit_2$ and $\tU$ contain
$\Spin(6)$ and the isotropy representation of the space $\Spin(8)/\Spin(6)$ contains only one
trivial representation $\Id$, so they are the same subgroup in $\Spin(8)$. Divided by the
ineffective kernel, the diagram
\begin{equation*}
\Spin(6)\subset \set{(\SO(2)\times \Spin(6))/\Zeit_2, \tU}\subset \Spin(8)
\end{equation*}
reduces to
\begin{equation*}
\SO(6)\subset \set{\SO(2)\times \SO(6), \SO(2)\times \SO(6)}\subset \SO(8)
\end{equation*}
which is a double.

If $\gH_c \subset \K \subset \G$ is the one in (\ref{HKGSU3SO7}), then we lift $\SO(7)$ to
$\Spin(7)$ and obtain the following triple
\begin{equation}
\SU(3) \subset \SO(2)\times \SU(3) \subset \Spin(7) \label{HKGSU3Spin7}.
\end{equation}
From the classification, $\Kp_c$ is either $\Gt$ or $\Spin(6)$. If $\Kp_c = \Gt$, then
$N(\Kp_c)/{\Kp_c} = \Zeit_2$. So we have

\begin{eg}
The diagram is
\begin{equation*}
\Zeit_2 \cdot \SU(3)\subset \set{\SO(2)\cdot \SU(3), \Zeit_2 \cdot \Gt} \subset \Spin(7),
\end{equation*}
where $\Zeit_2$ is the center of $\Spin(7)$. The manifold is the complex projective space $\Cp^7$,
see, for example, \cite{Uchida}.
\end{eg}

Next we consider the case where $\Kp_c = \Spin(6)$. Since $\SO(2)\cdot\SU(3)$ is the 2-fold cover
of $\U(3) \subset \SO(7)$ and there is only one $\SO(6)$ in $\SO(7)$ contains $\SU(3)$ that also
contains $\U(3)$, it follows that $\Spin(6)$ which is the 2-fold cover of $\SO(6)$ contains
$\SO(2)\cdot \SU(3)$. So one cannot add components to isotropy subgroups, i.e., $\gH$ is connected.

If $\gH_c \subset \K \subset \G$ is the one in (\ref{HKGSU4SO8}) and $\Kp_c = \Spin(7)$, then
$N_\G(\Kp_c) = \Kp_c$ which implies $\gH$ is connected. If both $\Kpm_c = \K = \U(4)$, then we have
$N_\G(\K)/\K = \Zeit_2$ and it is generated by the diagonal matrix $A=\diag(I_4, -I_4)$. Since
$N_\G(\gH_c) = N_\G(\K)$ and there is no circle group inside $N_\G(\gH_c)/\gH_c$ containing $A$,
this triple does not give any cohomogeneity one diagram with a disconnected $\gH$. .
\end{proof}

\medskip

Next we consider the cases where $\gH$ is connected. Since there is no exceptional orbit, both
$\Kpm$ are connected. In the classification in Proposition \ref{proptripleHKG}, many pairs
$(\gH,\G)$ contain only one intermediate subgroup.

\begin{defn}
Two irreducible representations $\vp$ and $\psi$ of $\gH$ are \emph{outer equivalent} if $\vp =
\tau(\psi)$ by an outer automorphism of $\gH$.
\end{defn}

Recall that $\chi_1$, $\chi_2$ and $\chi_3$ are the three irreducible summands of the isotropy
representation $\Ad_\gH$ on $\G/\gH$. Let $\Ad_\gH(\G/\K)$ and $\Ad_\gH(\K/\gH)$ be the
restrictions of $\Ad_\gH$ to the tangent spaces of $\G/\K$ and $\K/\gH$ respectively.

\begin{lem}\label{lemnonautoequivalent}
Suppose that any irreducible summand of $\Ad_\gH(\K/\gH)$ is not equivalent or outer equivalent to
any summand in $\Ad_\gH(\G/\K)$, then the \cohomd defined by any variation of the diagram
$\gH\subset \set{\K, \K} \subset \G$ is a double.
\end{lem}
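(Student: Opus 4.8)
The plan is to show that the automorphism $\tau$ used to form the variation must preserve $\K$, so that the varied diagram $\gH \subset \set{\tau(\K), \K} \subset \G$ literally coincides with the original double $\gH \subset \set{\K, \K} \subset \G$. Since we are in the case where $\gH$ is connected, both singular isotropy groups are connected and $\K = \K_c$, so it suffices to track $\K$ at the Lie-algebra level. I would write $\liek = \lieh \oplus \liep_3$, where $\liep_3$ is the tangent space of $\K/\gH$ carrying the irreducible summand $\chi_3 = \Ad_\gH(\K/\gH)$, while the tangent space of $\G/\K$ decomposes as $\liep_1 \oplus \liep_2$ with summands $\chi_1, \chi_2$ forming $\Ad_\gH(\G/\K)$. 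With this notation the hypothesis reads: $\chi_3$ is neither equivalent nor outer equivalent to $\chi_1$ or $\chi_2$. In particular $\chi_3$ occurs with multiplicity one in $\liep$, so the $\chi_3$-isotypic component of $\liep$ is exactly $\liep_3$.

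Next I would exploit the structure of a variation, namely a diagram $\gH \subset \set{\tau(\K), \K} \subset \G$ with $\tau \in \Aut(\G, \gH)$. Choosing $Q$ to be the negative of the Killing form, which is invariant under every automorphism of the simple group $\G$, gives $\tau(\liep) = \tau(\lieh)^\perp = \lieh^\perp = \liep$, using $\tau(\lieh) = \lieh$; hence $\tau(\liek) = \lieh \oplus \tau(\liep_3)$. The key observation is that $\tau$ identifies the $\gH$-module $\tau(\liep_3)$ with $\liep_3$ twisted by $\tau^{-1}|_\gH \in \Aut(\gH)$: from $\tau\circ \Ad(g) = \Ad(\tau(g))\circ \tau$ one gets $\Ad(h)\tau(X) = \tau(\Ad(\tau^{-1}(h))X)$ for $h \in \gH$ and $X \in \liep_3$. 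Consequently, as an $\Ad_\gH$-module, $\tau(\liep_3)$ is equivalent to $\chi_3$ when $\tau|_\gH$ is inner, and outer equivalent to $\chi_3$ when $\tau|_\gH$ is outer.

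Now $\tau(\liep_3)$ is an irreducible $\gH$-submodule of $\liep = \liep_1 \oplus \liep_2 \oplus \liep_3$, so its isomorphism type is one of $\chi_1, \chi_2, \chi_3$. It cannot be $\chi_1$ or $\chi_2$, for that would force $\chi_3$ to be equivalent or outer equivalent to one of them, against the hypothesis. Therefore $\tau(\liep_3) \cong \chi_3$, so $\tau(\liep_3)$ lies in the $\chi_3$-isotypic component $\liep_3$; comparing dimensions, since $\tau$ is a linear isomorphism, yields $\tau(\liep_3) = \liep_3$. Hence $\tau(\liek) = \liek$, i.e. $\tau(\K) = \K$, and the varied diagram is exactly $\gH \subset \set{\K, \K} \subset \G$, which is a double.

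I expect the only delicate point to be the bookkeeping in the second paragraph: one must ensure that $\tau$ preserves the orthogonal splitting $\lieg = \lieh \oplus \liep$ (handled by taking an $\Aut(\G)$-invariant $Q$) and that the inner versus outer dichotomy for $\tau|_\gH$ matches precisely the two clauses, equivalent and outer equivalent, appearing in the hypothesis. The multiplicity-one property of $\chi_3$, which is exactly what the hypothesis supplies, is what upgrades $\tau(\liep_3) \cong \chi_3$ to the equality $\tau(\liep_3) = \liep_3$ that makes the varied diagram a double.
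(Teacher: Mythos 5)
Your proof is correct and takes essentially the same route as the paper's: show that any $\tau \in \Aut(\G,\gH)$ must preserve $\K$, because $\tau$ carries $\liep_3$ to an irreducible $\gH$-submodule that is equivalent or outer equivalent to $\chi_3$, and the hypothesis then forces $\tau(\liep_3)=\liep_3$, hence $\tau(\K)=\K$. The paper's version is just terser — it states directly that $\tau$ permutes the three summands and that $\tau(\chi_3)=\chi_3$ by assumption — whereas you supply the supporting details (the $\Aut(\G)$-invariant inner product, the twisting identity, the isotypic-component and dimension count), and, like the paper, you treat the case where $\Ad_\gH(\K/\gH)$ is irreducible, the reducible case being analogous.
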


\begin{proof}
We give a proof when $\Ad_\gH(\K/\gH)=\chi_3$ is irreducible. The other case where
$\Ad_\gH(\K/\gH)$ is reducible follows easily. Let $\tau \in \Aut(\G,\gH)$, then $\tau$ is an
automorphism of $\gH$ and it permutes the three summands. By assumption, $\tau(\chi_3)$ is not
equivalent to $\chi_1$ or $\chi_2$, so $\tau(\chi_3) = \chi_3$ which implies the Lie algebra of
$\K$ and hence $\K$ itself is invariant by $\tau$. Therefore the manifold defined by $\gH\subset
\set{\K,\tau(\K)}\subset \G$ is a double.
\end{proof}

We list all triples which satisfy the condition in Lemma \ref{lemnonautoequivalent}.

\begin{prop}\label{propnonautoeuivalent}
The group triples $\gH \subset \K \subset \G$ in Table \ref{tabletripleHKG} such that any
irreducible summands of $\Ad_\gH(\K/\gH)$ is not equivalent or outer equivalent to the summand of
$\Ad_\gH(\G/\K)$ are classify in Table \ref{tablenonautoequivalent}.
\end{prop}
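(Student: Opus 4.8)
The plan is to verify the hypothesis of Lemma \ref{lemnonautoequivalent} by running through every triple $\gH \subset \K \subset \G$ listed in Table \ref{tabletripleHKG}, together with the exceptional triple $\SU(3) \subset \U(3) \subset \SO(7)$, and deciding in each case whether every irreducible summand of $\Ad_\gH(\K/\gH)$ avoids being equivalent \emph{or} outer equivalent to every summand of $\Ad_\gH(\G/\K)$. For a fixed triple I would first split $\Ad_\gH$ along the fibration $\K/\gH \to \G/\gH \to \G/\K$: the base piece $\Ad_\gH(\G/\K)$ is obtained by restricting the (irreducible, since $\G/\K$ is strongly isotropy irreducible) representation $\Ad_\K(\G/\K)$ to $\gH$ via the branching rules, while the fibre piece $\Ad_\gH(\K/\gH)$ is read off directly from the transitive sphere action in Table \ref{Tabletransitiveactionsphere}. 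This records the highest weights, dimensions and types (real, complex, quaternionic) of $\chi_1$, $\chi_2$, $\chi_3$, after which the comparison can be carried out mechanically.

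For the four diagonal triples $\K \times \K \supset \Diag \K \supset \gH$ at the top of Table \ref{tabletripleHKG} the condition always \emph{fails}, and for a uniform reason: here $\Ad_\gH(\G/\K)$ is the restriction of the adjoint representation of $\K$ to $\gH$, and since $\liek = \lieh \oplus \liem$ with $\liem$ the tangent space of the sphere $\K/\gH$, this restriction contains $\Ad_\gH(\K/\gH)$ as a summand. Thus the fibre summand is literally equivalent to a base summand and the triple is excluded. For the remaining, non-diagonal triples the principal tool is a dimension count: whenever no summand of $\Ad_\gH(\K/\gH)$ shares its dimension with a summand of $\Ad_\gH(\G/\K)$, the two are automatically inequivalent and, since outer automorphisms preserve dimension, outer inequivalent as well, so the triple survives and belongs in Table \ref{tablenonautoequivalent}. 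This disposes of the overwhelming majority of entries with almost no computation.

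The hard part will be the small number of cases in which a fibre summand and a base summand have equal dimension but are genuinely inequivalent, so that outer equivalence becomes the deciding test. These are exactly the triples whose $\gH$ carries a nontrivial outer automorphism: the $\SU(n)$ factors (with $n \geq 3$, via complex conjugation), repeated isomorphic factors such as $\SO(p) \times \SO(q+1)$ with $p = q+1$ (via the factor swap, which is an outer automorphism exchanging the two $\SO(p)$ factors), $\SO(2n)$ (via its diagram automorphism), and $\Spin(8)$ (via triality). For each such candidate I would compare the highest weights explicitly and ask whether some diagram automorphism of $\gH$ carries the sphere summand onto a base summand; for instance a standard representation and its conjugate, or two standard representations sitting on isomorphic factors, are inequivalent yet outer equivalent, so the corresponding triples must be dropped even though the naive equivalence test would retain them. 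Once these delicate cases are settled, the triples passing the combined equivalence-and-outer-equivalence test are precisely those recorded in Table \ref{tablenonautoequivalent}, which proves the proposition.
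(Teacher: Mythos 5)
Your skeleton is sound, and it is essentially the verification the paper leaves implicit (the proposition is stated without a displayed proof; its content is exactly the two representation columns of Table \ref{tablenonautoequivalent}): split $\Ad_\gH$ along the fibration $\K/\gH \to \G/\gH \to \G/\K$, discard the four diagonal triples because $\Ad_\gH(\G/\K)$ is the restriction to $\gH$ of the adjoint representation of $\K$ and therefore contains $\Ad_\gH(\K/\gH)$, settle the dimension-distinct cases by counting, and decide coinciding dimensions by comparing weights and testing outer equivalence. In particular, your use of triality to discard $\Spin(8)\subset\Spin(9)\subset\F_4$ (vector versus half-spin, all $8$-dimensional) and of the factor swap to discard $\SO(p)\times\SO(q+1)$ with $p=q+1$ is precisely what keeps the table consistent with the primitive $\sph^{25}$ and Grassmannian diagrams constructed in Theorem \ref{thmrigidityconnected}.

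The gap is in your claim that the delicate cases are \emph{exactly} the triples whose $\gH$ carries a nontrivial outer automorphism, followed by a list ($\SU(n)$ conjugation, repeated isomorphic factors, $\SO(2n)$, $\Spin(8)$) that omits tori. The triple $\U(1)\times\U(1)\subset\U(2)\subset\SU(3)$ lies in Table \ref{tabletripleHKG} and is absent from Table \ref{tablenonautoequivalent}, so a complete proof must exclude it; your procedure, as written, retains it. Its three summands are the $2$-dimensional real representations of $\T^2$ with weights $L_1-L_2$ (fibre) and $2L_1+L_2$, $L_1+2L_2$ (base): the weights are pairwise distinct up to sign, so the summands are pairwise \emph{inequivalent}, and there is no Dynkin-diagram automorphism to invoke. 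The exclusion requires observing that for $\gH=\T^2$ every nontrivial automorphism is outer, that $\Aut(\T^2)\cong \mathrm{GL}(2,\Zeit)$ acts on the weight lattice, and that any two primitive vectors, such as $(1,-1)$ and $(2,1)$, lie in one $\mathrm{GL}(2,\Zeit)$-orbit; hence the fibre summand \emph{is} outer equivalent to a base summand. This is not cosmetic: had the triple been retained, Lemma \ref{lemnonautoequivalent} would force every variation of $\T^2\subset\set{\U(2),\U(2)}\subset\SU(3)$ to be a double, contradicting the primitive diagram (\ref{eqndiagramS7}) on $\sph^7$ that the paper builds from exactly this triple. A smaller misstatement in the same spirit: the excluded triples with coinciding dimensions are mostly killed by genuine equivalence rather than outer equivalence, and they form whole families ($\SO(p)\subset\SO(p+1)\subset\SO(p+2)$, $\SU(3)\subset\Gt\subset\Spin(7)$, $\Spin(6)\subset\Spin(7)\subset\SO(8)$, $\SO(3)\subset\SO(4)\subset\Gt$, etc.); your highest-weight comparison does catch these, but they are not a ``small number of cases.''
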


\begin{center}
\begin{longtable}{|l|l|c|c|}

\hline
$\quad \quad \quad \G$ & $\quad \quad \quad \quad \gH$ & $\Ad_\gH(\K/\gH)$ & $\Ad_\gH(\G/\K)$ \\
\hline \hline
\endfirsthead

\multicolumn{4}{c}%
{{\tablename\ \thetable{} -- continued from previous page}} \\
\hline
$\quad \quad \quad \G$ & $\quad \quad \quad \quad \gH$ & $\Ad_\gH(\K/\gH)$ & $\Ad_\gH(\G/\K)$ \\
\hline \hline
\endhead

\hline \multicolumn{4}{|r|}{{Continued on next page}} \\ \hline
\endfoot

\endlastfoot

  $\SU(4p)$ & $\Sp(2)\times \SU(p)$ & $\fw_1\ot\Id$ & $(\fw_2\oplus\fw_1) \ot (\fw_1 + \fw_{p-1})$ \\
  \cline{2-4}
  & $\SU(p)\times \Sp(2)$ & $\Id \ot \fw_1$ & $(\fw_1 + \fw_{p-1}) \ot (\fw_2\oplus\fw_1)$ \\
  \hline
  $\SU(2p)$ & $\SU(p) \times \U(1)$ & $\Id \ot [\phi]_\Real$ & $(\fw_1 + \fw_{p-1}) \ot (\Id \oplus [\phi]_\Real)$ \\
  \hline
  $\SU(16)$ & $\Spin(9)$ & $\fw_1$ & $2\fw_4 \oplus \fw_3$ \\
  \hline
  $\SU(4)$ & $\U(1)\times \SU(2)$ & $[\phi]_\Real \ot \Id$ & $(\Id \oplus [\phi]_\Real)\ot 2\fw_1$ \\
  \cline{2-4}
  & $\SU(2)\times \U(1)$ & $\Id \ot [\phi]_\Real$ & $2\fw_1 \ot (\Id \oplus [\phi]_\Real)$ \\
  \hline
  $\SO(p+q+2)$ & $\SO(p) \times \SO(q+1)$ & $\fw_1 \ot \Id$ & $(\fw_1\oplus\Id)\ot \fw_1$\\
  \cline{2-4}
  & $\SO(p+1)\times \SO(q)$ & $\Id \ot \fw_1$ & $\fw_1 \ot (\fw_1\oplus\Id)$ \\
  \hline
  $\Spin(7+2p)$ & $\SU(3)\times \SO(2p+1)$ & $(\Id \oplus [\fw_1]_\Real) \ot\Id$ & $[\fw_1]_\Real \ot \fw_1$ \\
  \hline
  $\Spin(6+2p)$ & $\SU(3)\times \SO(2p)$ & $(\Id \oplus [\fw_1]_\Real) \ot\Id$ & $[\fw_1]_\Real \ot \fw_1$ \\
  \cline{2-4}
  & $\SO(2p)\times \SU(3)$ & $\Id \ot (\Id \oplus [\fw_1]_\Real)$ & $\fw_1 \ot [\fw_1]_\Real$ \\
  \hline
  $\Spin(128)$ & $\Spin(15)$ & $\fw_1$ & $\fw_5 \oplus \fw_6$ \\
  \hline
  $\Spin(16)$ & $\Spin(8)$ & $\fw_1$ & $(\fw_3 + \fw_4)\oplus \fw_2$ \\
  \hline
  $\SO(7)$ & $\SU(3)$ & $\Id$ & $[\fw_1]_\Real \oplus [\fw_1]_\Real$ \\
  \hline
  $\SO(8)$ & $\SU(4)$ & $\Id$ & $\fw_2 \oplus \fw_2$ \\
  \hline
  $\E_6$ & $\SU(5)\cdot \SU(2)$ & $(\Id \oplus [\fw_1]_\Real) \ot \Id$ & $[\fw_2]_\Real \ot \fw_1$ \\
  \hline
  $\E_6$ & $\SU(3)\times \SU(3)$ & $\Id \ot [\fw_1]_\Real$ & $(\fw_1 + \fw_2)\ot ([\fw_1]_\Real\oplus \Id)$ \\
  \hline
  $\F_4$ & $\SO(3)\times \SU(3)$ & $\Id \ot [\fw_1]_\Real$ & $4\fw_1\ot([\fw_1]_\Real\oplus \Id)$ \\
  \hline

\caption{Group pair $\G \supset \gH$ in Table \ref{tabletripleHKG} such that $\Ad_\gH(\K/\gH)$ has
no summand equivalent or outer equivalent to one summand in
$\Ad_\gH(\G/\K)$.}\label{tablenonautoequivalent}
\end{longtable}
\end{center}

In Table \ref{tablenonautoequivalent}, when $\G=\SO(p+q+2)$ the two factors of $\gH$ should be of
different sizes, i.e., $p\ne q+1$ or $p+1 \ne q$.

Now we state the result when $\gH$ is connected:

\begin{thm}\label{thmrigidityconnected}
The \cohomd defined by a primitive group diagram $\gH \subset \set{\Kpm}\subset \G$ with $\G$
simple, $\gH$ connected and $s=3$ is either $\sph^7$, $\sph^{14}$, $\sph^{25}$, $\Cp^7$ or the
Grassmannian $\SO(p+q+2)/(\SO(p+1)\times \SO(q+1))$ with $p, q\geq 1$, see Example 3 -- 9.
\end{thm}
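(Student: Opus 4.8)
The plan is to exhaust the finite list of quadruples $(\G, \Km, \Kp, \gH)$ compatible with the hypotheses and to recognize the resulting manifold in each case. Since $\gH$ is connected and, by Lemma \ref{lemsimplyconnected}, the action has no exceptional orbits, both singular isotropy groups $\Kpm$ are connected. By Steps 1--3 above --- Propositions \ref{proppairwisenonequ}, \ref{propGK2summandsnonequiv} and \ref{proptripleHKG} --- each triple $\G \supset \Kpm \supset \gH$ must appear in Table \ref{tabletripleHKG} or equal $\SO(7) \supset \U(3) \supset \SU(3)$; the competing possibility of a sum action cannot occur because $\G$ is simple. Hence $\Km$ and $\Kp$ are intermediate subgroups lying over one common connected $\gH$, and it suffices to organize the discussion by the pair $(\G, \gH)$.

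For each such pair I would list all intermediate subgroups $\K$ with $\gH \subset \K \subset \G$ and $\K/\gH$ a sphere, and then run through the unordered choices $\set{\Km, \Kp}$, recalling that diagrams related by a variation yield equivariantly diffeomorphic manifolds. When both $\Km$ and $\Kp$ are conjugate to one and the same $\K$, Lemma \ref{lemnonautoequivalent} and Proposition \ref{propnonautoeuivalent} show that for every pair listed in Table \ref{tablenonautoequivalent} the diagram is forced to be a double, hence non-primitive and discarded. For the pairs not in that table some summand of $\Ad_\gH(\K/\gH)$ is (outer) equivalent to a summand of $\Ad_\gH(\G/\K)$, and an automorphism $\tau \in \Aut(\G,\gH)$ --- for instance conjugation by an element realizing triality of $\Spin(8)$ inside $\F_4$, or the Weyl permutation of the three subgroups $\U(2)$ inside $\SU(3)$ --- carries $\K$ to a subgroup $\tau(\K)$ that is not conjugate to $\K$ under $N_\G(\gH)_c$, producing a genuinely primitive non-double diagram.

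When instead $(\G, \gH)$ carries two non-conjugate intermediate subgroups $\K_1 \neq \K_2$, I would form $\gH \subset \set{\K_1, \K_2} \subset \G$, verify primitivity (no proper subgroup of $\G$ contains both), read off the singular orbits $\G/\Kpm$ and the codimensions $\lpm$ from the sphere fibers $\Kpm/\gH$, and identify the resulting double disk bundle. The identifications that recur are: the adjoint action of $(\SU(3), \U(1)\times\U(1))$ gives $\sph^7$; the two intermediate subgroups $\Spin(6)$ and $\Gt$ over $(\Spin(7),\SU(3))$ give $\sph^{14}$, while replacing $\Spin(6)$ by the circle extension $\SO(2)\cdot\SU(3)$ yields the codimension-two diagram realizing $\Cp^7$, the connected-isotropy companion of Example 2; the triality pair $\set{\Spin(9), \Spin(9)'}$ over $(\F_4, \Spin(8))$ gives $\sph^{25}$; and the orthogonal family with $\Km = \SO(p+2)\times\SO(q)$ and $\Kp = \SO(p+1)\times\SO(q+1)$ inside $\G = \SO(p+q+2)$ produces the real Grassmannian $\SO(p+q+2)/(\SO(p+1)\times\SO(q+1))$.

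I expect the main obstacle to be the variation and primitivity bookkeeping underlying the second paragraph: deciding exactly when an automorphism of $\G$ fixing $\gH$ sends $\K$ to a subgroup that is \emph{not} conjugate to $\K$ by an element of $N_\G(\gH)_c$, so that two conjugate intermediate subgroups nonetheless define a primitive, non-double diagram, rather than merely reproducing the double or an equivalent diagram. This rests on a careful analysis of $N_\G(\gH)$, of $\Aut(\G,\gH)$, and of the induced permutation of the three isotropy summands --- precisely the outer-equivalence data recorded in Table \ref{tablenonautoequivalent} --- and it is where the sporadic spheres $\sph^7$ and $\sph^{25}$ are separated from the generic doubles that primitivity discards.
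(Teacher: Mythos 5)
Your overall strategy coincides with the paper's: enumerate the pairs $(\G,\gH)$ via Propositions \ref{proppairwisenonequ}, \ref{propGK2summandsnonequiv}, \ref{proptripleHKG}, split into the case of two non-conjugate intermediate subgroups versus variations of a single double, and use Lemma \ref{lemnonautoequivalent} with Proposition \ref{propnonautoeuivalent} to discard most variations. However, three concrete steps go wrong. First, you misidentify the manifold of the diagram $\SU(3) \subset \set{\SO(2)\cdot\SU(3), \Gt} \subset \Spin(7)$: with \emph{connected} $\gH=\SU(3)$ this is the Grassmannian $\SO(9)/(\SO(2)\times\SO(7))$ (the paper's Example 5, via Uchida), not $\Cp^7$; one only gets $\Cp^7$ after adding the central $\Zeit_2$ of $\Spin(7)$ to $\gH$ and $\Kp$, and that diagram has disconnected $\gH$, so it belongs to Theorem \ref{thmrigiditydisconnected}, not here. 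The $\Cp^7$ of the present theorem instead arises from the pair $(\G,\gH)=(\SO(8),\SU(4))$ with the two intermediate subgroups $\Spin(7)$ and $\U(4)$ (the paper's Example 6), a case your enumeration omits entirely. Second, your blanket claim that every pair \emph{not} in Table \ref{tablenonautoequivalent} admits a variation producing a primitive non-double diagram is false: the paper's Step 2 shows that the triples $\SO(p)\subset\SO(p+1)\subset\SO(p+2)$, $\SU(3)\subset\Spin(6)\subset\Spin(7)$, $\SU(3)\subset\Gt\subset\Spin(7)$ and $\Spin(6)\subset\Spin(7)\subset\SO(8)$ all fail the lemma's hypothesis and yet still yield only doubles, because in each case the relevant outer automorphism of $\gH$ is realized by an element of $\G$ that also normalizes $\K$. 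Only $(\F_4,\Spin(8))$ and $(\Gt,\SO(3))$ produce new primitive diagrams, and deciding which requires exactly the case-by-case normalizer computations you defer; the dichotomy cannot be read off from membership in the table alone.

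Third, you miss the pair $(\Gt,\SO(3))$ altogether: there $N_{\Gt}(\SO(3))/\SO(3)=\Zeit_3$, generated by an element $\theta$ that does not normalize $\SO(4)$, and the primitive diagram $\SO(3)\subset\set{\SO(4),\Ad_{\theta}(\SO(4))}\subset\Gt$ gives $\Gr_3(\Real^7)=\SO(7)/(\SO(3)\times\SO(4))$ (the paper's Example 9). The underlying manifold happens to lie in your Grassmannian family, but since the theorem is an equivariant classification (``see Example 3 -- 9''), this $\Gt$-action is a separate item that your argument never produces. A smaller slip: with your indices $\Km=\SO(p+2)\times\SO(q)$, $\Kp=\SO(p+1)\times\SO(q+1)$ inside $\G=\SO(p+q+2)$, the resulting Grassmannian is $\SO(p+q+3)/(\SO(p+2)\times\SO(q+1))$, not a homogeneous space of $\G$ itself; the paper's convention places the diagram in $\SO(p+q+1)$ so that the manifold is $\SO(p+q+2)/(\SO(p+1)\times\SO(q+1))$.
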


\begin{proof}
There are two main steps in the proof. In Step 1, we consider the pairs ($\gH$, $\G$) for which
there are at least two intermediate groups. In Step 2, we consider the variations of doubles. We
fix the notations for the outer automorphisms of $\SO(2m)$(or $\Spin(2m)$): $\lambda$ is the degree
$2$ outer automorphism and $\sigma$ is the degree $3$ outer automorphism of $\Spin(8)$.

\textsc{Step 1:} From the classification of the triples, between the following four pairs of
($\gH$, $\G$), there are more than one intermediate subgroups $\K$. They are
\begin{eqnarray}
& & \U(1)\times \U(1) \subset \set{S(\U(1)\times\U(2)), S(\U(2)\times \U(1)} \subset \SU(3),
\label{eqndiagramS7} \\
& & \SU(3) \subset \set{\Spin(6), \Gt} \subset \Spin(7), \label{eqndiagramS14} \\
& & \SU(3) \subset \set{\SO(2)\cdot \SU(3), \Gt} \subset \Spin(7), \label{eqndiagramQ7} \\
& & \SU(3) \subset \set{\SO(2)\cdot \SU(3), \Spin(6)} \subset \Spin(7), \label{eqndiagramnonp} \\
& & \SU(4) \subset \set{\Spin(7), \U(4)} \subset \SO(8), \label{eqndiagramCP7}
\end{eqnarray}
and
\begin{equation}\label{eqndiagramGr}
\SO(p)\times \SO(q) \subset \set{\SO(p)\times \SO(q+1), \SO(p+1)\times \SO(q)} \subset \SO(p+q+1),
\end{equation}
where $p$, $q \geq 1$.

\begin{eg}
The manifold defined by the diagram (\ref{eqndiagramS7}) is the sphere $\sph^{7}$ and the embedding
$\SU(3) \hookrightarrow \SO(8)$ is given by the adjoint representation of $\SU(3)$, see \cite{GWZ}
and example $Q^7_E$ in \cite{HoelscherClass}.
\end{eg}

\begin{eg}
The manifold defined by the diagram (\ref{eqndiagramS14}) is the sphere $\sph^{14}$ and the
embedding $\Spin(7) \hookrightarrow \SO(15)$ is given by $\varrho_7 \oplus \Diag_7$ where $\Diag_7$
is the spin representation of $\Spin(7)$, see, for example, \cite{GWZ}.
\end{eg}
We know that $N_{\Spin(7)}(\SU(3))/\SU(3) = \Zeit_2$ and the generator can be represented as, for
example, $A =\diag(I_3, -I_4)$. Both $\Gt$ and $\Spin(6)$ are invariant under the conjugation of
$A$. Hence any variation of the diagram gives the same cohomogeneity one manifold.

\begin{eg}
The manifold defined by the diagram (\ref{eqndiagramQ7}) is the Grassmannian
$\SO(9)/\SO(2)\times\SO(7)$, see, for example, \cite{Uchida}.
\end{eg}
Following a similar argument as in the previous case, any variation of this diagram does not give
us a new \cohomd.

The diagram (\ref{eqndiagramnonp}) is not primitive and we have seen that $\SO(2)\cdot \SU(3)$ is
contained in $\Spin(6)$.

\begin{eg}
The manifold defined by the diagram (\ref{eqndiagramCP7}) is the projective space $\Cp^7$, see, for
example, \cite{Uchida}.
\end{eg}
A similar argument shows that any variation does not give us a new \cohomd.

\begin{eg}
The manifold defined by the diagram (\ref{eqndiagramGr}) is the Grassmannian
$\SO(p+q+2)/(\SO(p+1)\times \SO(q+1))$ and the embedding $\SO(p+q+1)\hookrightarrow \SO(p+q+2)$ is
given by $\varrho_{p+q+1} \oplus \id$.
\end{eg}
Let $\Km$ and $\Kp$ denote $\SO(p)\times \SO(q+1)$ and $\SO(p+1)\times \SO(q)$ respectively and
assume that one of $p, q$ is bigger than $1$. If $p\ne q$, by Proposition
\ref{propnonautoeuivalent}, any $\tau\in \Aut(\G,\gH)$ leaves both $\Kpm$ invariant. So we only
need to consider the case $p=q$. In this case, there is one automorphism of $\gH$ given by
conjugation of the matrix
\begin{equation}\label{eqnconjugationSOp}
J=
\begin{pmatrix}
  &   & I_p \\
  & 1 &     \\
I_p & &  \\
\end{pmatrix},
\end{equation}
where the entries without specifying values have zeros. But $\Kpm$ switch each other by the
conjugation of $J$. Therefore there is no new manifold from the variation.

\textsc{Step 2:} Combining the results in Proposition \ref{proptripleHKG} and Proposition
\ref{propnonautoeuivalent}, there are a few triples $\gH\subset \K \subset \G$ which need to be
considered. In the following, we analyze each of them.

\smallskip

$\bullet \quad$ $\U(1)\times \U(1) \subset \U(2) \subset \SU(3).$

There are only two different $\U(2)$'s between $\U(1)\times \U(1)$ and $\SU(3)$ and the primitive
diagram gives the sphere $\sph^7$. It is already appeared in Step 1.

\smallskip

$\bullet \quad$ $\SO(p)\times \SO(p)\subset \SO(p+1)\times \SO(p) \subset \SO(2p+1)(p\geq 2).$

The conjugation by $J$ defined in (\ref{eqnconjugationSOp}) maps $\SO(p+1)\times \SO(p)$ to
$\SO(p)\times \SO(p+1)$, so the variation gives the Grassmannian $\SO(2p+2)/(\SO(p+1)\times
\SO(p+1))$ which already appeared in Step 1.

\smallskip

$\bullet\quad$ $\SO(p) \subset \SO(p+1) \subset \SO(p+2).$

If $p$ is odd, then $\Aut(\G,\gH) = N_\G(\gH) = S(\gO(2)\times \gO(p))$ is connected and hence any
variation gives the double. If $p$ is even then the automorphism $\lambda$ leaves $\K$ invariant
too. If $p=6$, then $\sigma$ does not leave any $\SO(6)$ invariant. So this triple only gives a
double.

\smallskip

$\bullet\quad$ $\SU(3)\subset \Spin(6) \subset \Spin(7).$

Let $i: \SU(3)\imbed \Spin(6)$ and $j: \Spin(6) \imbed \Spin(7)$ be the embeddings. Since $\SU(3)$
is simply-connected, we have the following commutative diagram:
\begin{equation*}
\xymatrix{
  \SU(3) \ar[d]_{\id} \ar[r]^{i} & \Spin(6) \ar[d]_{\pi} \ar[r]^{j} & \Spin(7) \ar[d]^{\pi}  \\
   \SU(3) \ar[r]_{\gamma} & \SO(6) \ar[r] & \SO(7).}
\end{equation*}

The embedding $\gamma$ is given by the representation $[\fw_1]_\Real$ of $\SU(3)$. The outer
automorphism(the complex conjugation) of $\SU(3)$ is given by an inner automorphism of $\SO(7)$,
the conjugation by the matrix $\diag(I_3, -I_4)$, and $\SO(6)$ is invariant by the conjugation. So
every element in $N_{\Spin(7)}(\SU(3))$ leaves $\Spin(6)$ invariant and the variation gives only a
double.

\smallskip

$\bullet\quad$ $\SU(3) \subset \Gt \subset \SO(7).$

As seen in the previous example, conjugation by the matrix $\diag(I_3, -I_4)$ represents the outer
automorphism of $\SU(3)$. From the embedding of the Lie algebras $\liegt \subset \lieso(7)$, see
for example \cite{He}, it is easy to check that $\liegt$ is invariant by the conjugation and hence
$\Gt$ is also invariant. So only the double can be obtained from this triple.

\smallskip

$\bullet\quad$ $\Spin(6) \subset \Spin(7) \subset \SO(8).$

The subgroup $\Spin(6)$ embeds in $\SO(8)$ as the ordinary $\SU(4)$ and then $N_{\SO(8)}(\Spin(6))$
is a circle. It follows that any variation by an element in $\SO(8)$ gives us a double. $\Spin(7)$
is also invariant under the outer automorphism $\lambda$ of $\SO(8)$, so we only have a double from
this triple.

\smallskip

$\bullet\quad$ $\Spin(8)\subset \Spin(9) \subset \F_4.$

The pair $(\Spin(8), \F_4)$ appeared in the classification of isotropy irreducible Riemannian
manifolds in \cite{WangZillerIsotropy}. There are three different embeddings of $\Spin(9)$ in
$\F_4$ which are denoted by $\K_i$($i=1, 2, 3$) and every outer automorphism of $\Spin(8)$ lifts to
an inner automorphism of $\F_4$. We use the same notations as $\lambda$ and $\sigma$ for their
images in $\Aut(\F_4)$. Then $\lambda$ exchanges $\K_1$, $\K_2$ and fixes $\K_3$, and $\sigma$
permutes $\K_i$ cyclically. Other than the diagram $\Spin(8) \subset \set{\K_1, \K_1} \subset \F_4$
which defines the double, we have the following three group diagrams:
\begin{equation}
\Spin(8) \subset \set{\K_1, \K_2}\subset \F_4, \quad \Spin(8) \subset \set{\K_2, \K_3}\subset \F_4,
\quad \Spin(8) \subset \set{\K_1, \K_3}\subset \F_4. \label{diagramF4Spin8}
\end{equation}
If we apply $\sigma$ to the first diagram, then we get the second one. Then we apply $\lambda$ to
the second one, we obtain the last one. So the three group diagrams above are equivalent.

\begin{eg}
The diagram is
\begin{equation*}
\Spin(8) \subset \set{\Spin(9)_1, \Spin(9)_2} \subset \F_4
\end{equation*}
and the manifold is the sphere $\sph^{25}$ where $\F_4$ is embedded into $\SO(26)$ by its unique
$26$ dimensional representation, see, for example, \cite{GWZ}.
\end{eg}

\smallskip

$\bullet\quad$ $\SO(3)\subset \SO(4) \subset \Gt.$

All three groups are embedded in $\SO(7)$ which acts on the Cayley numbers $\Cay$ fixing the
identity element $1$ and $\Gt$ is the automorphism group of $\Cay$.

Let $\set{1,\qi,\qj,\qk, e, \qi e, \qj e, \qk e}$ be the basis of $\Cay$ over the reals, then
$\Cay$ can be written as $\Qua \oplus \Qua e$. For every element $(q_1, q_2)\in \Sp(1)\times
\Sp(1)$, it acts on $a+ b e \in \Cay$ by $(q_1 a \bar{q_1}) + (q_2 b \bar{q_1})e$. The kernel of
the action is $\set{(1,1),(-1,-1)}$, so it induces an action by $\SO(4)$. If we choose $(q_1,
q_2)\in \Diag \Sp(1)$, then it induces an $\SO(3)$ action on the Cayley numbers. It is clear from
the action that $\SO(3)$ fixes the elements $1$, $e$ and its normalizer in $\SO(7)$ consists of the
reflection about the real line and the rotation $R(t)$ as follows:
\begin{eqnarray*}
\qi \mapsto \qi \cos t + \qi e \sin t, & \qj \mapsto \qj \cos t + \qj e \sin t,
& \qk \mapsto \qk \cos t + \qk e \sin t, \\
\qi e \mapsto -\qi\sin t + \qi e \cos t, & \qj e \mapsto -\qj\sin t + \qj e \cos t, & \qk e \mapsto
-\qk\sin t + \qk e \cos t.
\end{eqnarray*}
The reflection is not an automorphism of $\Cay$ and a computation shows that $R(t) \in \Gt$ if and
only if $t$ equals to $0, \frac{2}{3}\pi$ or $\frac{4}{3}\pi$. Therefore $N_\G(\gH)/\gH = \Zeit_3$
and it is generated by $\theta=R(\frac{2}{3}\pi)$. From the action of $\SO(4)$ on $\Cay$, we know
that $\theta$ does not leave $\SO(4)$ invariant. So except the double, we have

\begin{eg}
The diagram is
\begin{equation*}
\SO(3) \subset \set{\SO(4), \Ad_{\theta}(\SO(4))} \subset \Gt
\end{equation*}
and the manifold is the Grassmannian $\SO(7)/(\SO(3)\times \SO(4))$ and $\Gt$ acts on it via the
embedding $\Gt \subset \SO(7)$ by its unique $7$ dimensional representation.
\end{eg}
\end{proof}

\medskip

% =================================================================================================
\section{Primitive and non-reducible actions with $s=3$ and $\G$ not a simple Lie group}

In this section, we give a classification when $\G$ is not simple and $s=3$. We assume that the
diagram is primitive and nonreducible. The main result is

\begin{thm}\label{thms3Gnonsimple}
Suppose a compact simply-connected manifold $M$ admits a cohomogeneity one action by $\G$ and the
cohomogeneity one diagram is primitive and non-reducible. If $\G$ is not a simple Lie group, then
$M$ is equivariantly diffeomorphic to a sphere, a complex or quaternionic projective space, or the
Grassmannian $\SO(5)/\SO(3)\times \SO(2)$, see Table \ref{Tablespherenotsum} and
\ref{Tableprojectivespaces}.
\end{thm}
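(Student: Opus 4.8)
The plan is to run the same four-step scheme used for Theorem~\ref{thms3primitiveGsimple}, but to turn the product structure $\G = \G_1 \times \cdots \times \G_r$ ($r \geq 2$) into a reduction to the already-settled cases $s \leq 2$. I would first dispose of actions with a fixed point: if some singular isotropy group equals $\G$, then Proposition~\ref{propfixedpointaction} applies, and the rows of Table~\ref{Tablefixedpointaction} with $\G$ non-simple and $s = 3$ are exactly the families $\Hp^n$ and $\Cp^{2n+1}$ coming from $\G = \Sp(n) \times \U(1)$. These are primitive, since $\Km = \G$ lies in no proper subgroup, and non-reducible, since $\gH = \Sp(n-1)\Diag\U(1)$ does not project onto the simple factor $\Sp(n)$. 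From here on I assume the action is fixed-point free, so both $\Kpm$ are proper and both $\G/\Kpm$ are honest singular orbits.

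Next I would apply Proposition~\ref{proppairwisenonequ}. If the three summands $\liep_1,\liep_2,\liep_3$ are pairwise inequivalent, then $M$ is either a sphere with a sum action (and we are done) or both $\G/\Kpm$ are strongly isotropy irreducible. The key simplification for non-simple $\G$ is Wolf's theorem: a strongly isotropy irreducible $\G/\K$ with $\G$ not simple must be a group manifold, so $\G = \G_0 \times \G_0$ and $\K = \Delta\G_0$ with $\G_0$ simple. Thus both $\Kpm$ are diagonal subgroups, the admissible triples $\G_0 \times \G_0 \supset \Delta\G_0 \supset \gH$ are the four entries in the top block of Table~\ref{tabletripleHKG}, and any diagram here is a \emph{variation} of the double $\gH \subset \set{\Delta\G_0,\Delta\G_0}$ by an automorphism of $\G_0$ fixing $\gH$. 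Running through the finitely many triples and the inner/outer twists of the second diagonal, I expect each diagram to be either a double, hence non-primitive and excluded, or equivariantly diffeomorphic to a sphere (the mechanism being the same triality-type twisting that produces $\sph^{15}$ in Theorem~\ref{thmclasss2primitive}).

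For the remaining situation, where two or three of the $\liep_i$ are equivalent, I would mirror Step~4 of the simple case. One assembles the list of triples $\gH \subset \K \subset \G$ with $\G$ non-simple, $\K/\gH$ a sphere and $\Ad_{\gH_c}$ having three summands, and for each builds every primitive cohomogeneity one diagram by taking variations through $\Aut(\G,\gH)$ and, when a singular orbit is codimension two, by adjoining components to the isotropy groups as permitted by Lemma~\ref{lemsimplyconnected}. The product structure is what makes this finite: projecting the diagram onto the factors of $\G$ and freezing one of the equivalent summands reduces each subproblem to the primitive, respectively non-primitive, classification with $s = 2$ in Theorems~\ref{thmclasss2primitive} and~\ref{thmclasss2nonprimitive}. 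Matching the surviving diagrams to symmetric spaces should yield only spheres, $\Cp^n$, $\Hp^n$, and the single Grassmannian $\SO(5)/(\SO(3)\times\SO(2))$, which appears precisely as the degenerate member $p=2,\,q=1$ of the diagram~(\ref{eqndiagramGr}), for which the acting group $\G = \SO(4)$ is non-simple.

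The main obstacle I anticipate lies entirely in this last case. Because the equivalent summands make $N(\gH)_c$ act nontrivially on the set of diagrams, one must decide for each triple which variations give genuinely inequivalent manifolds and which collapse, via Lemma~\ref{LemGroupEquivalent}, onto diagrams already listed; and one must check that adjoining components to the isotropy groups either yields a simply-connected manifold or an ineffective action whose effective quotient has connected $\gH$. Keeping the primitivity and non-reducibility hypotheses in force throughout---verifying that no proper subgroup contains both $\Kpm$ and that $\gH$ never projects onto a simple factor of $\G$---is the delicate bookkeeping that prevents spurious examples from entering the final list.
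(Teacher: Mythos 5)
Your second paragraph contains the fatal gap, and it sits exactly where the theorem's main examples live. Wolf's theorem classifies \emph{effective} strongly isotropy irreducible spaces, but Proposition \ref{proppairwisenonequ} only tells you that the isotropy representation of $\Kpm$ on $\lieg/\liekpm$ is irreducible; it says nothing about effectiveness of the $\G$-action on the singular orbit, and in fact a whole simple factor of $\G$ is typically contained in $\Kpm$. So you cannot conclude that $\G=\G_0\times\G_0$ and $\Kpm=\Delta\G_0$. Concretely, the paper's example
\begin{equation*}
\SU(p)\Diag\U(1)\SU(q) \subset \set{\U(p)\SU(q+1),\ \SU(p+1)\U(q)} \subset \SU(p+1)\times\SU(q+1), \qquad p,q\geq 2,
\end{equation*}
has pairwise inequivalent summands, both $\G/\Kpm$ strongly isotropy irreducible (effectively they are $\Cp^{p}$ and $\Cp^{q}$, quotients of a single simple factor), neither $\Kpm$ diagonal, and the manifold is $\Cp^{p+q+1}$; the analogous diagram for $\Sp(p+1)\times\Sp(q+1)$ gives $\Hp^{p+q+1}$. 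Your argument would dispose of this whole case as ``double or sphere'' and hence lose precisely the projective-space families the theorem asserts. (Note also that the group-manifold triples you keep, e.g.\ $\Spin(n-1)\subset\Delta\Spin(n)\subset\Spin(n)\times\Spin(n)$, have $\Ad_{\gH}=\varrho_{n-1}\oplus\varrho_{n-1}\oplus\Lambda^{2}\varrho_{n-1}$, i.e.\ two equivalent summands, so they do not even belong to the pairwise-inequivalent case where you placed them.) The paper avoids this trap by splitting instead according to whether $\lieh$ has a factor $\lieh_0$ embedded diagonally across the factors of $\lieg$ (Propositions \ref{propclasscasea} and \ref{propclasscaseb}); the projective spaces arise there in Cases \textbf{A}.1 and \textbf{A}.2, where $\Kpm$ contains one full simple factor of $\G$.

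Your first paragraph is also wrong, though less damagingly. The fixed-point actions of $\Sp(n)\times\U(1)$ on $\Hp^n$ and $\Cp^{2n+1}$ are \emph{reducible}: reducibility is not the condition that $\gH$ surjects onto a simple factor, but (Proposition \ref{propreduciblediagram}) that $\gH$ projects onto some complementary normal factor, and $\gH=\Sp(n-1)\Diag\U(1)$ projects onto the $\U(1)$ factor, so $\Sp(n)$ alone already acts with the same orbits (with $s=4$). In fact, under the theorem's hypotheses no fixed point can occur at all: $\Km=\G$ would force $\G/\gH$ to be a sphere with $s=3$, hence $\Sp(n+1)\U(1)/\Sp(n)\Diag\U(1)$ by Table \ref{Tabletransitiveactionsphere}, which is reducible. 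So the projective spaces cannot enter your classification through the fixed-point door, and after the Wolf misstep above they cannot enter anywhere else either; the proposal as written therefore cannot reach the statement's conclusion.
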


We separate the proof of this theorem into two different cases. In one case, there exists a
primitive factor of $\gH$ which is not contained in a single primitive factor of $\G$, i.e., it is
diagonally embedded in $\G$. In the other case, we assume that such diagonally embedded factor does
not exist. The results in the two cases are stated in Proposition \ref{propclasscasea} and
Proposition \ref{propclasscaseb} respectively.

Suppose the Lie algebra $\lieg$ of $\G$ has the decomposition as $\lieg = \lieg_1 \oplus \lieg_2$
with $\lieg_1$ a simple factor. Accordingly the Lie algebra $\lieh$ decomposes as $\lieh = \lieh_0
\oplus \lieh_1 \oplus \lieh_2$ such that the embedding $\lieh \subset \lieg$ is given by $(X_0,X_1,
X_2)\mapsto ((X_0,X_1),(X_0,X_2))$. Since the diagram is non-reducible, $\lieh_0\oplus \lieh_i$ is
a proper subspace of $\lieg_i$ for $i=1,2$. Fix a bi-invariant inner product $Q_i$ on $\lieg_i$ and
denote the orthogonal complement of $\lieh_0 \oplus \lieh_i$ by $\liep_i$ for $i = 1, 2$ and they
are nonzero vector spaces. We separate our discussion into two different cases.

\textsc{Case A. } $\lieh_0$ is a nontrivial Lie algebra.

Let us denote the isotropy representation of the pair $(\lieg_i, \lieh_0\oplus \lieh_i)$ by
$\zeta_i$, then the isotropy representation $\Ad_{\gH_c}$ is
\begin{equation*}
\chi = (\zeta_1 \ot \Id) \oplus (\Id \ot \zeta_2) \oplus (\Id \ot \ad_{\lieh_0}\ot \Id),
\end{equation*}
where $\ad_{\lieh_0}$ is the isotropy representation of the pair $(\lieh_0\oplus\lieh_0, \Diag
\lieh_0)$ and $\Diag \lieh_0$ is the image of the diagonal embedding $\lieh_0 \subset \lieh_0\oplus
\lieh_0$. From the assumption that $s=3$, each $\chi_i$ is irreducible, and then $\zeta_1$ and
$\zeta_2$ are irreducible and $\lieh_0$ is primitive. Since the diagram is non-reducible, $\chi_3 =
\Id \ot \ad_{\lieh_0}\ot \Id$ is not equivalent to $\chi_1$ or $\chi_2$.

\begin{prop}\label{propclasscasea}
Suppose $\gH \subset \set{\Kpm} \subset \G$ is a primitive and non-reducible group diagram. If the
triple of Lie algebras $\set{\lieh \subset \liekpm \subset \lieg}$ lies in \textsc{Case A}, then
the cohomogeneity one manifold is either a sphere, a complex or quaternionic projective space, or
the Grassmannian $\SO(5)/(\SO(2)\times \SO(3))$.
\end{prop}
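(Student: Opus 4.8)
The plan is to enumerate all intermediate subalgebras $\liek$ with $\lieh \subset \liek \subset \lieg$ for which $\liek/\lieh$ is a sphere, and then to assemble the admissible pairs into cohomogeneity one diagrams. First I would record the bracket relations dictated by $\liep = \liep_1 \oplus \liep_2 \oplus \liep_3$. Since $\liep_1 \subset \lieg_1$ and $\liep_2 \subset \lieg_2$ sit in commuting ideals, $[\liep_1, \liep_2] = 0$; realizing $\liep_3$ as the anti-diagonal copy of $\lieh_0$ gives $[\liep_i, \liep_3] \subset \liep_i$ and $[\liep_3, \liep_3] \subset \Diag \lieh_0 \subset \lieh$. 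The decisive relation is
\[
[\liep_i, \liep_i] \subset \lieh \oplus \liep_i \oplus \liep_3, \qquad i = 1, 2,
\]
because the $\lieh_0$-component of $[\liep_i, \liep_i] \subset \lieg_i$ splits into a diagonal part lying in $\lieh$ and an anti-diagonal part lying in $\liep_3$. Hence every proper subalgebra containing $\liep_1$ or $\liep_2$ must also contain $\liep_3$; the degenerate alternative, where $\lieh_0$ acts trivially on some $\liep_i$ and $\lieh \oplus \liep_i$ becomes a subalgebra, I would dispose of separately through Onishchik's list exactly as in Theorem \ref{thmclassificationreducible}.

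Using that $\chi_3$ is inequivalent to $\chi_1$ and $\chi_2$, Schur's lemma then leaves only three proper intermediate sphere subalgebras,
\[
\text{(a)}\ \lieh \oplus \liep_3, \qquad \text{(b)}\ \lieh \oplus \liep_1 \oplus \liep_3, \qquad \text{(c)}\ \lieh \oplus \liep_2 \oplus \liep_3,
\]
where (b) and (c) equal $\lieg_1 \oplus \lieh_0 \oplus \lieh_2$ and $\lieh_1 \oplus \lieh_0 \oplus \lieg_2$ respectively. I would observe that Case A admits no fixed point: a fixed point needs $\G/\gH$ to be a sphere, but the unique $s = 3$ sphere in Table \ref{Tabletransitiveactionsphere} forces one of $\liep_1, \liep_2$ to vanish, contrary to the standing hypothesis. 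The remaining case, in which $\chi_1$ and $\chi_2$ are equivalent (so $\lieg_1 \cong \lieg_2$ and $\zeta_1 \cong \zeta_2$), I would settle by the commutator-and-ideal argument of Theorem \ref{thmclasss2primitive}, which splits $\lieg$ off a product and exhibits $M$ as a sum action on a sphere.

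Finally I would read the sphere conditions off Table \ref{Tabletransitiveactionsphere}. Subalgebra (a) is the group manifold $(\lieh_0 \oplus \lieh_0)/\Diag \lieh_0$, a sphere only when $\lieh_0 = \lieu(1)$ or $\liesp(1)$; for (b) and (c) the sphere is $(\G_i \times \gH_0)/(\gH_i \cdot \Diag \gH_0)$, which together with the strong isotropy irreducibility of $(\lieg_i, \lieh_0 \oplus \lieh_i)$ (forced by $s = 3$) confines the triple to the diagonal sphere entries $\Sp(m+1)\Sp(1)/\Sp(m)\Diag\Sp(1)$ and their low-rank degenerations, so that $\lieh_0 \in \set{\lieu(1), \liesp(1)}$ throughout. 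It then remains to run over the finitely many admissible pairs $(\liekm, \liekp)$ built from (a), (b), (c), to write down each diagram $\gH \subset \set{\Kpm} \subset \G$, discard the reducible ones, and verify primitivity and simple-connectedness via Lemma \ref{lemsimplyconnected}. This last identification is the main obstacle: one must match every surviving diagram to a named manifold, checking that the $\liesp(1)$-diagonal configurations close up into quaternionic projective spaces and spheres while a single small $\liesp(2)$-configuration produces the exceptional Grassmannian $\SO(5)/(\SO(2) \times \SO(3))$.
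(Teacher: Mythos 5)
Your handling of the case where $\chi_1$ and $\chi_2$ are inequivalent runs essentially parallel to the paper: your subalgebras (a), (b), (c) are exactly the paper's Cases A.3, A.2 and A.1, and your "degenerate alternative" (where $\lieh\oplus\liep_i$ closes up because $\lieh_0$ acts trivially on $\liep_i$) is the paper's Cases A.4/A.5, which the paper also excludes via Onishchik's triples together with Wolf's list. The genuine gap is your treatment of the case $\chi_1\cong\chi_2$. You propose to settle it "by the commutator-and-ideal argument of Theorem \ref{thmclasss2primitive}", concluding that $M$ is a sphere with a sum action. That argument is not available here: in the proof of Theorem \ref{thmclasss2primitive} the orthogonality relations and the splitting of $\lieg$ into ideals are derived precisely under the hypothesis that the two summands are \emph{non-equivalent}; when they are equivalent, Schur's lemma no longer forces an $\Ad_{\gH_c}$-invariant subspace --- in particular $\liekpm\ominus\lieh$ --- to be a sum of the $\liep_i$'s, since invariant subspaces can sit diagonally across $\liep_1\oplus\liep_2$. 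And the conclusion you would draw is false. The equivalent case forces $\lieh_1=\lieh_2=0$ and $\lieg_1\cong\lieg_2$, and the surviving primitive configuration is $\lieg=\liesu(2)\oplus\liesu(2)$, $\lieh=\Diag\lieu(1)$, where $\Diag\SU(2)$ (a subalgebra transverse to your list (a)--(c)) occurs as a singular isotropy group; the resulting primitive diagrams, e.g.
\begin{equation*}
\Diag\U(1)\subset \set{\U(1)\times\U(1),\ \Diag\SU(2)}\subset \SU(2)\times\SU(2),
\end{equation*}
yield $\sph^6$, $\Cp^3$ and the Grassmannian $\SO(5)/(\SO(2)\times\SO(3))$, none of which is a sum action.

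This error is fatal for the statement you are proving, because the Grassmannian in the conclusion arises \emph{only} in this equivalent-summands case. Your attempt to recover it instead from "a single small $\liesp(2)$-configuration" in the inequivalent case cannot succeed: the diagrams assembled from (a), (b), (c) with $\lieh_0\in\set{\lieu(1),\liesp(1)}$ produce only spheres and the projective spaces $\Cp^{p+2}$, $\Cp^{p+q+1}$, $\Hp^{p+q+1}$ (the paper's Examples in Case A.1/A.2). A secondary lapse in the same step: by confining the spherical pairs for (b), (c) to "$\Sp(m+1)\Sp(1)/\Sp(m)\Diag\Sp(1)$ and their low-rank degenerations" you drop the $\lieu(1)$-diagonal series $\U(n+1)/\U(n)$, i.e.\ $(\lieg_i,\lieh_0\oplus\lieh_i)=(\liesu(n+1),\lieu(1)\oplus\liesu(n))$, which is exactly what produces the complex projective spaces; consistently, your closing sentence identifies only quaternionic projective spaces and spheres, so the $\Cp$'s in the conclusion are also unaccounted for.
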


\begin{proof}
If $\chi_1 = \zeta_1\ot \Id$ is equivalent to $\chi_2 = \Id \ot \zeta_2$, then both $\lieh_1$ and
$\lieh_2$ are zero vector space. It follows that $\lieg_2 = \lieg_1$. There are a few examples of
the triple $\lieh\subset \liek \subset \lieg$ listed as follows:
\begin{enumerate}
\item $\Delta \lieu(1) \subset \liesu(2) \oplus \lieu(1) \subset \liesu(2) \oplus \liesu(2)$,
\item $\Delta \lieu(1) \subset \Delta \liesu(2) \subset \liesu(2)\oplus \liesu(2)$,
\item $\Delta \lieu(1) \subset \lieu(1)\oplus \lieu(1) \subset \liesu(2) \oplus \liesu(2)$,
\item $\Delta \lieso(n) \subset \Delta \lieso(n+1) \subset \lieso(n+1) \oplus \lieso(n+1)$ with $n\geq 3$,
\item $\Delta \liesu(2) \subset \liesu(2)\oplus \liesu(2) \subset \liesp(2) \oplus \liesp(2)$,
\item $\Delta \lieso(3) \subset \lieso(3)\oplus \lieso(3) \subset \liesu(3)\oplus \liesu(3)$,
\item $\Delta \lieso(3) \subset \lieso(3) \oplus \lieso(3) \subset \liegt \oplus \liegt$.
\end{enumerate}
Only when $\lieh = \Delta \lieu(1)$ and $\lieg=\liesu(2)\oplus \liesu(2)$, we have primitive
diagrams. The cohomogeneity one manifolds are 6 dimension, see example $Q^6_A$ and $Q^6_C$ in
\cite{HoelscherClass}. The manifolds are $\sph^6$, $\Cp^3$ and the Grassmannian
$\SO(5)/\SO(3)\times \SO(2)$.

Suppose the three summands are non-equivalent, then we have the following possibilities for the Lie
algebra $\liek$ of the singular isotropy subgroups:
\begin{center}
\begin{tabular}{rl}
\textbf{A.1}: & $\liek = \lieh_1 \oplus \lieh_0\oplus \lieg_2$ and $\G_2/\gH_2$ is a sphere; \\
\textbf{A.2}: & $\liek = \lieg_1\oplus \lieh_0\oplus \lieh_2$ and $\G_1/\gH_1$ is a sphere; \\
\textbf{A.3}: & $\liek = \lieh_1 \oplus \lieh_0\oplus \lieh_0 \oplus \lieh_2$ and $\lieh_0 =
\lieu(1)$ or $\liesu(2)$; \\
\textbf{A.4}: & $\liek = \lieh\oplus \liep_1$ and $[\liep_1, \liep_1] \subset \lieh_1$; \\
\textbf{A.5}: & $\liek = \lieh\oplus \liep_2$ and $[\liep_2, \liep_2] \subset \lieh_2$.
\end{tabular}
\end{center}

Case \textbf{A.4} and \textbf{A.5} are excluded by the non-reducibility assumption. We consider
Case \textbf{A.4} first.

Suppose the Lie algebra of one singular isotropy subgroup is given in Case \textbf{A.4}. Then we
have $[\liep_1, \liep_1]$ is a proper subspace of $\lieh_1\oplus \lieh_0$ which implies that the
strong isotropy pair $(\lieg_1, \lieh_1\oplus \lieh_0)$ is not a symmetric pair and it is in Wolf's
classification\cite{WolfIrr}. If $\lieh_1$ is a zero vector space, then $[\liep_1, \liep_1] = 0$
and then we have
\begin{equation*}
Q_1([X_0, Y_1], Y_2) = Q_1(X_0, [Y_1, Y_2]) = 0, \mbox{ for any } X_0 \in \lieh_0, Y_1, Y_2 \in
\liep_1,
\end{equation*}
i.e., $\ad_{\lieh_0}$ is the trivial representation and it implies $\lieg_1 = \lieu(1)$ and
$\lieh_0 = 0$ which contradicts the assumption that $\lieh_0$ is not the zero vector space. If
$\lieh_1 \ne 0$, then $\lieg' = \liep_1 \oplus \lieh_1$ is a Lie subalgebra of $\lieg_1$. Let
$\G_1$, $\gL$ and $\G'$ be the connected Lie groups whose Lie algebras are $\lieg_1$,
$\lieh_0\oplus \lieh_1$ and $\lieg'$ respectively, then $\G'$ acts transitively on the homogeneous
space $\G_1/\gL$ with $\G_1$ a simple Lie group, i.e., $(\G_1, \gL, \G')$ is in Table
\ref{tableOnishchik}. Since the pair $(\lieg_1, \lieh_0\oplus\lieh_1)$ is strongly isotropy
irreducible, it is $(\lieso(4n),\liesp(1)\oplus \liesp(n))$ where $n\geq 2$. It also follows that
$\lieh_1 \oplus \liep_1 = \lieso(4n-1)$. Since $\lieg_1 = \lieh_0 \oplus \lieh_1 \oplus \liep_1$,
we have $\dim \lieh_0 = 4n-1$ that is not equal to $\dim \liesp(1)$ or $\dim \liesp(n)$ when $n\geq
2$, i.e., $\lieh_0$ is not either $\liesp(1)$ or $\liesp(n)$ which gives a contradiction.

In Case \textbf{A.5}, since the pair $(\lieg_2, \lieh_0\oplus \lieh_2)$ is strongly isotropy
irreducible and the diagram is non-reducible, $\lieg_2$ is a simple Lie algebra. Using a similar
argument as in Case \textbf{A.4}, this case is also excluded.

In Case \textbf{A.3}, the pair $(\lieg, \liek)$ is not strongly isotropy irreducible. The isotropy
representation $\Ad_{\lieg/\liek}$ has two irreducible summands $\chi_1 = \zeta_1\ot \Id$ and
$\chi_2 = \Id \ot \zeta_2$ and the isotropy representation of the pair $(\liek,\lieh)$ is $\chi_3 =
\Id \ot\ad_{\lieh_0}\ot \Id$. Since $\lieg_1$ is simple, $\chi_1$ is not equivalent to $\chi_3$. If
$\chi_2$ is equivalent to $\chi_3$, then $\lieg_2$ contains $\lieh_0$ factor and the diagram is
reducible. From Remark \ref{remnonequivsummands}, since $\chi_1$ is not equivalent to $\chi_2$, the
manifold is a sphere via sum action.

In Case \textbf{A.1} the pair $(\lieg_2,\lieh_0\oplus \lieh_2)$ is strongly isotropy irreducible,
with $\lieh_0$ a primitive Lie algebra and $(\lieg_2, \lieh_2)$ a spherical pair. Similar
properties hold for the pair $(\lieg_1,\lieh_0\oplus \lieh_1)$ in Case \textbf{A.2}. From the
classification of strongly isotropy irreducible spaces, it follows that $(\lieg_2,\lieh_0\oplus
\lieh_2)$ is either $(\liesu(n+1), \lieu(1)\oplus\liesu(n))$ or
$(\liesp(n+1),\liesp(1)\oplus\liesp(n))$ with $n\geq 1$.

\smallskip

If one triple is in Case \textbf{A.3}, then the manifold is a sphere via sum action. We assume that
both triples are strongly isotropy irreducible and in Case \textbf{A.1} or \textbf{A.2}. Thus
$\lieh_0$ is either $\lieu(1)$ or $\liesp(1)$ and then the assumption that the diagram is
non-reducible implies that $\lieg_1$ and $\lieg_2$ are simple Lie algebras. W.L.O.G., we may assume
that $(\lieg_1, \lieh_1) = (\liesu(p+1), \liesu(p))$ and then $\lieh_0 = \lieu(1)$. If $\lieh_2 =
0$, then $\lieg_2 = \liesu(2)$ and the triple of Lie algebras is
\begin{equation*}
\liesu(p)\oplus\lieu(1) \subset \liesu(p+1)\oplus \lieu(1) \subset \liesu(p+1)\oplus\liesu(2).
\end{equation*}
So the principal isotropy representation is
\begin{equation*}
\chi = (\Id \ot [\phi]_\Real)\oplus ([\fw_1]_\Real\ot [\phi]_\Real) \oplus (\Id \ot \Id).
\end{equation*}
Let $\liekm = \liesu(p+1)\oplus \lieu(1)$ and then $\liekp = \liesu(p)\oplus
\lieu(1)\oplus\liesu(2)$ otherwise $\liekp = \liekm$ and the manifold is a double. Thus we have
\begin{eg}
The diagram is
\begin{equation*}
\SU(p)\times \Diag \U(1) \subset \set{\SU(p+1)\times\U(1), \U(p)\times\SU(2)} \subset
\SU(p+1)\times \SU(2)
\end{equation*}
and the manifold is the complex projective space $\Cp^{p+2}$, see for example, \cite{Uchida}.
\end{eg}
When $p=1$, the diagram already appeared in \cite{HoelscherClass} as example $Q^6_D$.

\smallskip

If $\lieh_2 \ne 0$, then $(\lieg_2,\lieu(1)\oplus\lieh_2)$ is a strongly isotropy irreducible pair
and the three summands of the principal isotropy representation are pairwisely non-equivalent. If
the manifold is not a double, then $\liekm = \liesu(p)\oplus \lieu(1)\oplus \lieg_2$, $\liekp =
\liesu(p+1)\oplus \lieu(1)\oplus\lieh_2$ and $\lieh = \liesu(p)\oplus \Diag \lieu(1)\oplus\lieh_2$.
Furthermore we have that $(\lieg_2, \lieh_2)$ is a spherical pair and then is $(\liesu(q+1),
\liesu(q))$ for $q\geq 2$. Therefore we have
\begin{eg}
The diagram is
\begin{equation*}
\SU(p) \Diag \U(1) \SU(q) \subset \set{\U(p)\SU(q+1), \SU(p+1)\U(q)} \subset \SU(p+1)\SU(q+1)
\end{equation*}
and the manifold is the complex projective space $\Cp^{p+q+1}$.
\end{eg}

Next we assume that $(\lieg_1, \lieh_1) = (\liesp(p+1),\liesp(p))$ and then $\lieh_0 = \liesp(1)$.
A similar argument shows that if the manifold is not a double, then we have
\begin{eg}
The diagram is
\begin{equation*}
\Sp(p)\Diag \Sp(1) \Sp(q) \subset \set{\Sp(p+1)\Sp(q), \Sp(p)\Sp(q+1)} \subset \Sp(p+1)\Sp(q+1)
\end{equation*}
for $p,q\geq 1$, and the manifold is the quaternionic projective space $\Hp^{p+q+1}$, see
\cite{Iwataquater}.
\end{eg}
\end{proof}

\smallskip

\textsc{Case B. } $\lieh_0$ is a trivial Lie algebra, i.e., each primitive factor of $\lieh$ lies
in some primitive factor of $\lieg$ as a proper subspace and thus $\lieg$ has two or three factors.

\begin{prop}\label{propclasscaseb}
Suppose $\gH \subset \set{\Kpm} \subset \G$ is a primitive and non-reducible group diagram. If the
triples of Lie algebras $\set{\lieh \subset \liekpm \subset \lieg}$ are in \textsc{Case B}, then
the cohomogeneity one manifold is a sphere with a sum action.
\end{prop}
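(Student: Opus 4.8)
The plan is to mimic the structure-theoretic argument in the proof of Theorem \ref{thmclasss2primitive}, but to replace the bracket computations there by a bookkeeping argument on the factors of $\lieg$, since in \textsc{Case B} every primitive factor of $\lieh$ already sits inside a single primitive factor of $\lieg$. Write $\lieg = \lieg_1 \oplus \cdots \oplus \lieg_r$ and $\lieh = \lieh_1 \oplus \cdots \oplus \lieh_r$ with each $\lieh_a \subsetneq \lieg_a$ a proper subalgebra, and set $\liep_{(a)} = \lieg_a \ominus \lieh_a \ne 0$. Each $\liep_{(a)}$ contributes at least one irreducible summand to $\Ad_{\gH_c}$, so from $s = 3$ there are only two or three factors. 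The first observation I would record is that every singular isotropy algebra respects this factorization: since $\liekpm = \lieh \oplus \liem_{\pm}$ with $\liem_{\pm}$ an $\Ad_{\gH_c}$-invariant subspace, hence a sum of whole irreducible summands, and since each summand lies in a single $\lieg_a$, we get $\liekpm = \bigoplus_a (\liekpm \cap \lieg_a)$.

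The decisive input is then that $\Kpm/\gH$ is a sphere. Under the factorization it becomes a product $\prod_a (\K_a^{\pm}/\gH_a)$, and a sphere admits no nontrivial product decomposition; hence for each sign exactly one factor, say $a_{\pm}$, carries a positive-dimensional sphere $\K_{a_{\pm}}^{\pm}/\gH_{a_{\pm}}$, while $\K_b^{\pm} = \gH_b$ for all $b \ne a_{\pm}$. Now I bring in primitivity. If some factor $b$ is distinct from both $a_-$ and $a_+$, then $\K_b^{\pm} = \gH_b$ and both $\Kpm$ lie in the proper subalgebra $(\bigoplus_{a \ne b}\lieg_a) \oplus \lieh_b$, contradicting primitivity. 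Consequently no factor index lies outside $\{a_-, a_+\}$; since there are at least two factors this forces $a_- \ne a_+$ and exactly two factors, say $a_+ = 1$ and $a_- = 2$, so that $\liekm = \lieh_1 \oplus \liek_2^-$ and $\liekp = \liek_1^+ \oplus \lieh_2$.

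To finish I would again invoke primitivity to promote the sphere subalgebras to the full factors. If $\liek_1^+ \subsetneq \lieg_1$ then $\liek_1^+ \oplus \lieg_2$ is a proper subalgebra containing both $\liekpm$, which is impossible; hence $\liek_1^+ = \lieg_1$, and symmetrically $\liek_2^- = \lieg_2$. Thus $\liekm = \lieh_1 \oplus \lieg_2$ and $\liekp = \lieg_1 \oplus \lieh_2$ with $\G_1/\gH_1 = \Kp/\gH$ and $\G_2/\gH_2 = \Km/\gH$ both spheres, which is exactly the sum-action diagram \eqref{diagramsumaction}; therefore $M$ is equivariantly diffeomorphic to a sphere. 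The only delicate points I anticipate are purely bookkeeping: justifying that $\liekpm$ splits along the factors (which rests on each irreducible summand lying in a single $\lieg_a$), the repeated use of ``a sphere is not a nontrivial product,'' and checking that the degenerate cases $\lieh_a = 0$ (where $\G_a/\gH_a$ is $\sph^1$ or $\sph^3$) are handled by the same reasoning. No genuinely hard step should arise, since once the factorization is in place the primitivity hypothesis does all the work.
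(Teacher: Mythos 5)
Your reduction via primitivity (ruling out three factors, forcing the two sphere directions into distinct factors, and then promoting $\liek_1^+$ and $\liek_2^-$ to the full factors) is sound \emph{once the factor-splitting of $\liekpm$ is known}, but that first step rests on a false claim: that an $\Ad_{\gH_c}$-invariant subspace $\liem_\pm$ must be a sum of the \emph{given} irreducible summands $\liep_i$, hence that $\liekpm=\bigoplus_a(\liekpm\cap\lieg_a)$. Schur's lemma gives this only when the summands are pairwise non-equivalent; if two summands are equivalent, their isotypic component contains a continuum of invariant irreducible subspaces (``diagonal'' copies), and $\liem_\pm$ may be one of those. In Case B this is not a vacuous worry. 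It happens in two ways: two equivalent summands inside one factor, e.g.\ $(\lieg_2,\lieh_2)=(\lieso(8),\liegt)$ or $(\lieso(7),\lieu(3))$, where the diagonal subspaces produce the subalgebras $\liespin^{\pm}(7)$; or two one-dimensional trivial summands in \emph{different} factors, which is forced whenever $\lieg_1=\lieu(1)$, $\lieh_1=0$ and $\lieg_2/\lieh_2$ has a trivial summand, e.g.\ $(\lieg_2,\lieh_2)=(\liesu(k+1),\liesu(k))$. In the latter situation your intermediate claim is actually contradicted by genuine examples satisfying all hypotheses of the proposition: the diagrams
\begin{equation*}
\SU(k) \subset \set{\gS^1_{1,q}\cdot \SU(k),\ \SU(k+1)} \subset \U(1)\times \SU(k+1), \qquad q\in\Zeit,
\end{equation*}
where $\gS^1_{1,q}$ winds with slope $(1,q)$ around the $\U(1)$-factor and the centralizer circle of $\SU(k)$ in $\SU(k+1)$, are primitive, non-reducible, simply connected, have $s=3$ and lie in Case B, yet for $q\ne 0$ the algebra $\liekm$ does \emph{not} split along the two factors of $\lieg$. (For $k=2$ this is Hoelscher's example $Q^7_G$, listed as primitive in Table \ref{Tablemanifoldlowdim}; these are exactly the ``sum actions'' in which, as noted in Appendix B, the $\U'(1)$ factor may be diagonally embedded.) So your argument, taken literally, proves a false statement and would miss these diagrams.

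This gap is precisely where the paper's proof does its real work. After reductions parallel to yours, it must treat the cases in which the sphere summand of one $\Kpm/\gH$ is equivalent to another summand: there the singular isotropy algebra is allowed to be $\lieh\oplus\liep_0$ with $\liep_0$ diagonal, and the paper pins it down using the two-summand classification behind Proposition \ref{propGK2summandsnonequiv} (Dickinson--Kerr), the subalgebra and spherical-pair constraints, normalizer computations, and the connectedness restrictions of Lemma \ref{lemsimplyconnected}, concluding that each such configuration is either non-primitive or again a (possibly ``twisted'') sum action on a sphere. A correct repair of your proof would replace factor-splitting by splitting along \emph{isotypic} components and then carry out exactly this analysis; the degenerate case $\lieh_a=0$, $\lieg_a=\lieu(1)$ that you dismissed as bookkeeping is the heart of the problem, not a formality.
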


\begin{proof} We claim that $\lieg$ has exactly two primitive factors. In fact, from Proposition 1.20 in
\cite{HoelscherClass}, there are at most two $\lieu(1)$ factors since the manifold is
simply-connected. If there are exactly two $\lieu(1)$ factors, i.e., $\G = \G_0\times \mathsf{T}^2$
with $\G_0$ semisimple, then both $\lpm = 1$, $\Kpm = \gH\cdot \gS^1_\pm$ and the projections of
$\gS^1_\pm$ to $\T^2$ generate $\T^2$. Since $s=3$, it follows that $\G_0/\gH$ is strongly isotropy
irreducible and both $\gS^1_\pm$ are subgroup in $\T^2$.  So the projections of $\gS^1_\pm$ to the
$\G_0$ factor are zero and there exists an intermediate subgroup $\gH\times \T^2$ which implies
that the diagram is not primitive. Next we may assume that $\lieg$ has three factors and at most
one of them is $\lieu(1)$. It follows that the three principal isotropy summands are pairwisely
non-equivalent and then the pair $(\lieg,\liek)$ is strongly isotropy irreducible. Let
$\lieg=\lieg_1 \oplus \lieg_2 \oplus \lieg_3$ and $\lieh = \lieh_1 \oplus \lieh_2 \oplus \lieh_3$,
then each $(\lieg_i, \lieh_i)$ is strongly isotropy irreducible or $\lieg_i = \lieu(1)$ and
$\lieh_i = 0$. W.L.O.G., we may assume that $\liek = \lieg_1 \oplus \lieg_2 \oplus\lieh_3$ and then
from the classification of transitive actions on the sphere, there is no Lie group pair $(\K, \gH)$
with $\K/\gH$ a sphere such that the Lie algebras are given by $(\liek, \lieh)$.

Suppose $\lieh = \lieh_1 \oplus \lieh_2$ and $\lieg = \lieg_1 \oplus \lieg_2$ where $\lieg_1$,
$\lieg_2$ are primitive Lie algebras and $\lieh_i \subset \lieg_i$ for $i=1, 2$. W.L.O.G., we may
assume that $\lieg_1 = \lieh_1 \oplus \liep_1$, $\lieg_2 = \lieh_2 \oplus \liep_2 \oplus \liep_3$
and $\liep_i$'s are nonzero vector spaces. For the intermediate Lie algebra $\liek$, note that
$(\liek, \lieh)$ is a spherical pair that implies $\liek$ cannot be $\lieh\oplus \liep_1 \oplus
\liep_2$ or $\lieh\oplus \liep_1 \oplus \liep_3$, so we have the following four possibilities:
\begin{center}
\begin{tabular}{rl}
\textbf{B.1}: & $\liek = \lieg_1 \oplus \lieh_2$ and $(\lieg_1, \lieh_1)$ is a spherical pair; \\
\textbf{B.2}: & $\liek = \lieh \oplus \liep_3$, $\liel=\lieh_2 \oplus \liep_3$ is a Lie algebra
and $(\liel,\lieh_2)$ is a spherical pair; \\
\textbf{B.3}: & $\liek = \lieh \oplus \liep_2$, $\liel=\lieh_2 \oplus \liep_2$ is a Lie algebra and
$(\liel, \lieh_2)$ is a spherical pair; \\
\textbf{B.4}: & $\liek = \lieh_1 \oplus \lieg_2$ and $(\lieg_2, \lieh_2)$ is a spherical pair.
\end{tabular}
\end{center}

We see that only in Case \textbf{B.4}, $(\lieg, \liek)$ is a strongly isotropy irreducible pair. In
this case, since $\lieg_2$ is primitive, $(\lieg_2, \lieh_2) = (\liesu(k+1), \liesu(k))$ with
$k\geq 2$. Hence the triple of Lie algebras is
\begin{equation*}
\lieh_1\oplus \liesu(k) \subset \lieh_1\oplus \liesu(k+1) \subset \lieg_1\oplus \liesu(k+1), \quad
k\geq 2
\end{equation*}
and $(\lieg_1,\lieh_1)$ is strongly isotropy irreducible.

\smallskip

In Case \textbf{B.1} the pair $(\lieg, \liek)$ is not strongly isotropy irreducible. If $\dim
\liep_1$ is bigger than one, then the isotropy representation of $(\liek, \lieh)$ is not equivalent
to any summands in the isotropy representation of $(\lieg,\liek)$ which implies that the
cohomogeneity one manifold is a sphere. So we assume that $\dim \liep_1 = 1$, i.e., $\lieg_1 =
\lieu(1)$ and $\lieh_1 = 0$. Then one of $\liep_2$ and $\liep_3$, say $\liep_2$, is one dimensional
otherwise the three summands are pairwisely non-equivalent. Let $\liekm = \liek = \lieu(1)\oplus
\lieh_2$. If the manifold has one singular orbit whose codimension is bigger than $2$, then $\liekp
= \liep_0 \oplus \liep_3\oplus \lieh$ where $\liep_0$ is a 1-dimensional subspace in $\liep_1
\oplus \liep_2$. In particular, the pair $(\lieg, \liekp)$ is strongly isotropy irreducible and
thus belongs to Case \textbf{B.4}. It follows that $\lieg_2 = \liesu(k+1)$ and $\lieh_2 =
\liesu(k)$ with $k\geq 2$. Since the normalizer of $\liesu(k)$ in $\lieu(1)\oplus \liesu(k+1)$ is
$\lieu(1)\oplus\liesu(k)$, $\liekp$ has to be $\liesu(k+1)$ and then the diagram of the Lie
algebras is
\begin{equation*}
\liesu(k) \subset \set{\lieu(1)\oplus \liesu(k), \liesu(k+1)} \subset \lieu(1)\oplus\liesu(k+1),
\quad k\geq 2.
\end{equation*}
If $\gH$ is connected, then the group diagram is
\begin{equation*}
\SU(k) \subset \set{\U(1)\times \SU(k), \SU(k+1)} \subset \U(1)\times \SU(k+1),
\end{equation*}
and if $\gH$ is not connected, then for each $m \geq 2$, we have the following group diagram
\begin{equation*}
\Zeit_m \cdot \SU(k) \subset \set{\U(1)\times \SU(k), \Zeit_m\cdot \SU(k+1)} \subset \U(1) \times
\SU(k+1).
\end{equation*}
The \cohomd for these diagrams is the sphere $\sph^{2k+3}$ via a sum action. When $k=2$, see
example $Q^7_G$ in \cite{HoelscherClass}.

In Case \textbf{B.2}, $(\lieg, \liek)$ is not strongly isotropy irreducible and the principal
isotropy representation is
\begin{equation*}
\chi =(\ad_{\lieg_1/\lieh_1} \ot \Id) \oplus (\Id \ot \ad_{\lieg_2/\lieh_2}|_{\liep_2}) \oplus (\Id
\ot \ad_{\lieg_2/\lieh_2}|_{\liep_3})
\end{equation*}
and $\liep_3$ is the representation space of $\chi_3 = \ad_{\liek/\lieh}$. If $\chi_1$ and $\chi_2$
are not equivalent to $\chi_3$, then the manifold is a sphere. We consider the case when $\chi_2$
is equivalent to $\chi_3$, and thus the isotropy representation of $(\lieg_2, \lieh_2)$ has two
equivalent summands. From Dickinson-Kerr's classification and the proof of Proposition
\ref{propGK2summandsnonequiv}, $(\lieg_2,\lieh_2)$ is either $(\lieso(8),\liegt)$ or $(\lieso(7),
\lieu(3))$. Let $\liekm = \liek$ and if $\ad_{\liekp/\lieh}$ is irreducible, then $\liekp$ is
either $\lieh\oplus \liep_1$ or $\lieh \oplus \liep_0$ where $\liep_0$ is a subspace in
$\liep_2\oplus \liep_3$ with dimension $\dim \liep_2$. In the first case, $\ad_{\liekp/\lieh}$ is
not equivalent to $\ad_{\lieg/\lieh}|_{\liep_2}$ and $\ad_{\lieg/\liekp}$ has two summands. So the
manifold is a sphere. In the second case, the diagram is not primitive since both $\liekpm$ are
subalgebras of $\lieh_1\oplus\lieg_2$. Therefore the isotropy representation of $(\lieg,\liekp)$
has two summands, i.e., it belongs to Case \textbf{B.4}. It follows that $\lieg_1 = \liesu(k+1)$,
$\lieh_1 = \liesu(k)$ and $(\lieg_2, \lieh_2)$ is strongly isotropy irreducible which contradicts
the fact that it is one of $(\lieso(8), \liegt)$ and $(\lieso(7),\lieu(3))$. Next we consider the
case when $\chi_1$ is equivalent to $\chi_3$, and thus $(\lieg_1, \lieh_1) = (\lieu(1), 0)$ and
$\dim \liep_3 = 1$. Let $\liekm = \liek = \lieh_2\oplus \liep_3$. Since $ \lieu(1)\oplus \liekm$ is
a Lie subalgebra of $\lieu(1)\oplus \lieg_2$ and the diagram is primitive, $\liekp$ contains the
subspace $\liep_2$ which implies that $\liekp$ has codimension one in $\lieg$ and thus $(\lieg,
\liekp)$ is strongly isotropy irreducible, i.e., it is in Case \textbf{B.4}. So we have $(\lieg_2,
\lieh_2) = (\liesu(k+1), \liesu(k))$ for $k\geq 2$ and $\liekp=\liesu(k+1) = \lieg_2$. The diagram
is not primitive since both $\liekpm$ are contained in $\lieg_2$.

Similar argument shows that there is no new \cohomd other than a sphere in Case \textbf{B.3}.

Finally we consider Case \textbf{B.4}. From the previous discussion, we may assume that both
triples $\lieh \subset \liekpm \subset \lieg$ are in this case. It is easy to see that both
$(\lieg_i, \lieh_i)$ are spherical pair and the action is a sum action on a sphere.
\end{proof}

\medskip

% -------------------------------------------------------------------------------------------------
\appendix

% =================================================================================================
\section{Compact homogeneous spaces with two isotropy summands}\label{apptwosummands}

W. Dickinson and M. Kerr classified compact simply-connected homogeneous spaces $\G/\K$ in
\cite{DickKerr} for which $\G$ is a simple Lie group and the isotropy representation has two
summands. Their classification is not complete and also contains some mistakes.

For the pair $(\G, \K) = (\E_6, \Spin(6)\Spin(4)\SO(2))$ listed as \textbf{IV}.10 in their paper,
the isotropy representation should be
\begin{equation*}
\chi = \fw_2 \otimes [\fw\otimes \Id]_\Real \otimes \Id + [\fw_1\otimes \fw \otimes\Id\otimes
\phi]_\Real + [\fw_3\otimes \Id \otimes \fw \otimes \phi ]_\Real
\end{equation*}
and has three irreducible summands. For the pair $(\E_8, \Spin(12)\Spin(4))$ listed as
\textbf{IV}.37, the isotropy representation should be
\begin{equation*}
\chi = \fw_1\otimes [\fw\otimes \Id]_\Real + \fw_6\otimes \fw \otimes \Id + \fw_6\otimes \Id
\otimes \fw
\end{equation*}
and has three irreducible summands.

Except for these two examples, they also missed 5 pairs for which the isotropy representation has
two summands. The complete classification is

\begin{thm}\label{thmtwosummands}
Suppose $\G/\K$ is a compact simply-connected homogeneous space with $\G$ a simple Lie group. If
the isotropy representation of $\K$ has exactly two summands, then
\begin{enumerate}
\item either $(\G, \K)$ is listed in the paper \cite{DickKerr} except the examples IV.10 and IV.37;
\item or $(\G, \K)$ is one of the followings
\[
\begin{array}{rlr}
\mathbf{I}.31 & \Spin(10)\U(1)< \U(16) < \SO(32) & \rho = [\fw_4\otimes \phi]_\Real \\
              & d_1 = 210, d_2 = 240 & \chi = \fw_4\fw_5\otimes \Id \oplus [\fw_3\otimes
              \phi]_\Real \\
              & & \\
\mathbf{I}.32 & \SO(m)\times \Spin(7) < \SO(m) \times \SO(8) < \SO(m+8)(m \geq 1) & \rho =
\fw_1\otimes \Id \oplus \Id \otimes \fw_3 \\
              & d_1 = 7, d_2 = 8m & \chi = \Id \otimes \fw_1 \oplus \fw_1 \otimes \fw_3 \\
              & & \\
\mathbf{II}.15 & \SU(6) < \Sp(10) < \SU(20) & \rho = \fw_3 \\
               & d_1 = 175, d_2 = 189 & \chi = \fw_3^2 \oplus \fw_2 \fw_4 \\
               & & \\
\mathbf{III}.12 & \Sp(m)\times \U(1) < \U(2m) < \Sp(2m)(m\geq 2) & \rho = [\fw_1\otimes\phi]_\Real \\
                & d_1 = (m-1)(2m+1), d_2 = 2m(2m+1) & \chi = \fw_2\otimes \Id \oplus [\fw_1^2\otimes
                \phi]_\Real \\
                & & \\
\mathbf{V}.19 & \SU(2)\times \SU(2) < \SO(8) & \rho = \fw\otimes \fw^3 \\
               & d_1 = 7, d_2 = 15 & \chi = \Id\otimes \fw^6 \oplus \fw^2 \otimes\fw^4.
\end{array}
\]
\end{enumerate}
\end{thm}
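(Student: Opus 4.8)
The plan is to work within the representation-theoretic framework behind the Dickinson--Kerr classification, recompute each case by hand, and isolate the discrepancies. The governing identity is that, if $\chi$ denotes the isotropy representation of $\G/\K$ on $\liep = \lieg/\liek$, then $\ad_\lieg|_\liek = \ad_\liek \oplus \chi$; hence counting the real-irreducible summands of $\chi$ reduces to branching the adjoint representation of $\lieg$ to $\liek$ and discarding the copy of $\ad_\liek$. The condition $s=2$ is very restrictive, and I would organize the entire search according to whether $\K$ is a maximal connected subgroup of $\G$.

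When $\K$ is maximal I would run through Dynkin's classification of maximal subalgebras together with the Borel--de Siebenthal description of maximal-rank subgroups, computing $\chi$ for each by the branching rule. The subtle subcase, and precisely the source of the missing example $\mathbf{V}.19$, consists of the maximal subgroups of non-maximal rank arising from a single irreducible embedding $\liek \hookrightarrow \lieso(N)$ or $\liesp(N)$. For $\SU(2)\times\SU(2)\hookrightarrow\SO(8)$ via $\fw\otimes\fw^3$ one branches $\ad_{\lieso(8)} = \Lambda^2(\fw\otimes\fw^3)$; using $\Lambda^2(V\otimes W) = (\Lambda^2 V \otimes \Sym^2 W)\oplus(\Sym^2 V\otimes\Lambda^2 W)$ and removing $\ad_\liek = (\fw^2\otimes\Id)\oplus(\Id\otimes\fw^2)$ leaves exactly $(\Id\otimes\fw^6)\oplus(\fw^2\otimes\fw^4)$, the two summands recorded in the statement.

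When $\K$ is not maximal I would use the tower structure: any intermediate subgroup $\K \subsetneq \gL \subsetneq \G$ gives $\chi = (\lieg/\liel)|_\liek \oplus (\liel/\liek)$, and since both pieces are nonzero, $s=2$ forces $\G/\gL$ to be strongly isotropy irreducible with $\lieg/\liel$ still $\liek$-irreducible, and $\gL/\K$ to be isotropy irreducible. This turns the problem into a finite search pairing the symmetric spaces and Wolf's strongly isotropy irreducible spaces \cite{WolfIrr} with isotropy irreducible $(\gL,\K)$. The four omitted tower examples all appear here: for instance $\mathbf{III}.12$ comes from $\Sp(m)\U(1) \subset \U(2m) \subset \Sp(2m)$, with $\Sp(2m)/\U(2m)$ the symmetric space and $\U(2m)/(\Sp(m)\U(1))$ built from the embedding $\nu_m\otimes\phi$, while $\mathbf{I}.32$ comes from $\SO(m)\times\Spin(7)\subset \SO(m)\times\SO(8)\subset\SO(m+8)$ through $\Spin(7)\subset\SO(8)$ and the real Grassmannian, and similarly for $\mathbf{I}.31$ and $\mathbf{II}.15$. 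Each is confirmed by computing $\chi$ and checking that it has exactly two summands, with careful attention to the real, complex, or quaternionic type of every piece.

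For the two erroneous entries I would simply recompute the branching. Restricting $\ad_{\liee_6}$ to $\Spin(6)\Spin(4)\SO(2)$ produces the three-summand decomposition displayed for $\mathbf{IV}.10$, and restricting $\ad_{\liee_8}$ to $\Spin(12)\Spin(4)$ does the same for $\mathbf{IV}.37$; in each case a module that Dickinson and Kerr recorded as irreducible in fact splits, so the pair has $s=3$ and must be removed. I expect the main obstacle to be not any single branching computation but the \emph{completeness} of the enumeration: one must revisit every branch of the original argument, and in particular keep exact track of when a complex irreducible summand is isomorphic to its conjugate and of how the real forms of $\liek$-modules are counted, since it is exactly these subtleties that generated both the missing families and the miscounts in $\mathbf{IV}.10$ and $\mathbf{IV}.37$.
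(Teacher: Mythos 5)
Your overall framework---splitting on whether $\K$ is maximal in $\G$, treating maximal $\K$ via Dynkin and Borel--de Siebenthal plus branching of $\ad_\lieg$, and treating non-maximal $\K$ via a tower $\K \subset \gL \subset \G$ in which $s=2$ forces $\G/\gL$ to be strongly isotropy irreducible with $\lieg/\liel$ still $\liek$-irreducible and $\gL/\K$ isotropy irreducible---is legitimate, and it is genuinely different from the paper's. The paper uses that dichotomy only for exceptional $\G$; for classical $\G$ it organizes the search by the defining embedding $\rho:\liek\to\lieg$ and the identities $\Lambda^2\rho=\ad_{\liek}\oplus\chi$ (orthogonal), $\rho\ot\rho^*=\Id\oplus\ad_{\liek}\oplus\chi$ (unitary), $\Sym^2\rho=\ad_{\liek}\oplus\chi$ (symplectic), with Dynkin's restriction-irreducibility theorems and Kraemer's classification supplying the finiteness; your route is essentially Dickinson--Kerr's original one, rerun as an audit.

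There is, however, a concrete error that matters for a completeness statement: $\mathbf{V}.19$ does \emph{not} come from a maximal subgroup. The representation $\fw^3$ of the second $\SU(2)$ is its $4$-dimensional irreducible quaternionic representation, so it maps that factor into $\Sp(2)$, and the image of $\fw\ot\fw^3$ therefore lies inside the maximal tensor-product subgroup $\Sp(1)\cdot\Sp(2)\subset\SO(8)$. In your own dichotomy $\mathbf{V}.19$ is a tower example, $\SU(2)\cdot\SU(2)\subset\Sp(1)\cdot\Sp(2)\subset\SO(8)$, where $\SO(8)/(\Sp(1)\cdot\Sp(2))$ is strongly isotropy irreducible (it is the case $n=2$ of $(\lieso(4n),\liesp(1)\oplus\liesp(n))$ on Wolf's list) and $\Sp(2)/\SU(2)$ is the isotropy irreducible Berger space, with $\lieg/\liel$ restricting irreducibly to $\fw^2\ot\fw^4$. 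Your $\Lambda^2$ computation of $\chi$ is correct, but the placement is not: a maximal branch run on Dynkin's actual list will never produce this pair, and your tower branch is explicitly charged only with the other four examples, so the two branches as described do not jointly \emph{find} $\mathbf{V}.19$---it is asserted, then verified, but not produced by the systematic search. Since the content of the theorem is precisely completeness, and since this kind of misplacement (assuming an irreducible embedding of a non-simple group is maximal, when tensor-product embeddings factor through products of classical groups) is exactly the failure mode behind the omissions in \cite{DickKerr}, this must be repaired. A secondary gap: in the maximal branch with $\K$ simple and irreducibly embedded in a classical $\G$, there are infinitely many candidate representations, so ``computing $\chi$ for each by the branching rule'' is not a finite procedure; you need Dynkin's results on which exterior/symmetric squares and tensor squares stay irreducible under restriction, or Kraemer's classification, as the paper uses, to terminate that part of the search.
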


\smallskip

\begin{proof}
Since $\G$ and $\K$ are connected Lie groups, we consider their Lie algebras $\lieg$ and $\liek$,
and thus the isotropy representation $\ad_{\lieg/\liek}$ has two summands. We separate the proof
into two parts: $\lieg$ is classical or exceptional Lie algebra. In the first part, we consider the
case that $\lieg$ is a classical Lie algebra, i.e., it is one of $\lieso(n)$, $\liesu(n)$ and
$\liesp(n)$.

\textsc{Case I.} If $\lieg$ is $\lieso(n)$ and $\rho: \liek \To \lieg$ is the embedding, then the
isotropy representation $\chi$ of $\lieg/\liek$ is determined by $\Lambda^2 \rho = \ad_{\liek}
\oplus \chi$ where $\Lambda^2 \rho$ is the exterior square of the representation $\rho$.

\textsc{Case I.a.} If $\rho = \rho_1 \oplus \rho_1^*$ and $\rho_1$ is an irreducible representation
of complex type, i.e., $\rho_1(\liek) \subset \lieu(m)$ with $n = 2m$, then $\Lambda^2\rho =
[\Lambda^2\rho_1]_\Real \oplus [\rho_1\otimes \rho_1^*]$ where $[\Lambda^2\rho_1]_\Real =
\Lambda^2\rho_1 \oplus \Lambda^2\rho_1^*$ and $\rho_1\otimes \rho_1^*$ contains the representation
$\ad_{\liek}$. If $\liek$ is $\lieu(m)$, then $(\lieg, \liek)$ is a symmetric pair and thus
$\ad_{\lieg/\liek}$ is isotropy irreducible. If $\liek = \liesu(m)$, then it is example
\textbf{I}.24. If $\liek$ is not $\lieu(m)$ or $\liesu(m)$, then the irreducible representation
with highest weight $\lambda + \lambda^*$ is contained in $\rho_1\otimes \rho_1^*$ but not in
$\ad_{\liek}$, see \cite{WangZillerEinstein}, where $\lambda$ is the highest weight of the
representation $\rho_1$. It follows that $\Lambda^2 \rho_1$ is irreducible, i.e., $\Lambda^2 \mu_m$
remains irreducible when restricted from $\lieu(m)$ to $\liek$. Such embeddings $\rho_1 : \liek \To
\lieu(m)$ are classified by D.B. Dykin in \cite{Dynkin}. Furthermore the isotropy representation
$\ad_{\lieso(2m)/\liek}$ for each pair has two irreducible summands. The pairs $(\lieg, \liek)$
give us example \textbf{I}.25 -- \textbf{I}.28 in \cite{DickKerr} and the example \textbf{III}.31.

%
%\begin{table}[!h]
%\begin{center}
%\begin{tabular}{|c|c|c|c|c|c|}
%\hline
%$\liek$ & $\rho_1$ & $\lieso(2m)$ & $\chi_1$ & $\chi_2$ &  \\
%\hline \hline $\lieso(k)\oplus \lieu(1)$ & $\varrho_1 \otimes \phi$ & $\lieso(2k)$ &
%$\fw_1^2\otimes \Id$ &
%$[\fw_2\otimes \phi^2]_\Real$ & $k \geq 3$ \\
%\hline $\lieu(k)$ & $\fw_2\otimes \phi$ & $\lieso(k^2-k)$ & $\fw_2\fw_{k-2}\otimes \Id$ &
%$[\fw_1\fw_3\otimes\phi^2]_\Real$ &$k\geq 4$ \\
%\hline $\lieu(k)$ & $\fw_1^2 \otimes \phi$ & $\lieso(k^2+k)$ & $\fw_1^2\fw_{k-1}^2\otimes \Id$ &
%$[\fw_1^2\fw_2\otimes \phi^2]_\Real$ & $k \geq 3$ \\
%\hline $\lieso(10) \oplus \lieu(1)$ & $\fw_4 \otimes \phi$ & $\lieso(32)$ & $\fw_4\fw_5\otimes \Id$
%&
%$[\fw_3\otimes \phi^2]_\Real$ & \\
%\hline $\liee_6\oplus \lieu(1)$ & $\fw_1 \otimes \phi$ & $\lieso(54)$ & $\fw_1\fw_6\otimes \Id$ &
%$[\fw_3\otimes \phi^2]_\Real$ & \\
%\hline
%\end{tabular}
%\end{center}
%\caption{Dykin's classification of $\rho_1: \liek \To \lieu(m)$ with $\Lambda^2\rho_1$
%irreducible.}\label{TableDykinirreducibleUm}
%\end{table}

If $\rho = \rho_1 \oplus \rho_1$ and $\rho_1$ is of quaternionic type, then $\Lambda^2 \rho =
\Lambda^2 \rho_1 \oplus \Lambda^2 \rho_1 \oplus [\rho_1\otimes \rho_1]$ and $\Lambda^2 \rho_1$ is
of real type and $\rho_1\otimes \rho_1 = \Sym^2\rho_1 \oplus \Lambda^2 \rho_1$. So the isotropy
representation $\ad_{\lieg/\liek}$ has more than two irreducible summands.

\smallskip

\textsc{Case I.b.} Next we assume that $\rho = \rho_1 \oplus \rho_1^* \oplus \rho_2$ where $\rho_1$
is irreducible of complex or quaternionic type and $\rho_2$ is a nontrivial representation with $l
= \dim_{\Real} \rho_2 \geq 2$. If $\rho_1 = \phi$, then $\liek = \lieu(1)\oplus \liek_0$, $\rho=
[\phi]_\Real \otimes\Id \oplus \Id \otimes \rho_2$ and $\rho(\liek) \subset \lieso(2)\oplus
\lieso(l) \subset \lieso(2+l)$. It follows that $\Lambda^2\rho = \Id \oplus [\Id\otimes
\Lambda^2\rho_2]\oplus ([\phi]_\Real \otimes \rho_2)$ and $\ad_{\liek} = \Id \oplus \ad_{\liek_0}$
is contained in $\Id \oplus [\Id \otimes \Lambda^2\rho_2]$. Since $\liek_0$ is proper in
$\lieso(l)$ and $\ad_{\lieg/\liek}$ has two irreducible summands, we have that $\rho_2$ is
irreducible and the pair $(\lieso(l), \liek_0)$ is isotropy irreducible. They are example
\textbf{I}.1 -- \textbf{I}.18(when $m=2$) in \cite{DickKerr} and  the special case of the example
\textbf{I}.32 when $m=2$.

If $\rho_2$ is the trivial representation, then $\liek = \lieu(m)$ and $\lieg=\lieso(2m+1)$ with $m
\geq 2$. It is example \textbf{I}.18(when $m=1$) in \cite{DickKerr}.

\smallskip

\textsc{Case I.c.} We assume that every irreducible summand in $\rho$ is of real type. If $\rho$ is
not irreducible, then $\rho = \rho_1 \oplus \rho_2$ with $\dim_\Real \rho_i = n_i$ for $i=1, 2$.
Let $\liek_2 = \ker \rho_1$ and $\liek_1 =\ker \rho_2$. Then $\liek = \liek_1 \oplus \liek_2$,
$\Lambda^2\rho = \Lambda^2\rho_1 \oplus \Lambda^2\rho_2 \oplus [\rho_1 \otimes \rho_2]$ and
$\ad_{\liek} = \ad_{\liek_1} \oplus \ad_{\liek_2}$. Since $\ad_{\liek_i} \subseteq \Lambda^2\rho_i$
and the equality implies that $\liek_i = \lieso(n_i)$, one of $\liek_i$, say $\liek_2$, is equal to
$\lieso(n_2)$ and then $(\lieso(n_1), \liek_1)$ is isotropy irreducible and $\rho_1$ is
irreducible. These give us example \textbf{I.}1 -- \textbf{I}.17, \textbf{I}.19 and \textbf{I}.30
in \cite{DickKerr} and the example \textbf{I}.32.

If $\rho$ is irreducible, then $\liek$ has at most two simple factors. We use the classification of
Kraemer in \cite{Kraemer}. These give us example \textbf{I}.20 -- \textbf{I}.23, \textbf{I}.29,
\textbf{V}.1 -- \textbf{V}.5 in \cite{DickKerr} and the example \textbf{V}.19. This finishes the
proof when $\G$ is an orthogonal group.

\smallskip

\textsc{Case II.} If $\lieg$ is $\liesu(n)$ and $\rho: \liek \To \lieg$ is the embedding, then the
isotropy representation $\chi$ of $\lieg/\liek$ is determined by $\rho \otimes \rho^*= \Id \oplus
\ad_{\liek} \oplus \chi$.

\textsc{Case II.a.} The image is contained in $\liesp(m)$($n=2m$) or $\lieso(n)$. We consider the
first case where $\rho(\liek)$ is contained in $\lieso(n)$. Let $\chi_2$ be the isotropy
representation of $\ad_{\liesu(n)/\lieso(n)}$, then it remains irreducible when restricted to the
proper subgroup $\liek$. Dynkin classified the triples $(\liek, \lieso(N), \chi)$ where $\chi \ne
\varrho_N$ such that the restriction of $\chi$ to $\liek$ remains irreducible. They are example
\textbf{II}.13 and \textbf{II}.14 in \cite{DickKerr}.

Next we assume that $\rho(\liek) \subset \liesp(m) \subset \liesu(2m)$. It follows that
$\ad_{\liesu(2m)/\liesp(m)}$ remains irreducible when restricted to the proper subalgebra $\liek$.
Such $\liek$ are classified by Dynkin. They are example \textbf{II}.9 -- \textbf{II}.12 in
\cite{DickKerr} and the example \textbf{II}.15.

\smallskip

\textsc{Case II.b.} Suppose that $\rho=\rho_1 \oplus \rho_2$ is reducible with $m=\deg \rho_1$ and
$n = \deg \rho_2$. Let $\liek_2 = \ker\rho_1$ and $\liek_1 = \ker \rho_2$, and then $\liek =
\liek_1 \oplus \liek_2$, $\rho= \sigma_1 \otimes \Id \oplus \Id \otimes \sigma_2$ where
$\sigma_i$'s are representations of $\liek_i$. It follows that
\begin{equation*}
\rho \otimes \rho^* = (\sigma_1\otimes \sigma_1^*)\otimes \Id + \Id\otimes(\sigma_2\otimes
\sigma_2^*) + (\sigma_1\otimes \sigma_2^*) + (\sigma_1^*\otimes \sigma_2).
\end{equation*}

Both of $\liek_1$ and $\liek_2$ cannot be proper subalgebras of $\liesu(m)$ and $\liesu(n)$
respectively. The case when $\liek_1 = \liesu(m)$ with $\sigma_1 = \mu_m$ and $\liek_2 = \liesu(n)$
with $\sigma_2 =\mu_n$ is example \textbf{II}.7 in \cite{DickKerr}. If one of $\liek_1$ and
$\liek_2$, say $\liek_1$, is a proper subalgebra of $\liesu(m)$, then $\liek_2$ has to be
$\lieu(n)$ with $\sigma_2 = \mu_n$ and $\sigma_1$ is an irreducible complex representation. Let
$\chi_1$ denote the isotropy representation of $\ad_{\liesu(m)/\liek_1}$ and then the summands of
$\ad_{\lieg/\liek}$ are $\chi_1 \otimes \Id$ and $[\sigma_1\otimes \mu_n]_\Real$. Such $\liek$'s
can be classified using Kraemer's results and they give us example \textbf{II}.1 -- \textbf{II}.6
and \textbf{II}.8 in \cite{DickKerr}.

\smallskip

\textsc{Case II.c.} Suppose that $\rho$ is an irreducible complex representation and $\rho(\liek)$
is not contained in some $\liesp(m)$ or $\lieso(n)$. Then we can assume that $\liek$ has at most
two simple factors and we can use Kraemer's classification. They are example \textbf{V}.6 --
\textbf{V}.8 in \cite{DickKerr}. This finishes the proof when $\G$ is a unitary group $\SU(n)$.

\smallskip

\textsc{Case III.} If $\lieg$ is $\liesp(n)$ with $n\geq 3$ and $\rho: \liek \To \lieg$ is the
embedding, then the isotropy representation $\chi$ of $\lieg/\liek$ is determined by $\Sym^2 \rho =
\ad_{\liek} \oplus \chi$.

\textsc{Case III.a.} Suppose the image of $\rho(\liek)$ is contained in $\lieu(n)$. If $\liek$ is
semi-simple, i.e., $\rho(\liek) \subset \liesu(n)$, then since $\ad_{\liesp(n)/\liesu(n)}$ already
has two summands, we have $\liek = \liesu(n)$. It is  example \textbf{III}.8 in \cite{DickKerr}. If
$\liek$ contains $\lieu(1)$ factors and its semi-simple part idea is denoted by $\liek_0$, then
$\liesu(n)/\liek_0$ is isotropy irreducible and the restriction the representation $\fw_1^2$ from
$\liesu(n)$ to $\liek_0$ remains irreducible. From Dynkin's classification, $\liek_0$ is
$\liesp(n/2)$ and the embedding is $\fw_1$. It gives us the example \textbf{III}.12.

\smallskip

\textsc{Case III.b.} If $\rho = \rho_1 \oplus \rho_2$ and $\rho_1$ is irreducible of real or
quaternionic type, then $\rho(\liek) \subset \liesp(n_1)\oplus \liesp(n_2)$. We may assume that
$\liek = \liesp(n_1) \oplus \liek_0$ and $\rho = \nu_{n_1}\otimes \Id \oplus \Id \otimes \sigma$.
The isotropy representation of $\liesp(n_2)/\liek_0$ is irreducible and it is denoted by $\chi_1$.
Since $\ad_{\lieg/\liek}$ has two irreducible summands, $\sigma$ is irreducible. They are example
\textbf{III}.1 -- \textbf{III}.7 in \cite{DickKerr}. In fact Kraemer missed the pair $(\liegt\oplus
\liesp(1), \liesp(7))$ in his classification.

\smallskip

\textsc{Case III.c.} If $\rho$ is irreducible of real or quaternionic type, then we may assume that
$\liek$ has at most two primitive factors. We can use Kraemer's classification and they are example
\textbf{III}.9 -- \textbf{III}.11 and \textbf{V}.9 -- \textbf{V}.15 in \cite{DickKerr}. This
finishes the proof when $\G$ is a symplectic group.

\smallskip

In the second part, we consider the case when $\lieg$ is an exceptional Lie algebra. There are two
different cases. First we assume that there is an intermediate subalgebra $\liel$ between $\liek$
and $\lieg$. So both pair $(\lieg, \liel)$ and $(\liel, \liek)$ are isotropy irreducible. From the
classification strong isotropy irreducible homogeneous spaces in \cite{WolfIrr}, we can determine
the possible $\liel$ for each $\lieh$. Then we look at the possible $\liek$ such that $\liel/\liek$
is isotropy irreducible and $\ad_{\lieg/\liel}$ remains irreducible when restricted to $\liek$.
Such examples of $(\lieg,\liek)$ give us example \textbf{IV}.1 -- \textbf{VI}.48 in \cite{DickKerr}
except \textbf{IV}.10 and \textbf{IV}.37. Next we assume that $\liek$ is maximal in $\lieg$ and
then the isotropy representation of $\lieg$ splits as $\ad_{\liek}$ and $\ad_{\lieg/\liek}$ when it
is restricted to $\liek$. We check the table of the branching rules in \cite{McKayPatera} to see
for which $(\lieg,\liek)$, $\ad_{\lieg/\liek}$ has exactly two summands. They are example
\textbf{V}.16 -- \textbf{V}.18 in \cite{DickKerr} and thus the classification is finished.
\end{proof}

% -------------------------------------------------------------------------------------------------

\medskip

\section{Collection of tables}\label{appTables}

% -------------------------------------------------------------------------------------------------
In this appendix, we collect the tables which contain the classification of the case $s=3$.

\subsection{Primitive actions}
From the classifications in Theorem \ref{thmclassificationreducible}, \ref{thms3primitiveGsimple}
and \ref{thms3Gnonsimple}, except for a few sum actions on spheres, the primitive actions without
fixed points and with $s=3$ are non-reducible and the manifolds are spheres, projective spaces and
Grassmannian manifolds $\Gr_m(\Real^{m+n}) = \SO(m+n)/(\SO(m)\times \SO(n))(m,n \geq 2)$. In the
following, we first describe the actions on spheres, including the reducible ones, and then we list
the actions on the other manifolds.

\subsubsection{Actions on spheres}
The cohomogeneity one actions on spheres were classified in \cite{Straume}, see the group diagrams
in \cite{GWZ}. A large class of cohomogeneity one actions on spheres with $s=3$ is given by the sum
actions. Recall that if $\gL_i/\gH_i = \sph^{l_i}$($i=1,2$), then $\gL_1 \times \gL_2$ acts on the
$\sph^{l_1+l_2+1}$ via cohomogeneity one with diagram
\begin{equation*}
\gH_1 \times \gH_2 \subset \set{\gL_1 \times \gH_2, \, \gH_1 \times \gL_2} \subset \gL_1 \times
\gL_2,
\end{equation*}
and $s= s_1 + s_2$ where $s_i$ is the number of the irreducible summands in the isotropy
representation of $\gL_i/\gH_i$. It is easy to see that any variation of the diagram is equivalent
to the original one. If one singular orbit is codimension 2, say $\gL_1/\gH_1 = \sph^1$, then for
every $m\ne 0$, we have the diagram
\begin{equation*}
\Zeit_m \times \gH_2 \subset \set{\U'(1) \times \gH_2, \Zeit_m \times \gL_2} \subset \U(1) \times
\gL_2,
\end{equation*}
where the $\U'(1)$ factor may diagonally embedded into $\U(1) \times \gL_2$. The action is not
effective if $m\geq 2$ and the non-effective kernel is $\Zeit_m\times 1$.

Every sum action is primitive. The action is reducible if one of the spheres is given as
$\U(n+1)/\U(n)$, $\SO(4)/\SO(3)$, $\Sp(n+1)\U(1)/\Sp(n)\Delta\U(1)$ or
$\Sp(n+1)\Sp(1)/\Sp(n)\Delta\Sp(1)$, where $n \geq 1$. Using Table
\ref{Tabletransitiveactionsphere} of transitive actions on spheres, one can easily write down the
diagram of sum actions with $s\leq 3$.

Other than sum actions, there are a few cohomogeneity one actions on sphere which have $s=3$. All
of them are primitive actions and they are listed in Table \ref{Tablespherenotsum} where $\pi$ is
the representation of $\G$ on $\Real^n$. Note that the actions of $\SU(2)\times \SU(2)$ on $\sph^6$
and $\Spin(7)$ on $\sph^{14}$ are special cases of what are called generalized sum actions in
\cite{GWZ}.

\begin{table}[!h]
\begin{center}
\begin{tabular}{|c|c|c|c|c|c|}
\hline
$n$ & $\G$ & $\pi$ & $\Km$ & $\Kp$ & $\gH$ \\
\hline \hline
$6$ & $\SU(2)\times \SU(2)$ & $\fw_1 \otimes \fw_1 \oplus \Id \otimes \fw_1^2$ & $\Delta \SU(2)$ & $\SU(2) \times \U(1)$ & $\Delta \U(1)$ \\
\hline
$7$ & $\SU(3)$ & $\fw_1\fw_2$ & $S(\U(1)\times \U(2))$ & $S(\U(2)\times \U(1))$ & $\U(1)\times \U(1)$ \\
\hline
$14$ & $\Spin(7)$ & $\varrho_7\oplus \Delta_7$ & $\Spin(6)$ & $\Gt$ & $\SU(3)$ \\
\hline
$25$ & $\F_4$ & $\fw_1$ & $\Spin(9)_1$ & $\Spin(9)_2$ & $\Spin(8)$ \\
\hline
\end{tabular}
\end{center}
\smallskip
\caption{Cohomogeneity one actions on $\sph^n$ with $s=3$ which are not sum
actions.}\label{Tablespherenotsum}
\end{table}

\subsubsection{Actions on projective spaces and Grassmannian manifolds}
The cohomogeneity one actions on projective spaces were classified in \cite{Uchida},
\cite{Iwataquater} and \cite{IwataCayley}. Note that all cohomogeneity one actions on $\Cp^n$ and
$\Hp^n$ are obtained from an action on an odd dimensional sphere when $\U(1)$ and $\Sp(1)$ is a
normal subgroup in $\G$ with induced action given by a Hopf action, see \cite{GWZ}. Table
\ref{Tableprojectivespaces} list these actions as well as those on the Grassmannian manifolds for
which $s=3$.

\begin{table}[!h]
\begin{center}
\begin{tabular}{|c|c|c|c|c|}
\hline
 & $\G$ & $\Km$ & $\Kp$ & $\gH$ \\
\hline \hline
$\Cp^3$ & $\SU(2) \times \SU(2)$ & $\U(1) \times \U(1)$ & $\Zeit_2 \cdot \Delta \SU(2)$ & $\Zeit_2 \cdot \Delta\U(1)$ \\
\hline
$\Cp^7$ & $\Spin(7)$ & $\SO(2)\cdot \SU(3)$ & $\Zeit_2\cdot \Gt$ & $\Zeit_2 \cdot \SU(3)$ \\
\hline
$\Cp^7$ & $\SO(8)$ & $\U(4)$ & $\Spin(7)$ & $\SU(4)$ \\
\hline
$\Cp^{n+1}$ & $\SO(2+n)$ & $\SO(2) \times \SO(n)$ & $\gO(n+1)$ & $\Zeit_2 \cdot \SO(n)$ \\
\hline
$\Cp^{p+q+1}$ & $\SU(p+1)\times \SU(q+1)$ & $\SU(p+1)\U(q)$ & $\U(p)\SU(q+1)$ & $\SU(p) \Delta \U(1)\SU(q)$ \\
\hline
$\Hp^{p+q+1}$ & $\Sp(p+1)\times \Sp(q+1)$ & $\Sp(p+1)\Sp(q)$ & $\Sp(p)\Sp(q+1)$ & $\Sp(p) \Delta \Sp(1)\Sp(q)$ \\
\hline \hline
$\Gr_2(\Real^5)$ & $\SU(2)\times \SU(2)$ & $\U(1) \times \U(1)$ & $\Delta \SU(2)$ & $\Delta \U(1)$ \\
\hline
$\Gr_3(\Real^7)$ & $\Gt$ & $\SO(4)$ & $\SO(4)'$ & $\SO(3)$ \\
\hline
$\Gr_2(\Real^9)$ & $\Spin(7)$ & $\SO(2)\cdot \SU(3)$ & $\Gt$ & $\SU(3)$ \\
\hline
$\Gr_p(\Real^{p+q+1})$ & $\SO(p+q+1)$ & $\SO(p)\times \SO(q+1)$ & $\SO(p+1)\times \SO(q)$ & $\SO(p)\times \SO(q)$ \\
\hline
\end{tabular}
\end{center}
\smallskip
\caption{Fixed-point free cohomogeneity one actions on projective spaces and Grassmannian manifolds
with $s=3$.}\label{Tableprojectivespaces}
\end{table}

\subsection{Reducible, non-primitive actions}
%% Reducible actions with s = 3 ===================================================================

The reducible, non-primitive cohomogeneity one manifolds with $s=3$ and whose reduced diagram has
$s\geq 4$ are classified in Table \ref{Tables3reducibledouble} and Table
\ref{Tables3reduciblenondouble}, see Theorem \ref{thmclassificationreducible}.

Table \ref{Tables3reducibledouble} is the classification where the manifold is a double, i.e., $\Km
= \Kp$, and Table \ref{Tables3reduciblenondouble} is the one where the manifold is not a double. In
both tables, the last column contains the conditions for the groups. If a homogeneous space appears
in this column, it means that the space is strongly isotropy irreducible, for examples, in
\textbf{R}.1 the space $\G_1/(\gH_1 \gH_2 \U(1))$ is strongly isotropy irreducible, in \textbf{R}.3
the space $\G_1/\gH_1$ is strongly isotropy irreducible and not the circle, and in \textbf{R}.7 the
space $\G_1/\gH_1$ is an isotropy irreducible sphere. In both tables, $k\geq 2$ and $m \geq 1$ are
positive integers. $\Zeit_m$ is a cyclic group in $\Km/\gH$ if it is a circle, and $\Zeit_1$ stands
for the trivial group with one element. In Table \ref{Tables3reduciblenondouble}, for the singular
isotropy subgroups, we write $\Km$ on the top of $\Kp$. There are further conditions for some of
them.
\begin{itemize}
\item In example \textbf{R}.9 and \textbf{R}.10, the groups satisfy the Condition T1: the homogenous
space $\G_1/(\gH_1\times \gH_0)$ has two irreducible isotropy summands, $\G_1$ is simple and
$\gH_0$ is $\U(1)$(example \textbf{R}.9) or $\SU(2)$(example \textbf{R}.10).
\item In example \textbf{R}.11, the $\U'(1)$ factor of $\K$ is embedded into $\G$ as $\set{(e^{\imath \theta}, e^{\imath
p\theta}, 1)|\, 0 \leq \theta \leq 2\pi}$ for some integer $p$. The diagram is the normal extension
of example $Q^5_A$ in \cite{HoelscherClass}.
\item In example \textbf{R}.12, the embedding of the $\U'(1)$ factor of $\K$ into $\G$ is given as
\begin{equation*}
\set{\left(e^{\imath \theta}, \begin{pmatrix}\beta(p \theta) & \\ & A \end{pmatrix}, 1\right)| \, 0
\leq \theta \leq 2 \pi, A \in \SO(n)} \subset \U(1) \times \SO(2+n) \times \U(1),
\end{equation*}
where $\beta(p \theta)$ is the rotation by angle $p \theta$ and $p$ is an integer.
\item Example \textbf{R}.22 is the normal extension of example $N^5$ in \cite{HoelscherClass}, see
the embeddings of $\U(1)_{-}$ and $\U(1)_{+}$ there.
\item Example \textbf{R}.23 was discussed in the proof of Theorem \ref{thmclassificationreducible}, or
see Lemma 4.3 in \cite{HoelscherClass}. We consider the corresponding non-reducible diagram as $\G
= \U(1) \times \SO(n+2)$. The reducible one can be obtained by normal extension since the subgroup
$\SO(2) \subset \SO(n+2)$ normalizes all isotropy subgroups. The embeddings of $\Kpm_c$ are given
as
\begin{equation*}
\Kpm_c = \set{\left(e^{\imath n_{\pm} \theta},
\begin{pmatrix}\beta(m_{\pm}\theta) & \\ & A \end{pmatrix}\right) | \, 0 \leq \theta \leq 2\pi, A \in \SO(n)}
\end{equation*}
such that $\Km_c \ne \Kp_c$ where $m_{\pm}$ and $n_{\pm}$ are integers. Take two cyclic groups
$\Zeit_{k_{\pm}} \subset \set{(e^{\imath n_{\pm} \theta}, \beta(m_{\pm}\theta))}$ and let
$\gH_{\pm} = \Zeit_{k_{\pm}}\cdot \SO(n)$. Let $\gH = <\gH_-,\gH_+>$ generated by $\gH_{\pm}$ and
let $\Kpm = \Kpm_c \cdot \gH$, then the diagram defines a simply connected manifold if and only if
$\gcd(n_-, n_+, d) =1$ where $d$ is the index of $\gH\cap \Km_c \cap \Kp_c$ inside $\Km_c \cap
\Kp_c$.
\end{itemize}

\medskip

\begin{table}[!h]
\begin{center}
\begin{tabular}{|r|l|l|l|l|}
\hline
 & $\quad \quad \quad \quad \G$ & $\quad \quad \quad \quad \Kpm$ & $\quad \quad \quad \gH$ &  \\
\hline \hline
R.1 & $\G_1 \times \U(1) \times \gH_2$ & $\gH_1 \U(1) \Delta \gH_2 \U(1)$ & $\Zeit_m \cdot \gH_1 \Delta\U(1)\Delta \gH_2$ & $\G_1/(\gH_1\gH_2\U(1))$ \\
    & & & & $\gH_2$ primitive \\
\hline
R.2 & $\G_1 \times \SU(2) \times \gH_2$ & $\gH_1 \SU(2) \Delta \gH_2 \SU(2)$ & $\gH_1 \Delta\SU(2)\Delta \gH_2$ & $\G_1/(\gH_1\gH_2\SU(2))$ \\
    & & & & $\gH_2$ primitive \\
\hline
R.3 & $\G_1 \times \SU(2)\times \U(1)$ & $\gH_1\SU(2)\U(1)$ & $\gH_1 \Delta\U(1)$ & $\G_1/\gH_1 \ne \sph^1$ \\
\hline
R.4 & $\G_1 \times \Sp(n+1)\Sp(1)$ & $\gH_1 \Sp(n+1)\Sp(1)$ & $\gH_1 \Sp(n)\Delta \Sp(1)$ & $\G_1/\gH_1$ \\
\hline
R.5 & $\G_1 \times \G_2\times \U(1)$ & $\gH_1 \gH_2 \U(1)\U(1)$ & $\Zeit_m\cdot \gH_1 \gH_2\Delta \U(1)$ & $\G_1/\gH_1 \ne \sph^1$ \\
    &  & & & $\G_2/(\gH_2\U(1))$ \\
\hline
R.6 & $\G_1 \times \G_2\times \SU(2)$ & $\gH_1 \gH_2 \SU(2)\SU(2)$ & $\gH_1 \gH_2\Delta \SU(2)$ & $\G_1/\gH_1$ \\
    &  & & & $\G_2/(\gH_2\SU(2))$ \\
\hline
    & & & & $\G_1/\gH_1 = \sph^k$ \\
R.7 & $\G_1\times \G_2 \times \gH_0$ & $\G_1\gH_2\Delta \gH_0$ & $\gH_1 \gH_2\Delta \gH_0$ & $\G_2/(\gH_2 \gH_0)$ \\
    & & & & $\gH_0$ primitive \\
\hline
R.8 & $\U(1)\times \G_1 \times \gH_0$ & $\U(1)\gH_1\Delta \gH_0$ & $\gH_1\Delta \gH_0$ & $\G_1/(\gH_1 \gH_0)$ \\
    & & & & $\gH_0$ simple \\
\hline
R.9 & $\G_1\times \U(1)$ & $\gH_1 \U(1) \U(1)$ & $\Zeit_m \cdot \gH_1\Delta\U(1)$ & Condition T1 \\
\hline
R.10 & $\G_1\times \SU(2)$ & $\gH_1 \SU(2)\SU(2)$ & $\gH_1\Delta\SU(2)$ & Condition T1 \\
\hline
R.11 & $\U(1)\times \SU(2)\times \U(1)$ & $\U'(1)\Delta \U(1)$ & $\Zeit_m \cdot \Delta \U(1)$ &  \\
\hline
R.12 & $\U(1) \times \SO(n+2) \times \U(1)$ & $\U'(1) \SO(n)\Delta \U(1)$ & $\Zeit_m \cdot \SO(n)\Delta \U(1)$ & $n\geq 2$\\
\hline
R.13 & \multicolumn{3}{c|}{$\G \supset \K=\Kpm \supset \Zeit_m \cdot \gH$ in Table \ref{tabletriplereducible}} & $m=1$ if $\K/\gH\ne \sph^1$ \\
\hline
\end{tabular}
\end{center}
\smallskip
\caption{Non-primitive fixed-point free cohomogeneity one manifolds with reducible actions. The
action has $s=3$ and its reduced action has $s\geq 4$. Part I: the manifold is a double}
\label{Tables3reducibledouble}
\end{table}

\begin{table}[!h]
\begin{center}
\begin{tabular}{|r|l|l|l|l|}
\hline
 & $\quad \quad \quad \quad \G$ & $\quad \quad \quad \quad \Kpm$ & $\quad \quad \quad \gH$ &  \\
\hline \hline
R.14 & $\G_1 \times \U(1) \times \SU(2)$ & $\gH_1 \U(1)\Delta \SU(2) \U(1)$ & $\Zeit_m \cdot\gH_1 \Delta \U(1)\Delta \SU(2)$ & $\G_1/(\gH_1\U(1)\SU(2))$ \\
    & & $\Zeit_m \cdot\gH_1 \SU(2)\Delta \U(1)\SU(2)$ & &  \\
\hline
R.15 & $\G_1 \times \SU(2)\times\U(1)$ & $\gH_1 \U(1)\U(1)$ & $\gH_1 \Delta\U(1)$ & $\G_1/\gH_1 \ne \sph^1$  \\
     & & $\gH_1\SU(2)\U(1)$ & & \\
\hline
R.16 & $\G_1 \times \Sp(n+1)$ & $\gH_1\Sp(n+1)\Sp(1)$ & $\gH_1 \Sp(n)\Delta \Sp(1)$ & $\G_1/\gH_1$ \\
     & $\times \Sp(1)$ & $\gH_1 \Sp(n)\Sp(1)\Sp(1)$ & & \\
\hline
R.17 & $\G_1 \times \G_2 \times \U(1)$ & $\gH_1\gH_2\U(1)\U(1)$ & $\Zeit_m \cdot\gH_1 \gH_2\Delta \U(1)$ & $\G_1/\gH_1 = \sph^k$  \\
     & & $\Zeit_m \cdot \G_1 \gH_2\Delta\U(1)$ & & $\G_2/(\gH_2\U(1))$  \\
\hline
R.18 & $\G_1 \times \G_2 \times \SU(2)$ & $\gH_1\gH_2\SU(2)\SU(2)$ & $\gH_1 \gH_2\Delta \SU(2)$ & $\G_1/\gH_1 = \sph^k$  \\
     & & $\G_1 \gH_2\Delta\SU(2)$ & & $\G_2/(\gH_2\SU(2))$ \\
\hline
R.19 & $\G_1 \times \U(1)$ & $\gH_1\U(1) \U(1)$ & $\gH_1\Delta \U(1)$ & $\G \supset \Kp \supset \gH_c$\\
     & & $\gH_1 \Sp(1)\U(1)$ & & is 1--5 in Table \ref{tabletriplereducible}\\
\hline
R.20 & $\G_1 \times \U(1)$ & $\gH_1\U(1) \U(1)$ & $\Zeit_m \cdot\gH_1\Delta \U(1)$ & $\G \supset \Kp_c \supset \gH_c$\\
     & & $\Zeit_m \cdot\K_1 \Delta \U(1)$ & & is 6--8 in Table \ref{tabletriplereducible}\\
\hline
R.21 & $\G_1 \times \SU(2)$ & $\gH_1\SU(2)\SU(2)$ & $\gH_1\Delta \SU(2)$ & $\G \supset \Kp \supset \gH$\\
     & & $\K_1 \Delta \SU(2)$ & & is 9--13 in Table \ref{tabletriplereducible}\\
\hline
R.22 & $\U(1) \times \SU(2)$ & $\U(1)_{-}\cdot \gH$ & $\gH$ & \\
     & $ \times \U(1)$       & $\U(1)_{+}\cdot \gH$ & & \\
\hline
R.23 & $\U(1) \times \SO(n+2)$ & $\Km_c\cdot \gH$ & $\gH$ & $n\geq 2$ \\
     & $\times \U(1)$          & $\Kp_c\cdot \gH$ & & \\
\hline
\end{tabular}
\end{center}
\smallskip
\caption{Non-primitive fixed-point free cohomogeneity one manifolds with reducible actions. The
action has $s=3$ and its reduced action has $s\geq 4$. Part II: the manifold is not a double}
\label{Tables3reduciblenondouble}
\end{table}

Since the actions are non-primitive, the manifolds are bundles over lower dimensional bases. To
identity the bundle structure, it is easy to work with non-reducible diagrams. Table
\ref{Tables3reduciblenonreducible1} and \ref{Tables3reduciblenonreducible2} list the diagrams of
the reduced actions in Table \ref{Tables3reducibledouble} and \ref{Tables3reduciblenondouble}. Note
that in example \textbf{R}.19, \textbf{R}.20 and \textbf{R}.21 we keep the reducible diagrams.

\begin{table}[!h]
\begin{center}
\begin{tabular}{|r|l|l|l|l|}
\hline
 & $\quad \quad \quad \G$ & $\quad \quad \Kpm$ & $\quad \quad \gH$ &  \\
\hline \hline
R.1 & $\G_1$ & $\gH_1 \U(1)$ & $\Zeit_m \cdot \gH_1$ & $\G_1/(\gH_1\gH_2\U(1))$ \\
    & & & & $\gH_2$ primitive \\
\hline
R.2 & $\G_1$ & $\gH_1 \SU(2)$ & $\gH_1$ & $\G_1/(\gH_1\gH_2\SU(2))$ \\
    & & & & $\gH_2$ primitive \\
\hline
R.3 & $\G_1 \times \SU(2)$ & $\gH_1\SU(2)$ & $\gH_1$ & $\G_1/\gH_1 \ne \sph^1$ \\
\hline
R.4 & $\G_1 \times \Sp(n+1)$ & $\gH_1 \Sp(n+1)$ & $\gH_1 \Sp(n)$ & $\G_1/\gH_1$ \\
\hline
R.5 & $\G_1 \times \G_2$ & $\gH_1 \gH_2 \U(1)$ & $\Zeit_m\cdot \gH_1 \gH_2$ & $\G_1/\gH_1 \ne \sph^1$ \\
    &  & & & $\G_2/(\gH_2\U(1))$ \\
\hline
R.6 & $\G_1 \times \G_2$ & $\gH_1 \gH_2 \SU(2)$ & $\gH_1 \gH_2$ & $\G_1/\gH_1$ \\
    &  & & & $\G_2/(\gH_2\SU(2))$ \\
\hline
    & & & & $\G_1/\gH_1 = \sph^k$ \\
R.7 & $\G_1\times \G_2$ & $\G_1\gH_2$ & $\gH_1 \gH_2$ & $\G_2/(\gH_2 \gH_0)$ \\
    & & & & $\gH_0$ primitive \\
\hline
R.8 & $\U(1)\times \G_1$ & $\U(1)\gH_1$ & $\gH_1$ & $\G_1/(\gH_1 \gH_0)$ \\
    & & & & $\gH_0$ simple \\
\hline
R.9 & $\G_1$ & $\gH_1 \U(1)$ & $\Zeit_m \cdot \gH_1$ & Condition T1 \\
\hline
R.10 & $\G_1$ & $\gH_1 \SU(2)$ & $\gH_1$ & Condition T1 \\
\hline
R.11 & $\U(1)\times \SU(2)$ & $\U'(1)$ & $\Zeit_m$ &  \\
\hline
R.12 & $\U(1) \times \SO(n+2)$ & $\U'(1) \SO(n)$ & $\Zeit_m \cdot \SO(n)$ & $n\geq 2$\\
\hline
R.13 & \multicolumn{3}{c|}{$\G \supset \K=\Kpm \supset \Zeit_m \cdot \gH$ in Table \ref{tabletriplereducible}} & $m=1$ if $\K/\gH\ne \sph^1$ \\
\hline
\end{tabular}
\end{center}
\smallskip
\caption{The diagrams of reduced actions in Table \ref{Tables3reducibledouble}.}
\label{Tables3reduciblenonreducible1}
\end{table}

\begin{table}[!h]
\begin{center}
\begin{tabular}{|r|l|l|l|l|}
\hline
 & $\quad \quad \quad \G$ & $\quad \quad \Kpm$ & $\quad \quad \gH$ &  \\
\hline \hline
R.14 & $\G_1$ & $\gH_1 \U(1)$ & $\Zeit_m \cdot\gH_1$ & $\G_1/(\gH_1\U(1)\SU(2))$ \\
    & & $\Zeit_m \cdot\gH_1 \SU(2)$ & &  \\
\hline
R.15 & $\G_1 \times \SU(2)$ & $\gH_1 \U(1)$ & $\gH_1$ & $\G_1/\gH_1 \ne \sph^1$  \\
     & & $\gH_1\SU(2)$ & & \\
\hline
R.16 & $\G_1 \times \Sp(n+1)$ & $\gH_1\Sp(n+1)$ & $\gH_1 \Sp(n)$ & $\G_1/\gH_1$ \\
     &  & $\gH_1 \Sp(n)\Sp(1)$ & & \\
\hline
R.17 & $\G_1 \times \G_2$ & $\gH_1\gH_2\U(1)$ & $\Zeit_m \cdot\gH_1 \gH_2$ & $\G_1/\gH_1 = \sph^k$  \\
     & & $\Zeit_m \cdot \G_1 \gH_2$ & & $\G_2/(\gH_2\U(1))$  \\
\hline
R.18 & $\G_1 \times \G_2$ & $\gH_1\gH_2\SU(2)$ & $\gH_1 \gH_2$ & $\G_1/\gH_1 = \sph^k$  \\
     & & $\G_1 \gH_2$ & & $\G_2/(\gH_2\SU(2))$ \\
\hline
R.19 & $\G_1 \times \U(1)$ & $\gH_1\U(1) \U(1)$ & $\gH_1\Delta \U(1)$ & $\G \supset \Kp \supset \gH_c$\\
     & & $\gH_1 \Sp(1)\U(1)$ & & is 1--5 in Table \ref{tabletriplereducible}\\
\hline
R.20 & $\G_1 \times \U(1)$ & $\gH_1\U(1) \U(1)$ & $\Zeit_m \cdot\gH_1\Delta \U(1)$ & $\G \supset \Kp_c \supset \gH_c$\\
     & & $\Zeit_m \cdot\K_1 \Delta \U(1)$ & & is 6--8 in Table \ref{tabletriplereducible}\\
\hline
R.21 & $\G_1 \times \SU(2)$ & $\gH_1\SU(2)\SU(2)$ & $\gH_1\Delta \SU(2)$ & $\G \supset \Kp \supset \gH$\\
     & & $\K_1 \Delta \SU(2)$ & & is 9--13 in Table \ref{tabletriplereducible}\\
\hline
R.22 & $\U(1) \times \SU(2)$ & $\U(1)_{-}\cdot \gH$ & $\gH$ & \\
     &        & $\U(1)_{+}\cdot \gH$ & & \\
\hline
R.23 & $\U(1) \times \SO(n+2)$ & $\Km_c\cdot \gH$ & $\gH$ & $n\geq 2$ \\
     &         & $\Kp_c\cdot \gH$ & & \\
\hline
\end{tabular}
\end{center}
\caption{The diagrams of reduced actions in Table \ref{Tables3reduciblenondouble}.}
\label{Tables3reduciblenonreducible2}
\end{table}

\smallskip

In certain cases, the diagram defines a product action, i.e., the manifold is a product of a
cohomogeneity one manifold and a homogeneous space. In Table \ref{Tablebundlereducible}, we specify
each manifold in Table \ref{Tables3reducibledouble} and \ref{Tables3reduciblenondouble} as a
product(if the action is a product action) or a (possibly non-trivial) bundle. The first seven
examples \textbf{R}.3, \textbf{R}.4, \textbf{R}.7, \textbf{R}.8, \textbf{R}.11, \textbf{R}.15 and
\textbf{R}.16 are products. Note that the family in example \textbf{R}.22 contains both trivial and
non-trivial $\sph^3$ bundle over $\sph^2$ depending on the embeddings of the circles $\U(1)_-$ and
$\U(1)_+$, see \cite{HoelscherClass}.

\begin{table}[!h]
\begin{center}
\begin{tabular}{|r|c|c|}
\hline
 & Fiber & Base \\
\hline \hline
R.3 & $\sph^4$ & $\G_1/ \gH_1$ \\
\hline
R.4 & $\sph^{4n+4}$ & $\G_1/\gH_1$ \\
\hline
R.7 & $\sph^{k+1}$ & $\G_2/\gH_2$ \\
\hline
R.8 & $\sph^{2}$ & $\G_1/\gH_1$ \\
\hline
R.11 & $\sph^2$ & $\sph^3$ \\
\hline
R.15 & $\Cp^2$ & $\G_1/\gH_1$ \\
\hline
R.16 & $\Hp^{n+1}$ & $\G_1/\gH_1$ \\
\hline \hline
R.1 & $\sph^2$ & $\G_1/(\gH_1\cdot \U(1))$ \\
\hline
R.2 & $\sph^4$ & $\G_1/(\gH_1\cdot \SU(2))$ \\
\hline
R.5 & $\sph^2$ & $\G_1/\gH_1 \times \G_2/(\gH_2 \cdot \U(1))$ \\
\hline
R.6 & $\sph^4$ & $\G_1/\gH_1 \times \G_2/(\gH_2 \cdot \SU(2))$ \\
\hline
R.9 & $\sph^2$ & $\G_1/(\gH_1 \cdot \U(1))$ \\
\hline
R.10 & $\sph^4$ & $\G_1/(\gH_1 \cdot \SU(2))$ \\
\hline
R.12 & $\sph^2$ & $\SO(n+2)/\SO(n)$ \\
\hline
R.13 & sphere & $\G/\K$ \\
\hline
R.14 & $\sph^5$ & $\G_1/(\gH_1 \cdot \U(1) \cdot \SU(2))$ \\
\hline
R.17 & $\sph^{k+2}$ & $\G_2/(\gH_2 \cdot \U(1))$ \\
\hline
R.18 & $\sph^{k+4}$ & $\G_2/(\gH_2 \cdot \SU(2))$ \\
\hline
R.19 & $\Cp^2$ & $\G_1/(\gH_1\cdot \SU(2))$ \\
\hline
R.20 & sphere & $\G_1/(\K_1 \cdot \U(1))$ \\
\hline
R.21 & sphere & $\G_1/(\K_1 \cdot \SU(2))$ \\
\hline
R.22 & $\sph^3$ & $\sph^2$ \\
\hline
R.23 & lens space & $\SO(n+2)/(\SO(2)\times \SO(n))$ \\
\hline
\end{tabular}
\end{center}
\smallskip
\caption{The bundle structure of the manifolds in Table \ref{Tables3reducibledouble} and
\ref{Tables3reduciblenondouble}. The first seven examples are
products.}\label{Tablebundlereducible}
\end{table}

\newpage

%% Non-primitive actions with s=3 =================================================================
\subsection{Non-primitive, non-reducible actions}

The non-primitive, non-reducible cohomogeneity one manifolds with $s=3$ which are not doubles are
classified in Table \ref{Tables3nonprimitive}, see Theorem \ref{thms3nonprimitive}.

In Table \ref{Tables3nonprimitive} we use the same conventions in the tables of reducible,
non-primitive actions, i.e., $k\geq 2$ and the homogeneous space appeared in the last column is
strongly isotropy irreducible. The further conditions for some diagrams are
\begin{itemize}
\item In example \textbf{N}.5, the groups satisfy Condition T2: $\G_1/\gL_1$ is strongly isotropy irreducible
and the diagram $\gH_1 \subset \set{\K_1, \gL_2} \subset \gL_2$ has $s=2$.
\item In example \textbf{N}.7, we have $l_1, l_2 \geq 1$.
\item In example \textbf{N}.8, we have $l\geq 1$ and $k \geq 2$.
\item In example \textbf{N}.8, \textbf{N}.9 and \textbf{N}.10, the triple is not the last two
examples in Table \ref{TableHKG2summandsKHsphere}. Otherwise the diagram is reducible.
\item Example \textbf{N}.10 was already discussed in the proof of Theorem \ref{thms3nonprimitive}.
Both $\Kpm_c$ are contained in the subgroup $\gL = \U(1)\times \U(1)\cdot \gH'_c$. The construction
of cohomogeneity one diagrams is very similar to the example \textbf{R}.23 and example $N^7_H$ in
\cite{HoelscherClass}.
\end{itemize}

\begin{table}[!h]
\begin{center}
\begin{tabular}{|r|l|l|l|l|}
\hline
 & $\quad \quad \quad \quad \G$ & $\quad \quad \quad \quad \Kpm$ & $\quad \quad \quad \gH$ &  \\
\hline \hline
N.1 & $\E_6$ & $\SU(2) \U(5)$ & $\SU(2) \SU(5)$ & \\
    & & $\SU(2) \SU(6)$ & & \\
\hline
N.2 & $\Spin(6+n)$ & $\SO(n) \U(3)$ & $\SO(n) \SU(3)$ & $n\geq 1$ \\
    &              & $\SO(n) \SU(4)$ & & \\
\hline
N.3 & $\G_1 \times \Sp(n+1)$ & $\gL_1  \Sp(1) \Sp(1) \Sp(n)$ & $\gL_1 \Delta\Sp(1)\Sp(n)$ & $\G_1/(\gL_1\Sp(1))$ \\
    &  & $\gL_1 \Sp(1)\Sp(n+1)$ & & \\
\hline
N.4 & $\G_1\times \SU(n+1)$ & $\gL_1 \U(1)  S(\U(1) \U(n))$ & $\gL_1 \Delta\U(1)\SU(n)$ & $\G_1/(\gL_1\U(1))$ \\
    & & $\gL_1  \U(1)  \SU(n+1)$ & & \\
\hline
N.5 & $\G_1\times \gL_2$ & $\gL_1  \K_1$ & $\gL_1  \gH_1$ & Condition T2 \\
    & & $\gL_1 \gL_2$ & & \\
\hline \hline
N.6 & $\Spin(8) \times \G_1$ & $\Spin^-(7)\times \gL_1$ & $\Gt \times \gL_1$ & $\G_1/\gL_1$ \\
    & & $\Spin^+(7) \times \gL_1$ & & \\
\hline
N.7 & $\G_0 \times \gL_1\times \gL_2$ & $\gL_0  \gL_1 \gH_2$ & $\gL_0 \gH_1 \gH_2$ & $\G_0/\gL_0$ \\
    & & $\gL_0 \gH_1  \gL_2$ & & $\gL_i/\gH_i = \sph^{l_i}$ \\
\hline
N.8 & $\gL_1 \times \G_2$ & $\gH_1  \K_2$ & $\gH_1 \gH_2$ & $\G_2 \supset \K_2 \supset \gH_2$ in Table \ref{TableHKG2summandsKHsphere} \\
    & & $\gL_1 \gH_2$ & & $\gL_1/\gH_1 = \sph^{l}, \, \K_2/\gH_2 = \sph^{k}$ \\
\hline
N.9 & $\gL_1 \times \G_2$ & $\gH_1  \K_2$ & $\gH_1 \gH_2 \cdot \Zeit_m$ & $\G_2 \supset \K_2 \supset \gH_2$ in Table \ref{TableHKG2summandsKHsphere} \\
    & & $\gL_1 \gH_2\cdot \Zeit_m $ & & $\gL_1/\gH_1 = \sph^k, \,\K_2/\gH_2 = \sph^1$ \\
\hline
N.10 & $\U(1) \times \G'$ & $\K'$ & $\gH'$ & $\G' \supset \K'_c \supset \gH'_c$ in Table \ref{TableHKG2summandsKHsphere} \\
    & & $\U(1) \gH'$ & & $\K'_c/\gH'_c = \sph^1$ \\
\hline
\end{tabular}
\end{center}
\smallskip
\caption{Non-primitive cohomogeneity one manifolds with non-reducible actions which are not
doubles. The action has no fixed points and $s=3$.}\label{Tables3nonprimitive}
\end{table}

In Table \ref{Tablebundlenonprimitive}, we also specify the bundle structure of the examples in
Table \ref{Tables3nonprimitive}. The fiber also admits a cohomogeneity one action with diagram $\gH
\subset \set{\Kpm} \subset \gL$ where $\gL$ is a proper subgroup in $\G$. We identify the fiber
either as a known manifold or by its diagram. The first three examples \textbf{N}.5, \textbf{N}.6
and \textbf{N}.7 are products.

\begin{table}[!h]
\begin{center}
\begin{tabular}{|r|c|c|}
\hline
& Fiber & Base \\
\hline \hline
N.5 & $\gH_1 \subset \set{\K_1, \gL_2} \subset \gL_2$ & $\G_1/\gL_1$ \\
\hline
N.6 & $\sph^{15}$ & $\G_1/\gL_1$ \\
\hline
N.7 & $\sph^{l_1+l_2+1}$ & $\G_0/\gL_0$ \\
\hline \hline
N.1 & $\Cp^6$ & $\E_6/(\SU(2)\cdot \SU(6))$ \\
\hline
N.2 & $\Cp^4$ & $\SO(6+n)/(\SO(6)\times \SO(n))$ \\
\hline
N.3 & $\Hp^{n+1}$ & $\G_1/(\gL_1\cdot \Sp(1))$ \\
\hline
N.4 & $\Cp^{n+1}$ & $\G_1/(\gL_1 \cdot \U(1))$ \\
\hline
N.8 & $\sph^{l + k +1}$ & $\G_2/\K_2$ \\
\hline
N.9 & $\sph^{k + 2}$ & $\G_2/\K_2$ \\
\hline
N.10 & lens space & $\G'/\K'$ \\
\hline
\end{tabular}
\end{center}
\smallskip
\caption{The bundle structure of the manifolds in Table \ref{Tables3nonprimitive}. The first three
examples are products.}\label{Tablebundlenonprimitive}
\end{table}

\newpage

%% Classification in lower dimensions =============================================================
\subsection{Classifications in low dimensions}

For the convenience of the reader, we list the non-reducible cohomogeneity one manifolds with
$s\leq 3$ in low dimensions. The $4$-manifold $\Cp^2\sharp \overline{\Cp^2}$ is the connected sum
of $\Cp^2$ and another copy of $\Cp^2$ with opposite orientation.  The examples of dimensions 5, 6
and 7 are already in Table 8.2 and 8.4 in \cite{HoelscherClass}. The last column shows the type of
the corresponding diagram with connected groups.

Example $N^6_F$ and $N^7_I$ appear in Theorem \ref{thmclasss2nonprimitive}, see Remark
\ref{rems2lowdim}. Example $N^6_D$ is a special case of example $\textbf{N}.4$ in Table
\ref{Tables3nonprimitive} with $\G_1/\gL_1 = \SU(2)/\U(1)$. Example $N^7_H$ is a special case of
example \textbf{N}.10 in Table \ref{Tables3nonprimitive} with the triple $(\SU(3) \supset \U(2)
\supset \SU(2))$. The primitive one $Q^7_E$ appears in the proof of Theorem
\ref{thmrigidityconnected}. The other primitive ones $Q^6_A$, $Q^6_C$, $Q^6_D$ and $Q^7_G$ appear
in the proof of Proposition \ref{propclasscasea} and \ref{propclasscaseb}.

\begin{table}[!h]
\begin{center}
\begin{tabular}{|c|c|c|c|}
\hline
 & Diagram & $s$ & Type \\
\hline \hline
$\sph^2 \times \sph^2$ & $\Zeit_n \subset \set{\Kpm = \U(1)} \subset \SU(2)$, $n$ even & $3$ & double \\
\hline
$\Cp^2 \sharp\overline{\Cp^2}$ & $\Zeit_n \subset \set{\Kpm = \U(1)} \subset \SU(2)$, $n$ odd & $3$ & double \\
\hline
$\Cp^2$ & $\langle \qi \rangle \subset \set{e^{\qi\theta}, e^{\qj\theta}\cup \qi e^{\qj\theta}} \subset \gS^3$ & $3$ & primitive \\
\hline
$\sph^4$ & $\set{\pm 1, \pm \qi, \pm \qj, \pm \qk} \subset \set{e^{\qi\theta} \cup \qj e^{\qi\theta}, e^{\qj\theta}\cup \qi e^{\qj\theta}} \subset \gS^3$ & $3$ & primitive \\
\hline \hline
$Q^6_A$ & $\Delta \U(1) \cdot \Zeit_n \subset \set{\U(1)\times \U(1), \Delta \SU(2)\cdot \Zeit_n} \subset \SU(2)\times \SU(2)$ & $3$ & primitive\\
& where $n=1$ or $2$ & &  \\
\hline
$Q^6_C$ & $\Delta \U(1) \subset \set{\Delta \SU(2), \SU(2)\times \U(1)} \subset \SU(2)\times \SU(2)$ & $3$ & primitive \\
\hline
$Q^6_D$ & $\Delta \U(1) \subset \set{\SU(2)\times \U(1), \U(1) \times \SU(2)} \subset \SU(2)\times \SU(2)$ & $3$ & primitive \\
\hline
$N^6_D$ & $\set{(e^{\qi p\theta}, e^{\qi\theta})} \subset \set{\U(1)\times \U(1), \SU(2)\times \U(1)} \subset \SU(2) \times \SU(2)$ & $3$ & non-primitive \\
\hline
$N^6_B$ & $\set{(e^{\qi p\theta}, e^{\qi q\theta})}\cdot \Zeit_n \subset \set{\Kpm=\U(1) \times \U(1)} \subset \SU(2) \times \SU(2)$ & $3$ & double \\
\hline
$Q^6_B$ & $\Delta \U(1) \subset \set{\Kpm = \Delta \SU(2)} \subset \SU(2) \times \SU(2)$ & $3$ & double \\
\hline
$N^6_E$ & $\set{(e^{\qi p\theta}, e^{\qi\theta})} \subset \set{\Kpm = \SU(2) \times \U(1)} \subset \SU(2) \times \SU(2)$ & $3$ & double \\
\hline
$N^6_F$ & $\SU(2)\cdot \Zeit_2 \subset \set{\Kpm = \U(2)} \subset \SU(3)$ & $2$ & double \\
\hline \hline
$Q^7_E$ & $\U(1) \times \U(1) \subset \set{S(\U(1)\U(2)), \, S(\U(2)\U(1))} \subset  \SU(3)$ & $3$ & primitive \\
\hline
$Q^7_G$ & $\gH_0\cdot \Zeit_n \subset \set{(\beta(m\theta),e^{\qi\theta})}\cdot \gH_0, \, \SU(3)\times \Zeit_n \subset \SU(3)\times \U(1)$ & $3$ & primitive \\
 & $\gH_0 = \SU(2)\times 1, \, \Zeit_n\subset \set{(\beta(m\theta),e^{\qi\theta})}$, & & \\
 & $\beta(\theta)= \diag(e^{-\qi\theta},e^{\qi\theta},1), \, \gcd(m,n)=1$ & & \\
\hline
$N^7_H$ & $\gH \subset \set{(\beta(m_-\theta),e^{\qi n_-\theta})}\cdot \gH, \, \set{(\beta(m_+\theta),e^{\qi n_+\theta})}\cdot \gH  \subset \SU(3)\times \U(1)$ & $3$ & non-primitive \\
 & $\gH_0 = \SU(2)\times 1$, $\gH=\gH_-\cdot \gH_+$, $\Km\ne \Kp$, & & \\
 & $\beta(\theta)= \diag(e^{-\qi\theta},e^{\qi\theta},1)$, $\gcd(n_-,n_+,d)=1$ & & \\
 & where $d$ is the index of $\gH\cap \Km_0 \cap \Kp_0$ in $\Km_0 \cap \Kp_0$ & & \\
\hline
$Q^7_F$ & $\gH_0\cdot \Zeit_n \subset  \Kpm = \set{(\beta(m\theta),e^{\qi\theta})}\cdot \gH_0 \subset \SU(3)\times \U(1)$ & $3$ & double \\
 & $\gH_0 = \SU(2)\times 1$, $\Zeit_n\subset \set{(\beta(m\theta),e^{\qi\theta})}$, & & \\
 & $\beta(\theta)= \diag(e^{-\qi\theta},e^{\qi\theta},1)$ & & \\
\hline
$N^7_G$ & $\U(1)\times \U(1) \subset \set{\Kpm = \U(2)} \subset \SU(3)$ & $3$ & double \\
\hline
$N^7_I$ & $\Sp(1)\U(1) \subset \set{\Kpm = \Sp(1)\Sp(1)} \subset \Sp(2)$ & $2$ & double \\
\hline
\end{tabular}
\end{center}
\smallskip
\caption{Cohomogeneity one manifolds with non-reducible action and $s\leq 3$ in low dimensions. The
action is not a product or a sum action and has no fixed points.} \label{Tablemanifoldlowdim}
\end{table}

% -------------------------------------------------------------------------------------------------
%\bibliographystyle{amsplain}
%\bibliography{}

\medskip

%

% -------------------------------------------------------------------------------------------------

\vfill
\end{document}